    \definecolor{darkblue}{rgb}{0,0,.85} 
    \definecolor{darkred}{rgb}{0.84,0,0}
    \def\paragraph{\@startsection{paragraph}{4}%
    \z@\z@{-\fontdimen2\font}%
    {\normalfont\bfseries}}
    \numberwithin{equation}{subsection}
    \newtheorem{lem}{Lemma}[subsection]
    \newtheorem{cor}[lem]{Corollary}
    \newtheorem{thm}[lem]{Theorem}
    \newtheorem{prop}[lem]{Proposition}
    \newtheorem*{thmi}{Theorem} 
    \newtheorem*{propi}{Proposition} 
    \theoremstyle{definition}
    \newtheorem{definition}[lem]{Definition}
    \newtheorem{construction}[lem]{Construction}
    \newtheorem{rem}[lem]{Remark}
    \newtheorem{example}[lem]{Example}
    \newtheorem*{definitioni}{Definition} 
    \theoremstyle{plain}
    \newtheorem{lemA}{Lemma}[section]
    \newtheorem{corA}[lemA]{Corollary}
    \newtheorem{propA}[lemA]{Proposition}
    \theoremstyle{definition}
    \newtheorem{definitionA}[lemA]{Definition}
    \newcommand{\mf}[1]{\mathfrak{#1}}
    \newcommand{\mc}[1]{\mathcal{#1}}
    \newcommand{\mb}[1]{\mathbf{#1}}
    \newcommand{\ov}[1]{\overline{#1}}
    \newcommand{\op}{\operatorname}
    \DeclareMathOperator*{\colim}{colim}
    \DeclareMathOperator{\Hom}{Hom}
    \DeclareMathOperator{\Aut}{Aut}
    \DeclareMathOperator{\Isom}{Isom}
    \DeclareMathOperator{\im}{im}
    \DeclareMathOperator{\Spf}{Spf}
    \DeclareMathOperator{\Spec}{Spec}
    \DeclareMathOperator{\Spa}{Spa}
    \DeclareMathOperator{\interior}{int}
    \newcommand{\oc}{\mathrm{oc}}
    \newcommand{\et}{\mathrm{\acute{e}t}}
    \newcommand{\adm}{\mathrm{adm}}
    \newcommand{\mx}{\mathrm{max}}
    \newcommand{\proet}{\mathrm{pro\acute{e}t}}
    \newcommand{\alg}{\mathrm{alg}}
    \newcommand{\sep}{\mathrm{sep}}
    \newcommand{\cnts}{\mathrm{cnts}}
    \newcommand{\cat}[1]{\operatorname{\mathbf{#1}}} 
    \newcommand{\Set}{\cat{Set}}
    \newcommand{\GSet}[1]{{#1\text{-}\cat{Set}}}
    \newcommand{\Cov}{\cat{Cov}}
    \newcommand{\Et}{\cat{\acute{E}t}}
    \newcommand{\FEt}{\cat{F\acute{E}t}}
    \newcommand{\UFEt}{\cat{UF\acute{E}t}}
    \newcommand{\h}{\mathcal{O}}
    \newcommand{\cO}{\mathcal{O}}
    \newcommand*\isomto{%
        \xrightarrow{\raisebox{-0.2 em}{\smash{\ensuremath{\sim}}}}%
    }
    \newcommand{\stacks}[1]{\cite[\href{https://stacks.math.columbia.edu/tag/#1}{Tag~#1}]{StacksProject}}
	\newcounter{steps}
    \newcommand{\beginsteps}{\setcounter{steps}{0}}
    \newcommand{\step}{\stepcounter{steps} \medskip \noindent {\it Step \arabic{steps}.} }
    \title{Geometric arcs and fundamental groups of rigid spaces} 
    \date{\today}
    \author{Piotr Achinger}
    \address{Institute of Mathematics of the Polish Academy of Sciences \newline \indent ul.\ Śniadeckich 8, 00-656 Warsaw, Poland}
    \email{pachinger@impan.pl}
    \author{Marcin Lara}
    \address{Institute of Mathematics of the Polish Academy of Sciences \newline \indent ul.\ Śniadeckich 8, 00-656 Warsaw, Poland}
    \email{marcin.lara@impan.pl}
    \author{Alex Youcis}
    \address{\begin{itemize} 
    \item[(1)] Institute of Mathematics of the Polish Academy of Sciences \newline \indent ul.\ Śniadeckich 8, 00-656 Warsaw, Poland
    \item[(2)] Graduate School of Mathematical Sciences, The University of Tokyo,
    3-8-1 Komaba, Meguro-ku, Tokyo, 153-8914, Japan
    \end{itemize}}
     \email{ayoucis@ms.u-tokyo.ac.jp}
\begin{document}

\begin{abstract}
We develop the notion of a \emph{geometric covering} of a rigid space $X$, which yields a larger class of covering spaces than that studied previously by de~Jong. Geometric coverings are closed under disjoint unions and are \'etale local on $X$. If $X$ is connected, its geometric coverings form a tame infinite Galois category, and hence are classified by a topological group. The definition is based on the property of lifting of ``geometric arcs'', and is meant to be an analogue of the notion developed for schemes by Bhatt and Scholze.
\end{abstract}

\maketitle

\section{Introduction}

While the geometry of non-archimedean analytic spaces has become relatively robust in the 60 years since its inception, a definitive theory of covering spaces has remained elusive. Ideologically, the main reason for the difficulty is that non-archimedean analytic spaces are not locally simply connected in any meaningful way. Here we mean `simply connected' in the rigid-geometric sense, not from a~topological perspective (work of Berkovich \cite{BerkovichContractible} shows that smooth $p$-adic Berkovich spaces are locally contractible as topological spaces). 

An important step in this direction, prompted by the theory of $p$-adic period mappings, was taken by de~Jong in \cite{deJongFundamental}. Following Berkovich \cite[6.3.4 ii)]{BerkovichEtale}, he combined the notions of topological covering and finite \'etale map by considering morphisms which locally in the Berkovich topology become the disjoint union of finite \'etale coverings. By doing so, he was able to develop a reasonable notion of the fundamental group of a rigid space.  Unfortunately, de~Jong's theory of covering spaces lacks some of the properties one would expect such as being closed under compositions and disjoint unions. Even more serious, the property of being a covering space in his sense is not local in the admissible topology on the target (see \cite[Theorem~1]{ALY1P2}).

Similar issues arise in topology when one considers covering spaces of topological spaces which aren't locally simply connected. An alternative perspective on covering spaces, present in the recent work of Brazas \cite{Brazas}, is to overcome this difficulty by converting one of the key properties of covering spaces into a definition --- the ability to lift paths. An analogue of this approach is also hidden in the work of Bhatt and Scholze on the pro-\'etale fundamental group in \cite{BhattScholze}, where the corresponding notion of a covering space is called a \emph{geometric covering}. Namely, if one defines a `geometric path' between two geometric points of a scheme $X$ as a sequence of specializations and generalizations of points in $X$ where subsequent points are `connected' by a strictly Henselian local ring, then geometric coverings are precisely \'etale maps for which one has unique lifting of all geometric paths.

The goal of this article is the development of a good theory of covering spaces in terms of path lifting, thus continuing the above pattern. It enlarges de Jong’s category\footnote{This is in contrast with the work of Andr\'e \cite{AndreLectures} and Lepage \cite{Lepage} on the tempered fundamental group, which aims at making the de~Jong fundamental group smaller and therefore more manageable.} and is based on the notion of a geometric arc, to be explained (along with the shift from paths to arcs) in more detail below.

\begin{definitioni}[See Definition~\ref{def:geom-cov}] \label{intro-def:geom-cov}
    Let $X$ be an adic space locally of finite type over a~non-archimedean field $K$. An \'etale morphism of adic spaces $Y\to X$ is a \emph{geometric covering} if it is partially proper and if the following condition holds:
    \begin{quote} 
        For every test curve $C\to X$, the map $Y_C\to C$ of adic curves satisfies unique lifting of geometric arcs.
    \end{quote}
    We denote by $\Cov_X$ the category of geometric coverings of $X$.
\end{definitioni}

In addition to our definition being geometrically intuitive, we show in \S\ref{s:geom-cov} that the notion of geometric covering is closed under composition, closed under disjoint unions, and is \'etale local on the target. Moreover, every finite \'etale covering is a geometric covering, and thus combining these results we see that every covering space in the sense of de Jong is a~geometric covering.\footnote{Our definition of geometric coverings matches quite nicely to that of a semi-covering as defined in \cite{Brazas}: the condition is the replacement for the unique lifting of paths, and \'etale and partially proper constitute the replacement for local homeomorphism (cf.\@ Proposition \ref{pp etale local structure}).}

To explain `geometric arcs' it is useful to note that, unlike schemes, for a geometric point $\ov{x}$ of a rigid space $X$, the \'etale localization $X_{(\ov{x})}$ (i.e.\@ the inverse limit of pointed \'etale neighborhoods) is often nothing more than a point (see \cite[Proposition 2.5.13 i)]{Huberbook}) and thus is not large enough to emulate the notion of `path' in algebraic geometry. But, also unlike schemes, rigid spaces $X$ (or more precisely their associated Berkovich space $[X]$) have an abundance of topological arcs, e.g.\ $[X]$ is arc connected if $X$ is connected. Our notion of \emph{geometric arcs} is a synthesis of the algebro-geometric and topological notions of paths: a geometric arc $\ov\gamma$ in a rigid space $X$ is an arc $\gamma$ in $[X]$ together with a choice of a geometric point above each of its points and a compatible family of \'etale paths along $\gamma$ (see Definition~\ref{def:geom-int}). 

If $X$ is one-dimensional, every two points of $X$ are connected by a geometric arc. This statement is significantly more subtle than the analogue from algebraic geometry (see Section~\ref{s:geom-arcs}). In general, extending a theorem of de~Jong \cite[Proposition~6.1.1]{deJongCrystal}, we show in Appendix~\ref{curve-connectedness appendix} that every two points of a connected $X$ can be connected by a sequence of \emph{test curves} (i.e.\@ one-dimensional rigid spaces over some extension of $K$). Combining these two statements we see that $X$ is ``geometric path connected''. We can then more precisely state the unique lifting property for a map $Y\to X$ as in Definition~\ref{intro-def:geom-cov} by saying that for every geometric arc $\ov\gamma$ in a test curve $C$ and every lifting $\ov y$ of the geometric left endpoint $\ov x$ of $\ov\gamma$ to $Y_C$, there exists a unique lifting of $\ov\gamma$ to a geometric arc in $Y_C$ with geometric left endpoint $\ov y$. 

Using this ``geometric path connectedness'', the path lifting property of our geometric coverings gives a concrete way of constructing an isomorphism of fiber functors $F_{\ov{x}_0}\simeq F_{\ov{x}_1}$ for any two geometric points of $X$ in the same connected component. This is an important ingredient in the proof of the following theorem, which is the main result of our paper.

\begin{thmi}[{See Theorem~\ref{main tameness result}}]
    For a connected adic space $X$ locally of finite type over a~non-archimedean field $K$ and a geometric point $\ov x$ of $X$, the pair $(\Cov_X, F_{\ov x})$ forms a~tame infinite Galois category in the sense of \cite[\S7]{BhattScholze}.
\end{thmi}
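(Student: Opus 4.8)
The plan is to verify the axioms of a tame infinite Galois category from \cite[\S7]{BhattScholze}: (i) $\Cov_X$ admits finite limits and all colimits; (ii) every geometric covering of $X$ is a disjoint union of connected geometric coverings; (iii) $F_{\ov x}$ is faithful, conservative, and commutes with finite limits and colimits; and (iv) (tameness) for every connected $Y\in\Cov_X$ the group $\Aut(F_{\ov x})$ acts transitively on $F_{\ov x}(Y)$. Axioms (i)--(iii) are largely formal given the structural results of \S\ref{s:geom-cov} --- closure of $\Cov_X$ under composition and disjoint unions, \'etale-locality on the target, and the local description of Proposition~\ref{pp etale local structure} --- while axiom (iv) is the heart of the matter and will be reduced to the arc-lifting property of Definition~\ref{intro-def:geom-cov} together with the connectedness results of \S\ref{s:geom-arcs} and Appendix~\ref{curve-connectedness appendix}.

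For (i): the terminal object is $X$ itself, the fibre product $Y\times_X Z$ is a geometric covering because base change along the \'etale map $Z\to X$ preserves geometric coverings and these are closed under composition, and equalizers exist because the diagonal of a (partially proper, hence separated) geometric covering is a clopen immersion, so an equalizer is a clopen subcovering; coproducts are geometric coverings by the disjoint-union closure, and the remaining colimits are produced from the \'etale-local structure. For (ii) one uses that $[Y]$ is a locally path-connected topological space, so that the connected components of $Y$ are open and each is again a geometric covering. For (iii): one identifies $F_{\ov x}(Y)$ with the set of geometric points of $Y$ above $\ov x$ --- equivalently, the set of connected components of the fibre $Y_{\ov x}$, since an \'etale partially proper adic space over a geometric point is a disjoint union of copies of it --- whence commutation with fibre products, coproducts, and then all finite limits and colimits. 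Faithfulness and conservativity both reduce to the assertion that \emph{a morphism of geometric coverings over $X$ inducing a bijection on the fibre over $\ov x$ is an isomorphism}: using the diagonal and the graph one reduces this to showing that a clopen subcovering $W\subseteq V$ with $W_{\ov x}=V_{\ov x}$ equals $V$, and this in turn to the statement that a nonempty connected geometric covering $Z\to X$ has $Z_{\ov x}\ne\emptyset$ --- which holds because, by arc lifting, the set $\{\,\ov x' : Z_{\ov x'}\ne\emptyset\,\}$ is stable under the geometric-arc equivalence relation and hence, being nonempty, equal to all of $X$ by the connectedness results.

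For (iv), the key geometric input --- the one flagged in the introduction --- is that for any two geometric points $\ov x_0,\ov x_1$ of the connected space $X$ there is an isomorphism of fibre functors $F_{\ov x_0}\simeq F_{\ov x_1}$ on $\Cov_X$. I would construct it by joining $\ov x_0$ to $\ov x_1$ by a finite chain of test curves (Appendix~\ref{curve-connectedness appendix}), joining the relevant points within each test curve by a geometric arc (\S\ref{s:geom-arcs}), and then, for each $Z\in\Cov_X$, pulling $Z$ back to each test curve and applying the unique-lifting property of Definition~\ref{intro-def:geom-cov} to obtain a bijection $F_{\ov x_0}(Z)\simeq F_{\ov x_1}(Z)$ --- ``monodromy along the chain'' --- with uniqueness of lifts making this bijection natural in $Z$. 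Granting this, tameness follows: given a connected $Y\in\Cov_X$ and $\ov y_0,\ov y_1\in F_{\ov x}(Y)$, view $\ov y_0,\ov y_1$ as geometric points of $Y$, apply the key input to the \emph{connected} space $Y$ to get $\varphi\colon F_{\ov y_0}\simeq F_{\ov y_1}$ on $\Cov_Y$, and transport $\varphi$ through the base-change functor $\Cov_X\to\Cov_Y$, $Z\mapsto Z\times_X Y$, via the canonical identifications $F_{\ov y_i}(Z\times_X Y)\cong F_{\ov x}(Z)$; the resulting $\sigma\in\Aut(F_{\ov x})$ satisfies $\sigma(\ov y_0)=\ov y_1$, as one checks by evaluating on $Y\times_X Y\in\Cov_Y$ along the diagonal clopen immersion.

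The main obstacle is the construction of the fibre-functor isomorphism in (iv) --- equivalently, upgrading the per-test-curve arc lifting of Definition~\ref{intro-def:geom-cov} to a well-defined natural automorphism of $F_{\ov x}$. Two points demand care: that lifting a geometric arc across one test curve yields, at its right endpoint, a geometric point equipped with exactly the \'etale-path data needed to begin the lift across the next curve of the chain --- so that monodromy along a chain is defined at all, and independent of how the chain is presented --- and that the resulting permutation of $F_{\ov x}(Z)$ is genuinely functorial in $Z$; both rest on the precise formulation of geometric arcs in Definition~\ref{def:geom-int} and on the uniqueness half of the lifting property. Since the same circle of ideas --- arc lifting together with the connectedness theorems --- also underlies the key lemma used for conservativity in (iii), essentially all of the non-formal content of the theorem is concentrated there.
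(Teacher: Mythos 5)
Your proposal follows essentially the same architecture as the paper's proof: the reduction of all the non-formal content to the existence of an isomorphism of fibre functors $F_{\ov x_0}\simeq F_{\ov x_1}$ built from chains of test curves and unique lifting of geometric arcs (Theorem~\ref{pathsexist}), the use of surjectivity of non-empty connected geometric coverings (Proposition~\ref{image of geometric covering clopen}) to handle faithfulness, conservativity and the identification of categorical with topological connectedness, and above all your tameness argument --- base change to $\Cov_Y$, transport of the path $F_{\ov y_0}\simeq F_{\ov y_1}$ through $Z\mapsto Z\times_X Y$, and evaluation on $Y\times_X Y$ along the diagonal --- which is precisely the paper's Proposition~\ref{tameness follows from pathsexist}. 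You have also correctly located the heart of the matter in the construction and naturality of the monodromy along a chain of test curves.

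The one genuine gap is your treatment of coequalizers in axiom (i), which you dispose of with ``the remaining colimits are produced from the \'etale-local structure.'' This is the only part of \textbf{(IGC1)} that is not formal, and it does not follow from \'etale descent of the property of being a geometric covering. Given $W\rightrightarrows U$ in $\Cov_X$, the paper (Proposition~\ref{cocomplete and finitely complete}) forms the image $R_0$ of $W\to U\times_X U$, which is clopen by Proposition~\ref{image of geometric covering clopen}, generates from it an equivalence relation $R$ on $U$ by symmetrizing and composing (each stage again clopen, the increasing union clopen by local connectedness), and then must \emph{represent} the sheaf quotient $(U\times_X U)/R$ by an adic space; this requires a non-trivial representability theorem for quotients by \'etale equivalence relations with closed graph, for which the paper cites Warner. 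Only then can one apply Proposition~\ref{image of geometric covering is geometric covering} to the resulting surjection from $U\times_X U$ to conclude that the quotient is again a geometric covering. The same omission propagates into axiom (iii): commutation of $F_{\ov x}$ with fibre products and coproducts does not by itself give cocontinuity --- you also need $F_{\ov x}$ to commute with these coequalizers, which the paper obtains from the fact that quotients by \'etale equivalence relations commute with pullback along $\ov x\to X$. Everything else in your outline can be completed exactly as you describe.
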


In words this means that the category $\Cov_X$ of geometric coverings has enough structure to support a notion of Galois theory. From the general yoga of tame infinite Galois categories one thus obtains from the pair $(\Cov_X,F_{\ov{x}})$ a Noohi topological group $\pi_1^{\rm ga}(X, \ov x)$, which we call the \emph{geometric arc fundamental group}\footnote{Given the analogies with \cite{BhattScholze}, it might seem reasonable to call this group the pro-\'etale fundamental group. However, we do not know if there is a suitable `pro-\'etale topology' whose local systems are $\Cov_X$.}, as well as an equivalence of categories 
\[ 
    F_{\ov x}\colon \Cov_X \isomto \GSet{\pi_1^{\rm ga}(X, \ov x)}.
\]
An analogue of this result was proven, in different language, by de Jong for the category of disjoint unions of his coverings (see \cite[Remark~7.4.11]{BhattScholze}). His proof, however, does not generalize to our situation. Moreover, our main theorem directly implies tameness of the natural generalizations of de Jong's category (see below).

The proofs of several key facts about our geometric coverings, most notably \'etale descent, rely on a more down-to-earth characterization in terms of a topological condition called the \emph{arcwise valuative criterion (AVC)}. To motivate it, note that unlike in the case of locally Noetherian schemes as in \cite{BhattScholze}, partially proper \'etale maps do not form a~well-behaved notion of a `covering space' (e.g.\ the inclusion of the open unit disk into the closed unit disk is partially proper and \'etale). Indeed, partial properness, which is defined using a valuative criterion, allows one to lift specializations, but on a connected rigid $K$-space $X$ most pairs of points cannot be connected by a zigzag of specializations. In fact, this is possible if and only if their images under the \emph{separation map}
\[ 
    \sep_X \colon X\to [X]
\]
in the corresponding separated quotient (Berkovich space) $[X]=X^{\rm Berk}$ are the same. Note that the fibers of $\sep_X$ are spaces of valuations (Riemann--Zariski spaces). The AVC for map of rigid $K$-spaces $Y\to X$ (see Definition~\ref{def:AVC}) is a lifting condition with respect to squares of the form
\[ 
    \xymatrix{
        [0,1) \ar[r] \ar[d] & [Y] \ar[d] \\
        [0,1] \ar[r]\ar@{.>}[ur]|-\exists & [X]
    }
\]
where $[0,1]\to [X]$ is a parametrized arc in the Berkovich space of $X$. A partially proper open immersion $U\to X$ with $X$ connected fails to satisfy this, and indeed the AVC complements well the partial properness condition: the valuative criterion included in the definition of a partially proper map works along the fibers of the separation map $\sep_X\colon X\to [X]$, whereas AVC works on the target $[X]$, and so the two jointly give rise to a situation much more representative of the valuative criterion from algebraic geometry (see Figure \ref{fig:val-criterion} for a graphical representation of this idea). The link between geometric coverings and the AVC is then the following.

\begin{propi}[{Proposition~\ref{geometric covering equiv}}] 
    Let $Y\to X$ be an \'etale and partially proper morphism of rigid spaces. Then, $Y\to X$ is a geometric covering if and only if for all test curves $C\to X$ the map $[Y_C]\to[C]$ satisfies AVC. 
\end{propi}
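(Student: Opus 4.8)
The plan is to reduce the statement to a claim purely about curves, and then to compare unique lifting of geometric arcs with the AVC by a maximal‑lift argument along the parametrizing interval.

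\emph{Reduction to curves.} Étaleness and partial properness are stable under base change, and both ``$Y\to X$ is a geometric covering'' and ``$[Y_C]\to[C]$ satisfies AVC for all test curves $C$'' are formulated by pulling back along arbitrary test curves $C\to X$; since $Y\to X$ is étale, each $Y_C\to C$ is again an étale, partially proper morphism of adic curves. Hence it suffices to prove: for an étale, partially proper morphism $Z\to C$ of adic curves over a non‑archimedean field, $Z\to C$ satisfies unique lifting of geometric arcs if and only if $[Z]\to[C]$ satisfies AVC; the proposition then follows by applying this to $Z=Y_C$ for each test curve $C$. Three inputs will be used repeatedly. First, by the results of Section~\ref{s:geom-arcs}, every parametrized arc in the rigid curve $C$ underlies a geometric arc in $C$. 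Second, by the local structure of partially proper étale maps (Proposition~\ref{pp etale local structure}) the induced map $[Z]\to[C]$ is a local homeomorphism; in particular a lift of a parametrized arc $[0,1]\to[C]$ along $[Z]\to[C]$ is unique once its value at a single point is fixed, and a lift defined near a point extends a little further. Third, partial properness of $Z\to C$ gives lifting of specializations and generizations, hence allows one to transport a point of $Z$ within a single fibre of $\sep_Z$ once its image in $C$ is prescribed; this is what will let us lift the geometric‑point component of a geometric arc, while étaleness takes care of the étale‑path component.

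\emph{From AVC to unique lifting.} Assume $[Z]\to[C]$ satisfies AVC, fix a geometric arc $\ov\gamma$ in $C$ with underlying parametrized arc $\gamma\colon[0,1]\to[C]$, and fix a lift $\ov y$ of the left endpoint $\ov\gamma(0)$ to $Z$. Let $I\subseteq[0,1]$ be the set of $t$ for which $\ov\gamma|_{[0,t]}$ admits a lift to a geometric arc in $Z$ with left endpoint $\ov y$; such a lift is unique when it exists because $[0,t]$ is connected and $[Z]\to[C]$ is a local homeomorphism. Then $I$ is a subinterval containing $0$, and the local‑homeomorphism property shows that near each of its points the lift extends slightly further, so $I=[0,a)$ or $I=[0,a]$ for $a=\sup I$. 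Over $[0,a)$ we have the unique lifting arc $\gamma'\colon[0,a)\to[Z]$ of $\gamma|_{[0,a)}$, and AVC (applied to the parametrized arc $\gamma|_{[0,a]}$ together with $\gamma'$) extends it to $\ov{\gamma'}\colon[0,a]\to[Z]$; this extension is unique because $[Z]$ is Hausdorff and $[0,a)$ is dense in $[0,a]$. Upgrading $\ov{\gamma'}$ to a lift of $\ov\gamma|_{[0,a]}$ now only requires lifting the geometric point $\ov\gamma(a)$ and the étale path, which is carried out using partial properness (to move inside the Riemann--Zariski fibre $\sep_Z^{-1}(\ov{\gamma'}(a))$ so as to lie over the prescribed point of $C$) and the constructions of Section~\ref{s:geom-arcs}; hence $a\in I$, and if $a<1$ the local‑homeomorphism argument extends the lift past $a$, contradicting maximality. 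Therefore $I=[0,1]$, so $Z\to C$ satisfies unique lifting of geometric arcs.

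\emph{From unique lifting to AVC.} Conversely, assume $Z\to C$ satisfies unique lifting of geometric arcs, and let $\gamma\colon[0,1]\to[C]$ be a parametrized arc together with a lift $\gamma'\colon[0,1)\to[Z]$ of $\gamma|_{[0,1)}$. Choose a geometric arc $\ov\gamma$ in $C$ with underlying parametrized arc $\gamma$ (Section~\ref{s:geom-arcs}) and let $\ov y$ be a geometric point of $Z$ above $\gamma'(0)$ lying over $\ov\gamma(0)$. Unique lifting produces a geometric arc $\ov\delta$ in $Z$ with left endpoint $\ov y$ lifting $\ov\gamma$; its underlying parametrized arc $\delta\colon[0,1]\to[Z]$ lifts $\gamma$ and satisfies $\delta(0)=\gamma'(0)$, hence $\delta|_{[0,1)}=\gamma'$ by uniqueness of arc lifts along the local homeomorphism $[Z]\to[C]$. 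Thus $\delta$ is the required diagonal filling the AVC square, and $[Z]\to[C]$ satisfies AVC.

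\emph{The main obstacle.} The delicate point is the closedness step in the direction ``AVC $\Rightarrow$ unique lifting'': one must check that, at the limit parameter $a$, the AVC‑produced extension $\ov{\gamma'}$ of the underlying Berkovich arc genuinely upgrades to a lift of the \emph{full} geometric‑arc datum of $\ov\gamma|_{[0,a]}$ — that a geometric point and an étale path at $a$ can be chosen lying over the prescribed data in $C$ \emph{and} compatibly with the limit of the already‑constructed lift over $[0,a)$. Here the fibres of the separation maps are Riemann--Zariski spaces, so this compatibility is controlled by partial properness on the vertical direction and by the constructions of Section~\ref{s:geom-arcs} for the étale‑path component; reconciling these two and verifying continuity of the étale paths at $a$ is the technical crux. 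Everything else is a routine combination of the local structure of partially proper étale maps, the Hausdorffness of Berkovich spaces, and a connectedness argument on the interval.
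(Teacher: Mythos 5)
Your reduction to the curve case is fine, but the argument rests on a false topological premise: you assert that Proposition~\ref{pp etale local structure} makes $[Z]\to[C]$ a local homeomorphism, so that a topological lift of an arc is determined by its value at a single point. That proposition only says $Z$ is covered by opens $W$ with $W\to f(W)$ \emph{finite} \'etale, of degree possibly greater than one, and finite \'etale covers of rigid curves are emphatically not local homeomorphisms on Berkovich spaces (this failure of local simple connectedness is the raison d'\^etre of the paper). Concretely, for the Kummer cover $w\mapsto w^2$ of an annulus, an arc leaving the skeleton has two distinct lifts agreeing at its initial point. What is actually available is Proposition~\ref{[f] for etale and pp} ($[f]$ is open with discrete fibers and satisfies condition $(\ast)$), and uniqueness of lifts holds only for \emph{geometric} arcs (Corollary~\ref{unique lifting corollary}), where the \'etale-path data does the work that injectivity cannot.

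This premise is fatal in your ``unique lifting $\Rightarrow$ AVC'' direction. You choose an arbitrary geometric structure $\ov\gamma$ on $\gamma$, lift it from a geometric point over $\gamma'(0)$, and claim the underlying arc $\delta$ satisfies $\delta|_{[0,1)}=\gamma'$ ``by uniqueness of arc lifts along the local homeomorphism''; without that uniqueness, $\delta$ may branch away from $\gamma'$ immediately, and nothing ties the lifted geometric arc to the given partial lift. The paper's proof of Proposition~\ref{geometric covering equiv} avoids this by building the geometric structure compatibly with $\gamma'$ from the start: it puts a geometric structure on the ray $\rho'=\gamma'([0,1))$ \emph{upstairs} in $Z$ (Proposition~\ref{geometric intervals exist}), pushes it down to $\rho$, and extends it over the right endpoint using Corollary~\ref{extend}, whose content is the $\mc{P}$-field extension Lemma~\ref{lem:extending-p-field}; the lift of the resulting $\ov\gamma$ must then restrict to $\gamma'$ by uniqueness of geometric-arc lifts. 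This use of the $\mc{P}$-field machinery is the essential idea missing from your proposal. In the other direction your outline is closer to the paper's Proposition~\ref{pre-geometric covering lifts geometric arcs}, but the step you flag as ``the technical crux'' --- upgrading the AVC-produced endpoint to a lift of the full geometric datum --- is exactly the step you leave unresolved; the paper handles it not by moving inside Riemann--Zariski fibres via partial properness, but by taking a finite \'etale neighborhood $U\to f(U)$ of the limit point (Proposition~\ref{pp etale local structure}), observing that the overconvergent open $f(U)$ contains a tail $i([t_1,t_0])$ of the arc, and invoking unique lifting along finite \'etale maps (Proposition~\ref{unique lifting finite etale}) to lift the geometric structure on that tail and glue.
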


\begin{figure}[ht!]
    \centering
    \includegraphics[width=0.6\textwidth]{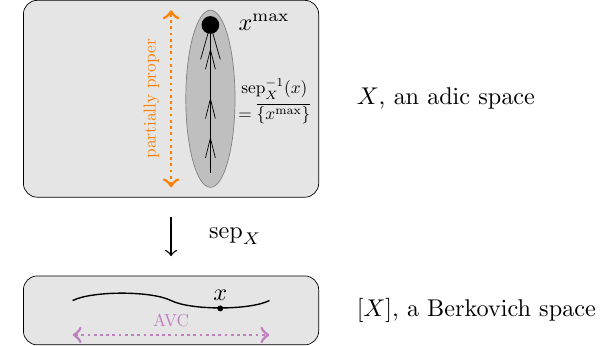}
    \caption{The separation map from the adic space to the Berkovich space [X] and an arc in [X].  Here, AVC allows us to connect different fibers, while partial properness lets us relate points in the fibers.}
    \label{fig:val-criterion}
\end{figure}

Finally we briefly mention some applications, carried out in our companion paper \cite{ALY1P2}, of the theory of geometric coverings to several objects previously highlighted by other authors. For $\tau\in\{\mathrm{adm},\text{\'et},\text{oc}\}$, we define $\Cov^\tau_X$ to consist of \'etale maps $Y\to X$ for which there exists a $\tau$-cover $U\to X$ such that $Y_U\to U$ is the disjoint union of finite \'etale coverings of $U$. Here $\adm,\et,$ or $\oc$ denotes the usual (i.e.\ `admissible'), \'etale, or overconvergent (i.e.\@ partially proper open) Grothendieck topology on $X$, respectively. Thus $\Cov^\text{oc}_X$ is the category of de Jong covering spaces, and each $\Cov^\tau_X$ is a~subcategory of $\Cov_X$.
\begin{enumerate}
    \item We show that if $X$ is connected, then the category of disjoint unions of objects of  $\Cov^\tau_X$ is a tame infinite Galois category. For $\tau=\text{oc}$, this is contained in  \cite{deJongFundamental}, and for $\tau=\text{adm}$ it answers a question asked by de Jong in the same paper. 
    \item De Jong also asked whether $\Cov^\text{adm}_X = \Cov^\text{oc}_X$. We prove that this is not true for an annulus in equal characteristic $p>0$, using an explicit construction with Artin--Schreier coverings. Recently Gaulhiac in \cite{Gaulhiac} has adapted our example to the mixed characteristic setting.
    \item In addition, we connect the larger category $\Cov^\et_X$, to the pro-\'etale topology introduced by Scholze in \cite{Scholzepadic} by obtaining an equivalence of categories 
    \[
        \Cov^\et_X \simeq \cat{Loc}(X_\proet)
    \]
    with the category of locally constant sheaves on $X_\proet$. 
\end{enumerate}

\subsection*{Acknowledgements}

We would like to thank Jeremy Brazas, Antoine Ducros, Gabriela Guzman, David Hansen, Johan de~Jong, Kiran Kedlaya, Nakyung `Rachel' Lee, Emmanuel Lepage, Shizhang Li, Wiesława Nizioł, Jérôme Poineau, Peter Scholze, and Maciej Zdanowicz for useful conversations and comments on the preliminary draft of this manuscript. We especially thank Ofer Gabber for pointing out several inaccuracies in previous versions of this manuscript, and suggesting fixes.

This work is a part of the project KAPIBARA supported by the funding from the European Research Council (ERC) under the European Union’s Horizon 2020 research and innovation programme (grant agreement No 802787). In addition, the final named author was partially supported by a JSPS fellowship during the final editing of this document.

\section{Preliminaries on valuative spaces and separated quotients}
\label{s:topol}

In this section we collect material of a purely topological nature, mostly based on \cite[Chapter~0]{FujiwaraKato}. We encourage the reader to skip this section upon first reading.

\subsection{Valuative spaces and universal separated quotients}
\label{ss:val-sep-quot}

The notion of a valuative space captures the key topological properties of adic spaces.

\begin{definition}[{\cite[Chapter 0, Definition 2.3.1]{FujiwaraKato}}]
    A topological space $X$ is called \emph{valuative} if it is locally spectral and if for each $x$ in $X$ the set of generizations of $x$ is totally ordered by the generization relation.
\end{definition}

Every point $x$ of a valuative space $X$ has a unique maximal generization, denoted $x^\mx$ (see \cite[Chapter 0, Remark 2.3.2 (1)]{FujiwaraKato}). We call a point $x$ of $X$ \emph{maximal} if $x=x^\mx$.

\begin{definition}[{\cite[Chapter 0, \S2.3.(c)]{FujiwaraKato}}] 
    Let $X$ be a valuative space. Then, the \emph{universal separated quotient} of $X$, denoted $[X]$, is the quotient space $X/\sim$ where $x\sim y$ if $x^\mx=y^\mx$. The quotient map $\sep_X\colon X\to [X]$ is called the \emph{separation map}. 
\end{definition}

For a point $x$ of $[X]$ we will write $x^\mx$ for the unique maximal point of $X$ in $\sep_X^{-1}(x)$. Note that for a point $x$ in a valuative space $X$ the equality $\sep_X^{-1}(\sep_X(x))=\overline{\{x^\mx\}}$ holds. A map $f\colon Y\to X$ of valuative spaces is \emph{valuative} (see \cite[Chapter 0, Definition 2.3.21]{FujiwaraKato}) if it has the property that $f(y)$ is maximal whenever $y$ is maximal.

By \cite[Chapter 0, Proposition 2.3.9]{FujiwaraKato}, the map $\sep_X:X\to [X]$ is initial amongst continuous maps from $X$ to $T_1$-topological spaces and, in particular, $[X]$ is $T_1$. This implies that the universal separated quotient of $X$ is functorial with respect to continuous maps. It also implies, by thinking about maps to $\{0,1\}$, that the map $\pi_0(X)\to\pi_0([X])$ is a bijection. For a map $f\colon Y\to X$ of valuative spaces, we denote the induced map $[Y]\to [X]$ by $[f]$. One of the themes of this paper it the interplay of the geometry of $X$ and the topology of $[X]$. 

\subsection{Taut valuative spaces}

In this section we discuss a technical condition on a valuative space $X$, namely when such a space is `taut'. Tautness will play an important role in our article since it will imply that $[X]$ and $\sep_X$ have reasonable topological properties.

\begin{definition}[{\cite[Definition 5.1.2 i)]{Huberbook}}] 
    A valuative space $X$ is \emph{taut}\footnote{In the terminology of Fujiwara--Kato, taut means quasi-separated and locally strongly compact (see \cite[Chapter 0, Remark 2.5.6]{FujiwaraKato}).} if it is quasi-separated and if the closure of every quasi-compact open subset of $X$ is quasi-compact.
\end{definition}

This definition is relatively tame since \cite[Chapter 0, Proposition 2.5.15]{FujiwaraKato} shows that if $X$ is a valuative space which is quasi-separated and paracompact (in the sense of \cite[Chapter~0, Definition 2.5.13]{FujiwaraKato}) then $X$ is automatically taut. 

\begin{prop}[{\cite[Chapter 0, Theorem 2.5.7 and Corollary 2.5.9]{FujiwaraKato}}] \label{taut properties} 
    Let $X$ be a~taut valuative space. Then, the following statements are true:
    \begin{enumerate}[(a)]
        \item \label{taut properties 1} the universal separated quotient $[X]$ is locally compact and Hausdorff,
        \item \label{tau properties 2} the map $\sep_X:X\to [X]$ is universally closed.
    \end{enumerate}
\end{prop}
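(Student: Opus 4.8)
This is \cite[Chapter~0, Theorem~2.5.7 and Corollary~2.5.9]{FujiwaraKato}, and below I sketch how one would organize a proof, highlighting the role of tautness. The plan is to first isolate the one place where tautness is needed, namely the claim that \emph{every fiber of $\sep_X$ is quasi-compact}. To see this, note that a point $\xi\in[X]$ equals $\sep_X(\xi^\mx)$ for its unique maximal point $\xi^\mx$; since $X$ is locally spectral, $\xi^\mx$ has a spectral — hence quasi-compact — open neighborhood $U$, and by tautness $\overline U$ is quasi-compact. By the identity recalled above, $\sep_X^{-1}(\xi)=\overline{\{\xi^\mx\}}$, which lies in $\overline U$ and is closed in $X$, hence closed in $\overline U$, hence quasi-compact. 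Granting in addition the two ``geometric'' inputs that $\sep_X$ is a closed map and that $[X]$ is Hausdorff, both assertions follow by soft topology. For (b): a continuous map is universally closed if and only if it is closed with quasi-compact fibers (a standard criterion, due to Bourbaki), and we have just checked both conditions. For (a): given $\xi\in[X]$, choose a quasi-compact open $U$ containing the quasi-compact fiber $\sep_X^{-1}(\xi)$, and set $V:=[X]\setminus\sep_X(X\setminus U)$; since $\sep_X$ is closed this is open, it contains $\xi$ because $\sep_X^{-1}(\xi)\subseteq U$, and $\sep_X^{-1}(V)\subseteq U$. Then $\overline V=\sep_X\bigl(\overline{\sep_X^{-1}(V)}\bigr)\subseteq\sep_X(\overline U)$ is quasi-compact, so every point of the Hausdorff space $[X]$ has a neighborhood with quasi-compact closure, which is local compactness.

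Thus the real work — and the main obstacle — lies in proving that $\sep_X$ is closed and that $[X]$ is Hausdorff. This is genuinely non-formal: tautness alone makes neither $X$ Hausdorff nor $\sep_X$ proper, and the difficulty is visible already in the fact that two disjoint quasi-compact closed subsets of a spectral space (say two closed points of $\Spec\mathbb Z$) need not be separated by open sets, so one cannot hope to separate the fibers over two distinct points of $[X]$ by purely topological means. The valuative axiom — total ordering of generizations — must be used. The cleanest route is to treat first the coherent case, where $X$ is quasi-compact and quasi-separated. There $[X]$ is automatically quasi-compact and $T_1$, and closedness of $\sep_X$ reduces to Hausdorffness of $[X]$ (the image of a closed set is then quasi-compact, hence closed in a Hausdorff space). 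Hausdorffness of $[X]$ can in turn be obtained by realizing a coherent valuative space, in the Zariski--Riemann manner, as a cofiltered limit of coherent sober spaces compatibly with the formation of separated quotients, so that $[X]$ becomes a cofiltered limit of compact Hausdorff spaces and is therefore compact Hausdorff.

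Finally, the general taut case is deduced from the coherent one by a patching argument over the quasi-compact opens $U$ with $\overline U$ quasi-compact, using the quasi-compactness of the fibers of $\sep_X$ (the first step) to glue the local descriptions; these reductions are routine but fiddly, and one may refer to \cite[Chapter~0, \S2.5]{FujiwaraKato} for the details. To summarize: the only input specific to tautness is quasi-compactness of fibers, everything else being either soft topology (Bourbaki's criterion, the neighborhood construction for local compactness) or the coherent-case statement; and within the latter the single hard point, which I expect to be the main obstacle, is the Hausdorffness of $[X]$.
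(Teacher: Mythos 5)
The paper gives no proof of this proposition: it is imported verbatim from \cite[Chapter~0, Theorem~2.5.7 and Corollary~2.5.9]{FujiwaraKato}, so there is no argument of the authors' to set against yours. Your sketch is nonetheless a correct account of how the cited result is organized, and the soft-topology reductions you supply all check out. Quasi-compactness of the fiber $\sep_X^{-1}(\xi)=\ov{\{\xi^\mx\}}$ follows exactly as you say, as a closed subset of the quasi-compact closure of a quasi-compact open neighborhood of $\xi^\mx$ (this is the one place tautness enters, and it is the same observation the paper itself makes, in reverse, in the proof of Proposition~\ref{interior equivalent}); Bourbaki's criterion then yields (b) from closedness plus quasi-compact fibers; and your neighborhood $V=[X]\setminus\sep_X(X\setminus U)$, with $\ov{V}=\sep_X\bigl(\ov{\sep_X^{-1}(V)}\bigr)\subseteq\sep_X(\ov{U})$ quasi-compact, correctly gives local compactness once Hausdorffness is granted (compare Proposition~\ref{taut qc oc neighborhood basis}, which runs a similar construction). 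You are also right to identify the Hausdorffness of $[X]$ for coherent $X$ --- obtained in \cite{FujiwaraKato} by exhibiting $[X]$ as a cofiltered limit in the Zariski--Riemann style and then patching over quasi-compact opens with quasi-compact closure --- as the sole non-formal input. Since you defer precisely that step to the reference, your proposal is not an independent proof, but as a reduction of the proposition to the genuinely hard part of loc.\ cit.\ it is accurate and consistent with how the paper uses the result.
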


Tautness is also useful for resolving a possible subtlety concerning the separation map and open subspaces of a valuative space. 

\begin{prop} \label{prop:taut-open}
    Let $X$ be a taut valuative space and $U$ a retrocompact open subspace. Then, the natural map $[U]\to [X]$ is a homeomorphism onto $\sep_X(U)$.
\end{prop}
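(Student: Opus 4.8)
The plan is to first settle the set-theoretic content and then isolate the topological heart of the statement.

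By functoriality of the universal separated quotient, the open immersion $\iota\colon U\hookrightarrow X$ induces a continuous map $[\iota]\colon[U]\to[X]$ with $[\iota]\circ\sep_U=\sep_X\circ\iota$. Since $\sep_U$ is surjective, the image of $[\iota]$ equals $\sep_X(U)$. For injectivity the key point is that open subsets of a locally spectral space are stable under generization (this reduces immediately to $\Spec A$, where the sets $D(f)$ are generization-stable). Hence for $u\in U$ every generization of $u$ formed in $X$ already lies in $U$; in particular $u^\mx$ computed in $X$ lies in $U$ and agrees with the maximal generization of $u$ computed inside $U$. Thus for $u,u'\in U$ we have $u\sim_U u'\iff u^\mx=(u')^\mx\iff u\sim_X u'$, so $[\iota]$ is injective and restricts to a continuous bijection $f\colon[U]\to\sep_X(U)$; the same observation shows $\sep_X(U)=\{p\in[X]:p^\mx\in U\}$.

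It remains to prove $f$ is a homeomorphism, equivalently (being a bijection) that it is a closed map. As $[U]=U/\!\sim$ carries the quotient topology, the closed subsets of $[U]$ are the sets $\sep_U(A)$ with $A\subseteq U$ closed and $\sim$-saturated, and $f(\sep_U(A))=\sep_X(A)$; so one must show that $\sep_X(A)$ is closed in $\sep_X(U)$ for every such $A$. Here I would use tautness: by Proposition~\ref{taut properties} the space $[X]$ is locally compact Hausdorff and $\sep_X$ is universally closed, so $\sep_X(\overline{A}^{X})$ is closed in $[X]$. The natural guess is that
\[
\sep_X(A)=\sep_X\bigl(\overline{A}^{X}\bigr)\cap\sep_X(U),
\]
which would finish the argument. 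The inclusion ``$\subseteq$'' is clear, and, unwinding the description $\sep_X(U)=\{p:p^\mx\in U\}$ together with $A=\overline{A}^{X}\cap U$ and the fact that a saturated set contains the maximal generization of each of its points, the reverse inclusion reduces to the implication
\[
b\in\overline{A}^{X}\ \text{and}\ b^\mx\in U\ \Longrightarrow\ b^\mx\in\overline{A}^{X}.
\]

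Establishing this last implication is the main obstacle, and it is precisely the step that forces the hypothesis that $U$ be retrocompact (without it there is no control on how $\overline{A}^{X}$ meets the fibers of $\sep_X$). The approach I would take: note that $A$, being closed in the retrocompact open $U$, is itself retrocompact in $X$, so that intersecting $\overline{A}^{X}$ with quasi-compact opens behaves well and one may reduce to the case where $U$, hence $A$ and $\overline{A}^{X}$, is quasi-compact; there $\overline{A}^{X}$ is a taut valuative space containing the dense open $A$, and one argues with the chain structure of generizations characteristic of valuative spaces — the point $b$ and its maximal generization $b^\mx\in U$ are joined by a totally ordered chain whose ``lower'' part lies in $\overline{A}^{X}$ because $b$ does and whose ``upper'' part lies in $U$ because $b^\mx$ does — the details of which constitute the real work. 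An alternative finish, bypassing this topological identification, is to show directly that $f$ is proper in the sense that preimages of compacta are compact: cover a compact $K\subseteq\sep_X(U)$ by finitely many quasi-compact opens of $[X]$, pull them back along $\sep_X$ (which is quasi-compact, another consequence of tautness), and intersect with $U$, using retrocompactness; combined with local compactness this yields that $f$ is closed, provided one also checks that $\sep_X(U)$ is locally compact, i.e.\ locally closed in $[X]$. Either way the formal manipulations are routine, and the entire difficulty is concentrated in comparing the subspace topology on $\sep_X(U)\subseteq[X]$ with the quotient topology descended from $U$ — a comparison which requires exactly tautness of $X$ together with retrocompactness of $U$.
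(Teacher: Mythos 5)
Your reduction is set up correctly: the map $[\iota]$ is continuous, injective (by generization-stability of the open set $U$, so that maximal generizations computed in $U$ and in $X$ agree), with image $\sep_X(U)$, and the whole content is to show it is closed, equivalently that $\sep_X(A)$ is closed in $\sep_X(U)$ for every closed saturated $A\subseteq U$. But at that point the proof stops: the implication you isolate, namely $b\in\overline{A}^{X}$ and $b^\mx\in U$ $\Rightarrow$ $b^\mx\in\overline{A}^{X}$, is exactly equivalent (via $\sep_X(\overline{A}^X)=\overline{\sep_X(A)}$, which follows from closedness of $\sep_X$) to the closedness you are trying to prove, so the reduction makes no progress, and the sketch you offer for it does not work. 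The chain joining $b$ to $b^\mx$ consists of the \emph{generizations} of $b$; since $\overline{A}^X$ is closed it is stable under \emph{specialization}, so membership of $b$ in $\overline{A}^X$ says nothing about any other point of that chain, and dually $U$ being open is stable under generization, so membership of $b^\mx$ in $U$ says nothing about the specializations of $b^\mx$ lying below it on the chain. Neither "half" of the chain is controlled, and this is precisely where a real argument (and the retrocompactness hypothesis) must enter. The alternative finish via properness of $f$ is likewise only a plan, with the local compactness of $\sep_X(U)$ left unverified.

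For comparison, the paper's proof is short because it routes everything through tautness of the inclusion $j\colon U\to X$: retrocompactness makes $j$ quasi-compact, hence a taut morphism, so $U$ is itself taut and $[U]$ is Hausdorff; the closedness of $[j]$ is then exactly \cite[Chapter 0, Proposition 2.3.27]{FujiwaraKato}, which is the general statement (for quasi-compact maps of taut valuative spaces) that your argument would need to reprove from scratch. If you want a self-contained proof you should either prove that statement or carry out your properness route in full; as written, the step that carries all the difficulty is asserted rather than proved.
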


\begin{proof} 
The inclusion map $j\colon U\to X$ is taut (see Definition \ref{taut map def}) since it is quasi-compact and quasi-separated. Thus, the natural map $[j]\colon [U]\to [X]$ is a map of Hausdorff spaces. It is clearly continuous and injective and has image $\sep_X(U)$. Thus, it suffices to prove that $[j]$ is closed, but this follows from \cite[Chapter 0, Proposition 2.3.27]{FujiwaraKato}.
\end{proof}

In particular, if $U$ is a retrocompact open subspace of a taut valuative space $X$, then we shall often identify  $\sep_X(U)$ and $[U]$ without comment. 

Finally, we record a result which shows that, in good situations, the universal separated quotients commutes with pullbacks of maps along open embeddings.

\begin{definition}[{\cite[Definition 5.1.2 ii)]{Huberbook}}] \label{taut map def} 
    Let $f\colon Y\to X$ be a morphism of valuative spaces. Then, $f$ is called \emph{taut} if it is locally quasi-compact (see \cite[Chapter 0, Definition~2.2.24]{FujiwaraKato}) and for every taut open subspace $U$ of $X$ the space $f^{-1}(U)$ is taut.
\end{definition}
 
\begin{prop} \label{commuting sep and inverse image} 
    Let $f\colon Y\to X$ be a valuative and taut map between valuative spaces, and let $U\subseteq X$ be retrocompact open. Then the natural map
    \[ 
        [Y\times_X U]\to [Y]\times_{[X]}[U]
    \]
    is a homeomorphism.
\end{prop}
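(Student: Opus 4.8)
The plan is to reduce the statement to the already-proven Proposition~\ref{prop:taut-open} by identifying both sides with the appropriate subspace of $[Y]$. First I would observe that since $U \subseteq X$ is retrocompact open, the inclusion $j\colon U \to X$ is quasi-compact and quasi-separated, hence taut; and since $f$ is taut, the open subspace $V := f^{-1}(U) = Y \times_X U$ of $Y$ is taut, so the fiber product is again a taut valuative space and $[V]$ makes sense. Moreover, the inclusion $V \to Y$ is again retrocompact open (pullback of the retrocompact open $j$ along the continuous map $f$ — retrocompactness is preserved), so Proposition~\ref{prop:taut-open} applies to give a homeomorphism $[V] \isomto \sep_Y(V) \subseteq [Y]$. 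Thus the left-hand side $[Y \times_X U]$ is canonically $\sep_Y(f^{-1}(U))$, sitting inside $[Y]$.

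Next I would analyze the right-hand side $[Y] \times_{[X]} [U]$. Using Proposition~\ref{prop:taut-open} again (now for $j\colon U\to X$), we identify $[U]$ with $\sep_X(U) \subseteq [X]$; so $[Y]\times_{[X]}[U] = [f]^{-1}(\sep_X(U)) \subseteq [Y]$, where $[f]\colon [Y]\to[X]$ is the induced map. The heart of the argument is then to show the equality of subsets of $[Y]$:
\[
    \sep_Y\bigl(f^{-1}(U)\bigr) = [f]^{-1}\bigl(\sep_X(U)\bigr),
\]
after which one checks the identification is a homeomorphism (both are subspaces of $[Y]$ with the subspace topology, and the natural map between them is the evident continuous bijection, hence a homeomorphism). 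The inclusion $\subseteq$ is formal from the commutative square relating $\sep_Y,\sep_X,f,[f]$. For $\supseteq$: take $\bar y \in [Y]$ with $[f](\bar y) \in \sep_X(U)$. Lift $\bar y$ to its maximal point $y = \bar y^{\mx}$ in $Y$; since $f$ is valuative, $f(y)$ is maximal in $X$, so $f(y) = ([f](\bar y))^{\mx}$, and $[f](\bar y)\in \sep_X(U)$ means $\sep_X(f(y)) \in \sep_X(U)$, i.e.\ $f(y)$ and some point of $U$ have the same maximal generization. Since $f(y)$ is already maximal, and $U$ is open (hence closed under generization — wait, $U$ open means closed under \emph{generization} in a spectral space? no: open subsets of spectral spaces are closed under generization), the maximal generization of any point of $U$ lies in $U$, forcing $f(y) \in U$, hence $y \in f^{-1}(U)$ and $\bar y = \sep_Y(y) \in \sep_Y(f^{-1}(U))$.

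The main obstacle I expect is the last point: controlling how $\sep_X(U)$ for $U$ open interacts with maximal points and the valuativity of $f$. Concretely, one must be careful that ``$f(y)$ is maximal'' (valuativity) combines correctly with ``$U$ is stable under generization'' to conclude $f(y)\in U$ — this uses that in a valuative space the fiber $\sep_X^{-1}(\sep_X(x))$ is exactly $\overline{\{x^{\mx}\}}$, so $\sep_X(f(y))\in \sep_X(U)$ gives a point $u\in U$ with $u^{\mx} = f(y)$ (as $f(y)$ is maximal), and then $f(y) = u^{\mx}$ is a generization of $u\in U$, so $f(y)\in U$ since $U$ is open. A secondary technical check is that the natural map $[Y\times_X U]\to [Y]\times_{[X]}[U]$ is genuinely the one induced by functoriality of $[\,\cdot\,]$ and that identifying it with the inclusion of subspaces of $[Y]$ is legitimate; this is bookkeeping with the universal property of $\sep$ from Proposition~\ref{taut properties} and the $T_1$/Hausdorff hypotheses, but deserves a sentence. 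Once these are in place, the homeomorphism assertion is immediate since a continuous bijection between two subspaces of $[Y]$ that is the restriction of the identity on the underlying sets is tautologically a homeomorphism.
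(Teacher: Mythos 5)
Your proof is correct and follows essentially the same route as the paper: identify both sides with subspaces of $[Y]$ via Proposition~\ref{prop:taut-open} and establish the set equality $\sep_Y(f^{-1}(U)) = [f]^{-1}(\sep_X(U))$ using valuativity of $f$ together with the fact that the open set $U$ is stable under generization. One caveat: retrocompactness of $f^{-1}(U)$ in $Y$ is \emph{not} preserved by arbitrary continuous maps as your parenthetical suggests --- it relies on $f$ being locally quasi-compact, which is part of the tautness hypothesis on $f$ (the paper cites \cite[Chapter 0, Proposition 2.2.25]{FujiwaraKato} for exactly this point).
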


\begin{proof}
We first claim that $\sep_Y(f^{-1}(U))=[f]^{-1}(\sep_X(U))$. To see this, we identify $[X]$ with maximal points of $X$, and therefore $\sep_X$ with the map $x\mapsto x^\mx$, and similarly for $Y$. Since $f$ is valuative, we have $f(y)^\mx = f(y^\mx)$, and $[f]$ corresponds to the restriction of $f$ to maximal points. The left hand side consists of $y=z^\mx$ where $f(z)\in U$, but (since $f$ is valuative and continuous and $U$ is open) this is equivalent to $f(y)\in U$.

By Proposition~\ref{prop:taut-open}, we have $[U] = \sep_X(U)$, and hence the target of the above map is $[f]^{-1}(\sep_X(U))$. By the previous paragraph, this equals $\sep_Y(f^{-1}(U))$. But $f^{-1}(U)$ is retrocompact in $Y$ (see \cite[Chapter 0, Proposition 2.2.25]{FujiwaraKato}), and again by Proposition~\ref{prop:taut-open} we have $\sep_Y(f^{-1}(U)) = [f^{-1}(U)] = Y\times_X U$.
\end{proof}

\subsection{Overconvergent open subsets, overconvergent interiors}
\label{ss:oc}

Another aspect of valuative spaces that will be important to us is the refinement of the notion of open subset of a~valuative space $X$ that the separation map $\sep_X:X\to [X]$ allows. 

\begin{prop}[{\cite[Chapter 0, Proposition 2.3.13]{FujiwaraKato}}] \label{oc equiv} 
    Let $X$ be a valuative space and let $U$ be an open subset of $X$. Then, the following conditions are equivalent:
    \begin{enumerate}[(a)]
        \item There exists an open subset $V\subseteq [X]$ such that $U=\sep_X^{-1}(V)$.
        \item The equality $\sep_X^{-1}(\sep_X(U))=U$ holds.
        \item For all $x$ in $U$ one has that $\overline{\{x^\mx\}}\subseteq U$.
    \end{enumerate}
\end{prop}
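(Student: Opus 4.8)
The plan is to establish the cycle of implications $(a)\Rightarrow(b)\Rightarrow(a)$ together with the equivalence $(b)\Leftrightarrow(c)$. Everything should reduce to two facts already at hand: that $\sep_X\colon X\to[X]$ is a quotient map — in particular continuous and surjective — and the identity $\sep_X^{-1}(\sep_X(x))=\overline{\{x^\mx\}}$ recorded just after the definition of the separation map.

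For $(a)\Rightarrow(b)$, given $U=\sep_X^{-1}(V)$ with $V\subseteq[X]$ open, I would apply $\sep_X$ and use surjectivity to get $\sep_X(U)=\sep_X(\sep_X^{-1}(V))=V$, whence $\sep_X^{-1}(\sep_X(U))=\sep_X^{-1}(V)=U$. For $(b)\Rightarrow(a)$, I would simply take $V:=\sep_X(U)$; since $[X]$ carries the quotient topology, $V$ is open exactly when $\sep_X^{-1}(V)$ is open in $X$, and $\sep_X^{-1}(V)=\sep_X^{-1}(\sep_X(U))=U$ by hypothesis is open, so this $V$ works. For $(b)\Leftrightarrow(c)$, note first that $x\in\overline{\{x^\mx\}}$ for every $x$, because $x^\mx$ is a generization of $x$; hence the inclusion $U\subseteq\sep_X^{-1}(\sep_X(U))$ always holds, and $(b)$ is equivalent to the reverse inclusion. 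Using the recorded identity, $\sep_X^{-1}(\sep_X(U))=\bigcup_{x\in U}\sep_X^{-1}(\sep_X(x))=\bigcup_{x\in U}\overline{\{x^\mx\}}$, so that reverse inclusion says precisely that $\overline{\{x^\mx\}}\subseteq U$ for every $x\in U$, which is $(c)$.

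I do not expect a genuine obstacle here: the statement is essentially a formal unwinding of preimages, and its entire content is packaged in the identity $\sep_X^{-1}(\sep_X(x))=\overline{\{x^\mx\}}$ together with the elementary properties of quotient maps. The only points that deserve a moment's care are invoking surjectivity of $\sep_X$ in the first implication, the quotient-topology characterization of open sets in $[X]$ in the second, and the observation that $x$ specializes its maximal generization $x^\mx$, so that $x$ always lies in $\overline{\{x^\mx\}}$.
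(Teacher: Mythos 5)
Your argument is correct. Note that the paper does not actually prove this proposition --- it is quoted from Fujiwara--Kato \cite[Chapter 0, Proposition 2.3.13]{FujiwaraKato} --- but your proof is the standard unwinding: the two implications $(a)\Leftrightarrow(b)$ are exactly the definition of the quotient topology together with surjectivity of $\sep_X$, and $(b)\Leftrightarrow(c)$ follows from the identity $\sep_X^{-1}(\sep_X(x))=\overline{\{x^{\mx}\}}$ recorded in \S\ref{ss:val-sep-quot}, all of which you invoke correctly.
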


\begin{definition} 
    Let $X$ be a valuative space. An open subset $U$ of $X$ is called \emph{overconvergent} if any of the equivalent conditions of Proposition~\ref{oc equiv} holds.
\end{definition}

Overconvergent open subsets of $X$ form a topology called the \emph{oc topology}. For a subset $F\subseteq X$ we denote by $\interior_X(F)$ the interior in this topology, and call it the \emph{overconvergent interior} of $F$.

\begin{prop} \label{interior equivalent} 
    Let $X$ be a taut valuative space, $U$ an open subset of $X$, and $S$ a~subset of $U$. Consider the following conditions:
    \begin{enumerate}[(1)]
        \item For all $x$ in $S$ one has that $\ov{\{x^\mx\}}\subseteq U$,
        \item $\sep_X^{-1}(\sep_X(S))$ is contained in $U$,
        \item $S$ is contained in $\interior_X(U)$,
        \item the topological interior of $\sep_X(U)$ in $[X]$ contains $\sep_X(S)$.
    \end{enumerate}
    Then, $(1)\Longleftrightarrow(2)\Longleftrightarrow(3)\Longrightarrow (4)$
\end{prop}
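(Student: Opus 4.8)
The plan is to prove the three equivalences $(1)\Leftrightarrow(2)\Leftrightarrow(3)$ directly, and then deduce $(3)\Rightarrow(4)$ using the universal closedness of $\sep_X$ from Proposition~\ref{taut properties}\eqref{tau properties 2}. The equivalence $(1)\Leftrightarrow(2)$ is essentially a reformulation: the fiber $\sep_X^{-1}(\sep_X(x))$ equals $\ov{\{x^\mx\}}$ for every $x$ (as recorded after the definition of the separation map), so $\sep_X^{-1}(\sep_X(S)) = \bigcup_{x\in S}\ov{\{x^\mx\}}$, and this union is contained in $U$ if and only if each $\ov{\{x^\mx\}}$ is. No tautness is needed here.

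For $(2)\Leftrightarrow(3)$, recall that $\interior_X(U)$ is the largest overconvergent open subset of $X$ contained in $U$. First I would show $(2)\Rightarrow(3)$: set $W := \sep_X^{-1}(\sep_X(U))$; by hypothesis the set $U' := W \cap (\text{some open set})$ — more precisely, one checks directly that $\sep_X^{-1}(\sep_X(S)) \subseteq U$ implies $\sep_X(S)$ has a saturated open neighborhood inside $\sep_X(U)$, but it is cleaner to argue as follows. Let $V := [X] \setminus \sep_X(X \setminus U)$. Since $X \setminus U$ is closed and $\sep_X$ is closed (Proposition~\ref{taut properties}\eqref{tau properties 2}), $\sep_X(X\setminus U)$ is closed, so $V$ is open in $[X]$; then $\sep_X^{-1}(V)$ is an overconvergent open subset of $X$, it is contained in $U$ (a point mapping into $V$ cannot lie in $X\setminus U$, nor can any point in its fiber), and it is in fact equal to $\interior_X(U)$. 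Now $(2)$ says precisely that $\sep_X(S)$ is disjoint from $\sep_X(X\setminus U)$: indeed if some $x\in S$ had $\sep_X(x) = \sep_X(z)$ with $z\notin U$, then $z \in \sep_X^{-1}(\sep_X(S)) \subseteq U$, a contradiction. Hence $\sep_X(S)\subseteq V$, so $S\subseteq \sep_X^{-1}(V) = \interior_X(U)$, giving $(3)$. Conversely $(3)\Rightarrow(2)$ is immediate: $\interior_X(U)$ is overconvergent, so $\sep_X^{-1}(\sep_X(\interior_X(U))) = \interior_X(U) \subseteq U$ by Proposition~\ref{oc equiv}, and $S\subseteq \interior_X(U)$ forces $\sep_X^{-1}(\sep_X(S)) \subseteq \sep_X^{-1}(\sep_X(\interior_X(U))) \subseteq U$.

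Finally, for $(3)\Rightarrow(4)$: with $V$ as above, $V$ is an open subset of $[X]$ contained in $\sep_X(U)$ (since $\sep_X^{-1}(V)\subseteq U$), hence $V$ is contained in the topological interior of $\sep_X(U)$ in $[X]$; and we showed $\sep_X(S)\subseteq V$, so $\sep_X(S)$ lies in that interior. The main obstacle, and the only place tautness enters, is establishing that $\sep_X(X\setminus U)$ is closed so that $V = [X]\setminus \sep_X(X\setminus U)$ is a legitimate open set; this is exactly Proposition~\ref{taut properties}\eqref{tau properties 2}, and it is also the reason $(4)$ need not imply the others without further hypotheses (the topological interior of $\sep_X(U)$ can be strictly larger than $V$ when $U$ is not retrocompact, so no reverse implication to $(4)$ is claimed).
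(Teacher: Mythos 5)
Your proof is correct, and the middle equivalence is argued by a genuinely different route than the paper's. For $(1)\Leftrightarrow(3)$ the paper reduces to the case $S=\{x\}$ and invokes \cite[Chapter 0, Proposition 2.3.29]{FujiwaraKato}, which is stated there for quasi-compact $U$; it then removes that hypothesis by observing that $\ov{\{x^\mx\}}=\sep_X^{-1}(\sep_X(x))$ is quasi-compact, which is where tautness (via Proposition~\ref{taut properties}) enters. You instead give a self-contained argument by exhibiting the overconvergent interior explicitly as $\interior_X(U)=\sep_X^{-1}\bigl([X]\setminus \sep_X(X\setminus U)\bigr)$, using the closedness of $\sep_X$ (again Proposition~\ref{taut properties}, just a different part of it) to see that this is a legitimate overconvergent open; the identification is justified since this set is overconvergent, contained in $U$, and contains every overconvergent open contained in $U$. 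Your approach buys a cleaner, reference-free proof and an explicit formula for $\interior_X(U)$ (equivalently $V=\sep_X(\interior_X(U))$, which also streamlines $(3)\Rightarrow(4)$); the paper's approach stays closer to the pointwise criterion of Proposition~\ref{oc equiv} and to the Fujiwara--Kato source. The only blemish is the abandoned false start (``set $W:=\sep_X^{-1}(\sep_X(U))$\dots'') before you settle on the actual argument; the completed argument itself has no gaps.
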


\begin{proof} 
To see that (1), (2), and (3) are equivalent we may assume that $S=\{x\}$. To prove the equivalence of (1) and (2) we merely apply the equality $\ov{\{x^\mx\}}=\sep_X^{-1}(\sep_X(x))$. We note that the equivalence of (1) and (3) is given in \cite[Chapter 0, Proposition 2.3.29]{FujiwaraKato} when $U$ is assumed quasi-compact (note that condition $(\ast)$ in loc.\@ cit.\@ is automatic since $X$ is assumed taut). But, by Proposition \ref{taut properties} and the equality $\ov{\{x^\mx\}}=\sep_X^{-1}(\sep_X(x))$ we see that the subspace $\ov{\{x^\mathrm{\mx}\}}$ is quasi-compact. It is then clear one can remove the quasi-compactness assumption in \cite[Chapter 0, Proposition 2.3.29]{FujiwaraKato}.

That (3) implies (4) is also simple since then $\sep_X(S)\subseteq \sep_X(\interior_X(U))$. As $\interior_X(U)$ is overconvergent we see that $\sep_X(\interior_X(U))$ is open in $[X]$ and contained in $\sep_X(U)$.
\end{proof}

\begin{rem} 
For a counterexample of the equivalence of (3) and (4) take $X$ to be the closed unit disk over $K$, and $U=X-\{x\}$ where $x$ is a type $5$ point in the closure of the Gauss point $\eta$ of $X$. Then, $\sep_X(U)=[X]$ and so $\sep_X(\eta)$ is contained in the interior of $\sep_X(U)$. But, since $x\in\ov{\{\eta\}}$ we see from (1) that $\eta$ is not in $\interior_X(U)$.
\end{rem}

Let $X$ be a taut valuative space, $U$ an open subset, and $S$ a subset of $U$. Below we shall often use the terminology that $U$ is an \emph{open oc neighborhood of $S$} to mean that $S$ is contained in $\interior_X(U)$. By the above this is equivalent to the claim that $U$ contains $\sep_X^{-1}(\sep_X(S))$. So we harmlessly abuse notation and say that for an open subset $U\subseteq X$ and a subset $S$ of $[X]$ that $U$ is an \emph{open oc neighborhood of $S$} if $U$ contains $\sep_X^{-1}(S)$.

\begin{prop}[{cf.\@ \cite[Lemma 8.1.5]{Huberbook}}] \label{taut qc oc neighborhood basis} 
    Let $X$ be a taut valuative space and $\Sigma$ a quasi-compact subset of $X$. Then, the set of quasi-compact open oc neighborhoods of $\Sigma$ form an open oc neighborhood basis of~$\Sigma$.
\end{prop}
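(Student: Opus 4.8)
\emph{Plan.} The strategy is to pass to the separated quotient $[X]$ and exploit that, over a taut space, $\sep_X$ behaves much like a proper map. The first move is a translation: by Proposition~\ref{interior equivalent} (the equivalence of (2) and (3)), an open subset $V\subseteq X$ with $\Sigma\subseteq V$ is an open oc neighborhood of $\Sigma$ precisely when $\sep_X^{-1}(\sep_X(\Sigma))\subseteq V$. Applying this to the given neighborhood $U$, we already know $\sep_X^{-1}(\sep_X(\Sigma))\subseteq U$, and it suffices to produce a \emph{quasi-compact} open $V'\subseteq X$ with
\[
    \sep_X^{-1}(\sep_X(\Sigma))\;\subseteq\; V'\;\subseteq\; U;
\]
such a $V'$ automatically contains $\Sigma$, hence is an open oc neighborhood of $\Sigma$ by the same equivalence, and since $U$ was an arbitrary open oc neighborhood this gives the asserted neighborhood basis.

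Two inputs make this work, both coming from tautness via Proposition~\ref{taut properties}. First, the fibres of $\sep_X$ are quasi-compact: the fibre over $z\in[X]$ equals $\ov{\{z^{\mx}\}}$, and it is the source of the base change of the universally closed map $\sep_X$ along $\{z\}\hookrightarrow[X]$, so it is quasi-compact (this is also used in the proof of Proposition~\ref{interior equivalent}). Second, $\sep_X$ is closed, so a ``tube lemma'' is available: if $\sep_X^{-1}(z)$ is contained in an open $O\subseteq X$, then $N:=[X]\setminus\sep_X(X\setminus O)$ is an open neighborhood of $z$ in $[X]$ with $\sep_X^{-1}(N)\subseteq O$.

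Now the construction. I would fix $z$ in the set $\sep_X(\Sigma)$, which is quasi-compact as a continuous image of $\Sigma$. The fibre $\sep_X^{-1}(z)$ lies in $U$; since $X$ is locally spectral it has a basis of quasi-compact open subsets, so by covering $\sep_X^{-1}(z)$ by such sets contained in $U$ and using quasi-compactness of the fibre, I get a quasi-compact open $O_z$ with $\sep_X^{-1}(z)\subseteq O_z\subseteq U$. The tube lemma then yields an open $N_z\ni z$ in $[X]$ with $\sep_X^{-1}(N_z)\subseteq O_z$. As $\sep_X(\Sigma)$ is quasi-compact, finitely many $N_{z_1},\dots,N_{z_n}$ cover it; set $V':=O_{z_1}\cup\dots\cup O_{z_n}$. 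Being a finite union of quasi-compact opens, $V'$ is a quasi-compact open subspace of $X$, clearly $V'\subseteq U$, and
\[
    \sep_X^{-1}(\sep_X(\Sigma))\;\subseteq\;\sep_X^{-1}\!\Big(\bigcup_i N_{z_i}\Big)\;=\;\bigcup_i \sep_X^{-1}(N_{z_i})\;\subseteq\;\bigcup_i O_{z_i}\;=\;V',
\]
which is precisely the sandwich required above; by the first paragraph this finishes the proof.

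The only points needing genuine care are the two facts in the second paragraph — quasi-compactness of the fibres of $\sep_X$ and closedness of $\sep_X$. I expect the fibre quasi-compactness to be the real content, since it is exactly here that tautness is used: for a general valuative space the fibres of $\sep_X$ are Riemann--Zariski spaces and need not be quasi-compact. Everything else is formal bookkeeping with preimages and finite subcovers.
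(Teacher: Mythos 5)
Your proof is correct, and its core coincides with the paper's: both arguments rest on the quasi-compactness of the fibres $\sep_X^{-1}(z)=\ov{\{z^\mx\}}$ (a consequence of tautness, as noted in the proof of Proposition~\ref{interior equivalent}) and on covering such a fibre by finitely many quasi-compact opens inside $U$. The one place you diverge is in how the finite subcover for all of $\Sigma$ is extracted: the paper stays upstairs in $X$, covering $\Sigma$ by the overconvergent interiors $\interior_X(V_x)$ of the quasi-compact opens produced for each $x\in\Sigma$ and then using quasi-compactness of $\Sigma$ directly, whereas you descend to $[X]$ and invoke a tube lemma for the closed map $\sep_X$ so as to cover the quasi-compact image $\sep_X(\Sigma)$. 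Your route therefore makes explicit use of closedness of $\sep_X$ (Proposition~\ref{taut properties}~\ref{tau properties 2}), which the paper's version does not need, but in exchange it avoids any further appeal to $\interior_X$ beyond the initial translation via Proposition~\ref{interior equivalent}. Both variants are valid and of essentially the same length; the difference is purely organizational.
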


\begin{proof} 
Assume first that $\Sigma=\{x\}$ is a point. Let $U$ be an open oc neighborhood of $x$. Note then that necessarily $U$ contains $x^\mx$. We see then by Proposition~\ref{interior equivalent} that $\ov{\{x^\mx\}}\subseteq U$. Since $\ov{\{x^\mx\}}$ is quasi-compact (as observed in the proof of Proposition~\ref{interior equivalent}), there exists a quasi-compact open subset $V$ of $U$ such that $\ov{\{x^\mx\}}\subseteq V$. So then, by Proposition~\ref{interior equivalent} we're done. Now, for arbitrary $\Sigma$, we can apply the above to all $x\in \Sigma$ and employ quasi-compactness of $\Sigma$ to find a subcover by finitely many such $\interior_X(V)$.
\end{proof}

\section{Preliminaries on rigid geometry}
\label{s:rigid-prelim}

In this section we review some rigid geometry, mostly based on \cite{Huberbook} and \cite{FujiwaraKato}.

\subsection{\texorpdfstring{Adic spaces and rigid $K$-spaces}{Adic spaces and rigid K-spaces}}
\label{ss:rigid-k-sp}

In this subsection we set our terminology concerning adic spaces and rigid spaces. We only consider adic spaces $X$ for which every affinoid open subspace is of the form $\Spa(A,A^+)$ where $(A,A^+)$ is a complete, analytic, and strongly Noetherian (see \cite[\S 1.1]{Huberbook}) Huber pair. For a Huber pair of the form $(A,A^\circ)$ we abbreviate $\Spa(A,A^\circ)$ to $\Spa(A)$.

Let $K$ be a non-archimedean field, i.e.\ a field complete with respect to a rank one valuation. By a \emph{rigid $K$-space} we mean an adic space $X$ locally of finite type over $\Spa(K)$. A \emph{rigid $K$-curve} is a rigid $K$-space which is of pure dimension $1$.

By an \emph{affinoid field} we mean a Huber pair $(L,L^+)$ where $L$ is a field, $L^+$ is a valuation ring, and $L$ has the valuation topology. For each point $x$ of an adic space $X$ one obtains an affinoid field $(k(x),k(x)^+)$.

We note here that the underlying topological space of any adic space is valuative, and  every morphism $f\colon Y\to X$ of adic spaces is valuative and locally quasi-compact (this follows from \cite[Theorem 3.5 (i)]{Huber93}, \cite[(1.1.9)]{Huberbook}, and \cite[Lemma 1.1.10 (4)]{Huberbook}). Moreover, by \cite[Theorem C.6.12]{FujiwaraKato} if $X$ is taut then $[X]$ is precisely the underlying topological space of the Berkovich space $X^\mathrm{Berk}$ associated to $X$ (see \cite[\S8.3]{Huberbook}]).

For an adic space $X$, we denote by $\Et_X$ (resp.\@ $\FEt_X$) the full subcategory of the category of adic spaces over $X$ consisting of \'etale (resp.\@ finite \'etale) morphisms.

\subsection{Geometric points, fiber functors, and algebraic fundamental groups}
\label{ss:geometric-points-and-fundamental-groups}

We now recall geometric points and their associated fiber functors.

\medskip
\paragraph{Geometric points} 

By a \emph{geometric point} of an adic space $X$ we mean a morphism of adic spaces $\ov{x}\colon \Spa(L,L^+)\to X$ where $(L,L^+)$ is an affinoid field and $L$ is separably closed (or, equivalently, algebraically closed). Its \emph{anchor point} is the image of the unique closed point of $\Spa(L,L^+)$. For a geometric point we write $\ov{x}^{\rm max}$ for the associated \emph{maximal geometric point} obtained as the composition $\Spa(L)\to \Spa(L,L^+)\to X$. 

Suppose that $\ov{x}\colon\Spa(L,L^+)\to X$ is a geometric point with anchor point $x$. Let $F_0$ denote the separable closure of $k(x)$ in $L$, which is separably closed since $L$ is. There is a valuation ring $F_0^+$ of $F_0$, unique up to Galois conjugacy, such that $F_0^+\cap k(x)=k(x)^+$. The completion $(F,F^+)$ of $(F_0,F_0^+)$ in $(L,L^+)$ is a Huber pair and we call the choice of such a pair a \emph{separable closure} of $x$ in $\ov{x}$. By an \emph{equivalence} $\ov{x}_1\to \ov{x}_2$, which only exists if the anchor points of these geometric points are the same point $x$ of $X$, we mean an isomorphism $(F_1,F_1^+)\to (F_2,F_2^+)$ of separable closures $(F_i,F_i^+)$ of $x$ in $\ov{x}_i$. 

\medskip

\paragraph{Fiber functors and algebraic fundamental groups}

Let $\ov{x}\colon\Spa(L,L^+)\to X$ be a~geometric point of an adic space $X$. 

\begin{definition}
    The \emph{fiber functor} associated to $\ov{x}$ is the functor 
    \begin{equation*}
        F_{\ov{x}}\colon\Et_X\to \cat{Set},\qquad (Y\to X)\mapsto \Hom_X(\Spa(L,L^+),Y) = \pi_0(Y_{\ov x})
    \end{equation*}
    We shall also use the notation $F_{\ov{x}}$ to refer to the restriction to any subcategory of $\Et_X$.
\end{definition}

Note that $F_{\ov{x}}(Y)$ is precisely the set of all liftings of $\ov{x}$ along $Y\to X$. There is a natural functorial association $F_{\ov{x}}(Y)\to Y_{\ov{x}}$ given by associating a lifting $\ov{y}$ to the anchor point of the geometric point $\Spa(L,L^+)\to Y_{\ov{x}}$ induced by $\ov{y}$. This map is not a bijection, unless $\ov{x}$ is a maximal geometric point, but does induce a functorial bijection $F_{\ov{x}}(Y)\to \pi_0(Y_{\ov{x}})$.

\begin{prop}
    Suppose $X$ is connected and let $\ov{x}$ and $\ov{y}$ be geometric points of $X$. Then $(\FEt_X,F_{\ov{x}})$ is a Galois category and the fiber functors $F_{\ov{x}}$ and $F_{\ov{y}}$ are isomorphic. 
\end{prop}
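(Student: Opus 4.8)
The plan is to reduce the statement to Huber's formalism of étale covering spaces and the classical theory of Galois categories. First I would recall that by \cite[\S2.6]{Huberbook}, the category $\FEt_X$ of finite étale morphisms over a connected adic space $X$ together with a fiber functor is a Galois category; the main points to check are the standard axioms (G1)--(G6) of a Galois category in the sense of SGA1 --- existence of finite limits and finite colimits in $\FEt_X$, the fact that every morphism factors as a strict epimorphism followed by a monomorphism onto a direct summand, that $F_{\ov x}$ is exact and reflects isomorphisms, and that $F_{\ov x}$ takes values in \emph{finite} sets. Finiteness of $F_{\ov x}(Y)$ is clear since a finite étale $Y\to X$ has finite fibers; exactness and conservativity follow from faithfully flat descent and the fact that $\ov x$ hits every connected component of $Y_{\ov x}$, using the functorial bijection $F_{\ov x}(Y)\cong\pi_0(Y_{\ov x})$ noted just above. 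The verification that $\FEt_X$ has the requisite categorical structure is essentially formal once one knows that fiber products of finite étale maps are finite étale and that quotients by finite groups of automorphisms exist in the category of adic spaces locally of finite type over $K$; I would cite Huber for the latter rather than reprove it.

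For the second assertion --- that $F_{\ov x}$ and $F_{\ov y}$ are isomorphic --- I would invoke the general fact that any two fiber functors on a connected Galois category are isomorphic (this is part of the Galois category axioms/SGA1 formalism: the automorphism group of a fiber functor is the fundamental group, and all fiber functors on a connected Galois category are non-canonically isomorphic). Concretely, since $(\FEt_X, F_{\ov x})$ is a Galois category, it is equivalent to the category of finite continuous $\pi$-sets for $\pi = \Aut(F_{\ov x})$ a profinite group, and under this equivalence $F_{\ov y}$ corresponds to some other fiber functor on finite $\pi$-sets, which is necessarily isomorphic to the forgetful functor. Alternatively, and perhaps more in the spirit of this paper, I would show directly that $F_{\ov x}$ and $F_{\ov y}$ become isomorphic after restricting to any pointed connected finite étale cover and then pass to the limit over the cofiltered system of such covers (a "universal cover" argument), but citing the abstract result is cleaner.

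The step I expect to be the main (minor) obstacle is purely bookkeeping: making sure the fiber functor as \emph{defined here} --- via $\Hom_X(\Spa(L,L^+), Y)$ rather than via the geometric fiber $Y_{\ov x}$ directly --- satisfies the Galois category axioms, since the target $\Spa(L,L^+)$ is not a geometric point in the naive scheme-theoretic sense (its étale localization can be a single point). The resolution is the remark already recorded in the excerpt: there is a functorial bijection $F_{\ov x}(Y)\xrightarrow{\sim}\pi_0(Y_{\ov x})$, and $\pi_0(Y_{\ov x})$ \emph{is} the usual geometric-fiber description for finite étale $Y$, so all the exactness and conservativity checks can be transported across this bijection. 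Everything else is an application of the standard machinery, so I would keep the proof short, essentially: "This is \cite[Theorem~2.6.(something)]{Huberbook} (or follows from the cited results together with the identification $F_{\ov x}\cong\pi_0((-)_{\ov x})$); the isomorphism of fiber functors is the standard fact that any two fiber functors on a connected Galois category are isomorphic."
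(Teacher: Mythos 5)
Your proposal is correct and matches the paper's treatment: the paper in fact gives no proof of this proposition, treating it as a standard consequence of Huber's theory (it cites \cite[Example 1.6.6 ii)]{Huberbook} for the reduction to $\FEt_{\Spec(R)}$ in the affinoid case) together with the classical SGA1 fact that any two fiber functors on a connected Galois category are isomorphic. The one genuine bookkeeping point you flag --- transporting the axioms across the functorial bijection $F_{\ov x}(Y)\isomto \pi_0(Y_{\ov x})$ --- is exactly the remark the paper records immediately before the statement.
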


Using this we can define the algebraic fundamental group.

\begin{definition}
    Let $X$ be a connected adic space and let $\ov{x}$ and $\ov{y}$ be geometric points of $X$. Then, the set of \emph{\'etale paths} from $\ov{x}$ to $\ov{y}$ is the set
    \begin{equation*}
        \pi_1^\alg(X;\ov{x},\ov{y})=\mathrm{Isom}_{\FEt_X}(F_{\ov{x}},F_{\ov{y}})
    \end{equation*}
    When $\ov{x}=\ov{y}$ we shorten the notation to $\pi_1^\alg(X,\ov{x})$ and call the resulting profinite group the \emph{algebraic fundamental group} of $X$.
\end{definition}

We use the following well-known result freely (see \cite[Example 1.6.6 ii)]{Huberbook}): for an affinoid adic space $X=\Spa(R, R^+)$, the category $\FEt_X$ is equivalent to the category $\FEt_{\Spec(R)}$. In particular, for $X$ connected we have $\pi_1^\alg(X, \ov x,\ov y)\simeq \pi_1^\et(\Spec(R), \ov x, \ov y)$.

\subsection{Partially proper morphisms}
\label{ss:pp}

In this section we recall the some important properties related to partially proper morphisms (see \cite[Definition 1.3.3]{Huberbook})\footnote{One should note that partially proper morphisms appear in the theory of Berkovich spaces under the name `boundaryless'.}.

First, it's thematically important to note that partial properness (and separatedness) can be understood in terms of a certain valuative criterion. 

\begin{prop}[{\cite[Lemma 1.3.10]{Huberbook}, \cite[Proposition 6.13]{LudwigNotes}}] \label{valuative criterion prop}
    Let $f\colon Y\to X$ be a~morphism of adic spaces which is quasi-separated  and locally of $^+$weakly finite type (resp.\@ locally of weakly finite type). Then, the following properties are equivalent.
    \begin{enumerate}[(1)]
        \item The morphism $f$ is partially proper (resp.\@ separated),
        \item \label{valuative criterion prop:2} for every affinoid field $(L,L^+)$ and every morphism $\Spa(L,L^+)\to X$ the morphism $\Hom_X(\Spa(L,L^+),Y)\to\Hom_X(\Spa(L),Y)$ is a bijection (resp.\@ injection),
        \item \label{valuative criterion prop:3} for every Huber pair $(R,R^+)$ and every morphism $\Spa(R,R^+)\to X$ the morphism $\Hom_X(\Spa(R,R^+),Y)\to \Hom_X(\Spa(R),Y)$ is a bijection (resp.\@ injection).
    \end{enumerate}
\end{prop}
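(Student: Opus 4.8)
The plan is to prove $(3)\Rightarrow(2)$, then $(1)\Leftrightarrow(2)$, and finally $(2)\Rightarrow(3)$; only the last step requires real work. The first implication is immediate, since an affinoid field is in particular a Huber pair and $\Spa(L)$ abbreviates $\Spa(L,L^\circ)$ in both lines, so $(2)$ is a special case of $(3)$.

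For $(1)\Leftrightarrow(2)$, which is in essence \cite[Lemma 1.3.10]{Huberbook} (see also \cite[Proposition 6.13]{LudwigNotes}), I would translate both sides into a statement about lifting vertical specializations. Giving a morphism $\Spa(L,L^+)\to X$ amounts, up to equivalence, to giving a point $x\in X$, an embedding $k(x)\hookrightarrow L$, and a valuation ring $L^+\subseteq\mathcal O_L$ restricting to $k(x)^+$ on $k(x)$; under this dictionary the maximal point of $\Spa(L,L^+)$ maps to $x$, its remaining points map to the associated vertical specializations of $x$, and $\Hom_X(\Spa(L),Y)$ is the set of lifts of the maximal point alone. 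Thus the bijectivity (resp.\@ injectivity) in $(2)$ says exactly that vertical specializations of valuative points of $X$ lift uniquely (resp.\@ at most uniquely) to $Y$; once one checks that this may be tested on $X$ itself rather than after base change and is local on source and target --- using that $f$ is already quasi-separated and locally of ${}^+$weakly finite type (resp.\@ weakly finite type) --- it is exactly Huber's definition of $f$ being partially proper (resp.\@ separated). The separated case runs in parallel throughout, with ``bijection'' replaced by ``injection''.

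The substance is $(2)\Rightarrow(3)$. Fix a Huber pair $(R,R^+)$ and a morphism $g\colon T:=\Spa(R,R^+)\to X$, and put $T^\circ:=\Spa(R,R^\circ)\subseteq T$, so that the map in $(3)$ is restriction along $T^\circ\hookrightarrow T$. Two observations drive the argument: $T^\circ$ is stable under generization in $T$ (immediate from $R^+\subseteq R^\circ$), and any continuous valuation on $R$ admitting no proper continuous coarsening lies in $T^\circ$ --- indeed, if such a $v$ had $v(f)>1$ for some power-bounded $f$, then the coarsening by the convex subgroup generated by $v(f)$, together with boundedness of $\{f^n\}$ and analyticity of $R$ (which supplies a topologically nilpotent unit), would furnish a proper continuous coarsening. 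Hence, for any $t\in T$, the image in $T$ of the generic point of $\Spa(k(t),k(t)^+)$ under the canonical map $\Spa(k(t),k(t)^+)\to T$ lies in $T^\circ$. Now, given a lift $h_0$ of $g$ over $T^\circ$: for each $t\in T$ this generic point inherits a lift to $Y$ from $h_0$, and the bijectivity half of $(2)$, applied to the affinoid field $(k(t),k(t)^+)$ and the induced morphism to $X$, produces a unique lift of all of $\Spa(k(t),k(t)^+)$ extending it, hence a value $h(t)\in Y$. Uniqueness forces every extension of $h_0$ to agree with $h$ pointwise, and $h|_{T^\circ}=h_0$ by the same uniqueness (using that $T^\circ$ is generizing-closed).

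The step I expect to be the main obstacle is promoting these pointwise lifts to an honest morphism of adic spaces $T\to Y$ over $X$. Here one uses quasi-compactness of $T$ and that $f$ is locally of ${}^+$weakly finite type to cover $T$ by finitely many retrocompact opens over which the candidate lands in an affinoid open $V=\Spa(B,B^+)\subseteq Y$ lying over an affinoid open $U=\Spa(A,A^+)\subseteq X$; there the assertion becomes the concrete statement that a continuous $A$-algebra homomorphism into the relevant sections of $\mathcal O_T$ carrying $B^+$ into the $R^\circ$-part automatically carries it into the $R^+$-part --- and it is precisely the partial properness (``boundarylessness'') of $Y\to X$, which forces $B^+$ to be correspondingly large, that makes this go through. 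Establishing this affinoid statement, together with the bookkeeping that the local lifts agree on overlaps, is where the real content lies; the rest is formal manipulation of valuations. In the separated case one only invokes the injectivity half of $(2)$ and only proves injectivity in $(3)$, so the same pointwise argument applies with ``unique lift'' weakened to ``at most one lift'', and no gluing of newly produced sections is required.
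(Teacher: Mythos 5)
This proposition is not proved in the paper at all: it is quoted verbatim from Huber \cite[Lemma 1.3.10]{Huberbook} and Ludwig \cite[Proposition 6.13]{LudwigNotes}, so the only meaningful comparison is with those sources. Your implications $(3)\Rightarrow(2)$ and the translation of $(1)\Leftrightarrow(2)$ into Huber's definition are fine in outline. The problem is $(2)\Rightarrow(3)$: you construct the extension $h\colon T=\Spa(R,R^+)\to Y$ only as a map of \emph{sets}, one point at a time via the affinoid-field criterion, and then explicitly defer the promotion of $h$ to a morphism of adic spaces (``is where the real content lies''). That deferred step \emph{is} the implication --- the pointwise construction by itself yields essentially only the uniqueness/injectivity half --- so as written the proposal has a genuine gap rather than a proof.

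Moreover, the mechanism you sketch for closing the gap is wrong as stated. Your reduction is: if $V=\Spa(B,B^+)\subseteq Y$ is an affinoid open containing the image of the given lift and a continuous map $B\to\mathcal{O}_T(W)$ carries $B^+$ into the $R^\circ$-part, then partial properness forces it into the $R^+$-part. This fails if $V$ is chosen merely to contain the image of $W\cap T^\circ$: take $Y=\mathbf{A}^{1,\mathrm{an}}_K\to X=\Spa(K)$, $V=\mathbf{D}^1_K=\Spa(K\langle S\rangle)$, and $(R,R^+)=(K\langle S\rangle, A')$ with $A'$ the integral closure of $\mathcal{O}_K[K\langle S\rangle^{\circ\circ}]$, so that $T=(\mathbf{D}^1_K)^{\rm univ}=\mathbf{D}^1_K\cup\{\nu_{0,1^+}\}$ and $T^\circ=\mathbf{D}^1_K$ (Proposition~\ref{prop:-univcomp} and Example~\ref{ex:Piotr-comp-disc}). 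The identity $T^\circ\to V$ sends $B^+=B^\circ$ into $R^\circ$ but not into $R^+=A'$ (indeed $\nu_{0,1^+}(S)>1$), and the unique extension $T\to Y$ exists but does not land in $V$. So the affinoid opens must be chosen to absorb the images of the \emph{specializations} in $W$ as well, and arranging this is exactly where the ``universally specializing'' half of partial properness, together with tautness/quasi-compactness arguments, has to enter; your sketch never engages with it. Either carry that argument out in full or do what the paper does and cite Huber and Ludwig.
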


We then record two useful facts about the relationship between partially proper morphisms and the topology of valuative spaces.

\begin{prop}[{\cite[Lemma 5.1.4.ii)]{Huberbook}}] \label{pp implies taut} 
    Let $f\colon Y\to X$ be a partially proper morphism of adic spaces. Then, $f$ is taut.
\end{prop}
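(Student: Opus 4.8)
The plan is to verify directly the two conditions in the definition of a taut morphism (Definition~\ref{taut map def}): that $f$ is locally quasi-compact, and that $f^{-1}(U)$ is taut for every taut open $U\subseteq X$. Local quasi-compactness is essentially immediate: a partially proper morphism is locally of ${}^{+}$weakly finite type, hence locally of weakly finite type, so locally on source and target it has the shape $\Spa(A,A^+)\to\Spa(B,B^+)$ with $A$ topologically of finite type over $B$, and such maps are quasi-compact; thus every point of $Y$ has an affinoid open neighborhood mapping into an affinoid open of $X$, which is exactly local quasi-compactness.

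For the second condition, fix a taut open $U\subseteq X$. The conditions defining partial properness are local on the target, so $f^{-1}(U)\to U$ is again partially proper; being in particular separated over $U$, which is quasi-separated, the space $f^{-1}(U)$ is quasi-separated. Hence the proposition reduces to the assertion that for a partially proper morphism $g\colon Z\to W$, the closure in $Z$ of every quasi-compact open $V\subseteq Z$ is quasi-compact.

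To prove this, I would first note that $g(V)$ is quasi-compact, hence contained in a quasi-compact open $W_0\subseteq W$ (cover $g(V)$ by finitely many affinoid opens and take their union); replacing $W$ by $W_0$ and $Z$ by $g^{-1}(W_0)$, we may assume $W$ is quasi-compact. The key input is then the local structure of partially proper morphisms over a quasi-compact base: one can write $Z=\bigcup_\alpha Z_\alpha$ as a filtered union of quasi-compact open subspaces such that the closure $\overline{Z_\alpha}$ of each $Z_\alpha$ in $Z$ is contained in some $Z_\beta$. The local content here is that an affinoid open $\Spa(A,A^+)\subseteq Z$ lying over an affinoid open of $W$ has closure contained in the (quasi-compact) affinoid open obtained by replacing $A^+$ with $A^\circ$ --- this is where partial properness, through the specialization-lifting property of the valuative criterion in Proposition~\ref{valuative criterion prop}, is used. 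Granting the exhaustion, $V$ is covered by the $Z_\alpha$, so $V\subseteq Z_{\alpha_0}$ for some $\alpha_0$ by quasi-compactness, whence $\overline V\subseteq\overline{Z_{\alpha_0}}\subseteq Z_\beta$ is a closed subset of a quasi-compact space, hence quasi-compact.

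The hard part will be the passage, in the previous paragraph, from the local statement about affinoid open subspaces to a statement about closures taken in the ambient space $Z$: affinoid open subspaces are generally not stable under specialization, so the closure of $\Spa(A,A^+)$ inside the preimage of an affinoid open of $W$ can be strictly smaller than its closure in $Z$, and it is the latter that must be controlled. This is precisely the substance of Huber's analysis of partially proper (equivalently, boundaryless) morphisms; everything else is formal.
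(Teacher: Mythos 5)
The paper gives no proof of this proposition at all: it is imported verbatim from Huber \cite[Lemma 5.1.4 ii)]{Huberbook}, so the only thing to compare your sketch with is Huber's own argument. Your overall strategy is the right one (and is essentially his): local quasi-compactness is automatic, quasi-separatedness of $f^{-1}(U)$ follows from separatedness over a quasi-separated base, and everything reduces to showing that the closure in $Z$ of a quasi-compact open is quasi-compact, which one controls via the valuative criterion. But the one concrete detail you supply for the key step is backwards. Since $A^+\subseteq A^\circ$ by definition of a Huber pair, imposing $|a|_v\leq 1$ for all $a\in A^\circ$ is a \emph{stronger} condition than for $a\in A^+$, so $\Spa(A,A^\circ)\subseteq \Spa(A,A^+)$: ``replacing $A^+$ by $A^\circ$'' shrinks the affinoid and cannot possibly contain its closure. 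What is actually needed is the opposite move: replace $A^+$ by the \emph{smallest} admissible ring of integral elements over the base affinoid $\Spa(B,B^+)$, namely the integral closure of $B^+[A^{\circ\circ}]$ in $A$. This is the relative universal compactification $U^{\rm univ}_{/W}$ of Theorem~\ref{thm:Huber-univ-compact} and Proposition~\ref{prop:-univcomp}: a quasi-compact space containing $U=\Spa(A,A^+)$ as an open subspace. Partial properness of $Z\to W$, through Proposition~\ref{valuative criterion prop}, extends $U\to Z$ to $U^{\rm univ}_{/W}\to Z$ over $W$, and the image of this map is a quasi-compact subset of $Z$ containing every specialization in $Z$ of a point of $U$, hence containing $\ov{U}$ (which equals the set of such specializations, cf.\@ \cite[Chapter 0, Corollary 2.2.27]{FujiwaraKato}); then $\ov{U}$ is closed in a quasi-compact set and we are done.

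Two smaller remarks. First, the exhaustion $Z=\bigcup_\alpha Z_\alpha$ with $\ov{Z_\alpha}\subseteq Z_\beta$ is an unnecessary detour, and as set up it is mildly circular: to produce a quasi-compact \emph{open} $Z_\beta$ containing $\ov{Z_\alpha}$ you already need to know that $\ov{Z_\alpha}$ is quasi-compact. Once the closure of each affinoid open is known to be quasi-compact, the closure of an arbitrary quasi-compact open is a finite union of such closures and you are finished. Second, you explicitly defer the substance of the argument to ``Huber's analysis''; given that the paper itself only cites Huber here, that is defensible as attribution, but as a standalone proof the proposal is incomplete, and the one step it does make explicit is the one that is wrong.
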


\begin{prop} \label{pp open embedding} 
    Let $X$ be an adic space and $U$ an open subset of $X$. Then, the inclusion morphism $U\hookrightarrow X$ is partially proper if and only if $U$ is an overconvergent open.
\end{prop}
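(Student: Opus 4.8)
The plan is to combine the valuative criterion for partial properness (Proposition \ref{valuative criterion prop}) with the characterization of overconvergent open subsets (Proposition \ref{oc equiv}), using the fact that every open immersion of adic spaces is quasi-separated and locally of ${}^+$weakly finite type, so that Proposition \ref{valuative criterion prop} applies to $j\colon U\hookrightarrow X$. By that criterion, $j$ is partially proper if and only if for every affinoid field $(L,L^+)$ and every morphism $\Spa(L,L^+)\to X$ the restriction map $\Hom_X(\Spa(L,L^+),U)\to\Hom_X(\Spa(L),U)$ is a bijection. Since $U\hookrightarrow X$ is a monomorphism, $\Hom_X(\Spa(L,L^+),U)$ is either empty or a singleton, and likewise for $\Spa(L)$; concretely, a morphism $\Spa(L,L^+)\to X$ factors through $U$ if and only if its image (the anchor point) lies in $U$, and similarly $\Spa(L)\to X$ factors through $U$ iff the maximal point it hits lies in $U$. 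Thus the restriction map is always injective, and it is surjective precisely when: whenever the maximal point of a morphism $\Spa(L,L^+)\to X$ lands in $U$, the anchor point already lands in $U$.

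The next step is to translate this pointwise condition into the statement that $U$ is overconvergent, i.e.\ that for all $x\in U$ one has $\ov{\{x^\mx\}}\subseteq U$ (condition (c) of Proposition \ref{oc equiv}). For the forward direction, suppose $j$ is partially proper and fix $x\in U$; I want to show any specialization $z$ of $x^\mx$ lies in $U$. Any such $z$, being in $\ov{\{x^\mx\}}=\sep_X^{-1}(\sep_X(z))$, can be realized as the anchor point of a morphism $\Spa(L,L^+)\to X$ whose maximal point is $x^\mx$ — this is the standard correspondence between points of an adic space and equivalence classes of morphisms from $\Spa(L,L^+)$, where $(L,L^+)$ arises from $(k(z),k(z)^+)$ after passing to an algebraically closed extension, with $L^+$ chosen so that its associated rank-one localization corresponds to $x^\mx$. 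Since $x^\mx$ is a generization of $x\in U$ and $U$ is open... wait — more carefully: we know $x\in U$, hence $x^\mx\in\ov{\{x^\mx\}}$'s generic point; but actually the cleanest realization is to take a morphism whose anchor point is $x$ itself won't hit $z$, so instead: pick $z\in\ov{\{x^\mx\}}$ and note $x^\mx$ is also the maximal generization of $z$, so there is $\Spa(L,L^+)\to X$ with anchor point $z$ and maximal point $x^\mx=z^\mx=x^\mx$; now I need $x^\mx\in U$, which holds because $x^\mx\in\ov{\{x^\mx\}}$ and... the point is simply that $U$ open and $x\in U$ with $x^\mx$ a generization of $x$ forces $x^\mx\in U$ (generizations of points in an open set lie in the open set). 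Then partial properness gives $z\in U$, as desired. For the converse, suppose $U$ is overconvergent and $\varphi\colon\Spa(L,L^+)\to X$ has maximal point in $U$; its anchor point $z$ satisfies $z^\mx\in U$, and by overconvergence $\ov{\{z^\mx\}}\subseteq U$, in particular $z\in\ov{\{z^\mx\}}\subseteq U$, so $\varphi$ factors through $U$.

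The main obstacle I anticipate is the bookkeeping in the first paragraph's "realization" step: verifying that for a specialization $z$ of a maximal point $\xi$ there genuinely exists a morphism $\Spa(L,L^+)\to X$ from an affinoid field with anchor point $z$ and maximal point $\xi$, with the valuation data matching up correctly. This is where one uses that the fiber $\sep_X^{-1}(\sep_X(\xi))=\ov{\{\xi\}}$ is a Riemann–Zariski-type space of valuations and that points of $\Spa(A,A^+)$ correspond exactly to such equivalence classes of valuations; I would cite \cite[\S2.5]{Huberbook} or the discussion of affinoid fields and geometric points in \S\ref{ss:geometric-points-and-fundamental-groups} above, rather than reproving it. Everything else — injectivity of the restriction map, the reduction to a pointwise statement, and the dictionary with Proposition \ref{oc equiv}(c) — is formal once this realization is in hand. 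An alternative, possibly shorter route would be to bypass the valuative criterion and argue directly that $j\colon U\hookrightarrow X$ is partially proper iff it is taut and universally specializing into $U$, but using Proposition \ref{valuative criterion prop} as above keeps the argument self-contained within the results already recorded.
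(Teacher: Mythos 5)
The paper states this proposition without proof (it is essentially \cite[Lemma 1.3.12 and \S 8.2]{Huberbook}), so there is no argument of the authors' to compare against; judged on its own, your proof is correct and follows the natural route via the valuative criterion. Two small points of hygiene. First, the image of $\Spa(L,L^+)\to X$ is not just the anchor point but the whole chain of its generizations; factoring through $U$ is nonetheless equivalent to the anchor point lying in $U$ precisely because $U$ is open, which is what you in effect use, so say that rather than parenthetically identifying ``image'' with ``anchor point''. Second, the realization step you flag as the main obstacle is cleaner than you make it sound: for $z\in\ov{\{x^\mx\}}$ one has $z^\mx=x^\mx$, and the canonical morphism $\Spa(k(z),k(z)^+)\to X$ (with $(k(z),k(z)^+)$ already an affinoid field in the paper's sense --- no algebraically closed extension is needed) has anchor point $z$ and maximal point $z^\mx$; since morphisms of adic spaces are valuative and generizations form a chain, this is exactly the test object you need. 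With those two remarks the argument is complete; I would also tidy the mid-proof self-corrections (``wait --- more carefully'') into the final statement of the forward direction, whose logic is: $x\in U$ open forces $x^\mx\in U$, and then surjectivity of the restriction map for the morphism above forces $z\in U$.
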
 

\subsection{Huber's universal compactifications and tautness of curves}
\label{taut subsection}

Since a large portion of our theory relies on rigid $K$-curves, it is useful to know that they are taut. The proof of this fact employs Huber's universal compactifications.

\begin{definition}[{\cite[Definition 5.1.1]{Huberbook}}]
     A \emph{universal compactification} of a map of adic spaces $U\to X$ is a locally closed embedding $U\to U^{\rm univ}_{/X}$ over $X$ with $U^{\rm univ}_{/X}\to X$ partially proper, and such that for every factorization $U\to U'\to X$ with $U'\to X$ partially proper there exists a unique morphism $U^{\rm univ}_{/X}\to U'$ under $U$ and over $X$. 
\end{definition}

\begin{thm}[Huber, {\cite[Theorem~5.1.5]{Huberbook}}] \label{thm:Huber-univ-compact}
    Let $U\to X$ be a taut and separated map of rigid $K$-spaces. Then, a universal compactification $j:U\to U^{\rm univ}_{/X}$ exists, is a~quasi-compact open embedding, and every point of $U^{\rm univ}_{/X}$ is a specialization of a point of~$U$.
\end{thm}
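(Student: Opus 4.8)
The plan is to reduce to the affinoid case, write down $U^{\rm univ}_{/X}$ by hand there, and then globalize using tautness. So suppose first that $X=\Spa(A,A^+)$ and $U=\Spa(B,B^+)$ are affinoid with $U\to X$ of finite type, and let $C\subseteq B$ be the integral closure of the subring $A^++B^{\circ\circ}$, where $A^+$ is understood via its image in $B$ and $B^{\circ\circ}$ is the set of topologically nilpotent elements. Since $B$ is strongly Noetherian, $(B,C)$ is a Huber pair and $\overline U^{/X}:=\Spa(B,C)$ is an adic space, automatically quasi-compact. As $A^+$ maps into $B^+$ and $B^{\circ\circ}\subseteq B^+$ with $B^+$ integrally closed, we have $C\subseteq B^+$, and the inclusion of $+$-rings induces a morphism $j\colon U\to\overline U^{/X}$; I claim this is the universal compactification.

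In the affinoid situation I would verify four things. \emph{(a) $j$ is a quasi-compact open immersion.} It identifies $U$ with $\{v\in\Spa(B,C):v(b)\le 1\text{ for all }b\in B^+\}$; since $B^+$ is generated over $C$ by finitely many elements (a consequence of finite type, via Noetherianity of the reduction $B^\circ/B^{\circ\circ}$), this is a rational subset, hence open, and $j$ is then automatically quasi-compact as $\overline U^{/X}$ is quasi-compact. \emph{(b) $\overline U^{/X}\to X$ is partially proper.} By the valuative criterion (Proposition~\ref{valuative criterion prop}) it suffices to see that for every Huber pair $(R,R^+)$ with a map $\Spa(R,R^+)\to X$ the restriction $\Hom_X(\Spa(R,R^+),\overline U^{/X})\to\Hom_X(\Spa(R),\overline U^{/X})$ is a bijection; but both sides are naturally the set of continuous $A$-algebra homomorphisms $\varphi\colon B\to R$ compatible with the given $A\to R$, because the extra condition $\varphi(C)\subseteq R^+$ (resp.\ $\subseteq R^\circ$) is automatic: $\varphi$ carries $A^+$ into $R^+$ and $B^{\circ\circ}$ into $R^{\circ\circ}\subseteq R^+$, and $R^+$ is integrally closed. \emph{(c) Universal property.} Given $U\to U'\to X$ with $U'\to X$ partially proper, Proposition~\ref{valuative criterion prop} applied to $U'$ with $(R,R^+)=(B,B^+)$ and then with $(B,C)$ shows that both $\Hom_X(\Spa(B,B^+),U')$ and $\Hom_X(\Spa(B,C),U')$ are in bijection with $\Hom_X(\Spa(B,B^\circ),U')$, hence with one another, and tracing through the bijection is precisely restriction along $j$; thus $U\to U'$ extends uniquely over $\overline U^{/X}$. \emph{(d) Specialization.} Every $v\in\Spa(B,C)$ has a maximal generization $v^{\max}$, which is of rank one, and a continuous rank-one valuation $w$ on $B$ satisfies $w(b)\le 1$ for every power-bounded $b$ (otherwise $w(b^n)=w(b)^n\to\infty$, contradicting boundedness of $\{b^n\}$); hence $v^{\max}\in\Spa(B,B^\circ)\subseteq U$, and $v$ is a specialization of $v^{\max}$.

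To globalize I would first record that the formation of $\overline U^{/X}$ commutes with base change along open immersions $X'\hookrightarrow X$ (immediate from the formula, as $C$ is built from $A^+$) and, more delicately, is compatible with retrocompact open immersions $V\hookrightarrow U$ in the sense that $\overline V^{/X}$ is naturally an open of $\overline U^{/X}$ — this is what lets the local models glue, and it is here that tautness of $U\to X$ is indispensable, together with the topological input of Section~\ref{s:topol} on the behavior of $\sep$ under pullback along opens (Propositions~\ref{prop:taut-open} and \ref{commuting sep and inverse image}). Granting these compatibilities, for a general separated taut $U\to X$ one chooses an affinoid cover $\{X_i\}$ of $X$, covers each $U_i=U\times_XX_i$ (still separated and taut over the affinoid $X_i$) by affinoids affinoid over $X_i$, forms the local compactifications, and glues them to a space $\overline U^{/X}$ equipped with an open immersion $j\colon U\hookrightarrow\overline U^{/X}$; quasi-compactness of $j$ and the specialization statement are local and so follow from the affinoid case, and the universal property follows by reducing a given factorization $U\to U'\to X$ to the affinoid situation over compatible covers of $X$ and of $U'$.

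The main obstacle is exactly this gluing: making precise and proving that $\overline{(-)}^{/X}$ restricts correctly to retrocompact opens of $U$, so that the affinoid models patch together to an adic space and $j$ remains an open immersion — this is the one place that uses the full strength of ``taut'', and without it the statement genuinely fails (partially proper \'etale maps alone being ill-behaved). A secondary, bookkeeping difficulty is checking in (a) that $B^+$ is finite over $C$, i.e.\ that $U$ really is a rational subset of $\overline U^{/X}$. As an alternative to the explicit Huber-pair construction, one could define $\overline U^{/X}$ as the closure of $U$ inside any partially proper adic space over $X$ in which $U$ sits as a locally closed subspace (for affine $U$, inside an analytic projective space $\mathbb P^N_X$); this makes (a) and (d) transparent, since the image of a locally closed subspace is open in its closure and in a valuative space the closure of a retrocompact open is its set of specializations, but one is still left to prove quasi-compactness of the closure, once more by invoking tautness.
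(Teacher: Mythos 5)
This theorem is quoted from Huber's book (Theorem~5.1.5); the paper supplies no proof of its own, so the only meaningful comparison is with Huber's argument, which your proposal does in fact track. Your affinoid model is the right one: $\Spa(B,C)$ with $C$ the integral closure of the subring generated by the image of $A^+$ and by $B^{\circ\circ}$ is exactly Huber's construction, and it is consistent with the third part of Proposition~\ref{prop:-univcomp} and with Lemma~\ref{lem:univcomp-reflex} in the paper. Your verifications (a)--(d) are essentially sound: the observation that $\varphi(C)\subseteq R^+$ is automatic (because $R^+$ is integrally closed and contains $\varphi(A^+)$ and $R^{\circ\circ}$) is the key point for partial properness, and deducing the universal property by playing the valuative criterion for $U'$ against the two pairs $(B,B^+)$ and $(B,C)$ is the same two-step bijection the paper itself uses in the proof of Lemma~\ref{lem:univcomp-reflex}.

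The gap is the one you name yourself: the globalization. You assert, but do not prove, that the affinoid models are compatible with retrocompact open immersions $V\hookrightarrow U$ (so that $\overline V^{/X}$ sits as an open subspace of $\overline U^{/X}$) and that the resulting local compactifications glue to an adic space into which $U$ still embeds as a quasi-compact open with the stated specialization property. This is precisely where tautness and separatedness do their work and is the technical heart of Huber's \S 5.1; without it one does not know the glued object is well defined, let alone that $j$ stays a quasi-compact open immersion globally. A secondary unproved input is the finite generation of $B^+$ over $C$ needed in step (a) to exhibit $U$ as a rational subset; this does hold for rigid $K$-spaces via finiteness of the reduction $\widetilde B$, but it must be supplied, and one should also note that the valuative criterion of Proposition~\ref{valuative criterion prop} applies to $\Spa(B,C)\to X$ only after checking that this map is quasi-separated and locally of $^+$weakly finite type (which is the point of taking $C$ to contain $B^{\circ\circ}$). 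In short: correct strategy and an essentially complete affinoid case, but the proposal stops short of the part of the proof that actually uses the hypotheses of the theorem.
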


If $U$ is a rigid $K$-space, taut and separated over $\Spa(K)$, then we shall shorten the notation $U^{\rm univ}_{/\Spa(K)}$ to $U^{\rm univ}$. An example of such an object is given in the following.

\begin{example} \label{ex:Piotr-comp-disc}
    For $\mathbf{D}^1_K = \Spa(K\langle X\rangle)$, we have $(\mathbf{D}^1_K)^{\rm univ} = \mathbf{D}^1_K\cup \{\nu_{0, 1^+}\}$ where $\nu_{0,1^+}$ is a~certain rank two valuation in $\mathbf{A}^{1,\mathrm{an}}_K$ in the closure of the Gauss point 
\end{example}

We would like to explicitly link Huber's definition of universal compactifications with the approach taken by Scholze in \cite[\S 18]{ScholzeDiamonds} in the context of $v$-sheaves. In the lemma below, we denote by $\mc{A}$ the category of affinoid adic spaces $T=\Spa(R, R^+)$ such that $R$ is topologically of finite type over some non-archimedean field.

\begin{lem} \label{lem:univcomp-reflex}
    Let $U\to X$ be a separated and taut morphism of rigid $K$-spaces. Then, we have an isomorphism of presheaves on $\mc{A}$:
    \begin{equation*}
        \Hom(T, U^{\rm univ}_{/X}) \simeq \Bigg \{(f,\tilde f)\,:\, \vcenter{\xymatrix{T^\circ\ar[r]^-{\tilde f} \ar[d] & U\ar[d] \\ T\ar[r]_-f & X}}\emph{ commutes}\Bigg\}
    \end{equation*}
    where $T=\Spa(R, R^+)$ and $T^\circ=\Spa(R)$.
\end{lem}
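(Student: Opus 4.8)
The plan is to realize $U^{\rm univ}_{/X}$ by its universal property and match it term-by-term with the functor on the right. First I would observe that since $U \to X$ is taut and separated, Theorem~\ref{thm:Huber-univ-compact} applies: the universal compactification $j \colon U \to U^{\rm univ}_{/X}$ exists, is a quasi-compact open embedding, $U^{\rm univ}_{/X} \to X$ is partially proper, and every point of $U^{\rm univ}_{/X}$ specializes from a point of $U$. Write $P := U^{\rm univ}_{/X}$ for brevity. The goal is to produce, for each $T = \Spa(R,R^+)$ in $\mc{A}$, a bijection between $\Hom(T,P)$ and the set of commuting squares $(f,\tilde f)$ with $f \colon T \to X$ and $\tilde f \colon T^\circ \to U$, functorial in $T$.

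The construction in one direction is straightforward: given $g \colon T \to P$, compose with $P \to X$ to get $f$; to get $\tilde f$, I need to check that the open subset $g^{-1}(U) \subseteq T$ contains $T^\circ = \Spa(R)$, i.e.\ contains every maximal point of $T$. This is exactly where the "every point of $P$ is a specialization of a point of $U$" clause of Theorem~\ref{thm:Huber-univ-compact} enters: if a maximal point $t \in T$ had image $g(t) \notin U$, then $g(t)$ is a proper specialization of some $u \in U \subseteq P$; pulling back the generization $u \leadsto g(t)$ along $g$ (using that $T$, being an adic space, is valuative and $g$ is a valuative specializing map — or more directly, that $U$ is overconvergent in $P$ by Proposition~\ref{pp open embedding}, since $U \hookrightarrow P$ is partially proper as a composite... wait, it is not, $U \to X$ need not be partially proper) — the clean argument is that $U$ is \emph{not} overconvergent in $P$ in general, so instead I should argue pointwise: $g(t) \notin U$ forces $t$ to be non-maximal because $U^{\rm univ}_{/X} \setminus U$ consists of the "added" points, all of which are proper specializations, and $g$ carries the maximal point $t$ to a point which, being in the image, must itself be... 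Let me restructure. The correct route: $g^{-1}(U)$ is open in $T$; I claim it is overconvergent, equivalently contains $T^\circ$. A point $t \in T$ lies in $g^{-1}(U)$ iff $g(t) \in U$; since by Theorem~\ref{thm:Huber-univ-compact} $U \subseteq P$ is a quasi-compact open whose complement consists entirely of non-maximal points of $P$ (they are proper specializations of points of $U$, hence non-maximal in the valuative space $P$), and since $g$ is a valuative morphism of adic spaces, $g$ sends maximal points of $T$ to maximal points of $P$; thus every maximal point of $T$ lands in $U$, i.e.\ $T^\circ \subseteq g^{-1}(U)$. Restricting $g$ to $T^\circ$ then lands in $U$, giving $\tilde f$, and the square commutes by construction.

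For the reverse direction, given a commuting square $(f, \tilde f)$, I would factor it through the universal property: $\tilde f \colon T^\circ \to U$ together with the requirement of compatibility over $X$ should produce a unique $g \colon T \to P$. The key point is that $T \to X$ is partially proper? No — rather, one uses that $T^\circ \hookrightarrow T$ is dense with $T$ "compactifying" $T^\circ$ in a way compatible with Huber's construction: concretely, $(T^\circ)^{\rm univ}_{/X}$ receives $\tilde f$'s classifying map from $T^\circ$, is partially proper over $X$, and the structure map $T \to X$ being partially proper is what we need — here is where I would invoke that for $T = \Spa(R,R^+) \in \mc{A}$, the inclusion $T^\circ = \Spa(R) \hookrightarrow T = \Spa(R,R^+)$ exhibits $T$ as a universal compactification-like object, namely $T \to X$ is partially proper (as $T \to \Spa(K)$ is, since $R^+$ absorbs the valuative boundary). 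Then the universal property of $P = U^{\rm univ}_{/X}$ applied to the factorization $U \to P \to X$, against the partially proper $T \to X$ receiving $U$-data only on $T^\circ$, is not quite the shape of Huber's universal property; so instead I would use Lemma~\ref{lem:univcomp-reflex}-style bootstrapping: the universal property of $(T^\circ)^{\rm univ}_{/X}$ gives a map $(T^\circ)^{\rm univ}_{/X} \to P$, and separately a map $T^\circ \to T$ extends to $(T^\circ)^{\rm univ}_{/X} \to T$ (by the same universal property, since $T \to X$ is partially proper), and I would show the latter is an isomorphism using Theorem~\ref{thm:Huber-univ-compact} (quasi-compact open embedding $+$ specialization-density $+$ the structure of $\Spa(R,R^+)$ over $\Spa(R)$). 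Composing gives $T \cong (T^\circ)^{\rm univ}_{/X} \to P$, the desired $g$. Uniqueness and the fact that the two constructions are mutually inverse follow because a morphism out of $T$ is determined by its restriction to the dense subset $T^\circ$ (density in the constructible or specialization sense, using that $P$ is an adic space and $g_1, g_2$ agreeing on $T^\circ$ with $T \to X$ fixed forces agreement by the valuative criterion / partial properness of $P \to X$, Proposition~\ref{valuative criterion prop}).

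The main obstacle, I expect, is the reverse direction — specifically, showing that $T = \Spa(R,R^+)$ is itself the universal compactification of $T^\circ = \Spa(R)$ over $X$ (or at least maps canonically and bijectively the right way), which requires carefully combining the explicit structure of $\Spa(R,R^+) \to \Spa(R)$ as a "Riemann–Zariski" fiber with Huber's Theorem~\ref{thm:Huber-univ-compact}; the forward direction and the functoriality/naturality checks are then essentially formal, resting on the valuativity of morphisms of adic spaces and the separation-map formalism of Section~\ref{s:topol}.
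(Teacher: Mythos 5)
Both directions of your argument contain genuine gaps. In the forward direction, you deduce ``$T^\circ\subseteq g^{-1}(U)$'' from the (correct) observation that every maximal point of $T$ lands in $U$. But $T^\circ=\Spa(R)$ is strictly larger than the set of maximal points of $T=\Spa(R,R^+)$: it also contains higher-rank points (e.g.\ the type~5 points of $\mathbf{D}^1_K$ lie in $\Spa(K\langle x\rangle)$), and an open subset of $T$ containing all maximal points need not contain $T^\circ$ --- removing a single rank-two point from $\mathbf{D}^1_K$ leaves an open set containing every rank-one point. The paper closes this gap by using that $U\hookrightarrow U^{\rm univ}_{/X}$ is a \emph{quasi-compact} open immersion (Theorem~\ref{thm:Huber-univ-compact}), so that $V=g^{-1}(U)$ is quasi-compact; then $V\cap T^\circ$ is a quasi-compact open subset of $T^\circ$ containing all maximal, hence all classical, points, and therefore equals $T^\circ$ by \cite[Theorem~4.3]{Huber93}. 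Without this quasi-compactness input the inference fails.

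In the reverse direction your route rests on two assertions that are false in general: that $T\to X$ is partially proper, and that $(T^\circ)^{\rm univ}_{/X}\to T$ is an isomorphism. Already for $R^+=R^\circ$ one has $T=T^\circ$, which is typically not partially proper over $X$ (the closed disk over $\Spa(K)$ is the standard example) and is certainly not its own universal compactification; the ring of integral elements $R^+$ is arbitrary here, not the minimal one. The correct and much shorter argument is the one you mention only in passing for uniqueness: since $U^{\rm univ}_{/X}\to X$ is partially proper, Proposition~\ref{valuative criterion prop}~\ref{valuative criterion prop:3} gives a bijection $\Hom_X(T,U^{\rm univ}_{/X})\to\Hom_X(T^\circ,U^{\rm univ}_{/X})$; composing $\tilde f$ with the open immersion $U\to U^{\rm univ}_{/X}$ and applying this bijection produces the desired $g\colon T\to U^{\rm univ}_{/X}$ directly, with existence and uniqueness in one stroke and no need to identify $T$ with any universal compactification.
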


\begin{proof}
It suffices to show that for $T=\Spa(R,R^+)\to X$ an object of $\mc{A}_{/X}$ there is a~functorial identification $\Hom_X(T,U^{\rm univ}_{/X})\simeq \Hom_X(T^\circ,U)$. Let us note that one has a~natural injective map of sets $\Hom_X(T^\circ,U)\to \Hom_X(T^\circ,U^{\rm univ}_{/X})$. We also have a bijection
\[
    \Hom_X(T,U^{\rm univ}_{/X})\to \Hom_X(T^\circ,U^{\rm univ}_{/X})
\]
by Proposition \ref{valuative criterion prop}~\ref{valuative criterion prop:3} applied to the partially proper map $U^{\rm univ}_{/X}\to X$. In particular, we get an injective map
\begin{equation*}
    \Hom_X(T^\circ,U)\to \Hom_X(T^\circ,U^{\rm univ}_{/X})\simeq \Hom_X(T,U^{\rm univ}_{/X})
\end{equation*}
This map is clearly functorial, and thus it suffices to show that it is a bijection. To do this, it suffices to show that for every map $f\colon T\to U^{\rm univ}_{/X}$ over $X$, that $f(T^\circ)\subseteq U$. 

To see this, note that by Theorem \ref{thm:Huber-univ-compact} that every maximal point of $U^{\rm univ}_{/X}$ is a point $U$. In particular, since $f$ is valuative, we see that $f^{-1}(U)$ contains every maximal point of $T$. But, by Theorem \ref{thm:Huber-univ-compact} the map $U\to U^{\rm univ}_{/X}$ is quasi-compact. Thus, $f^{-1}(U)\to T$ is quasi-compact, and thus $V=f^{-1}(U)$ is quasi-compact. 

It remains to show $T^\circ\subseteq V$. Since $T$ is quasi-separated, $V\cap T^\circ$ is a quasi-compact open subset of $T^\circ$. By the above, we see that $V\cap T^\circ$ contains all maximal and hence all classical points of $T^\circ$, and hence is equal to it by \cite[Theorem~4.3]{Huber93}.
\end{proof}

Note that if $X$ is a rigid $K$-space, then for any morphism of rigid $K$-spaces $U\to V$ which is separated and taut over $X$ Lemma \ref{lem:univcomp-reflex} implies that one obtains an induced morphism of adic spaces $f^{\rm univ}_{/X}\colon U^{\rm univ}_{/X}\to V^{\rm univ}_{/X}$. If $X=\Spa(K)$ we shall shorten this notation to just $f^{\rm univ}:U^{\rm univ}\to V^{\rm univ}$.

\begin{prop} \label{prop:-univcomp}
Let $U$ and $X$ be a rigid $K$-spaces. Let $U\to X$ be a separated and taut morphism. Then, the following statements hold true.
\begin{enumerate}[(a)]
    \item \label{prop:-univcomp 1} Suppose that $U\to X$ is an open immersion. Then, $U^{\rm univ}_{/X}\to X$ is a homeomorphism onto the closure of $U$ in $X$.
    \item \label{prop:-univcomp 2} If $U$ is taut and separated over $\Spa(K)$, then we have an injective map $U^{\rm univ}_{/X}\to U^{\rm univ}$ under $U$.
    \item For $U=\Spa(A)$ we have a, functorial in $A$, identification $U^{\rm univ}=\Spa(A, A')$ where $A'\subseteq A^\circ$ is the integral closure of the $\h_K$-subalgebra of $A$ generated by $A^{\circ\circ}$.
    \item \label{prop:-univcomp 4} If $U\to V$ is a finite morphism of affinoid $K$-spaces, then the induced map $U^{\rm univ}\to V^{\rm univ}$ is finite.
\end{enumerate}
\end{prop}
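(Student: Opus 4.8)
The plan is to prove the four assertions in the order (c), (d), (a), (b), the three recurring tools being the reflexivity Lemma~\ref{lem:univcomp-reflex}, Huber's structure Theorem~\ref{thm:Huber-univ-compact}, and the valuative criterion of Proposition~\ref{valuative criterion prop}. For (c): pick a pseudo-uniformizer $\varpi$ of $K$; since $\varpi A^\circ\subseteq A^{\circ\circ}\subseteq A'$, the ring $A'$ is open, integrally closed in $A$, and contains $\h_K$, so $(A,A')$ is a Huber pair and, $A$ being strongly Noetherian, $\Spa(A,A')$ is a rigid $K$-space. The one elementary observation that does all the work is that \emph{every} continuous $K$-algebra map $\varphi\colon A\to R$ into a complete Huber pair $(R,R^+)$ automatically satisfies $\varphi(A')\subseteq R^+$: indeed $\varphi(A^{\circ\circ})\subseteq R^{\circ\circ}\subseteq R^+$ by continuity, $\varphi(\h_K)\subseteq R^+$ because $\varphi$ is over $K$, and $R^+$ is integrally closed. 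Feeding this into Proposition~\ref{valuative criterion prop}(3) twice shows first that $\Spa(A,A')\to\Spa(K)$ is partially proper, and second — applying the criterion to a partially proper $W\to\Spa(K)$ with test pair $(R,R^+)=(A,A')$, so that $\Spa(R)=\Spa(A)$ — that restriction along $\Spa(A)\to\Spa(A,A')$ is a bijection $\Hom_{\Spa(K)}(\Spa(A,A'),W)\isomto\Hom_{\Spa(K)}(\Spa(A),W)$. Hence $\Spa(A,A')$ and $U^{\rm univ}$ corepresent the same functor on partially proper $K$-spaces and are canonically isomorphic under $\Spa(A)$; functoriality in $A$ is read off the construction of $A'$.

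For (d), write $U=\Spa(A)$, $V=\Spa(C)$ with $A$ module-finite over $C$. Every point of $\Spa(A)$ lies over a point of $\Spa(C)$, so $|c|_{\sup,A}\le |c|_{\sup,C}$ for $c\in C$, giving $C^{\circ\circ}\subseteq A^{\circ\circ}$ and hence $C'\subseteq A'$, i.e.\ a morphism of Huber pairs $(C,C')\to(A,A')$. The substance is that $A'$ equals the integral closure of $C'$ in $A$: for $a\in A^{\circ\circ}$ the characteristic polynomial of multiplication by $a$ on the finite $C$-module $A$ has coefficients in $C$ whose spectral seminorms are bounded by positive powers of $|a|_{\sup,A}<1$, so they lie in $C^{\circ\circ}\subseteq C'$; thus $a$, and therefore $\h_K[A^{\circ\circ}]$ and its integral closure $A'$, is integral over $C'$, while the reverse inclusion is formal from $C'\subseteq A'$. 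The standard criterion for a morphism of affinoid adic spaces to be finite (module-finite on rings, with the target ring of integral elements the integral closure of the image of the source one) then gives (d). The identical bookkeeping with $\h_K$ replaced by the image of $B^\circ$ shows, more generally, that for an affinoid $X=\Spa(B)$ one has $U^{\rm univ}_{/X}=\Spa(A,A'')$ where $A''$ is the integral closure of $B^\circ[A^{\circ\circ}]$ in $A$ (and $A'\subseteq A''$); this relative description is used in (b).

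For (a), with $U\hookrightarrow X$ an open immersion, Lemma~\ref{lem:univcomp-reflex} identifies $\Hom(T,U^{\rm univ}_{/X})$ with the set of maps $T\to X$ carrying $T^\circ$ into $U$, a subfunctor of $\Hom(-,X)$ on $\mc{A}$; hence $\pi\colon U^{\rm univ}_{/X}\to X$ is a monomorphism, in particular injective on points. By Theorem~\ref{thm:Huber-univ-compact} every point of $U^{\rm univ}_{/X}$ specializes from a point of $U$, so $\pi(U^{\rm univ}_{/X})\subseteq\overline U$; conversely $\pi$, being partially proper, lifts specializations, and (using that $U$ is retrocompact in the valuative space $X$) every point of $\overline U$ specializes from a point of $U$, so the image is exactly $\overline U$. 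Finally a monomorphism that is partially proper — so separated and locally of finite type — is a locally closed immersion, whence $\pi$ is a homeomorphism onto the locally closed, here closed, subset $\overline U$. I expect this last step to be the main obstacle: identifying the image of $\pi$ and upgrading the monomorphism to a homeomorphism is exactly where one must contend with the failure of quasi-separatedness (in general the closure of an open subset of $X$ is strictly larger than the set of its specializations), and it is why retrocompactness of $U$ and tautness of $X$ enter in the intended applications.

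For (b) I would first build the map: forgetting the $X$-component, $(f,\tilde f)\mapsto\tilde f$ is a natural transformation $\Hom(T,U^{\rm univ}_{/X})\to\Hom(T,U^{\rm univ})=\Hom(T^\circ,U)$ on objects of $\mc{A}$ over $K$, which glues over affinoid opens to a morphism $\Phi\colon U^{\rm univ}_{/X}\to U^{\rm univ}$ restricting to the identity on $U$. For injectivity I would reduce to the affinoid situation: over $X=\Spa(B)$ and with $U=\Spa(A)$, the relative description above shows $\Phi$ is the inclusion $\Spa(A,A'')\hookrightarrow\Spa(A,A')$ coming from the enlargement $A'\subseteq A''$ of rings of integral elements, which is injective on points; patching these local injections — again the point where tautness and retrocompactness are invoked, and the same topological bookkeeping as in (a) — gives injectivity of $\Phi$ in general. (As a sanity check, for $U$ a retrocompact non-overconvergent open such as a closed disk sitting inside a larger one, part (a) describes $U^{\rm univ}_{/X}$ as the closure obtained by adjoining an outward rank-two boundary point in the spirit of Example~\ref{ex:Piotr-comp-disc}, and that closure indeed embeds into $U^{\rm univ}$.)
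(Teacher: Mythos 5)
Parts (c) and the overall strategy of (d) (reduce finiteness of $U^{\rm univ}\to V^{\rm univ}$ to integrality of $C'\to A'$) agree with the paper, and your (c) is correct. There are, however, three concrete gaps in the remaining parts.

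In (d), the step ``the characteristic polynomial of multiplication by $a$ on the finite $C$-module $A$ has coefficients in $C$ whose spectral seminorms are bounded by positive powers of $|a|_{\sup,A}$'' is not justified: a finite morphism of affinoids need not be flat, so $A$ need not be free (or flat) over $C$ and there is no canonical characteristic polynomial; for an arbitrary lift of $m_a$ to an endomorphism of a presentation $C^n\twoheadrightarrow A$, the extra eigenvalues coming from the kernel are uncontrolled, so the coefficients need not be topologically nilpotent. Your fiberwise eigenvalue computation is valid in the finite flat case, and the general case can be repaired (e.g.\ by strictness of $C^n\to A$ one can write $a^Ne_i=\sum c_{ij}e_j$ with $c_{ij}\in C^{\circ\circ}$ for $N\gg 0$, then use transitivity of integrality), but some such device is required. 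The paper's device is different and cleaner: write $f^m=cg$ with $c\in K$, $|c|<1$, $g\in A^\circ$, take a monic equation $P$ for $g$ over the target's power-bounded elements (integrality being \cite[\S6.3.4, Proposition~1]{BGR}), and observe that $c^nP(T^m/c)$ is monic in $f$ with non-leading coefficients topologically nilpotent.

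In (a), your conclusion rests on ``a partially proper monomorphism is a locally closed immersion,'' a nontrivial assertion that you neither prove nor cite (and the suggestion that quasi-separatedness is the main obstacle is off the mark). The paper avoids it entirely: $U^{\rm univ}_{/X}\to X$ is quasi-compact by \cite[Corollary~5.1.6]{Huberbook}, and an injective quasi-compact map with closed image $\ov{U}$ is a homeomorphism onto its image by \cite[Lemma~1.3.15]{Huberbook}. In (b), the injectivity argument does not close: injectivity of a map of adic spaces is not local on the source, so ``patching'' the local injections $\Spa(A,A'')\hookrightarrow\Spa(A,A')$ over an affinoid cover proves nothing without a further separation argument; moreover your relative description only covers the case where $X$ (hence $U$) is affinoid, while $U\to X$ is an arbitrary separated taut morphism. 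The paper instead reads off injectivity directly on the functor of points of Lemma~\ref{lem:univcomp-reflex}: the map is $(f,\tilde f)\mapsto(\pi_X\circ f,\tilde f)$, and $f$ is recovered from $\tilde f$ using the valuative criterion of Proposition~\ref{valuative criterion prop} for the separated map $U\to X$.
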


\begin{proof} 
To prove the first statement, note that by Lemma~\ref{lem:univcomp-reflex} the map 
\[
    \Hom(\Spa(R,R^+),U^{\rm univ}_{/X})\to \Hom(\Spa(R,R^+),X)
\]
is injective for all objects $\Spa(R,R^+)$ of $\mc{A}$. Thus, $U^{\rm univ}_{/X}\to X$ is injective. To see the image is $\ov{U}$, note that by Lemma~\ref{lem:univcomp-reflex} applied to $T=\Spa(k(x), k(x)^+)$ for $x\in X$ shows that $x^\mx\in U$ if and only if $T\to X$ lifts to $U^{\rm univ}_{/X}$, in which case that lifting is unique. Combining this with \cite[Chapter 0, Corollary 2.2.27]{FujiwaraKato} shows that the image is indeed $\ov{U}$. The map $U^{\rm univ}_{/X}\to X$ is quasi-compact by \cite[Corollary 5.1.6]{Huberbook}, and thus it is a~homeomorphism onto $\ov{U}$ by \cite[Lemma 1.3.15]{Huberbook}

For the second statement, using the notation of Lemma~\ref{lem:univcomp-reflex}, the map $U^{\rm univ}_{/X}\to U^{\rm univ}$ corresponds via the Yoneda lemma to the map $(f, \tilde f)\mapsto (\pi_X\circ f, \tilde f)$ (where $\pi_X\colon X\to \Spa(K)$ is the structure map). This map is clearly under $U$, and is injective by Proposition \ref{valuative criterion prop}~\ref{valuative criterion prop:3} since $U\to X$ is separated

For the third statement, note that every map $(A, A^\circ)\to (R, R^\circ)$ over $(K, \cO_K)$ sends $A'$ to $R'$, and $R'\subseteq R^+$ because $R'$ is the smallest subring of integral elements of $R$ containing $\cO_K$. This shows that $(A,A')$ represents the correct presheaf as in Lemma \ref{lem:univcomp-reflex}.

To see the final claim, write $V=\Spa(B)$ and $U=\Spa(A)$. We know that $B\to A$ is finite and hence that $B^\circ\to A^\circ$ is integral by \cite[\S6.3.4, Proposition 1]{BGR}. We claim that this implies $B'\to A'$ is integral, which is sufficient to prove our claim. But, for this it suffices to show that every element of $A^{\circ\circ}$ satisfies a monic equation with non-leading coefficients in $B^{\circ\circ}$. Let $f\in A^{\circ\circ}$, then we can write $f^m=cg$ where $m\geq 1$ and $c\in K$ with $|c|<1$ and $g\in A^\circ$ (e.g.\@ apply the contents of \cite[\S6.2.3]{BGR}). Let $P=T^n + a_1 t^{n-1} + \ldots + a_n \in B^\circ[T]$ be a~monic polynomial with $P(g)=0$. Then $f$ satisfies the monic polynomial $c^nP(T^m/c)$ whose non-leading coefficients lie in $B^{\circ\circ}$.
\end{proof}

\begin{cor} \label{cor:curvecomp}
    Let $U$ be a quasi-compact and quasi-separated rigid $K$-curve. Then $U^{\rm univ}\setminus U$ is finite. 
\end{cor}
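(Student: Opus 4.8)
The plan is to reduce to the affinoid case and then exhibit $U^{\rm univ}\setminus U$ as a finite set by combining the explicit description of $U^{\rm univ}$ from Proposition~\ref{prop:-univcomp} with the one-dimensionality of $U$. First I would cover $U$ by finitely many affinoid opens $U_i = \Spa(A_i)$ (possible since $U$ is quasi-compact and quasi-separated, so in particular taut and separated over $\Spa(K)$ by the curve tautness discussed in \S\ref{taut subsection}); then, since the formation of universal compactifications is compatible with open immersions in the sense of Proposition~\ref{prop:-univcomp}~\ref{prop:-univcomp 1}, the $U_i^{\rm univ}\to U^{\rm univ}$ realize $U^{\rm univ}$ as glued from the closures $\ov{U_i}$, and it suffices to bound $U_i^{\rm univ}\setminus U_i$ for each $i$. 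So we are reduced to: for an affinoid rigid $K$-curve $U = \Spa(A)$, the set $U^{\rm univ}\setminus U$ is finite.

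Next I would invoke Proposition~\ref{prop:-univcomp}, part (c): $U^{\rm univ} = \Spa(A,A')$ where $A'\subseteq A^\circ$ is the integral closure of the $\h_K$-subalgebra generated by $A^{\circ\circ}$. Thus $U^{\rm univ}\to U$ is the inclusion of adic spectra coming from the ring inclusion $A'\hookrightarrow A^\circ$, and a point of $U^{\rm univ}\setminus U$ is precisely a valuation on $A$ bounded on $A'$ but not on $A^\circ$. Such points are rank-two valuations lying in the "boundary" — they specialize to (or generalize) points of $U$, and the key is that each fiber of $U^{\rm univ}\to U$ over a point $x\in U$ has at most one extra point, while the locus of $x\in U$ where the fiber is nontrivial is finite. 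Concretely: $U^{\rm univ}\setminus U$ maps to $U$, and Theorem~\ref{thm:Huber-univ-compact} tells us every point of $U^{\rm univ}$ is a specialization of a point of $U$; for a curve, the points admitting such a proper "vertical" specialization in $U^{\rm univ}$ are exactly the finitely many points of "type~$2$/$3$" corresponding to the finitely many boundary points, i.e.\ where the completion of $A$ along the relevant maximal generization has a non-trivial higher-rank extension. The cleanest route is to base change to $\ov{U} := U\times_{\Spa(K)}\Spa(\widehat{\ov{K}})$ or simply reduce mod nilpotents and normalize, so that $U$ becomes (a disjoint union of) smooth affinoid curves; for a smooth affinoid curve the structure of $U^{\rm univ}$ is classical (cf.\ Example~\ref{ex:Piotr-comp-disc}): $U^{\rm univ}\setminus U$ consists of one rank-two point $\nu_{x,?}$ for each of the finitely many "ends" of $U$, these ends being in bijection with the finitely many points of the boundary $\partial U$ of the affinoid in its smooth compactification.

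I expect the main obstacle to be making the last step — "for a curve the set of boundary points is finite" — fully rigorous at the level of adic spaces without circular appeal to Berkovich-space intuition. The honest argument: pick a finite map $\phi\colon U\to \mathbf{D}^1_K$ (Noether normalization for the one-dimensional affinoid $A$, after reducing mod nilpotents); by Proposition~\ref{prop:-univcomp}~\ref{prop:-univcomp 4} the induced map $U^{\rm univ}\to (\mathbf{D}^1_K)^{\rm univ}$ is finite, and by Example~\ref{ex:Piotr-comp-disc} we have $(\mathbf{D}^1_K)^{\rm univ}\setminus \mathbf{D}^1_K = \{\nu_{0,1^+}\}$, a single point. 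Hence $U^{\rm univ}\setminus U$ is contained in the fiber of a finite morphism over one point, so it is finite. (One must check that $\phi^{\rm univ}$ carries $U^{\rm univ}\setminus U$ into $(\mathbf{D}^1_K)^{\rm univ}\setminus\mathbf{D}^1_K$, i.e.\ that a valuation bounded on $A'$ but unbounded on $A^\circ$ restricts to one bounded on the corresponding subring of $K\langle X\rangle$ but unbounded on $K\langle X\rangle^\circ$; this is immediate from $\phi^{-1}(\mathbf{D}^1_K) = U$ together with $\phi^{\rm univ}$ being defined over $X=\Spa K$, or directly from the presheaf description in Lemma~\ref{lem:univcomp-reflex}. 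Also, the reduction mod nilpotents is harmless because $U^{\rm univ}$ and $U^{\rm univ}\setminus U$ depend only on the underlying topological space, which is unchanged by $A\rightsquigarrow A_{\mathrm{red}}$.) This reduces everything to the single computed example, which is the cleanest possible endpoint.
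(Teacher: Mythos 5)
Your final ``honest argument'' is exactly the paper's proof: reduce to the affinoid case via Proposition~\ref{prop:-univcomp}~\ref{prop:-univcomp 1}--\ref{prop:-univcomp 2}, take a finite Noether normalization $f\colon U\to \mathbf{D}^1_K$, and use Proposition~\ref{prop:-univcomp}~\ref{prop:-univcomp 4} together with Example~\ref{ex:Piotr-comp-disc} to see that $U^{\rm univ}\setminus U$ sits inside the finite fiber $(f^{\rm univ})^{-1}(\nu_{0,1^+})$. The one step you wave at --- that $(f^{\rm univ})^{-1}(\mathbf{D}^1_K)=U$ --- is not quite ``immediate from the presheaf description'': the paper deduces it from the fact that $U$ is closed in $(f^{\rm univ})^{-1}(\mathbf{D}^1_K)$ (being finite over $\mathbf{D}^1_K$) and dense in $U^{\rm univ}=\ov{U}$; alternatively one can use that $A^\circ$ is integral over $K\langle X\rangle^\circ$, so a continuous valuation that is $\leq 1$ on $K\langle X\rangle^\circ$ is $\leq 1$ on $A^\circ$.
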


\begin{proof}
By \cite[Chapter 0, Corollary 2.2.27]{FujiwaraKato} and Theorem \ref{thm:Huber-univ-compact} we see $\ov{U}=U^{\rm univ}$. Thus, we must show that $\ov{U}\setminus U$ is finite. If $\{V_i\}$ is a finite affinoid open cover of $U$, it suffices to show that $\ov{V_i}\setminus V_i$ is finite or all $i$. By combining Proposition \ref{prop:-univcomp} \ref{prop:-univcomp 1} and Proposition \ref{prop:-univcomp 2} it suffices to show that $V_i^{\rm univ}\setminus V_i$ is finite. Thus, we may assume that $U$ is affinoid.

Let $f\colon U\to D:=\mathbf{D}^1_K$ be a finite surjective map (see \cite[\S6.1.2, Corollary 2]{BGR}). By Proposition~\ref{prop:-univcomp} \ref{prop:-univcomp 4}, the induced map $f^{\rm univ}\colon U^{\rm univ}\to D^{\rm univ}$ is finite. As $f$ is finite, $U\to (f^{\rm univ})^{-1}(D)$ is finite, and thus has closed image, and therefore $\ov{U}\cap (f^{\rm univ})^{-1}(D)=U$. But, by combining \cite[Chapter 0, Corollary 2.2.27]{FujiwaraKato} and Theorem \ref{thm:Huber-univ-compact} we know that $\ov{U}=U^{\rm univ}$. Thus, $(f^{\rm univ})^{-1}(D)=U$, and so 
\[
    U^{\rm univ}\setminus U = (f^{\rm univ})^{-1}(D^{\rm univ}\setminus D) = (f^{\rm univ})^{-1}(\nu_{0, 1^+}) \quad \text{(see Example~\ref{ex:Piotr-comp-disc})}
\]
which is a finite set because $f^{\rm univ}$ has finite fibers by \cite[Lemma 1.5.2]{Huberbook}.
\end{proof}

We now prove that every quasi-separated rigid $K$-curve is taut.

\begin{prop} \label{prop:curves-are-taut}
    Let $X$ be a quasi-separated rigid $K$-curve. Then, $X$ is taut.
\end{prop}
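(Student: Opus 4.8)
The plan is to reduce the tautness of $X$ to the case of a quasi-compact quasi-separated rigid $K$-curve, and then to exploit Corollary~\ref{cor:curvecomp} together with the properties of Huber's universal compactification established in Proposition~\ref{prop:-univcomp}. Recall that tautness asks for two things: quasi-separatedness, which we already have by hypothesis, and that the closure of every quasi-compact open subset is quasi-compact. So the real content is the second condition. First I would observe that it suffices to prove it when $X$ itself is quasi-compact (hence quasi-separated and, being a curve, everything we need): indeed, if $V\subseteq X$ is a quasi-compact open, then $V$ is contained in a finite union $W$ of affinoid opens of $X$, which is itself a quasi-compact quasi-separated open rigid $K$-curve; the closure of $V$ in $X$ agrees with the closure of $V$ in $W$ provided $W$ is retrocompact (equivalently, the inclusion $W\hookrightarrow X$ is quasi-compact), which holds because $X$ is quasi-separated. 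So one is reduced to showing: a quasi-compact quasi-separated rigid $K$-curve $X$ is taut, i.e.\ the closure of every quasi-compact open $V\subseteq X$ is quasi-compact.

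Now for such $X$ and such a $V$, the inclusion $V\hookrightarrow X$ is a separated and taut (quasi-compact, quasi-separated) morphism, so by Theorem~\ref{thm:Huber-univ-compact} the universal compactification $V^{\rm univ}_{/X}$ exists and is a quasi-compact open embedding into a partially proper $X$-space, and by Proposition~\ref{prop:-univcomp}\ref{prop:-univcomp 1} the map $V^{\rm univ}_{/X}\to X$ is a homeomorphism onto the closure $\ov{V}$ of $V$ in $X$. Hence it is enough to see that $V^{\rm univ}_{/X}$ is quasi-compact. Since $V$ is a quasi-compact quasi-separated rigid $K$-curve, by Proposition~\ref{prop:-univcomp}\ref{prop:-univcomp 2} we have an injective map $V^{\rm univ}_{/X}\to V^{\rm univ}$ under $V$, and by Corollary~\ref{cor:curvecomp} the set $V^{\rm univ}\setminus V$ is finite; therefore $V^{\rm univ}_{/X}\setminus V$ is finite as well. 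Thus $V^{\rm univ}_{/X}$ is the union of the quasi-compact open $V$ and finitely many points, each of which, by Theorem~\ref{thm:Huber-univ-compact}, is a specialization of a point of $V$; covering each such extra point by a quasi-compact open neighborhood inside $V^{\rm univ}_{/X}$ exhibits $V^{\rm univ}_{/X}$ as a finite union of quasi-compact opens, hence quasi-compact. This proves $\ov{V}$ is quasi-compact, and therefore $X$ is taut.

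The step I expect to be the main obstacle is the bookkeeping in the reduction to the quasi-compact case — specifically, making sure that the closure operation is insensitive to passing to a retrocompact open curve $W$ containing $V$, and that such a $W$ can be chosen to be quasi-compact, quasi-separated, and separated enough for Theorem~\ref{thm:Huber-univ-compact} to apply (one may need to shrink $W$ further, or first cover $V$ by finitely many affinoids and argue on closures of each piece separately, using that a finite union of quasi-compact sets is quasi-compact). An alternative, possibly cleaner route that avoids the reduction: cover $X$ by affinoid opens, note tautness is local in a suitable sense for quasi-separated spaces (cf.\ the paracompactness criterion mentioned after the definition of taut), or invoke directly that a quasi-separated rigid $K$-curve is paracompact and hence taut by \cite[Chapter~0, Proposition~2.5.15]{FujiwaraKato}; but establishing paracompactness of curves seems to require essentially the same compactification input, so I would present the argument via universal compactifications as above.
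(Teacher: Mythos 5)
Your core argument is exactly the paper's: for a quasi-compact open $V\subseteq X$, Proposition~\ref{prop:-univcomp}~\ref{prop:-univcomp 1} identifies $\ov{V}\setminus V$ with $V^{\rm univ}_{/X}\setminus V$, Proposition~\ref{prop:-univcomp}~\ref{prop:-univcomp 2} puts this inside $V^{\rm univ}\setminus V$, and Corollary~\ref{cor:curvecomp} makes the latter finite, so $\ov{V}$ is quasi-compact. However, your preliminary reduction to $X$ quasi-compact rests on a false claim: the closure of $V$ in an open $W\supseteq V$ is $\ov{V}\cap W$, and retrocompactness of $W$ does not force this to equal $\ov{V}$ (take $V=W$ to be an affinoid subdomain of the closed unit disk; its closure in $X$ picks up rank-two points outside $W$). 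You correctly flag this as the weak spot, but the right fix is to drop the reduction entirely, as the paper does: the inclusion $V\hookrightarrow X$ of a quasi-compact open into a quasi-separated space is quasi-compact, quasi-separated, and an open immersion, hence taut and separated as a morphism, so Theorem~\ref{thm:Huber-univ-compact} and Proposition~\ref{prop:-univcomp} apply to $V\to X$ with no quasi-compactness hypothesis on $X$ (and $V$ itself, being quasi-compact and quasi-separated, is taut as a space, which is what Proposition~\ref{prop:-univcomp}~\ref{prop:-univcomp 2} and Corollary~\ref{cor:curvecomp} need). Your closing maneuver is also more elaborate than necessary: a finite set of points is quasi-compact in any topological space, so $\ov{V}=V\cup(\ov{V}\setminus V)$ is quasi-compact without finding quasi-compact neighborhoods of the extra points.
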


\begin{proof}
As $X$ is quasi-separated, it suffices to show that for every quasi-compact open $U\subseteq X$, the closure $\ov{U}$ is quasi-compact. By Proposition \ref{prop:-univcomp} \ref{prop:-univcomp 1}, $\ov U \setminus U = U^{\rm univ}_{/X}\setminus U$, which by Proposition~\ref{prop:-univcomp} \ref{prop:-univcomp 2} is contained in $U^{\rm univ}\setminus U$. The latter set is finite by Corollary~\ref{cor:curvecomp}. So $\ov{U} = U\cup (\ov{U}\setminus U)$ is quasi-compact, as the union of a quasi-compact open and a finite set.
\end{proof}

\subsection{Goodness of curves}
\label{ss:oc-taut-good}

While Proposition \ref{taut qc oc neighborhood basis} implies that every maximal point of a taut adic space is contained in a quasi-compact open oc neighborhood, it does not imply that one can take such a quasi-compact open oc neighborhood to be affinoid. 

\begin{definition}
    A taut adic space $X$ is called \emph{good} (cf.\@ \cite[Remark 1.2.16]{BerkovichEtale}) if every point $x$ of $X$ has an open oc neighborhood basis of affinoid open oc neighborhoods.
\end{definition} 

A taut adic space $X$ is good if and only if every point $x$ admits some affinoid oc open neighborhood. It will also be useful for our construction of geometric intervals on rigid $K$-curves (see Proposition \ref{geometric intervals exist}) to recall that any reasonably nice rigid $K$-curve is automatically good.

\begin{prop}[{cf.\@ \cite[Corollary 3.4]{deJongFundamental}}] \label{curves are good} 
    Let $X$ be a smooth and separated rigid $K$-curve and let $\Sigma$ be a connected quasi-compact subset of $X$. Suppose that $\sep_X(\Sigma)$ is not a connected component of $[X]$, then the set of affinoid open oc neighborhoods of $\Sigma$ forms an open oc neighborhood basis of $\Sigma$. In particular, $X$ is good.
\end{prop}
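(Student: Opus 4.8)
The statement is essentially local-to-global: I want to show every point $x$ of a smooth separated rigid $K$-curve $X$ has a cofinal system of affinoid open oc neighborhoods, under the hypothesis that $\sep_X(\Sigma)$ is not a connected component of $[X]$. The first reduction is to note, via Proposition~\ref{taut qc oc neighborhood basis}, that it suffices to take a quasi-compact open oc neighborhood $U$ of $\Sigma$ and find inside $\interior_X(U)$ an affinoid open oc neighborhood of $\Sigma$. Since $\Sigma$ is connected and quasi-compact we may also assume $U$ is connected and, shrinking further, that $U$ is itself quasi-compact; because $X$ is a quasi-separated curve, Proposition~\ref{prop:curves-are-taut} tells us $U$ is taut, and Corollary~\ref{cor:curvecomp} tells us $U^{\rm univ}\setminus U$ is a finite set of points, say $\{\nu_1,\dots,\nu_r\}$, each of which specializes from a point of $U$. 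Here is where the hypothesis on $\sep_X(\Sigma)$ enters: I want to know that there is at least one point of $X$ outside $\sep_X^{-1}(\sep_X(U))$ lying in the closure-direction, i.e.\ that the quasi-compact open $U$ does not exhaust its connected component of $[X]$ — otherwise $\sep_X(U)$, being clopen, would be a connected component, contradicting the hypothesis applied after noting $\sep_X(\Sigma) \subseteq \sep_X(U)$.

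**Constructing the affinoid neighborhood.** The idea now is to ``puncture'' $U^{\rm univ}$ (or a suitable affinoid piece) at a boundary point to make it affinoid while keeping it an oc neighborhood of $\Sigma$. Concretely: cover $U$ by finitely many affinoid opens; it suffices (arguing componentwise and using that $\Sigma$ is connected, hence lands in a bounded union of these) to treat an affinoid $V = \Spa(A)$ containing a part of $\Sigma$ near the relevant boundary point. On a smooth curve, $A^{\rm univ} = \Spa(A, A')$ (Proposition~\ref{prop:-univcomp}) is again an affinoid, and the finitely many points of $V^{\rm univ}\setminus V$ are rank-two ``type-5''-like boundary points. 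The key geometric input from de~Jong's \cite[Corollary~3.4]{deJongFundamental} (which this proposition is modeled on) is that, for a smooth curve, one can enlarge $V$ to a larger affinoid open oc neighborhood by pushing across one of these boundary points, provided we have ``room'' — and the hypothesis that $\sep_X(\Sigma)$ is not a whole connected component of $[X]$ is exactly what guarantees there is a direction in which we are not forced to take the entire component. I would then glue these enlarged affinoid pieces: on a curve, a connected finite union of affinoid opens that is itself an oc subset and covers $\Sigma$ can be arranged to be affinoid, again by smoothness (one can realize it as the preimage of an affinoid subdomain of $\mathbf{D}^1_K$ under a finite map $X\supseteq U \to \mathbf{D}^1_K$, as in the proof of Corollary~\ref{cor:curvecomp}).

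**Where the real work is.** The main obstacle is precisely the step of enlarging across a boundary point while staying affinoid and oc — this is the content of \cite[Corollary~3.4]{deJongFundamental} and is genuinely curve-specific: it uses that the boundary $V^{\rm univ}\setminus V$ is finite and that near each such point the curve looks, after a finite map, like a punctured disk, so that a slightly larger disk (or annulus) pulls back to an affinoid. Making this precise requires care about what ``slightly larger'' means at a rank-two point and about why the enlargement remains an \emph{overconvergent} open (i.e.\ one must check condition (3) of Proposition~\ref{oc equiv}, that $\overline{\{x^\mx\}}$ stays inside, for the new points) — this is where tautness of $X$ (Proposition~\ref{prop:curves-are-taut}) and the description of $\overline{\{x^\mx\}}$ as a Riemann--Zariski fiber get used. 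The hypothesis $\sep_X(\Sigma)\neq$ a component of $[X]$ is used exactly once but crucially: without it (e.g.\ $X$ proper, $\Sigma = X$) the only oc neighborhood of $\Sigma$ is $X$ itself, which need not be affinoid, so the enlargement has nowhere to go. I would organize the write-up as: (i) reduce to quasi-compact connected $U$ via Proposition~\ref{taut qc oc neighborhood basis}; (ii) invoke the hypothesis to find a boundary point of $U$ (inside $[X]$) whose complement-direction is nonempty; (iii) apply de~Jong's enlargement across that point on an affinoid chart and glue via a finite map to $\mathbf{D}^1_K$; (iv) conclude the ``in particular'' by noting any point $x$ of a smooth separated curve has some quasi-compact connected $\Sigma = \{x^\mx\}$-neighborhood not filling a component (if $[X]$ has a component that is a point, $X$ near there is already affinoid trivially; otherwise apply (i)--(iii)).
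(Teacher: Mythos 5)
There is a genuine gap. The key input you are missing is the Fresnel--Matignon dichotomy (\cite[Th\'eor\`eme 2]{FresnelMatignon}): a connected quasi-compact (quasi-separated) rigid $K$-curve is either affinoid or projective. The paper's proof is exactly your step (i) — shrink to a connected quasi-compact open oc neighborhood $W$ of $\Sigma$ via Proposition~\ref{taut qc oc neighborhood basis} — followed by this dichotomy: if $W$ is affinoid we are done; if $W$ is projective (hence a connected component of $X$), the hypothesis on $\Sigma$ produces a classical point $p\in W\setminus\Sigma$ (else $\sep_W(\Sigma)$ would contain the dense set of classical points of $[W]$ and hence equal $[W]$), and one shrinks once more to a quasi-compact open oc neighborhood inside $W\setminus\{p\}$, which can no longer be projective and so is affinoid. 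Your substitute for this — enlarging across points of $U^{\rm univ}\setminus U$ and then gluing the affinoid pieces ``via a finite map to $\mathbf{D}^1_K$ as in Corollary~\ref{cor:curvecomp}'' — is circular at the decisive step: the finite map to $\mathbf{D}^1_K$ comes from Noether normalization and so presupposes that the quasi-compact open in question is affinoid, which is precisely what has to be proved. You never actually establish that a connected quasi-compact open which is not a whole component is affinoid; that statement \emph{is} (the nontrivial half of) Fresnel--Matignon.

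There is also a logical error in how you invoke the hypothesis. You argue that if $U$ exhausted its connected component then ``$\sep_X(U)$, being clopen, would be a connected component, contradicting the hypothesis.'' But the hypothesis says only that $\sep_X(\Sigma)$ is not a component; the oc neighborhood $U\supseteq\Sigma$ may perfectly well be an entire (projective) connected component without any contradiction — e.g.\ $X=\mathbf{P}^{1,\mathrm{an}}_K$ and $\Sigma$ a single point. That is exactly the case the paper has to work to rule out, and it does so by the density-of-classical-points argument above, not by a formal clopen-ness observation about $U$. As written, your step (ii) would let you conclude falsely in that example that $U$ has a nonempty boundary direction to push into.
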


\begin{proof}
Let $U$ be an open oc neighborhood of $\Sigma$. By Proposition \ref{taut qc oc neighborhood basis} we can find a~quasi-compact open oc neighborhood $W$ of $\Sigma$ contained in $U$. By passing to a connected component of $W$ containing $\Sigma$ we may assume without loss of generality that $W$ is connected. By \cite[Th\'{e}or\`{e}me 2]{FresnelMatignon}, $W$ is either affinoid or projective. 

In the former case we are done, so suppose that $W$ is projective, and in particular a connected component of $X$. We claim that there exists a classical point $p$ of $W$ not contained in $\Sigma$. Otherwise $\sep_W(\Sigma)$ contains all classical points of $[W]$, and since these are dense, we have $\sep_W(\Sigma)=[W]$, contradicting that $\sep_X(\Sigma)$ is not a connected component of $[X]$. Replace $W$ again with a quasi-compact open oc neighborhood $W'$ of $\Sigma$ but this time in $W\setminus \{p\}$. Now $W'$ cannot be projective (being contained in the affine $W\setminus\{p\}$), so it is affinoid.
\end{proof}

\subsection{\'Etale and partially proper maps}
\label{ss:et-pp}

In this subsection we discuss some special properties of \'etale and partially proper morphisms of rigid $K$-spaces\footnote{We note as a side remark that as proved in  \cite[p.\ 427]{Huberbook}, \'etale and partially proper morphisms (of taut rigid $K$-spaces) correspond to \'etale morphisms of Berkovich $K$-analytic spaces.}.

\medskip

\paragraph{\'Etale localness of \'etale and partially proper maps}

\begin{prop} \label{prop:etale-etale-local}
    Let $Y\to X$ be a morphism locally of finite type between adic spaces and let $X'\to X$ be an \'etale surjection. If $Y_{X'}\to X'$ is \'etale, then $Y\to X$ is \'etale.
\end{prop}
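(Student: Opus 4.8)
The statement is that étaleness descends along étale surjections. The plan is to reduce immediately to the affinoid case and then invoke the corresponding fact for schemes via the dictionary between finite étale / étale morphisms of affinoid adic spaces and of the associated schemes — more precisely, the local characterization of étale morphisms of adic spaces as those which are locally of finite type, flat, and unramified, each of which is a property that can be checked on completed local rings (or on stalks), and each of which is already known to descend along faithfully flat / étale maps in commutative algebra.

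\medskip

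First I would note that being étale is local on the source and target of $Y \to X$ in the Zariski (equivalently, analytic) topology, and likewise the hypothesis of being locally of finite type is. So I may assume $X = \Spa(A, A^+)$ and $Y = \Spa(B, B^+)$ are affinoid, and, refining $X' \to X$, that $X' = \Spa(A', A'^+)$ is affinoid as well, with $A \to A'$ étale (in the adic sense) and $\Spa(A', A'^+) \to \Spa(A, A^+)$ surjective. Étaleness of a morphism of adic spaces $\Spa(B,B^+) \to \Spa(A,A^+)$ locally of finite type is detected by the induced maps on complete local rings $\widehat{\mathcal O}_{X, x} \to \widehat{\mathcal O}_{Y, y}$ being flat and unramified for all $y \mapsto x$ — this is Huber's characterization (\cite[Proposition 1.6.9, Lemma 2.2.8]{Huberbook}); alternatively, one uses that étale is equivalent to flat plus unramified, and unramified is equivalent to $\Omega_{B/A} = 0$ together with finite type. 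The key point is that each of these conditions is fppf-local on the base.

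\medskip

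The main step is then: given that $B \otimes_A A' $ (the coordinate ring of $Y_{X'}$) is étale over $A'$, and $A \to A'$ is étale hence faithfully flat after possibly covering (surjectivity of $\Spa(A',A'^+) \to \Spa(A,A^+)$ ensures $A \to A'$ is faithfully flat on the relevant local rings), deduce that $B$ is étale over $A$. Flatness of $A \to B$ follows by faithfully flat descent of flatness; finite presentation of $A \to B$ is part of the hypothesis (locally of finite type, and in the strongly Noetherian setting finite type implies finite presentation); and vanishing of $\Omega_{B/A}$ follows because $\Omega_{B/A} \otimes_A A' = \Omega_{(B\otimes_A A')/A'} = 0$ and $A \to A'$ is faithfully flat, so $\Omega_{B/A} = 0$. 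Combining, $B/A$ is flat, of finite presentation, and unramified, hence étale. One has to be slightly careful that "étale" for adic spaces is the completed notion, so the cleanest route is to phrase everything in terms of the complete local rings $\widehat{\mathcal O}_{X,x}$, which are Noetherian, and to use that $\widehat{\mathcal O}_{X,x} \to \widehat{\mathcal O}_{X', x'}$ is flat and local (indeed formally étale on complete local rings), so that the descent of flatness and of the vanishing of the module of differentials goes through at the level of these Noetherian local rings.

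\medskip

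\textbf{Expected main obstacle.} The routine part is the commutative-algebra descent; the genuine subtlety is matching the adic-geometric notion of étale (which involves completions and the condition "locally of finite type" in Huber's sense) with a statement checkable on complete local rings, and ensuring the étale surjection $X' \to X$ really does give faithful flatness on all the complete local rings that matter — i.e.\ that surjectivity of the map of adic spaces is enough, given that étale maps are open, to conclude that for every $x \in X$ there is $x' \in X'$ over it with $\widehat{\mathcal O}_{X,x} \to \widehat{\mathcal O}_{X',x'}$ faithfully flat. This is where I would be most careful, citing \cite[\S1.6, \S2.2]{Huberbook} for the local structure of étale morphisms and the behavior of completed local rings under étale maps.
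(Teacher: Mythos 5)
Your core idea --- reduce to a pointwise statement and run faithfully flat descent of ``flat plus unramified'' using Huber's local criterion for \'etaleness --- is the right kind of argument, and your fallback of working on stalks is essentially what the paper does. But the paper's route is organized differently in a way that sidesteps the places where your write-up is shaky. The paper isolates Lemma~\ref{etale image lemma}: if $Y_{X'}\to Y$ is flat and surjective over $X$ and the composite $Y_{X'}\to X$ is \'etale, then $Y\to X$ is \'etale. Here $Y_{X'}\to Y$ is the base change of the \'etale surjection $X'\to X$, hence flat and surjective, and $Y_{X'}\to X'\to X$ is a composite of \'etale maps. The proof then works entirely with the stalks $\h_{X,x}\to\h_{Y,y}\to\h_{Y_{X'},y'}$: flatness of the first map follows from the two-out-of-three property for flat local homomorphisms, and the fiber ring of $Y$ over $x$ embeds into that of $Y_{X'}$, which is a finite separable field extension, so one concludes by Huber's fiberwise criterion \cite[Proposition~1.7.5]{Huberbook}. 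No tensor products, completions, or modules of differentials ever appear.

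By contrast, several intermediate assertions in your plan are false or unjustified as literally stated. The coordinate ring of $Y_{X'}$ is the \emph{completed} tensor product $B\widehat{\otimes}_A A'$, not $B\otimes_A A'$, so the identity $\Omega_{B/A}\otimes_A A'=\Omega_{(B\otimes_A A')/A'}$ is not about the right object; one would need the continuous differentials and their behavior under completed base change. More seriously, an \'etale surjection of affinoid adic spaces $\Spa(A',A'^+)\to\Spa(A,A^+)$ does \emph{not} give an \'etale, or even faithfully flat, ring map $A\to A'$: by \cite[Lemma~2.2.8]{Huberbook} (Lemma~\ref{lem:twotwoeight} in the paper) such a map is only locally an open immersion into a finite \'etale cover, so the global descent step ``$B\otimes_A A'$ \'etale over $A'$ and $A\to A'$ faithfully flat imply $B$ \'etale over $A$'' does not apply as written. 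These issues disappear once you commit to arguing on stalks --- where flat local homomorphisms are automatically faithfully flat and the fiber criterion replaces the $\Omega=0$ condition --- which is exactly the form the paper's proof takes; I would encourage you to recast your argument in that shape rather than patching the global-ring version.
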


\begin{proof} 
This follows from Lemma \ref{etale image lemma}.
\end{proof}

\begin{lem} \label{etale image lemma}
    Let $Y\to X$ be a flat surjective map of adic spaces locally of finite type over the adic space $S$. Suppose that $Y\to S$ is \'etale, then $X\to S$ is \'etale.
\end{lem}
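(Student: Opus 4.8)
The statement to prove is Lemma~\ref{etale image lemma}: if $Y\to X$ is a flat surjection of adic spaces, both locally of finite type over $S$, and $Y\to S$ is \'etale, then $X\to S$ is \'etale. The plan is to reduce to a purely algebraic statement about (completed) local rings and then invoke faithfully flat descent of smoothness/\'etaleness. First I would recall that \'etaleness of $X\to S$ can be checked on completed local rings at points (or, using Huber's characterization, via the conditions: locally of finite type, flat, and unramified, the latter being $\Omega^1_{X/S}=0$), and similarly that flatness and $\Omega^1$-vanishing are visible on stalks. So the problem localizes: picking $y\in Y$ mapping to $x\in X$ mapping to $s\in S$, one is reduced to showing that if $\widehat{\cO}_{Y,y}$ is flat and formally \'etale over $\widehat{\cO}_{X,x}$, which in turn is flat over $\widehat{\cO}_{S,s}$, and the composite $\widehat{\cO}_{S,s}\to \widehat{\cO}_{Y,y}$ is formally \'etale, then $\widehat{\cO}_{S,s}\to \widehat{\cO}_{X,x}$ is formally \'etale.

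The key input is faithfully flat descent. Flatness of $X\to S$ at $x$ follows from flatness of $Y\to S$ at $y$ together with faithful flatness of $\widehat{\cO}_{X,x}\to\widehat{\cO}_{Y,y}$ (flatness descends along faithfully flat ring maps, using that $Y\to X$ being flat and surjective makes the relevant local ring extension faithfully flat). For unramifiedness, one uses the conormal/Kähler differential sequence: from $S\to X\to Y$ we get an exact sequence $\Omega^1_{X/S}\otimes_{\cO_X}\cO_Y \to \Omega^1_{Y/S}\to \Omega^1_{Y/X}\to 0$, and from $Y\to S$ \'etale we have $\Omega^1_{Y/S}=0$, hence $\Omega^1_{X/S}\otimes_{\cO_X}\cO_Y=0$; since $\cO_X\to\cO_Y$ is faithfully flat on the relevant stalks, this forces $\Omega^1_{X/S}=0$, i.e.\ $X\to S$ is unramified. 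Together with flatness and locally-of-finite-type, this gives \'etaleness of $X\to S$. (Alternatively, and perhaps more cleanly in Huber's framework, I would cite the infinitesimal lifting criterion for \'etale morphisms \cite[Proposition 1.6.7]{Huberbook} and run the standard scheme-theoretic argument: given a square-zero thickening $T_0\hookrightarrow T$ over $S$ with a map $T\to X$ and compatible $T_0\to X$, one must show the lift is unique and exists; uniqueness is checked after composing with the faithfully flat base change to $Y$, where \'etaleness of $Y\to S$ applies, and existence is obtained by a descent argument for the torsor of lifts, which splits because pulling back to $Y$ it splits and the obstruction lives in a module that injects after the faithfully flat extension.)

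The main obstacle I anticipate is making the descent step rigorous in the adic-space setting rather than for schemes: one must ensure that the relevant map of local rings $\widehat{\cO}_{X,x}\to\widehat{\cO}_{Y,y}$ is genuinely faithfully flat (flat we have by hypothesis on $Y\to X$; for faithfulness one needs that some point of $Y$ lies over $x$ and that flat local homomorphisms of the appropriate completed local rings are faithfully flat, which holds since a flat local homomorphism of local rings is automatically faithfully flat), and that Kähler differentials and flatness for morphisms locally of finite type of adic spaces behave compatibly with passing to these stalks — all of which is available in \cite[\S1.6]{Huberbook}, though one should cite it carefully. A secondary subtlety is that "\'etale" for Huber's adic spaces is defined as locally of finite type $+$ flat $+$ unramified (or via the lifting criterion), so one should fix that definition at the outset and check each of the three conditions in turn, the flatness and unramifiedness being the content and "locally of finite type" being part of the hypothesis.
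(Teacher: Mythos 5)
Your reduction to stalks and your treatment of flatness are sound and essentially match the paper: from $\h_{S,s}\to\h_{X,x}\to\h_{Y,y}$ the composite is flat, the second map is flat and local (hence faithfully flat), so the first map is flat by descent of flatness (\stacks{02JZ}, \stacks{00HR}).

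The gap is in the unramifiedness step. The first fundamental exact sequence for $Y\xrightarrow{g} X\to S$,
\[
g^*\Omega^1_{X/S}\to\Omega^1_{Y/S}\to\Omega^1_{Y/X}\to 0,
\]
is only right exact: the first map need not be injective, and flatness of $g$ alone does not make it so (left exactness typically requires something like smoothness of $g$, which is close to what you are trying to prove). So from $\Omega^1_{Y/S}=0$ you may conclude $\Omega^1_{Y/X}=0$, but the deduction ``hence $\Omega^1_{X/S}\otimes_{\h_X}\h_Y=0$'' is a non sequitur. The conclusion is nevertheless true, and the paper obtains it by working with fiber rings instead of differentials: base changing the flat local map $\h_{X,x}\to\h_{Y,y}$ along $\h_{S,s}\to k(s)$ gives a flat, hence injective, local homomorphism $\h_{X,x}/\mf{m}_s\h_{X,x}\to\h_{Y,y}/\mf{m}_s\h_{Y,y}$; the target is a finite separable field extension of $k(s)$ because $Y\to S$ is \'etale, so the source is a domain finite over $k(s)$, hence itself a finite separable field extension, and one concludes by the fiberwise criterion \cite[Proposition 1.7.5]{Huberbook}. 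I would also advise against routing the argument through completed local rings and formal \'etaleness: Huber's criterion works directly with the uncompleted stalks $\h_{X,x}$, and completions introduce comparison issues you do not need. Your parenthetical alternative via the infinitesimal lifting criterion and descent of the torsor of lifts could be made to work, but it requires the same kind of care at exactly the point where your differential argument breaks down.
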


\begin{proof}
We first show that $X\to S$ is flat. Let $x$ be a point of $X$ mapping to $s$ in $S$ and let $y$ be a point of $Y$ mapping to $x$. We then have a series of ring maps $\h_{S,s}\to \h_{X,x}\to \h_{Y,y}$. By assumption we know the composition is flat, and that the second map is flat, and thus must be the first by \stacks{02JZ} and \stacks{00HR}. Since $\h_{X,x}\to \h_{Y,y}$ is flat so is $\h_{X,x}/\mf{m}_s\h_{X,x}\to \h_{Y,y}/\mf{m}_s\h_{Y,y}$, where $\mf{m}_s$ is the maximal ideal of $\h_{S,s}$, but since this is a map of local rings it is automatically injective by loc.\@ cit. But, since $Y\to S$ is \'etale we know that $\h_{Y,y}/\mf{m}_s\h_{Y,y}$ is a finite separable field extension of $\h_{S,s}/\mf{m}_s$ and therefore so must be $\h_{X,x}/\mf{m}_s\h_{X,x}$. So, $X\to S$ is \'etale at $x$ by \cite[Proposition 1.7.5]{Huberbook}. Since $x$ was arbitrary the conclusion follows.
\end{proof}

The corresponding statement for partially proper morphisms is slightly more sophisticated, and requires Corollary \ref{boostrap corollary} below.

\begin{prop} \label{prop:pp-etale-local}
    Let $Y\to X$ be a locally of $^+$weakly finite type morphism of adic spaces. Suppose that $X'\to X$ is an \'etale surjection such that $Y_{X'}\to X'$ is partially proper. Then, $Y\to X$ is partially proper.
\end{prop}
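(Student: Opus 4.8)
The plan is to verify partial properness through the valuative criterion of Proposition~\ref{valuative criterion prop}, which I may invoke once I know that $Y\to X$ is quasi-separated. In fact I would first show that $Y\to X$ is separated: the diagonal $\Delta\colon Y\to Y\times_X Y$ is an immersion, and its base change along $X'\to X$ is the diagonal of $Y_{X'}\to X'$, which is a closed immersion because $Y_{X'}\to X'$ is partially proper, hence separated. Since $(Y\times_X Y)\times_X X'\to Y\times_X Y$ is an open surjection (\'etale morphisms are open, and surjectivity is stable under base change) and closed subsets descend along open surjections, the image of $\Delta$ is closed; so $\Delta$ is a closed immersion and $Y\to X$ is separated, in particular quasi-separated. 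Combined with the hypothesis that $Y\to X$ is locally of $^+$weakly finite type, Proposition~\ref{valuative criterion prop} then reduces the claim to the following: for every affinoid field $(L,L^+)$ and every morphism $\varphi\colon\Spa(L,L^+)\to X$, the map $\Hom_X(\Spa(L,L^+),Y)\to\Hom_X(\Spa(L),Y)$ is bijective.

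The crucial point is to lift the valuative test datum $\varphi$ along the \'etale surjection $X'\to X$. Since $X'\times_X\Spa(L,L^+)\to\Spa(L,L^+)$ is \'etale and surjective, I can choose a point $w$ above the closed point of $\Spa(L,L^+)$; its residue affinoid field $(L_1,L_1^+)$ satisfies $L_1/L$ finite separable and $L_1^+\cap L=L^+$, and the canonical morphism it carries produces a lift $\widetilde\varphi\colon\Spa(L_1,L_1^+)\to X'$ of the composite $\Spa(L_1,L_1^+)\xrightarrow{\pi}\Spa(L,L^+)\xrightarrow{\varphi}X$; this is (a form of) Corollary~\ref{boostrap corollary}. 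Granting the lift, the two directions of the desired bijection follow by a diagram chase from the corresponding bijection for the partially proper map $Y_{X'}\to X'$. For injectivity: two morphisms $g_1,g_2\colon\Spa(L,L^+)\to Y$ over $X$ that agree on $\Spa(L)$ give, upon pairing with $\widetilde\varphi$ (and using $Y_{X'}=Y\times_X X'$), two morphisms $\Spa(L_1,L_1^+)\to Y_{X'}$ over $X'$ agreeing on $\Spa(L_1)$, so they coincide, whence $g_1\circ\pi=g_2\circ\pi$ and therefore $g_1=g_2$ since $\pi$ is faithfully flat and quasi-compact, hence an epimorphism. For surjectivity: a morphism $\psi\colon\Spa(L)\to Y$ over $X$ pairs with $\widetilde\varphi|_{\Spa(L_1)}$ to a morphism $\Spa(L_1)\to Y_{X'}$ over $X'$, which extends over $\widetilde\varphi$ to $\Spa(L_1,L_1^+)\to Y_{X'}$ by the criterion for $Y_{X'}\to X'$; composing with $Y_{X'}\to Y$ gives an extension of $\psi$ after base change along $\pi$, which I would then descend along $\pi$ to the wanted morphism $\Spa(L,L^+)\to Y$ using effective descent of adic spaces along the finite faithfully flat morphism $\pi$, the descent datum being supplied by the uniqueness just established.

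I expect the main obstacle to be exactly this lifting-and-descending along the \'etale cover, i.e.\ Corollary~\ref{boostrap corollary} together with the descent step. A technical annoyance in the descent is that a chosen extension of the valuation of $L$ to $L_1$ need not be invariant under $\mathrm{Gal}(L_1/L)$, so one must either keep track of decomposition groups or --- the route I would actually take --- first reduce to the case where $L$ is algebraically closed. This reduction is harmless: one passes to an algebraic closure $(\overline{L},\overline{L}^+)$ of $(L,L^+)$ and descends along it using the already-established uniqueness. Once $L$ is algebraically closed one may take $L_1=L$ and lift $\varphi$ itself to $X'$ --- every point of $X'$ over the anchor point of $\varphi$ has residue field finite separable over $L$, hence equal to it --- so that $\pi$ is the identity and no descent is needed at all. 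Everything else is formal manipulation with fiber products.
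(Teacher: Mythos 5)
Your separatedness argument is correct and is essentially the paper's (the paper makes the image of $\Delta_{Y/X}$ closed by first reducing to a finite \'etale Galois cover so that $Y'\times_{X'}Y'\to Y\times_X Y$ is a closed map; you instead use that it is an open surjection and that closedness descends along open surjections, which works and even avoids that reduction for this step). The problem is in the second half, where you depart from the paper and run the valuative criterion of Proposition~\ref{valuative criterion prop} directly. The injectivity half is harmless (it is in fact already equivalent to the separatedness you proved), and the lifting of the test datum $\Spa(L,L^+)\to X$ to some $\Spa(L_1,L_1^+)\to X'$ with $L_1/L$ finite separable is fine (though it is not Corollary~\ref{boostrap corollary}, which is about checking properties on covers, not about lifting points). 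The genuine gap is the surjectivity step: having produced an extension $\Spa(L_1,L_1^+)\to Y$, you must descend it to $\Spa(L,L^+)\to Y$, and for this you invoke ``effective descent of adic spaces along the finite faithfully flat morphism $\pi$.'' No such descent statement is available in the paper or in the standard references it uses, and $\pi$ is not \'etale (the extension $L^+\to L_1^+$ of valuation rings can be ramified), so this is not a routine citation; the uniqueness you established supplies a descent datum but does not make the descent effective. Your fallback of first reducing to $L$ algebraically closed makes matters worse, not better: it requires descending along $\Spa(\overline{L},\overline{L}^+)\to \Spa(L,L^+)$, an infinite (and, after the necessary completion, non-algebraic) extension, and the Galois-stability issue you yourself flag for $\overline{L}^+$ is exactly the obstruction you have not resolved.

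The paper avoids this entirely, and the contrast is instructive. It first uses Corollary~\ref{boostrap corollary} to reduce to the case where $X'\to X$ is a finite \'etale Galois cover of affinoids; then $Y_{X'}\to X'\to X$ is partially proper (composition of a partially proper map with a finite map) and $Y_{X'}\to Y$ is surjective, so Lemma~\ref{image of partially proper} applies. That lemma is proved with the purely topological ``universally specializing'' characterization of partial properness: a specialization in the base is lifted to $Y_{X'}$ and then pushed forward along the continuous surjection $Y_{X'}\to Y$. Surjectivity on points is all that is needed there, and no descent of morphisms ever occurs. If you want to salvage your route, you should replace the $\Hom$-set formulation by this specializing formulation at the point where you pass back from $X'$ to $X$; as written, the proof does not close.
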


\begin{proof}
Write $Y' = Y_{X'}$. Since partially proper morphisms are affinoid open local on the target, we may assume by Corollary \ref{boostrap corollary} that $X'\to X$ is a finite \'etale Galois cover. Once we know $Y\to X$ is separated, the claim then follows from Lemma \ref{image of partially proper} since $Y\to X$ is separated, $Y'\to Y$ is surjective, and the composition $Y'\to X'\to X$ partially proper.

To see that $Y\to X$ is separated, note that as $Y'\to Y$ is surjective, the image of $\Delta_{Y/X}\colon Y\to Y\times_X Y$ is the image of the closed subset $\im(\Delta_{Y'/X'})$ under the finite \'etale map $Y'\times_{X'} Y'\to Y\times_X Y$. Since this map is closed, $\im(\Delta_{Y/X})$ is closed.
\end{proof}

\begin{lem} \label{image of partially proper}
    Let $Y\to X$ be a surjective map of adic spaces over the adic space $S$. Suppose that $Y\to S$ is partially proper and that $X\to S$ is separated and locally of $^+$weakly finite type. Then, $X\to S$ is partially proper.
\end{lem}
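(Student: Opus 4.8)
The plan is to verify the valuative criterion for partial properness from Proposition~\ref{valuative criterion prop}. The morphism $X\to S$ is quasi-separated (being separated) and locally of ${}^+$weakly finite type by hypothesis, so it suffices to prove that for every affinoid field $(L,L^+)$ and every morphism $\Spa(L,L^+)\to S$ the map
\[
    \Hom_S(\Spa(L,L^+),X)\longrightarrow\Hom_S(\Spa(L),X)
\]
is a bijection. Injectivity makes no use of $Y$: it is exactly the separatedness of $X\to S$ (the ``resp.\ injection'' clause of Proposition~\ref{valuative criterion prop}), so the real point is surjectivity --- every $S$-morphism $g\colon\Spa(L)\to X$ must extend over $\Spa(L,L^+)$.

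The crucial preliminary fact is that $Y\to X$ is locally of ${}^+$weakly finite type. Indeed, the graph $Y\to Y\times_S X$ of $Y\to X$ is the base change of the diagonal $X\to X\times_S X$ along $Y\times_S X\to X\times_S X$, hence a closed immersion since $X\to S$ is separated; composing it with the projection $Y\times_S X\to X$, which is a base change of $Y\to S$ and so is locally of ${}^+$weakly finite type, exhibits $Y\to X$ as locally of ${}^+$weakly finite type. Consequently the fibre product $Y\times_X\Spa(L)$ exists and is locally of ${}^+$weakly finite type over $\Spa(L)$; it is non-empty because $Y\to X$ is surjective. Since $L$ is algebraically closed, this fibre has an $L$-point: any non-empty affinoid open $\Spa(A,A^+)$ has $A$ an affinoid $L$-algebra, a maximal ideal of $A$ has residue field $L$ by the Nullstellensatz, and the associated continuous valuation lies in $\Spa(A,A^+)$ because $A^+$ is bounded and hence has image in $\mathcal{O}_L$. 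Such an $L$-point is a section of $Y\times_X\Spa(L)\to\Spa(L)$, i.e.\ an $S$-morphism $\tilde g\colon\Spa(L)\to Y$ with $(Y\to X)\circ\tilde g = g$.

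It remains to push this through. Applying Proposition~\ref{valuative criterion prop} to the partially proper morphism $Y\to S$, the lift $\tilde g$ extends to an $S$-morphism $\overline{\tilde g}\colon\Spa(L,L^+)\to Y$, and then $(Y\to X)\circ\overline{\tilde g}\colon\Spa(L,L^+)\to X$ restricts on the maximal point $\Spa(L)$ to $(Y\to X)\circ\tilde g = g$, so it is the desired extension; this completes the verification of the criterion. The argument is formal once one has the two non-routine ingredients: the translation into the valuative criterion, and the observation that, because $Y\to X$ is locally of (${}^+$weakly) finite type and $L$ is algebraically closed, $g$ can be lifted to $Y$ \emph{without enlarging the field $L$} --- the step at which one would otherwise be forced into a delicate descent along $\Spa(M,M^+)\to\Spa(L,L^+)$. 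I expect this latter point (recognizing $Y\to X$ as locally of ${}^+$weakly finite type via the graph, and extracting the rational point of the fibre) to be the main thing to get right.
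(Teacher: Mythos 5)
There is a genuine gap at the central step. Proposition~\ref{valuative criterion prop}~\ref{valuative criterion prop:2} requires the lifting property for \emph{every} affinoid field $(L,L^+)$; nothing lets you assume $L$ is algebraically closed, yet your construction of the lift $\tilde g$ rests entirely on that assumption. For a general $L$, the non-empty fibre $Y\times_X\Spa(L)$ only yields points whose residue fields are (finite, or completed) extensions $M$ of $L$, so you obtain a lift $\Spa(M)\to Y$ rather than a section over $\Spa(L)$. You would then have to extend over $\Spa(M,M^+)$ for a choice of $M^+$ dominating $L^+$ and descend the resulting map $\Spa(M,M^+)\to X$ to $\Spa(L,L^+)$ --- precisely the descent you acknowledge and claim to have avoided. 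Unless you first prove that the valuative criterion for partial properness may be tested only on algebraically closed affinoid fields (a reduction that is not in the paper and whose proof is essentially the same descent), the argument does not close.

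It is worth noting that the paper's proof sidesteps the valuative criterion entirely: since $X\to S$ is separated and locally of $^+$weakly finite type by hypothesis, partial properness in Huber's sense reduces to the purely topological condition that $X\to S$ be universally specializing. That follows in a few lines from the fact that $Y_{S'}\to X_{S'}$ remains surjective after any base change $S'\to S$ and that $Y_{S'}\to S'$ is specializing: a specialization in $S'$ of the image of $x\in X_{S'}$ lifts to $Y_{S'}$ through any preimage $y$ of $x$, and its image in $X_{S'}$ is the required specialization of $x$. If you want to salvage your approach, the cleanest fix is to replace the valuative criterion by this topological reformulation; your graph argument showing $Y\to X$ is locally of $^+$weakly finite type then becomes unnecessary.
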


\begin{proof} 
We only need to show that $X\to S$ is universally specializing. Let $S'\to S$ be any map of adic spaces. Then, $Y_{S'}\to X_{S'}$ is a surjection. Let $x$ be a point of $X_{S'}$ mapping to the point $s$ of $S'$, and let $y$ be a point of $Y_{S'}$ mapping to $x$. Let $s'$ be a point of $S'$ specializing $s$. Since $Y_{S'}\to S'$ is specializing there exists some $y'$ in $Y$ specializing $y$ which maps to $s'$ under $Y_{S'}\to S'$. Since $Y_{S'}\to X_{S'}$ is continuous the image of $y'$ specializes $x$ and maps to $s'$ under $X_{S'}\to S'$ from where the conclusion follows.
\end{proof}

\paragraph{Local finiteness of \'etale and partially proper maps}

The following result, important in what follows, says that an \'etale and partially proper map of adic spaces is `locally finite'. 

\begin{prop}[{cf.\@ \cite[Proposition 1.5.6]{Huberbook}}]\label{pp etale local structure}
    Let $f\colon Y\to X$ be an \'etale partially proper morphism of adic spaces where $X$ is taut. Then, for any point $y$ of $Y$ the set
    \begin{equation*}
        \left\{W\subseteq Y:\begin{aligned}(1)&\quad W\emph{ is an open oc neighborhood of }y\\ (2)&\quad f|_W:W\to f(W)\emph{ is finite \'etale}\end{aligned}\right\}
     \end{equation*}
    forms an open oc neighborhood basis of $y$.
\end{prop}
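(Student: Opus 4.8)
The plan is to combine Huber's local structure theorem for \'etale morphisms with partial properness. Locally on $Y$ that theorem writes $f$ as an open immersion into a finite $X$-space; partial properness will improve this open immersion to an \emph{overconvergent} open immersion, after which the only remaining task is to carve out, inside an overconvergent neighborhood, the single sheet of the finite model passing through $y$. This localization is genuinely necessary: an \'etale partially proper map need not be finite onto an open image, as other sheets may accumulate at the boundary, and the overconvergent topology is exactly what rules out such behavior near a point. For the reduction, observe that the overconvergent interior $\interior_Y(W)$ of an overconvergent open neighborhood $W$ of $y$ is again one, and any neighborhood contains its overconvergent interior; thus it suffices to produce, for each overconvergent open neighborhood $W$ of $y$, a member $W_0$ of the displayed family with $W_0\subseteq W$. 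Since \'etale maps are open, $f(W_0)$ is automatically open in $X$, so the only content is the finiteness of $f|_{W_0}\colon W_0\to f(W_0)$.

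Put $x=f(y)$. By \cite[Proposition~1.5.6]{Huberbook} there is an open neighborhood $V$ of $y$ and a factorization $V\hookrightarrow Z\xrightarrow{g}U$ over $X$ with $U\subseteq X$ open, $g$ finite, and $V\hookrightarrow Z$ an open immersion. As $X$ is taut and hence quasi-separated, we may take $U$ quasi-compact; then $U$ is quasi-compact and quasi-separated, hence taut, and therefore so are $Z$ (finite over $U$) and $Y$ (partially proper over $X$, hence taut by Proposition~\ref{pp implies taut}). After replacing $Z$ by $g^{-1}(U)$ and $V$ by $V\cap g^{-1}(U)\cap W$, Proposition~\ref{taut qc oc neighborhood basis} applied to $Y$ lets us shrink $V$ to an overconvergent open neighborhood of $y$ with $V\subseteq W$. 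Then $V\hookrightarrow Y$ is partially proper (Proposition~\ref{pp open embedding}), so $V\to X$ is partially proper; since $Z\to X$ is separated, the valuative criterion (Proposition~\ref{valuative criterion prop}) shows $V\hookrightarrow Z$ is partially proper, and being an open immersion it makes $V$ an overconvergent open subset of $Z$ (Proposition~\ref{pp open embedding} again). In particular $[V]$ is open in $[Z]$.

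Let $z$ be the image of $y$ in $Z$. The fiber $g^{-1}(x)$ is finite; using that every morphism of adic spaces is valuative and that finite morphisms have discrete fibers, one checks that each $w\in g^{-1}(x)$ satisfies $g(w^{\mx})=x^{\mx}$ and $\ov{\{w^{\mx}\}}\cap g^{-1}(x)=\{w\}$, so the points of $g^{-1}(x)$ have pairwise distinct images under $\sep_Z$. The induced map $[g]\colon[Z]\to[U]$ is closed with finite fibers, and $[g]^{-1}(\sep_U(x))=\sep_Z(g^{-1}(x^{\mx}))$ is a finite subset of $[Z]$ containing $\sep_Z(z)$. As $[Z]$ is Hausdorff, separate the points of this fiber by pairwise disjoint open sets, with the one around $\sep_Z(z)$ chosen inside $[V]$; the tube argument for closed maps then gives an open $\Omega\subseteq[U]$ containing $\sep_U(x)$ such that $[g]^{-1}(\Omega)$ is a disjoint union of clopen subsets, exactly one of which, $\Omega_0$, contains $\sep_Z(z)$ and lies in $[V]$. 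Put $W_0:=\sep_Z^{-1}(\Omega_0)$. Then $W_0$ is an overconvergent open subset of $Z$ contained in $V$, hence an overconvergent open neighborhood of $y$ in $Y$ with $W_0\subseteq W$; pulling back along $\sep_Z$ the clopen decomposition of $[g]^{-1}(\Omega)$ exhibits $W_0$ as a clopen piece of $g^{-1}(\sep_U^{-1}(\Omega))$, so $W_0\to\sep_U^{-1}(\Omega)$ is finite. Since $f(W_0)=g(W_0)$ is an open subset of $\sep_U^{-1}(\Omega)$, base change shows $f|_{W_0}\colon W_0\to f(W_0)$ is finite, and it is \'etale because $f$ is; thus $W_0$ lies in the displayed family, completing the proof.

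The main obstacle is the last step: arranging that every sheet of the finite local model other than the one through $y$ avoids a suitable neighborhood of $z$. This is where partial properness enters, via ``$[V]$ open in $[Z]$'', together with tautness, via ``$[Z]$ and $[U]$ Hausdorff and $[g]$ closed''. Without passing to overconvergent neighborhoods this would fail, which is precisely why the statement is formulated for overconvergent neighborhoods and why a direct appeal to \cite[Proposition~1.5.6]{Huberbook} alone is insufficient.
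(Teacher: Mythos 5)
The paper itself offers no proof of this proposition (it is stated with only a ``cf.'' pointer to Huber), so your argument has to stand on its own. Its second half --- isolating the sheet through $y$ by a tube argument in $[Z]\to[U]$, using that $[Z]$ is Hausdorff, $[g]$ is closed with finite fibers, and $[V]$ is open in $[Z]$ --- is sound. The gap is earlier, at the sentence ``Proposition~\ref{taut qc oc neighborhood basis} applied to $Y$ lets us shrink $V$ to an overconvergent open neighborhood of $y$ with $V\subseteq W$.'' That proposition produces quasi-compact open oc neighborhoods of $y$ only inside sets that are \emph{already} open oc neighborhoods of $y$, i.e.\ sets containing $\sep_Y^{-1}(\sep_Y(y))=\ov{\{y^\mx\}}$. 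The open $V$ furnished by Huber's local structure theorem is an arbitrary open neighborhood of $y$, and nothing forces $\ov{\{y^\mx\}}\subseteq V$. The requirement is not cosmetic: by Proposition~\ref{oc equiv}, \emph{every} overconvergent open containing $y$ contains all of $\ov{\{y^\mx\}}$, so no overconvergent open neighborhood of $y$ can sit inside $V\cap W$ unless $\ov{\{y^\mx\}}\subseteq V$ to begin with. Since everything downstream (partial properness of $V\hookrightarrow Z$, openness of $[V]$ in $[Z]$) rests on $V$ being overconvergent in $Y$, this is a genuine gap rather than a presentational one.

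This missing step is precisely what separates the proposition from the ordinary-neighborhood statement of Huber: one must produce a single finite local model $V\hookrightarrow Z\to U$ engulfing the whole quasi-compact set $\ov{\{y^\mx\}}$, whereas the local structure theorem only hands you a model around the single point $y$. Note that you cannot simply rerun the local structure theorem over an oc neighborhood of $x=f(y)$: Lemma~\ref{lem:twotwoeight} requires affinoid bases, and a taut adic space need not be good, so $x$ need not admit affinoid oc neighborhoods (this is exactly the caveat opening \S\ref{ss:oc-taut-good}). A priori $\ov{\{y^\mx\}}$, an irreducible closed set, is only covered by finitely many local models with different bases $U_i$, none containing it entirely, and merging them needs an argument --- this is where partial properness must actually be exploited (e.g.\ via the bijection $\ov{\{y^\mx\}}\to\ov{\{x^\mx\}}$ coming from the valuative criterion, or by passing through Huber's comparison with Berkovich spaces). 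As written, your proof assumes this step rather than proving it.
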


\paragraph{Properties of $[f]$ for \'etale and partially proper maps} 

We now study the topological implications for $[f]$ assuming that $f$ is \'etale and partially proper.

\begin{prop}\label{[f] for etale and pp}
    Let $f\colon Y\to X$ be an \'etale and partially proper morphism of adic spaces, and suppose that $X$ is taut. Then, the following properties hold true. 
    \begin{enumerate}[(a)] 
        \item The map $[f]$ is open,
        \item the map $[f]$ has discrete fibers, 
        \item the map $[f]$ satisfies the following condition:
        \begin{enumerate}
            \item[$(\ast)$] For $y\in Y$ and open neighborhood $U$ of $y$ there exists an open neighborhood $V$ of $f(y)$ and a clopen subset $W$ of $f^{-1}(V)$ containing $y$ and contained in $U$.
        \end{enumerate}
    \end{enumerate}
\end{prop}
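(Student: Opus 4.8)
The plan is to deduce all three assertions from a strengthened form of Proposition~\ref{pp etale local structure}. First note that $Y$ is taut: indeed $f$ is taut by Proposition~\ref{pp implies taut}, and $Y=f^{-1}(X)$ with $X$ a taut open of itself; hence by Proposition~\ref{taut properties} the spaces $[X],[Y]$ are locally compact Hausdorff and $\sep_X,\sep_Y$ are closed. The strengthened statement I would prove is: \emph{for each $y\in Y$ and each overconvergent open neighborhood $M$ of $y$ there exist overconvergent opens $N$ of $Y$ and $V$ of $X$ with $y\in N\subseteq M$, $f(y)\in V$, $f|_N\colon N\to V$ finite \'etale, and $N$ clopen in $f^{-1}(V)$.} To establish it, take $W$ from the neighborhood basis of Proposition~\ref{pp etale local structure} with $W\subseteq M$ and set $V:=\interior_X(f(W))$, $N:=\interior_Y(W)$. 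Since $f$ is separated and $f|_W\colon W\to f(W)$ is finite, hence proper, the open immersion $W\hookrightarrow f^{-1}(f(W))$ is proper, so $W$ is clopen in $f^{-1}(f(W))$. Now for $w\in W$ with $f(w)\in V$, the set $\ov{\{w^\mx\}}=\sep_Y^{-1}(\sep_Y(w))$ is mapped by $f$ into $\ov{\{f(w)^\mx\}}$, which lies in $f(W)$ by Proposition~\ref{interior equivalent} since $f(w)\in\interior_X(f(W))$; being connected and meeting $W$, the set $\ov{\{w^\mx\}}$ lies in the clopen $W$, so $w\in\interior_Y(W)$. Together with the reverse inclusion --- which holds because $f$ carries $\ov{\{w^\mx\}}$ \emph{onto} $\ov{\{f(w)^\mx\}}$ by specialization lifting for partially proper maps --- this gives $N=(f|_W)^{-1}(V)$. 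Hence $f|_N$ is the base change of the finite \'etale $f|_W$ along $V\hookrightarrow f(W)$, $N=W\cap f^{-1}(V)$ is clopen in $f^{-1}(V)$, and $f(y)\in V$ since $\ov{\{y^\mx\}}\subseteq W$ forces $\ov{\{f(y)^\mx\}}=f(\ov{\{y^\mx\}})\subseteq f(W)$. I expect this strengthened statement, especially the equality $N=(f|_W)^{-1}(V)$ and the clopenness claims, to be the crux; everything else is bookkeeping.

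Granting this, (a) is immediate: each open subset of $[Y]$ equals $\sep_Y(N)$ for an overconvergent open $N\subseteq Y$, and $[f](\sep_Y(N))=\sep_X(f(N))$ by functoriality, so it suffices that $f(N)$ is overconvergent in $X$. It is open since $f$ is \'etale; and it is overconvergent because for $w\in N$ and $x'\in\ov{\{f(w)^\mx\}}$ the specialization $f(w^\mx)\rightsquigarrow x'$ lifts (partial properness) to $w^\mx\rightsquigarrow w'$ with $w'\in\ov{\{w^\mx\}}\subseteq N$, whence $x'=f(w')\in f(N)$. For (b), fix $\bar x\in[X]$ and $\bar y\in[f]^{-1}(\bar x)$, set $y:=\bar y^\mx$, and apply the strengthened statement to obtain overconvergent $N\ni y$ and $V\ni f(y)=\bar x^\mx$ with $f|_N\colon N\to V$ finite \'etale. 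Then $\sep_Y(N)$ is an open neighborhood of $\bar y$ in $[Y]$, and each $\bar z\in[f]^{-1}(\bar x)\cap\sep_Y(N)$ has $\bar z^\mx\in(f|_N)^{-1}(\bar x^\mx)$, a finite set; so $[f]^{-1}(\bar x)\cap\sep_Y(N)$ is finite, hence discrete since $[Y]$ is Hausdorff, and therefore $\bar y$ is isolated in $[f]^{-1}(\bar x)$.

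For (c) --- that $[f]$ satisfies $(\ast)$ --- let $\bar y\in[Y]$ and let $\bar U$ be an open neighborhood of $\bar y$; apply the strengthened statement to $y:=\bar y^\mx$ with $M:=\sep_Y^{-1}(\bar U)$, producing overconvergent $N\ni y$, $V\ni f(y)$ with $f|_N\colon N\to V$ finite \'etale, $N$ clopen in $f^{-1}(V)$, and $\sep_Y(N)\subseteq\bar U$. Put $N^\flat:=\sep_Y(N)$, $V^\flat:=\sep_X(V)$. As $f^{-1}(V)$ and $N$ are $\sep_Y$-saturated, $\sep_Y(f^{-1}(V))=[f]^{-1}(V^\flat)$ and $N^\flat$ is clopen in $[f]^{-1}(V^\flat)$; moreover the restriction $[f]\colon N^\flat\to V^\flat$ is closed with finite fibers, because $f|_N$ is a finite --- hence closed and specializing --- map and $\sep_X|_V$ carries saturated closed sets to closed sets. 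The fiber of $N^\flat\to V^\flat$ over $[f](\bar y)$ is a finite set $\{\bar y=\bar y_1,\dots,\bar y_r\}\subseteq[Y]$, which is Hausdorff, so pick pairwise disjoint opens $\bar O_i\ni\bar y_i$ contained in $N^\flat$. By closedness, $\bar V:=V^\flat\setminus[f]\bigl(N^\flat\setminus\bigcup_i\bar O_i\bigr)$ is an open neighborhood of $[f](\bar y)$ in $[X]$ with $[f]^{-1}(\bar V)\cap N^\flat\subseteq\bigcup_i\bar O_i$, so $\bar W:=\bar O_1\cap[f]^{-1}(\bar V)$ has complement in $[f]^{-1}(\bar V)$ equal to $\bigl([f]^{-1}(\bar V)\setminus N^\flat\bigr)\cup\bigcup_{i\ge2}\bigl(\bar O_i\cap[f]^{-1}(\bar V)\bigr)$, an open set; thus $\bar W$ is clopen in $[f]^{-1}(\bar V)$ and $\bar y\in\bar W\subseteq N^\flat\subseteq\bar U$, as required.
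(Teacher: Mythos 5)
Your argument is correct. For parts (a) and (b) it is essentially the paper's: (a) is the same overconvergence argument (openness of $f$ plus the specializing property from partial properness), and (b) likewise reduces to the finite \'etale case over an oc neighborhood supplied by Proposition~\ref{pp etale local structure}. Where you genuinely diverge is part (c). The paper deduces $(\ast)$ from (a) and (b) via the purely point-set Lemma~\ref{lem:vapid}: a map with discrete fibers between Hausdorff spaces with locally compact source satisfies $(\ast)$, proved by a compactness argument (this is where tautness of $Y$, hence local compactness of $[Y]$, is used). You instead prove $(\ast)$ directly from a sharpened local structure statement, whose crux is the identity $\interior_Y(W)=(f|_W)^{-1}(\interior_X(f(W)))$ — proved correctly by combining the clopenness of $W$ in $f^{-1}(f(W))$ (cancellation for the finite map $W\to f(W)$ against the separated $f^{-1}(f(W))\to f(W)$) with connectedness of $\ov{\{w^\mx\}}$ in one direction and the specializing property in the other. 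This yields a finite \'etale $N\to V$ with $N$ clopen and saturated in $f^{-1}(V)$, which descends to a clopen $N^\flat$ in $[f]^{-1}(V^\flat)$ with $N^\flat\to V^\flat$ closed with finite fibers; the final shrinking to separate the points of the fiber is standard. The trade-off: the paper's route isolates a reusable topological lemma and keeps the geometry confined to (a) and (b), while yours avoids local compactness of $[Y]$ and produces a stronger, quantitative local statement (a genuinely finite \'etale clopen piece over an overconvergent neighborhood) that refines Proposition~\ref{pp etale local structure} and could be of independent use.
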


\begin{proof}
The final statement follows from Lemma~\ref{lem:vapid} below, so we focus only on the first two. To show the first statement, note that if $V\subseteq [Y]$ is open then $\sep_Y^{-1}(V)$ is an overconvergent subset of $Y$. So then, $f(\sep^{-1}(V))$ is open since $f$ is \'etale, but it is also overconvergent since it is closed under specialization since $f$ is specializing and $\sep^{-1}(V)$ is closed under specialization. Thus, $\sep_X(f(\sep_Y^{-1}(V)))$ is an open subset of $[X]$. But, since the separation maps are surjective this is equal to $[f](V)$ as desired.

We first address the second statement in the case $f$ is finite \'etale. Since $f$ is valuative it suffices to show that $f$ itself is quasi-finite, but this follows from \cite[Lemma 1.5.2]{Huberbook}. Let $y\in [Y]$ and let $W$ be as in Proposition~\ref{pp etale local structure} (for the point $y^\mx\in Y$). By the finite \'etale case, the assertion holds for $W\to f(W)$. Then, $\sep_X(W)$ is a neighborhood of $y$ which intersects $[f]^{-1}(f(y))$ at finitely many points. So, we're done.
\end{proof}

\begin{lem} \label{lem:vapid}
    Let $f\colon Y\to X$ be a map with discrete fibers between Hausdorff spaces, with $Y$ locally compact. Then, condition $(\ast)$ if Proposition~\ref{[f] for etale and pp} holds.
\end{lem}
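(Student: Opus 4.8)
The plan is to exploit the local compactness of $Y$ in three moves: first replace the given open neighborhood by one with compact closure, then shrink it so that its closure meets the fiber over $f(y)$ only at $y$, and finally delete from the base the (compact, hence closed) image of the resulting boundary. Throughout, $(\ast)$ refers to the condition in Proposition~\ref{[f] for etale and pp}.

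First I would fix $y\in Y$ and an open neighborhood $U$ of $y$, and set $x_0=f(y)$. Since $Y$ is locally compact and Hausdorff, choose an open $W_0$ with $y\in W_0\subseteq\overline{W_0}\subseteq U$ and $\overline{W_0}$ compact. As $X$ is Hausdorff, $\{x_0\}$ is closed, so $f^{-1}(x_0)$ is a closed subset of $Y$; hence $\overline{W_0}\cap f^{-1}(x_0)$ is compact, and since $f$ has discrete fibers it is also discrete, therefore finite. Write $\overline{W_0}\cap f^{-1}(x_0)=\{y,y_1,\dots,y_n\}$ with each $y_i\neq y$.

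Next I would separate $y$ from the finitely many $y_i$: using that $Y$ is Hausdorff, pick an open $N\ni y$ and an open set $M\supseteq\{y_1,\dots,y_n\}$ with $N\cap M=\emptyset$, so that $y_i\notin\overline{N}$ for every $i$. Put $W_1:=W_0\cap N$. Then $\overline{W_1}$ is compact, $\overline{W_1}\subseteq\overline{W_0}\cap\overline{N}\subseteq U$, and $\overline{W_1}\cap f^{-1}(x_0)\subseteq\{y,y_1,\dots,y_n\}\cap\overline{N}=\{y\}$; since $y\in W_1$, the boundary $\partial W_1:=\overline{W_1}\setminus W_1$ is disjoint from $f^{-1}(x_0)$, i.e.\ $x_0\notin f(\partial W_1)$.

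Finally, $\partial W_1$ is a closed subset of the compact set $\overline{W_1}$, hence compact, so $f(\partial W_1)$ is compact and thus closed in the Hausdorff space $X$. Set $V:=X\setminus f(\partial W_1)$, an open neighborhood of $x_0=f(y)$, and $W:=W_1\cap f^{-1}(V)$. Then $W$ is open in $f^{-1}(V)$, contains $y$, and is contained in $W_1\subseteq U$. It is also closed in $f^{-1}(V)$: if some $z\in\overline{W_1}\cap f^{-1}(V)$ lay in $\partial W_1$ then $f(z)$ would lie in $f(\partial W_1)$ and simultaneously in $V$, which is impossible, so $\overline{W}\cap f^{-1}(V)\subseteq\overline{W_1}\cap f^{-1}(V)\subseteq W_1\cap f^{-1}(V)=W$. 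Hence $W$ is a clopen subset of $f^{-1}(V)$ with $y\in W\subseteq U$, which is exactly $(\ast)$. The only delicate point is the shrinking step: one must ensure that passing from $W_0$ to the smaller $W_1$ does not let $\overline{W_1}$ acquire new points of $f^{-1}(x_0)$, and this is precisely where finiteness of $\overline{W_0}\cap f^{-1}(x_0)$ — a consequence of the discreteness of the fibers together with compactness of $\overline{W_0}$ — is used.
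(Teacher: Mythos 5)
Your proof is correct and follows essentially the same route as the paper's: local compactness gives a relatively compact neighborhood, discreteness plus compactness makes $\overline{W_0}\cap f^{-1}(x_0)$ finite, Hausdorff separation isolates $y$ in the closure, and the clopen set is produced by deleting the closed image of a compact set missing the fiber. The only (harmless) organizational difference is that the paper first reduces to the case where $Y$ is compact and $f^{-1}(x)=\{y\}$ via a restriction-to-a-closed-subset lemma and then removes $f(Y\setminus U)$, whereas you work directly with the boundary $\partial W_1$.
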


\begin{proof} 
If $y$ is a point of $Y$ and $Y_0\subseteq Y$ a closed subset such that $y$ is in the interior of $Y_0$ and $f|_{Y_0}$ satisfies the assertion $(\ast)$ at $y$, then so does $f$. Indeed, let $U$ be an open neighborhood of $y$ in $Y$ and let $U_0$ be the intersection of $U$ and the interior of $Y_0$. This is an open neighborhood of $y$ in $Y_0$, and hence there exists an open neighborhood $V$ of $f(y)$ in $X$ and a clopen subset $W\subseteq f^{-1}(V)\cap Y_0$ containing $y$ and contained in $U_0$ and hence in $U$. Then $W$ is closed in $f^{-1}(V)$. It is also open in $Y$, and hence open in $f^{-1}(V)$.

So then, let $y\in Y$ and let $x=f(y)$. Since $Y$ is locally compact, there exists an open neighborhood $U_1$ of $y$ such that $\ov U_1$ is compact. In particular, $\ov U_1\cap f^{-1}(x)$ is finite (being compact and discrete). Because $Y$ is Hausdorff, we can find an open neighborhood $U_0\subseteq U_1$ of $y$ such that $\ov U_0\cap f^{-1}(x) = \{y\}$. By the first paragraph of this proof applied to $Y_0=\ov U_0$, it is enough to show that $Y_0\to X$ satisfies $(\ast)$ at $y$. In other words, we may assume that $Y$ is compact and $f^{-1}(x)=\{y\}$. Let $U$ be an open neighborhood of $y$, and let $Z=Y\setminus U$. Then $f(Z)$ is closed and does not contain $x$, and hence $V=X\setminus f(Z)$ is an open neighborhood of $x$. Then $f^{-1}(V)\subseteq U$, and we can take $W=f^{-1}(V)$.
\end{proof}

\paragraph{Bounding cardinality of \'etale and partially proper maps} 

The following will be used to prove that the category of geometric coverings of a rigid $K$-space $X$ is not too big.

\begin{lem} \label{lem:card-bound}
    Let $X$ be a rigid $K$-space. Then, there exists a cardinal $\kappa$ such that for every \'etale and partially proper map $Y\to X$ with $Y$ connected, the cardinality of $|Y|$ is less than $\kappa$.
\end{lem}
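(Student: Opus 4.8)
The plan is to bound the cardinality of a connected étale partially proper $Y \to X$ by controlling two things: the cardinality of the base $X$ together with a fixed open cover, and the "multiplicity" of the étale map along each piece of that cover. First I would reduce to the case that $X$ is quasi-compact (hence taut, being a rigid $K$-space of the usual sort, and covered by finitely many affinoid opens): indeed $X$ is covered by countably many — or at least some fixed cardinality $\lambda$ of — quasi-compact opens $X_i$, and if $Y$ is connected then $|Y| \le \sum_i |Y_{X_i}|$, so it suffices to bound $|Y_{X_i}|$ uniformly in terms of data attached to $X_i$ alone (not to $Y$). So from now on assume $X$ is quasi-compact and taut.

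Next, the key input is the local finiteness of étale partially proper maps, Proposition~\ref{pp etale local structure}: for every $y \in Y$ there is a quasi-compact open oc neighborhood $W$ of $y$ with $W \to f(W)$ finite étale. Using quasi-compactness of $X$ and Proposition~\ref{taut qc oc neighborhood basis}, I would like to say that finitely many quasi-compact opens of $X$ of a "bounded" shape suffice to see all of $Y$, but since $Y$ itself is a priori large this needs care. The right move is: the images $f(W)$, as $W$ ranges over the finite-étale-over-image oc neighborhoods, are quasi-compact opens of $X$; since $X$ is quasi-compact and taut, there is a \emph{set} $\mathcal{B}$ — of cardinality bounded purely in terms of $X$ — of quasi-compact opens of $X$ such that every such $f(W)$ contains a member of $\mathcal{B}$ around each of its points, or more simply: fix a single finite affinoid cover $X = \bigcup_{j=1}^n \Spa(A_j)$. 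For each $j$, the restriction $Y_j := f^{-1}(\Spa(A_j)) \to \Spa(A_j)$ is étale and partially proper; by Proposition~\ref{pp etale local structure} again, $Y_j$ is covered by quasi-compact opens finite étale onto their (quasi-compact, open) images in $\Spa(A_j)$. Now a quasi-compact open of a quasi-compact taut adic space, up to refinement, lies in one of \emph{countably many} (more precisely, $\le$ some cardinal depending only on $\Spa(A_j)$) rational subsets $\Spa(A_j)\langle \tfrac{\mathbf f}{g}\rangle$; and over each such rational subset $U$, a finite étale cover is given by a finite étale $A_U$-algebra, and there are at most $|A_U|^{\aleph_0}$-many of those (classified by $\pi_1^{\mathrm{alg}}$ of a Noetherian affinoid, whose finite quotients number at most $|A_U|$). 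Hence the number of possible "local pieces" $W$, and the number of points in each (a finite étale cover of a quasi-compact space is quasi-compact, so has cardinality $\le |U|$, and $|U| \le |A_U|$ for affinoid $U$), are all bounded by a cardinal $\kappa_j$ depending only on $\Spa(A_j)$. Taking $\kappa = \left(\sup_j \kappa_j\right)^{+}$ raised suitably (to absorb the $\lambda$-fold and $n$-fold unions and the set of local pieces), and using that $|Y| \le$ (number of local pieces) $\times$ (max cardinality of a piece), gives the bound.

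Concretely the steps are: (1) reduce to $X$ quasi-compact and taut via a fixed open cover of bounded cardinality and connectedness of $Y$; (2) fix a finite affinoid cover of $X$ and reduce to bounding $|f^{-1}(\Spa(A))|$ for $\Spa(A)$ affinoid; (3) apply Proposition~\ref{pp etale local structure} to cover $f^{-1}(\Spa(A))$ by quasi-compact opens $W$ finite étale onto quasi-compact open images; (4) bound the cardinality of the set of such images (they lie among the rational subsets of $\Spa(A)$, of which there are at most $\kappa_A := |A|^{\aleph_0}$ many, since a rational subset is cut out by finitely many elements of $A$ up to equivalence) and, for each image $U$, bound the set of finite étale $W \to U$ and each of their cardinalities (a finite étale cover of $\Spa(A_U)$ corresponds to one of $\Spec(A_U)$, classified by the profinite $\pi_1^{\mathrm{alg}}$, which has at most $\kappa_A$-many open subgroups of each finite index, and the cover has $\le |A_U| \le \kappa_A$ points); (5) assemble: $|Y| \le n \cdot \kappa_A \cdot \kappa_A \cdot \kappa_A = \kappa_A$ (for $\kappa_A$ infinite), and set $\kappa = \kappa_A^{+}$ or the successor of the sup over the finite cover.

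The main obstacle is step (3)–(4): even though each local piece is small, a priori a connected $Y$ could be glued from a \emph{proper class} of local pieces, so one must genuinely bound the \emph{number} of distinct local pieces, not just their individual sizes. The resolution is that every local piece $W$, after shrinking, is finite étale over a \emph{rational} subset of a \emph{fixed} affinoid $\Spa(A)$, and both "rational subsets of $\Spa(A)$" and "finite étale covers of a given rational subset" form honest sets of cardinality controlled by $|A|$; since finitely many $\Spa(A)$'s cover $X$, this caps the total supply of local pieces by a cardinal depending only on $X$, and then connectedness of $Y$ forbids it from exceeding the union of (boundedly many) $\cdot$ (boundedly large) pieces. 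A minor technical point to check is that the finite-étale-onto-image neighborhoods from Proposition~\ref{pp etale local structure} can indeed be taken with affinoid (or at least rational-over-affinoid) image after intersecting with the fixed affinoid cover of $X$ and using that rational subsets form a neighborhood basis of points in an affinoid; this is standard.
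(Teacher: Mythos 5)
There is a genuine gap at the heart of your steps (3)--(4), and it is exactly the point the paper's proof is designed to handle. You propose to bound the \emph{number} of local pieces $W\subseteq Y$ by bounding (i) the possible images $f(W)$ (rational subsets of finitely many affinoids) and (ii) the finite \'etale covers of each such image. But (ii) only bounds finite \'etale covers \emph{up to isomorphism}, whereas the pieces $W$ are actual open subsets of $Y$: a priori, unboundedly many pairwise distinct opens $W\subseteq Y$ could all be isomorphic to one another as finite \'etale covers of the same $U$ (this is precisely what happens for a huge disjoint union of copies of one cover, and connectedness of $Y$ --- which you have not yet invoked at this stage --- is the only thing that can rule it out). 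So the ``total supply of local pieces'' is not capped by your count, and the first factor in your final assembly $|Y|\le(\text{number of pieces})\times(\text{size of a piece})$ is unjustified. Relatedly, your step (1) asks to bound $|f^{-1}(X_i)|$ ``in terms of data attached to $X_i$ alone, not to $Y$''; this is impossible, since $f^{-1}(X_i)$ is generally disconnected and its size is controlled only through the global connectedness of $Y$, which that reduction discards.

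The paper closes this gap combinatorially. It takes $\Gamma$ to be the set of connected affinoid opens $V\subseteq Y$ with $V\to f(V)$ finite \'etale (these cover $Y$ by Proposition~\ref{pp etale local structure}), makes $\Gamma$ into a graph with adjacency given by nonempty intersection, and notes that $\Gamma$ is connected because $Y$ is. The key point is then: if $V_0'$ and $V_1'$ are both adjacent to $V$ with $f(V_0')=f(V_1')$ and $V_0'\cap V=V_1'\cap V\neq\emptyset$, then $V_0'=V_1'$ (one shows $V_0'\cap V_1'$ is finite \'etale and nonempty over the common image, hence clopen in each connected $V_i'$). Thus a neighbor $V'$ of $V$ is determined, \emph{as a subset of $Y$}, by the pair $(f(V'),\,V'\cap V)$, which ranges over a set of cardinality at most $2^{\kappa_0+\kappa_1}$, where $\kappa_0$ bounds the cardinality of any affinoid over $K$ and $\kappa_1=|X|$. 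A connected graph with uniformly bounded vertex degree has boundedly many vertices, and each vertex is an affinoid of size at most $\kappa_0$, which gives the lemma. To repair your argument you would need a statement of this kind, pinning down each piece $W$ as a subset of $Y$ (not merely its isomorphism class as a cover) by data ranging over a bounded set.
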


\begin{proof}
The cardinality of any affinoid rigid $K$-space is bounded by some $\kappa_0$ which depends only on $K$. Indeed, by Noether normalization it suffices to take $\kappa_0$ bigger than the cardinality of $|\mathbf{D}^n_K|$ for all $n\geq 0$.

Let $\Gamma$ be the set of all connected affinoid opens $V\subseteq Y$ such that $V\to f(V)$ is finite \'etale. By Proposition \ref{pp etale local structure}, such opens $V$ cover $Y$. We make $\Gamma$ into a graph where $V$ and $V'$ are adjacent if $V\cap V'\neq\emptyset$. The graph $\Gamma$ is connected since $Y$ is. Therefore to find a~bound on the cardinality of $\Gamma$ it is enough to find an absolute bound on the degree of any vertex $V$ of $\Gamma$.

Let $V'_0$ and $V'_1$ be elements of $\Gamma$ with $f(V'_0)=f(V'_1)$ (call this subset $U'$) and such that $V'_0\cap V=V'_1\cap V\neq \emptyset$. Then $V'_i\to U'$ are connected finite \'etale coverings. Since $Y\to X$ is partially proper and in particular separated, the map $Y_{U'}\to U'$ is separated, and thus $V_0'\cap V_1'\to V_0'\times_{U'}V_1'$ is a closed immersion. Since this top map is a map between \'etale spaces over $U'$, it is \'etale and hence open. Therefore $V'_0\cap V'_1\to V'_0\times_{U'} V'_1\to U'$ is finite \'etale and non-empty. Since $V'_0$ and $V'_1$ are connected, we must have $V'_0=V'_1$. Indeed, since $V_0'\cap V_1'$ is finite \'etale over $U'$ and $V_i'$ for $i=0,1$ are finite \'etale over $U'$, the inclusion map $V_0'\cap V_1'\to V_i'$ for $i=0,1$ is finite, thus the image is clopen and therefore everything.

The set of possible values of $V'\cap V$ for varying $V'\in \Gamma$ is bounded by $2^{\kappa_0}$, and the set of possible values of $f(V')$ is bounded by $2^{\kappa_1}$ where $\kappa_1$ is the cardinality of $|X|$. By the previous paragraph, the degree of $V$ as a node of $\Gamma$ is bounded by $2^{\kappa_0+\kappa_1}$.
\end{proof}

\subsection{The \'etale bootstrap principle}
\label{ss:et-bootstrap}

In this final subsection, we observe that all \'etale morphisms of adic spaces can be refined by covers of a particularly simple form. 

\begin{lem}[{\cite[Lemma~2.2.8]{Huberbook}}] \label{lem:twotwoeight}
    Let $f\colon V\to X$ be an \'etale morphism of affinoid adic spaces. Then, there exists an affinoid open cover $X=\bigcup_{i\in I} U_i$, finite \'etale maps $Z_i\to U_i$, and open immersions $f^{-1}(U_i)\to Z_i$ over $U_i$.
\end{lem}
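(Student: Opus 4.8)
The plan is to reduce to the classical local structure of étale morphisms and then globalize, using that $V$ and $X$ being affinoid forces $f$ to be quasi-compact and quasi-finite.

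First I would invoke the fact that étaleness of a morphism of (strongly Noetherian) adic spaces is detected on completed local rings, so that the scheme-theoretic local structure theorem for étale maps carries over: every étale morphism is, locally on the source and target, an open immersion followed by a finite étale morphism (Zariski's main theorem applied to the quasi-finite étale morphism). Concretely, for each $v\in V$ there are affinoid open neighbourhoods $W\ni v$ and $U\ni f(v)$ with $f(W)\subseteq U$, a finite étale $Z\to U$, and an open immersion $W\hookrightarrow Z$ over $U$; since $f$ is moreover quasi-compact and separated (both spaces being affinoid), one may arrange, after shrinking $U$, that $W$ is clopen in $f^{-1}(U)$, hence --- being open in the étale $V$ --- finite étale over $U$.

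Next, fix $x\in X$. As $f$ is étale and quasi-compact, $f^{-1}(x)$ is a finite set $\{v_1,\dots,v_r\}$. Applying the previous step to each $v_j$ and intersecting the resulting neighbourhoods of $x$ produces an affinoid open $U\ni x$ together with clopen subsets $W_j\subseteq f^{-1}(U)$, each finite étale over $U$, with $f^{-1}(x)\subseteq\bigcup_j W_j=:Y_0$. Then $Y_0$ is a finite union of clopen subsets of $f^{-1}(U)$, hence clopen, and decomposing it into the atoms of the Boolean algebra of clopens it generates exhibits $Y_0\to U$ as finite étale. Shrinking $U$ once more to an affinoid $U'\ni x$ with $f^{-1}(U')\subseteq Y_0$, one gets $f^{-1}(U')=Y_0|_{U'}$ finite étale over $U'$; taking $Z=f^{-1}(U')$ with the identity open immersion settles the claim at $x$, and letting $x$ range over $X$ gives the cover.

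The main obstacle I expect is the final shrinking step: one must find an affinoid neighbourhood $U'$ of $x$ disjoint from $O:=f(f^{-1}(U)\setminus Y_0)$. Since $f$ is open and $f^{-1}(U)\setminus Y_0$ is clopen --- hence quasi-compact --- in $f^{-1}(U)$, the set $O$ is a quasi-compact open subset of $U$ with $x\notin O$; the delicate point is that $x$ need not lie outside $\ov O$ a priori, and excluding this is exactly where the quasi-finiteness / Zariski's-main-theorem input is needed rather than the bare normal form. (An alternative would be to run the argument inside Huber's universal compactification $V\hookrightarrow V^{\mathrm{univ}}_{/X}$ of Theorem~\ref{thm:Huber-univ-compact}, which is quasi-compact --- equal to $\ov V$ --- by tautness, hence proper over $X$; but one would then still need its finiteness, so this merely relocates the subtlety.)
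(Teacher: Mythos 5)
This lemma is quoted from Huber's book and the paper gives no proof of it, so the only question is whether your argument is correct. It is not: the final shrinking step is a genuine gap, and not one that extra ``quasi-finiteness/ZMT input'' can close, because the intermediate claim is false. The claim is that some affinoid $U'\ni x$ satisfies $f^{-1}(U')\subseteq Y_0$, where $Y_0$ is built from the finite fiber $f^{-1}(x)$. Take $X=\Spa(K\langle T\rangle)$, let $V=\{|T|\leq |\varpi|\}$ be an affinoid subdomain (so $f\colon V\to X$ is an \'etale open immersion of affinoids), and let $x$ be a rank-two point of $X$ lying in $\ov{\{\eta\}}\setminus V$, where $\eta$ is the Gauss point of $V$ (the specialization of $\eta$ ``from outside'', with $|T(x)|>|\varpi|$; compare the remark after Proposition~\ref{interior equivalent}). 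Then $f^{-1}(x)=\emptyset$, so $Y_0=\emptyset$, yet every open $U'\ni x$ contains the generization $\eta$ and hence meets $V$, so $f^{-1}(U')\neq\emptyset$. Of course the lemma's conclusion still holds here (take $Z=U'$ with the open immersion $V\cap U'\hookrightarrow U'$); what fails is your construction, because the finite \'etale target is built only from the fiber over $x$, whereas $\bigcap_{U'\ni x}f^{-1}(U')$ equals $f^{-1}(G_x)$ with $G_x$ the full (possibly infinite) chain of generizations of $x$, which is strictly larger than $f^{-1}(x)$ whenever $x$ is not maximal. This is exactly the non-$T_1$ phenomenon that partial properness is designed to control, and your $f$ is not partially proper.

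Any repair along these lines has to start from the quasi-compact, pro-constructible set $f^{-1}(G_x)$ rather than $f^{-1}(x)$: if one arranges $f^{-1}(G_x)\subseteq Y_0$ with $Y_0$ clopen in $f^{-1}(U)$ and finite \'etale over $U$, then $O=f(f^{-1}(U)\setminus Y_0)$ is a quasi-compact (hence constructible) open of $U$ meeting no generization of $x$, so its closure --- which for a constructible set is the set of specializations of its points --- avoids $x$, and the desired $U'$ exists inside $U\setminus\ov{O}$. Carrying this out requires covering $f^{-1}(G_x)$ by finitely many clopen pieces finite \'etale over a \emph{common} neighborhood of $x$, which is where the real work in Huber's proof lies. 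Separately, a smaller slip in your first step: a clopen subset $W$ of $f^{-1}(U)$ that is ``open in the \'etale $V$'' is not thereby finite over $U$ (e.g.\ $W=f^{-1}(U)$ itself); what one actually arranges, after shrinking $U$, is that $W$ becomes clopen in the finite \'etale space $Z|_U$, and clopen subspaces of finite \'etale covers are finite \'etale.
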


The desired refinement statement is then as follows.

\begin{prop} \label{prop:bootstrap}
    Let $X$ be an adic space and let $V\to X$ be an \'etale cover. Then $\{V\to X\}$ admits as a refinement an \'etale cover $\{V_{ij}\to X\}_{i\in I,j\in J_i}$ such that for all $i\in I$ and $j\in J_i$ one has a factorization
    \[
         V_{ij}\to W_i\to U_i\to X 
    \]
    such that the following holds 
    \begin{itemize}
        \item $\{U_i\to X\}_{i\in I}$ forms an open cover, with $U_i$ connected,
        \item the maps $W_i\to U_i$ are connected finite \'etale Galois covers with Galois group $G_i$,
        \item for every $i\in I$, the family $\{V_{ij}\to W_i\}_{j\in I_j}$ is a finite affinoid open cover, stable under the action of $G_i$.
    \end{itemize}
\end{prop}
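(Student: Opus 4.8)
The plan is to prove the statement locally around each point of $X$ and then glue. Fix $x\in X$. Since $V\to X$ is surjective, choose $v\in V$ over $x$, then an affinoid open $X'\ni x$ in $X$ and an affinoid open $V'\ni v$ in $V$ small enough that $V'\to X$ factors through $X'$; now $V'\to X'$ is an \'etale morphism of affinoid adic spaces, so Lemma~\ref{lem:twotwoeight} applies and yields an affinoid open cover $X'=\bigcup_\mu U_\mu$, finite \'etale maps $Z_\mu\to U_\mu$, and open immersions $V'\times_{X'}U_\mu\hookrightarrow Z_\mu$ over $U_\mu$. I would then fix a $\mu$ with $x\in U_\mu$ and shrink: since $V'\to X'$ is \'etale its image is an open subset of $X'$ containing $x$, so I can pick a \emph{connected} affinoid open $U$ with $x\in U\subseteq U_\mu\cap\im(V'\to X')$ (the finitely many connected components of an affinoid open are affinoid open). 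Base-changing, I get a finite \'etale $Z:=Z_\mu\times_{U_\mu}U\to U$, an open immersion $\iota\colon V'\times_{X'}U\hookrightarrow Z$ over $U$, and --- by the choice of $U$ --- the projection $V'\times_{X'}U\to U$ is now \emph{surjective}. This last point is the crux: it is what will force the local models to cover, not merely embed.

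Next I would trivialize $Z$ by a Galois cover. As $U$ is connected, $\FEt_U$ is a Galois category; letting $N\trianglelefteq\pi_1^\alg(U,\ov x)$ be the (open, finite-index) kernel of the action on the finite set $F_{\ov x}(Z)$, take a connected finite \'etale Galois cover $p\colon W\to U$ with Galois group $G\cong\pi_1^\alg(U,\ov x)/N$. Then $Z$ is split by $W$, i.e.\ there is an isomorphism $Z\times_U W\cong\coprod_{s\in S}W$ of adic spaces over $W$, with $S$ finite. Put $\Omega:=(V'\times_{X'}U)\times_U W$, an open subspace of $Z\times_U W$; under the splitting it becomes $\Omega=\coprod_{s\in S}\Omega_s$ with each $\Omega_s$ an open subspace of $W$. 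Two observations drive the argument: first, $\bigcup_{s}\Omega_s=W$, because the image of $\Omega$ under the projection to $W$ is $p^{-1}\big(\im(V'\times_{X'}U\to U)\big)=W$ by the previous step; second, the composite $\Omega\xrightarrow{\ \mathrm{pr}\ }V'\times_{X'}U\to V'\hookrightarrow V$ is a morphism over $X$ --- one checks, using the defining property of the two fibre products, that it agrees with the structure map $\Omega\to W\to U\hookrightarrow X$ --- so each $\Omega_s$, viewed as an open of $W$, carries a morphism to $V$ over $X$.

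Finally I would assemble the cover. Since $W$ is quasi-compact (finite over the affinoid $U$), refine the finite open cover $\{\Omega_s\}_{s\in S}$ of $W$ to a finite cover by affinoid opens $\Omega'_1,\dots,\Omega'_n$ with each $\Omega'_k$ contained in some $\Omega_s$, hence each equipped with a morphism $\Omega'_k\to V$ over $X$. Now take $\{V_{ij}\}_j:=\{\,g\Omega'_k : g\in G,\ 1\le k\le n\,\}$, the $G$-orbit of this family under the action of $G=\Aut_U(W)$ on $W$. This is again a finite cover of $W$ by affinoid opens (translates of affinoid opens by $U$-automorphisms are affinoid open), it is visibly $G$-stable, and each $g\Omega'_k$ still maps to $V$ over $X$: precompose $\Omega'_k\to V$ with the $U$-isomorphism $g^{-1}\colon g\Omega'_k\to\Omega'_k$. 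Setting $U_i:=U$, $W_i:=W$, $G_i:=G$ for the index attached to $x$ and letting $x$ range over $X$, all the required properties hold: $\{U_i\to X\}$ is an open cover with $U_i$ connected; $W_i\to U_i$ is a connected finite \'etale Galois cover with group $G_i$; $\{V_{ij}\to W_i\}_j$ is a $G_i$-stable finite affinoid open cover; and $\{V_{ij}\to X\}$ is an \'etale cover (each $V_{ij}\hookrightarrow W_i\to U_i\hookrightarrow X$ is \'etale, and the maps are jointly surjective since the $V_{ij}$ cover $W_i$, $W_i\to U_i$ is surjective, and the $U_i$ cover $X$) which refines $\{V\to X\}$ because every $V_{ij}$ maps to $V$ over $X$.

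The step I expect to be the main obstacle is the bookkeeping needed to make the $V_{ij}$ \emph{simultaneously} cover $W_i$ and admit maps to $V$ over $X$: the covering requirement is exactly what forces one to shrink $U$ so that $V'\times_{X'}U\to U$ becomes surjective, and the compatibility of the various structure maps to $X$ (through $X'$, $U$, and $W$) must be tracked with care when passing from $\Omega\subseteq Z\times_U W$ back to affinoid opens of $W$ and then saturating under $G$. Everything else --- the reduction to affinoid models via Lemma~\ref{lem:twotwoeight}, the existence of the trivializing Galois cover $W$, and the permanence of \'etaleness, surjectivity and openness under the base changes used --- is routine.
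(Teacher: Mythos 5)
Your proof is correct, and it shares the essential ingredients with the paper's: Lemma~\ref{lem:twotwoeight} to produce local finite \'etale models, a connected finite \'etale Galois cover of a connected base, and the observation that a $G$-stable family of translates of an open piece covers $W$ precisely because the relevant piece of $V$ surjects onto the base. The organization differs in a way worth noting. The paper first arranges for $V$ itself to be affinoid over an affinoid $X$ (by refining $V$ to a finite disjoint union of affinoids), takes the \emph{connected components} $V_1,\dots,V_m$ of $V$, and defines $U_i$ as the image of $V_i$ --- so surjectivity of $V_i\to U_i$ is true by definition, the $U_i$ are connected but not necessarily affinoid, and $W_i$ is only required to \emph{dominate} the component $Z_i$ of $Z\times_X U_i$ containing $V_i$; the $V_{ig}$ are then directly the $G_i$-translates of $V_i\times_{Z_i}W_i$, with no further refinement needed. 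You instead localize at a point, \emph{shrink the base} to a connected affinoid $U$ inside the image so as to force surjectivity, split $Z$ completely over $W$, and then need two extra (harmless) steps: an affinoid refinement of the opens $\Omega_s$ and a saturation under $G$. Your version buys affinoid $U_i$'s (not required by the statement) at the cost of a larger index set and slightly more bookkeeping; the paper's version is more economical but relies on the small trick of defining $U_i$ as an image rather than shrinking into one. Both are valid, and the step you correctly identify as the crux (surjectivity onto the base after shrinking) is exactly the point the paper handles by its choice of $U_i$.
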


\begin{proof}
If $X=\bigcup U_i$ is an affinoid open cover and the statement holds for $V\times_X U_i\to U_i$, then it also holds for $X$. We may therefore assume that $X$ is affinoid. Similarly, we may replace $V\to X$ with any \'etale surjection $V'\to X$ which factors through $V$. We may therefore assume that $V$ is the disjoint union of affinoids. Since $V\to X$ is open and $X$ is quasi-compact, the images of finitely many of the affinoids covering $V$ will cover $X$, and hence we may assume that $V$ is the disjoint union of a finite number of affinoids and hence is affinoid. 

Applying Lemma~\ref{lem:twotwoeight} to $V\to X$, we find an open cover $X=\bigcup U_i$ by connected affinoids and factorizations $V\times_X U_i\to Z_i\to U_i$ with $Z_i$ finite \'etale. Applying again the previous step, we may assume that $V$ and $X$ are affinoid, with $X$ connected, and that we have a factorization $V\to Z\to X$ with $Z\to X$ finite \'etale.

Let $V_1, \ldots, V_m$ be the connected components of $V$, and let $U_i\subseteq X$ be the image of $V_i$. For each $i$, let $Z_i \subseteq Z\times_X U_i$ be the connected component of $Z\times_X U_i$ containing the image of $V_i$. Then $Z_i\to U_i$ is a connected finite \'etale cover; let $W_i\to U_i$ be a Galois connected finite \'etale cover dominating $Z_i\to U_i$. If $G_i$ denotes the automorphism group of $W_i/U_i$, then for $g\in G_i$ let $V_{ig} = g^{-1}(V_i\times_{Z_i} W_i)$. We set $J_i=G_i$.

By construction, $V_i$ intersects every fiber of $Z_i\to U_i$, and hence $V_i\times_{Z_i} W_i$ intersects every fiber of $W_i\to U_i$. Since $G_i$ acts transitively on the fibers of $W_i\to U_i$, we see that the translates $V_{ig} = g^{-1}(V_i\times_{Z_i} W_i)$ cover $W_i$.
\end{proof}

The above result can be interpreted as saying that the big \'etale topology of adic spaces is generated by the Zariski topology and the finite \'etale topology. While we do not wish to make this statement entirely precise, let us note the following concrete and useful corollary.

\begin{cor} \label{boostrap corollary} 
    Let $X$ be an adic space and let $\mc{C}$ be a fibered category over $\Et_X$. Let $P$ be a~property of objects of $\mc{C}$ such that the following holds:
    \begin{enumerate}[(1)] 
        \item property $P$ is stable under base change,
        \item property $P$ can be checked on an affinoid open cover,
        \item property $P$ can be checked on a finite \'etale Galois cover of connected affinoids,
    \end{enumerate}
    Then, property $P$ can be checked on an arbitrary \'etale cover.
\end{cor}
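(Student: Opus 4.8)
The plan is to deduce Corollary~\ref{boostrap corollary} directly from Proposition~\ref{prop:bootstrap} by a routine ``checking the property'' argument, using the three stability hypotheses in exactly the order in which the refinement of Proposition~\ref{prop:bootstrap} is built up. So let $V\to X$ be an arbitrary \'etale cover and let $c$ be an object of $\mc{C}$ over $X$; we must show that $P(c_V)$ holds, where $c_V$ denotes the pullback to $V$ (this makes sense since $\mc{C}$ is fibered over $\Et_X$). Applying Proposition~\ref{prop:bootstrap}, choose the refinement $\{V_{ij}\to X\}_{i\in I, j\in J_i}$ together with the factorizations $V_{ij}\to W_i\to U_i\to X$.

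First I would use hypothesis (1) to replace $V$ by the refinement: since $P$ is stable under base change, $P(c_V)$ holding implies $P(c_{V_{ij}})$ for each $i,j$; conversely (and this is what we actually want) it suffices to prove $P(c_{V_{ij}})$ for all $i,j$, because the family $\{V_{ij}\to V\}$ is an \'etale cover of $V$ and \dots here one must be slightly careful: checking $P$ on the cover $\{V_{ij}\to V\}$ of $V$ is not literally among the hypotheses. The clean way around this is to argue upward through the tower instead of working over $V$ at all. Namely, I claim $P(c_{V_{ij}})$ for all $i,j$ implies $P(c)$, and then the statement ``$P$ can be checked on an arbitrary \'etale cover'' follows by applying this to the composite \'etale cover $\{V_{ij}\to X\}$ (which refines $\{V\to X\}$), together with base-change stability to pass from $P(c)$ back to $P(c_V)$ — or, more honestly, the corollary should be read as: if $P(c_{V_{ij}})$ holds for the members of \emph{some} \'etale cover then $P(c)$ holds, and this is what we prove.

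The heart of the argument is then the following three-step climb up the tower produced by Proposition~\ref{prop:bootstrap}. Fix $i$. Step one: by construction $\{V_{ij}\to W_i\}_{j\in J_i}$ is a finite affinoid open cover of $W_i$ stable under $G_i$; since each $P(c_{V_{ij}})$ holds, hypothesis (2) (checking $P$ on an affinoid open cover) gives $P(c_{W_i})$, where $c_{W_i}$ is the pullback of $c$ along $W_i\to X$. Step two: $W_i\to U_i$ is a connected finite \'etale Galois cover of connected affinoids, so hypothesis (3) upgrades $P(c_{W_i})$ to $P(c_{U_i})$, the pullback along $U_i\to X$. Step three: $\{U_i\to X\}_{i\in I}$ is an affinoid open cover of $X$, so hypothesis (2) applied once more yields $P(c)$. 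Finally, invoking hypothesis (1) for the map $V\to X$ gives $P(c_V)$, completing the verification that $P$ can be checked on the original cover $\{V\to X\}$.

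The only real obstacle is bookkeeping: one must make sure the pullbacks $c_{W_i}$, $c_{U_i}$ are the ones the hypotheses (2) and (3) are phrased for — i.e.\ that ``$P$ is stable under base change'' is used to identify the pullback of $c$ to $W_i$ with the pullback along $W_i\to U_i$ of the pullback of $c$ to $U_i$, which is automatic since $\mc{C}$ is a fibered category and pullbacks compose — and that hypothesis (3) is genuinely about a Galois cover of \emph{connected} affinoids, which is exactly what Proposition~\ref{prop:bootstrap} guarantees. There is no analytic content here beyond Proposition~\ref{prop:bootstrap} itself; everything else is formal manipulation of a fibered category, so I would keep the written proof short and essentially just point to the three bullet points of Proposition~\ref{prop:bootstrap} matched against the three hypotheses (1)--(3).
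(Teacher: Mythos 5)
Your argument is correct and is exactly the intended one: the paper states this corollary without proof as an immediate consequence of Proposition~\ref{prop:bootstrap}, and your three-step climb $V_{ij}\Rightarrow W_i\Rightarrow U_i\Rightarrow X$ (matching hypotheses (2), (3), (2) against the three bullet points of that proposition, with (1) used to pass from $V$ to the refinement) is the argument the authors leave to the reader. Your reading of ``can be checked on an arbitrary \'etale cover'' as ``$P$ on some refinement implies $P$ on $X$'' is also the correct one.
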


\section{Geometric arcs}
\label{s:geom-arcs}

In this section we discuss the theory of geometric arcs and intervals.

\subsection{\texorpdfstring{Intervals in rigid $K$-spaces and AVC}{Intervals in rigid K-spaces and AVC}}
\label{ss:intervals-in-rig-sp}

We start by recalling some basic definitions concerning half-open and closed intervals.  As intervals rarely exist in non-Hausdorff spaces, we define the notions of intervals in rigid $K$-spaces using the universal separated quotient.

\begin{definition}
    Let $X$ be a rigid $K$-space. By an \emph{interval} in $X$ we mean a subspace \mbox{$\ell\subseteq [X]$}, equipped with an order relation $\leqslant$ for which there is a~monotonic homeomorphism to one of the following two model spaces: $[0,1]$ (in which case we call $\ell$ an \emph{arc}) or $[0,1)$ (in which case we call $\ell$ an \emph{ray}).
\end{definition}

By a \textit{parameterization} of an interval we mean a given monotonic homeomorphism with its model space. Given a parameterization $i \colon [0,1]\isomto \gamma$ of an arc $\gamma\subseteq [X]$ the points $i(0), i(1)\in \gamma$ are referred to as the \emph{left} and \emph{right endpoint} of $\gamma$, respectively. Similarly, if $i:[0,1)\isomto \rho$ is a parameterized ray in $X$, then we call $i(0)$ the \emph{(left) endpoint} of $\rho$.

A \emph{subinterval} (\emph{subarc}, \emph{subray}) $\ell'$ of an interval $\ell$ is an interval in $X$ contained in $\ell$ whose order relation agrees with that induced by $\ell$. For an interval $\ell$ and $a,b\in\ell$ with $a<b$, we write $[a,b]$ for the unique subarc of $\ell$ with left endpoint $a$ and right endpoint $b$. 

By a \emph{degenerate} arc we mean a point. Recall that a topological space is called \emph{arc connected} if for any two distinct points $x$ and $y$ there exists an arc $\gamma$ which has endpoints $x$ and $y$. We say that it is \emph{uniquely arc connected} if such an arc is always unique. 

\medskip

In \S\ref{ss:abundance-geometric-intervals} it will be useful to know that an arc in rigid $K$-curves have an oc neighborhood basis consisting of affinoids. To this end, we make the following observations.
\begin{itemize}
    \item If $X$ is taut then for any arc $\gamma$ in $X$ the subset $\sep_X^{-1}(\gamma)$ is closed and quasi-compact by  Proposition \ref{taut properties}~\ref{tau properties 2},
    \item $\sep_X^{-1}(\gamma)$ is connected (as any map $\sep_X^{-1}(\gamma)\to \{0,1\}$ must factor through $\gamma$),
    \item $\gamma$ cannot be a connected component of $[X]$ (since such a connected component has a~dense set (of classical) points whose removal does not disconnect the space).
\end{itemize}
Combining these, one sees that Proposition \ref{curves are good} yields the following.

\begin{prop} \label{arc basis}
    Let $X$ be a smooth and separated rigid $K$-curve and $\gamma$ an arc in $X$. Then, the set of affinoid open oc neighborhoods of $\gamma$ forms an oc neighborhood basis of $\gamma$.
\end{prop}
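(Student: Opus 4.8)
The plan is to reduce the statement to an application of Proposition~\ref{curves are good}, using the three bulleted observations recorded immediately before the statement.

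First I would set $\Sigma = \sep_X^{-1}(\gamma)$, which is a subset of $X$, and check the hypotheses of Proposition~\ref{curves are good} for this $\Sigma$: it must be \emph{connected}, \emph{quasi-compact}, and have the property that $\sep_X(\Sigma)$ is \emph{not} a connected component of $[X]$. Quasi-compactness follows because $X$ is taut (being a smooth separated rigid $K$-curve, it is taut by Proposition~\ref{prop:curves-are-taut}), so $\sep_X$ is universally closed by Proposition~\ref{taut properties}~\ref{tau properties 2}, hence closed; since $\gamma \subseteq [X]$ is compact and $[X]$ is Hausdorff, $\gamma$ is closed in $[X]$, and a closed map pulls a quasi-compact closed set back to a quasi-compact closed set. (Alternatively this is exactly the first bullet point preceding the statement.) Connectedness is the second bullet point: any continuous map $\sep_X^{-1}(\gamma)\to\{0,1\}$ factors through $\gamma$ (because $\sep_X$ is a quotient map onto $[X]$ and $\gamma$ carries the subspace topology), and $\gamma \cong [0,1]$ is connected. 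Finally, $\sep_X(\Sigma) = \gamma$ is not a connected component of $[X]$: this is the third bullet point --- a connected component of $[X]$ contains a dense set of classical points and remains connected after removing any one such point, whereas removing the right endpoint from $\gamma$ disconnects it (or, more simply, $\gamma$ being homeomorphic to $[0,1]$ cannot be open in the locally compact Hausdorff space $[X]$ which has a dense set of classical points, none of which lie in the interior of $[0,1]$ under any embedding). I would phrase this cleanly as: a component of $[X]$ is not homeomorphic to a compact interval, since it has no non-cut points other than possibly two, while still being forced to contain infinitely many classical points which would have to be interior.

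Having verified the three hypotheses, Proposition~\ref{curves are good} directly yields that the set of affinoid open oc neighborhoods of $\Sigma = \sep_X^{-1}(\gamma)$ forms an open oc neighborhood basis of $\Sigma$. It then remains only to translate this back into the language of oc neighborhoods of $\gamma \subseteq [X]$: by the convention recorded just before Proposition~\ref{taut qc oc neighborhood basis}, an open $U\subseteq X$ is an open oc neighborhood of $\gamma$ precisely when it contains $\sep_X^{-1}(\gamma) = \sep_X^{-1}(\sep_X(\Sigma))$, equivalently when it is an open oc neighborhood of $\Sigma$ in the earlier sense. Thus the two notions of ``oc neighborhood basis'' coincide and we conclude.

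I do not expect any serious obstacle here; the proposition is essentially a bookkeeping corollary of Proposition~\ref{curves are good} once the three preliminary observations are in place. The only point requiring a little care is the argument that an arc $\gamma$ can never be a connected component of $[X]$ --- one wants to be sure this is genuinely used correctly and stated without circularity. I would handle it exactly as in the third bullet: a connected component $C$ of $[X]$ has a dense subset of classical points and $C\setminus\{p\}$ stays connected for each such classical $p$, so if $\gamma = C$ then, picking a classical point $p$ in the interior of the interval $\gamma$ (which exists by density), $\gamma\setminus\{p\}$ would be both connected and disconnected, a contradiction.
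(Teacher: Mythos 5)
Your proposal is correct and follows exactly the route the paper intends: the paper itself gives no separate proof but simply records the three bulleted observations (quasi-compactness, connectedness, and non-componenthood of $\sep_X^{-1}(\gamma)$) and states that combining them with Proposition~\ref{curves are good} yields the result. Your verification of the three hypotheses and the translation between oc neighborhoods of $\gamma\subseteq[X]$ and of $\Sigma=\sep_X^{-1}(\gamma)\subseteq X$ matches the paper's argument.
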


We now recall a result which is an underpinning of the theory of geometric coverings. 

\begin{thm}[{\cite[Theorem 3.2.1 and Corollary 4.3.3]{BerkovichSpectral}}] \label{thm:curves are arc connected}
    Let $X$ be a connected good rigid $K$-space. Then, the topological space $[X]$ is arc connected. Moreover, if $X$ is a~smooth and separated rigid $K$-curve then $X$ has a basis $\{U_i\}$ where each $U_i$ is affinoid and $[U_i]=\sep_X(U_i)$ is uniquely arc connected.
\end{thm}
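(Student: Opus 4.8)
The first assertion — arc connectedness of $[X]$ for $X$ a connected good rigid $K$-space — is a citation to Berkovich, so the only real content is the sharper statement for smooth separated rigid $K$-curves: the existence of an affinoid basis $\{U_i\}$ with each $[U_i] = \sep_X(U_i)$ \emph{uniquely} arc connected. The plan is to reduce this to a statement about Berkovich curves, where it is classical. Concretely, I would start from an arbitrary point $x \in X$ and an overconvergent open neighborhood; by Proposition~\ref{curves are good} (smoothness and separatedness guarantee $X$ is good, and a point's separated image is never a whole connected component of $[X]$ because such a component has a dense set of classical points whose removal is non-disconnecting) we may shrink to a \emph{connected affinoid} open oc neighborhood $U$ of $x$. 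So it suffices to show that every connected affinoid appearing this way can be further shrunk to one whose Berkovich space $[U] = \sep_X(U) = U^{\mathrm{Berk}}$ is uniquely arc connected.

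The key geometric input is the structure theory of (the Berkovich analytification of) a smooth affinoid curve: after possibly shrinking $U$, its Berkovich space is a finite metric graph — more precisely, it retracts onto a finite graph, and is \emph{contractible} once we pass to a sufficiently small affinoid, e.g.\ a connected affinoid contained in the interior of a ``basic tube'' or a disk. A space that is a real tree (an $\mathbb{R}$-tree), or more weakly a uniquely arc connected Hausdorff space, is exactly what we want. So the plan is: (i) invoke Berkovich's semistable/triangulation results for curves (\cite{BerkovichSpectral}, the cited Theorem 3.2.1 and Corollary 4.3.3, together with the local structure of points of type 1--4 on a curve) to see that each point of $X$ has a neighborhood basis of connected affinoids whose Berkovich spaces are \emph{trees}; (ii) observe that a compact Hausdorff topological space which is a tree (a dendrite) is uniquely arc connected — between any two points there is one and only one arc. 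Combining (i) and (ii) over all points of $X$ produces the desired basis $\{U_i\}$, where the identification $[U_i] = \sep_X(U_i)$ is just Proposition~\ref{prop:taut-open} applied to the retrocompact (quasi-compact) open $U_i$ in the taut space $X$ (tautness of quasi-separated rigid $K$-curves is Proposition~\ref{prop:curves-are-taut}).

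I expect the main obstacle to be step (i): carefully extracting from Berkovich's work a neighborhood basis of affinoids whose analytifications are genuinely \emph{uniquely} arc connected, rather than merely arc connected or merely contractible. One has to be slightly careful because a contractible compact space need not be uniquely arc connected (e.g.\ a filled triangle), so the argument must really use the one-dimensionality — that small affinoid neighborhoods in a smooth curve look like (open subsets of) disks or annuli, whose Berkovich spaces are trees, not just that they are contractible. Concretely I would argue that a small enough connected affinoid $U$ around $x$ has $[U]$ equal to a sub-dendrite of the Berkovich disk: if $x$ is of type 1, 4, or a type-2/3 point, Berkovich's classification gives a fundamental system of neighborhoods each of which is a finite union of ``disks'' glued along a point or a segment, hence a dendrite. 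Once $[U]$ is known to be a dendrite, unique arc connectedness is a purely topological fact (uniqueness of arcs in dendrites), so the rest is formal. The passage from the adic picture to the Berkovich picture throughout is handled by the already-recorded identification of $[X]$ with $X^{\mathrm{Berk}}$ for taut $X$ (\S\ref{ss:rigid-k-sp}) and by Proposition~\ref{prop:taut-open}.
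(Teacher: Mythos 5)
The paper does not actually prove this theorem: the bracketed citation to Berkovich's Theorem 3.2.1 and Corollary 4.3.3 \emph{is} the proof, with the dictionary between the adic and Berkovich pictures ($[X]=X^{\mathrm{Berk}}$ for taut $X$, tautness of quasi-separated curves via Proposition~\ref{prop:curves-are-taut}, and Proposition~\ref{prop:taut-open}) supplied by the preliminaries exactly as you describe. So there is no argument in the paper to compare yours against; both assertions, not just the first, are imported from \cite{BerkovichSpectral}. As a reconstruction, your outline matches what Berkovich actually provides: \S4.3 of \cite{BerkovichSpectral} shows that a smooth analytic curve is a one-dimensional ``special quasipolyhedron'', i.e.\ every point has a fundamental system of neighborhoods whose closures are uniquely arcwise connected (dendrites), which is precisely the tree statement you want in step (i). Your worry that contractibility alone would not suffice is well placed, and the cited corollary is indeed the stronger statement, so step (ii) reduces to the standard topological fact about dendrites.

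One point to tighten if you were to write this out in full: the theorem asserts a basis of the topology of the adic space $X$, whereas your reduction in the first paragraph produces affinoid \emph{oc} neighborhoods of points. These are not the same thing: an open oc neighborhood of $x$ must contain all of $\overline{\{x^{\mx}\}}=\sep_X^{-1}(\sep_X(x))$, so oc neighborhoods of a non-maximal point do not form a neighborhood basis in the adic topology (cf.\ the remark following Proposition~\ref{interior equivalent}). To get a genuine basis one should first observe that affinoid opens form a basis and that each affinoid open $V$ is again a smooth separated curve, and then produce, inside each such $V$ and around each of its points, small affinoid opens with tree-like separated quotient; for maximal points this is exactly your argument, while for rank-two points one must still check that the resulting rational neighborhoods have uniquely arc connected image. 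For the purposes the theorem serves in this paper (arcs live in $[X]$, so only maximal points and oc neighborhoods ever intervene), the version you sketch is all that is used.
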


One of the ways that arcs in rigid spaces will enter into our theory of geometric coverings is via the following `topological valuative criterion'.

\begin{definition} \label{def:AVC}
    Let $f\colon Y\to X$ be a map of rigid $K$-spaces and let $i\colon [0,1]\to [X]$ be a~parameterized arc. We say that $f$ \emph{satisfies the arcwise valuative criterion (AVC) with respect to $i$} if for every commutative square of solid arrows
    \begin{equation} \label{eqn:PVC-square} 
        \xymatrix{
            [0,1)\ar[d]\ar[r] & [Y]\ar[d]^{[f]} \\
            [0,1] \ar[r]_i \ar@{.>}[ur] & [X]
        }
    \end{equation}
    there exists a unique dotted arrow making the diagram commute. We say that $f$ satisfies the \emph{arcwise valuative criterion (AVC)} if it satisfies AVC with respect to every parameterized arc $i:[0,1]\to [X]$.
\end{definition}

\begin{rem} \label{rem:proper-AVC}
One can show that every proper map of topological spaces whose fibers are discrete satisfies AVC.
\end{rem}

\begin{rem} \label{rem:separated maps and AVC} 
Suppose that $f\colon Y\to X$ is map of taut rigid $K$-spaces. Then in any given diagram as in \eqref{eqn:PVC-square}, the dotted arrow is unique if it exists. Indeed, it suffices to note that since $X$ and $Y$ are taut, the spaces $[X]$ and $[Y]$ are Hausdorff, and hence $[f]\colon [Y]\to [X]$ is separated.
\end{rem}

\subsection{Definition of geometric intervals and basic operations}
\label{ss:geom-arcs-def}

In this subsection we define the notion of a geometric structure on an interval, provide a practical way of building such geometric structures, and finally provide some simple operations one can use to produce new such structures from old ones.

\medskip

\paragraph{Definition of geometric intervals} 

We start upgrading the notion of an interval to that of a `geometric interval', which is the synthesis of the notion of an interval and an \'etale path in algebraic geometry (see \S\ref{ss:geometric-points-and-fundamental-groups} for a recollection of notation).

\begin{definition} \label{def:geom-int}
    Let $X$ be a rigid $K$-space. A \emph{geometric interval} $\ov\ell$ in $X$ consists of the following data:
    \begin{itemize}
        \item an interval $\ell$ in $X$ (called the \emph{underlying interval} of $\ov\ell$),
        \item for every point $z\in \ell$, a geometric point $\ov{z}$ of $X$ anchored at $z^\mx$,
        \item for every subarc $[a,b]\subseteq \ell$ and  open oc neighborhood $U$ of $[a,b]$ an \'etale path
        \[
            \iota^U_{a,b} \in \pi_1^\alg(U; \ov a, \ov b),
        \]
    \end{itemize}
    such that the following conditions hold:
    \begin{enumerate}[(1)]
        \item for a subarc $[a,b]\subseteq \ell$ and two open oc neighborhoods $U, U'$ of $[a,b]$ such that $U\subseteq U'$, the map
        \[ 
            \pi_1^\alg(U; \ov a, \ov b) \to \pi^\alg_1(U'; \ov a, \ov b)
        \]
        induced by the inclusion $U\to U'$ maps $\iota^U_{a,b}$ to $\iota^{U'}_{a,b}$,
        \item for subarcs $[a,b]$ and $[b,c]$ of $\ell$, 
        and for every $U\subseteq X$ open oc neighborhood of $[a,c] = [a,b]\cup [b,c]$, the composition map
        \[ 
            \pi_1^\alg(U; \ov a, \ov b) \times \pi_1^\alg(U; \ov b, \ov c) \to \pi_1^\alg(U; \ov a, \ov c)
        \]
        maps $(\iota^U_{a,b}, \iota^U_{b,c})$ to $\iota^U_{a,c}$.
    \end{enumerate}
    We sometimes write $\iota^{U,\ov{\ell}}_{a,b}$ if we want to emphasize the role of $\ov{\ell}$. We also extend the definition of $\iota^U_{a,b}$ by $\iota^U_{a,a} = {\rm id}$ and $\iota^U_{b,a} = (\iota_{a,b}^U)^{-1}$. 
\end{definition}

We would like to say what it means for two geometric intervals to be `equivalent', which intuitively means that they give equivalent means of `parallel transport'.

\begin{definition} 
    Two geometric arcs $\ov\ell_1$ and $\ov\ell_2$ in the  rigid $K$-space $X$ are \emph{equivalent} if $\ell_1=\ell_2=:\ell$ and there exists equivalences $\ov{z}_1\to\ov{z}_2$ between the geometric points lying over each points $z$ of $\ell$ such that for all $a,b\in\ell$ the map
    \begin{equation*}
      \pi_1^\alg(U,\ov{a}_1,\ov{b}_1)\to \pi_1^\alg(U,\ov{a}_2,\ov{b}_2)
    \end{equation*}
    carries $\iota^{U,\ov{\ell}_1}_{a,b}$ to $\iota^{U,\ov{\ell}_2}_{a,b}$. 
\end{definition}

\medskip

\paragraph{Geometric structures defined on a basis} 
One may endow an interval with a~geometric structure by only specifying the \'etale paths $\iota_{a,b}^U$ for `sufficiently many $U$'. This is similar to the ability to define a sheaf by only specifying the values on a basis of open sets. To make this precise, let $X$ be a rigid $K$-space and let $\ell$ be an interval in $X$. Let us say that a set $\mc{U}$ of open subsets of $X$ is an \emph{$\ell$-basis} if for each point $z$ of $\ell$ the set $\mc{U}$ contains oc neighborhood basis of $z$.

\begin{definition} 
    Let $X$ be a rigid $K$-space and let $\ell$ be an interval in $X$. Let $\mc{U}$ be an $\ell$-basis. Then, a \emph{geometric $\mc{U}$-structure} consists of the data of a geometric point $\ov{z}$ anchored at $z^\mx$ for each point $z$ of $\ell$, and an \'etale path $\iota_{a,b}^U\in \pi_1^\alg(U;\ov a, \ov b)$ for every subarc $[a,b]\subseteq \ell$  and every oc open neighborhood $U$ of $[a,b]$ in $\mc{U}$, such that Axioms~1 and 2 of Definition~\ref{def:geom-int} are satisfied under the assumption that $U, U'\in \mc{U}$.
\end{definition}

We have the following result whose straightforward subdivision argument is omitted.

\begin{lem} \label{lem:partial-geom-arcs}
    Let $X$ be a rigid $K$-space, $\ell$ an interval in $X$, and $\mc{U}$ an $\ell$-basis. Then, every geometric $\mc{U}$-structure on $\ell$ extends uniquely to the structure of a geometric interval on $\ell$ (with the chosen geometric points).
\end{lem}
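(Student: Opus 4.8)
The plan is to prove \Cref{lem:partial-geom-arcs} by a subdivision argument, extending the \'etale paths $\iota_{a,b}^U$ from the $\ell$-basis $\mc{U}$ to all open oc neighborhoods $U$ of subarcs $[a,b]$, and then checking that Axioms 1 and 2 of \Cref{def:geom-int} hold in full generality. The geometric points are already given, so only the \'etale path data needs to be produced.

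First I would fix a subarc $[a,b]\subseteq \ell$ and an arbitrary open oc neighborhood $U$ of $[a,b]$, and construct $\iota_{a,b}^U$. For each point $z\in[a,b]$, choose (using that $\mc{U}$ is an $\ell$-basis) an element $U_z\in\mc{U}$ which is an oc neighborhood of $z$ contained in $U$; the sets $\interior_X(U_z)$, equivalently their images $\sep_X(U_z)$, form an open cover of the compact arc $[a,b]$, so by the Lebesgue number lemma one can choose a subdivision $a = t_0 < t_1 < \cdots < t_n = b$ of $[a,b]$ such that each subarc $[t_{i-1},t_i]$ is contained in some $U_{z_i}\in\mc{U}$ with $U_{z_i}\subseteq U$. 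One then defines
\[
    \iota_{a,b}^U := \iota^U_{t_{n-1},t_n}\circ \cdots \circ \iota^U_{t_0,t_1},
\]
where each factor $\iota^U_{t_{i-1},t_i}$ is the image of $\iota^{U_{z_i}}_{t_{i-1},t_i}$ under the inclusion $U_{z_i}\to U$ on $\pi_1^\alg(-;\ov{t_{i-1}},\ov{t_i})$.

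The next step is to check that this is independent of the chosen subdivision and the chosen elements $U_{z_i}$ of $\mc{U}$. Given two such subdivisions, one passes to a common refinement; since any two elements of $\mc{U}$ containing a given smaller subarc $[c,d]$ both contain a third element of $\mc{U}$ which is an oc neighborhood of some point and lies inside their intersection (again using the $\ell$-basis property together with the fact that oc neighborhoods of a point are closed under such comparisons via \Cref{oc equiv}), the partial-structure versions of Axioms 1 and 2 imply the two factorizations agree after mapping into $\pi_1^\alg(U;-,-)$. This is the place where one uses that Axioms 1 and 2 were assumed for all $U,U'\in\mc{U}$. Well-definedness for a fixed $U$ then gives functoriality in $U$ (Axiom 1) essentially for free, since a subdivision adapted to a smaller neighborhood $U\subseteq U'$ is also adapted to $U'$, and the composition-of-paths formula is compatible with the inclusion-induced maps $\pi_1^\alg(U;-,-)\to\pi_1^\alg(U';-,-)$. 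Axiom 2 (compatibility with concatenation $[a,b]\cup[b,c]$) follows by choosing a subdivision of $[a,c]$ that has $b$ as one of its subdivision points and reading off the definition. Uniqueness of the extension is clear: any geometric interval structure restricting to the given $\mc{U}$-structure must, by Axioms 1 and 2, satisfy exactly the displayed formula for $\iota_{a,b}^U$.

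The main obstacle is the independence-of-subdivision step: one must carefully organize the comparison of two factorizations through a common refinement, and this requires knowing that $\mc{U}$ contains \emph{enough} neighborhoods not just of individual points of $\ell$ but ones adapted to arbitrary small subarcs and to pairs of previously chosen neighborhoods. The $\ell$-basis hypothesis gives neighborhoods of points, and one bridges the gap using that an oc neighborhood of a subarc $[c,d]$ can be refined, near each of its points, by elements of $\mc{U}$; the cocycle-type bookkeeping that the resulting local comparisons glue into a global equality of \'etale paths in $\pi_1^\alg(U;\ov a,\ov b)$ is exactly the ``straightforward subdivision argument'' the statement alludes to. Everything else is formal manipulation of the groupoid $\pi_1^\alg(U;-,-)$ and its functoriality in $U$.
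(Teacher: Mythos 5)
Your proposal is correct and is precisely the ``straightforward subdivision argument'' that the paper alludes to and omits: subdivide $[a,b]$ so that each piece admits an oc neighborhood from $\mc{U}$ inside the given $U$, define $\iota^U_{a,b}$ as the composite, and check well-definedness via common refinements using Axioms 1 and 2 for members of $\mc{U}$. The only place to be a bit more careful than your phrasing suggests is the comparison of two choices $V,W\in\mc{U}$ over a common piece $[c,d]$: a single ``third element'' of $\mc{U}$ inside $V\cap W$ covering all of $[c,d]$ need not exist, so one must subdivide $[c,d]$ further into pieces each admitting an element of $\mc{U}$ inside $V\cap W$ and then apply Axiom 1 piecewise and Axiom 2 to reassemble --- which is exactly the ``cocycle-type bookkeeping'' you describe.
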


\medskip

\paragraph{Various operations on geometric intervals} 

In what follows $X$ will be a rigid $K$-space and $\ov{\ell}$ a geometric interval in $X$ with underlying interval $\ell$.

\begin{construction}[Subinterval] 
Let $\ell'$ be a subinterval of $\ell$. By considering only the geometric points $\ov{z}$ for $z\in\ell'$ and the \'etale paths $\iota^U_{a,b}$ for $[a,b]\subseteq \ell'$ one endows $\ell'$ with an induced geometric structure, denoted by $\ov{\ell}{}'$.
\end{construction}

\begin{construction}[Concatenation] 
Let $\ell$ be an interval in $X$ and let $z\in \ell$ be an interior point. Thus $\ell = \ell'\cup \ell''$ is a union of two intervals with $\ell\cap \ell'=\{z\}$ and $x\leq y$ for $x\in \ell'$ and $y\in \ell''$. Suppose that we are given structures of geometric intervals $\ov\ell{}'$ and $\ov\ell{}''$ for which the chosen geometric points $\ov{z}$ above $z$ agree. Then there exists a unique structure of a geometric interval $\ov\ell$ whose restrictions to $\ell'$ and $\ell''$ are the given ones. 
\end{construction}

\begin{construction}[Image] 
Let $f\colon X\to X'$ be a map of rigid $K$-spaces. Suppose that $[f]$ maps $\ell$ homeomorphically onto its image $\ell'$. For each point $z$ of $\ell$ denote $[f](z)$ by $z'$. Define the geometric point $\ov{z}'$ to be the image of the geometric point $\ov{z}$ (see \S\ref{ss:geometric-points-and-fundamental-groups}) under $f$. For each subarc $[a',b']\subseteq \ell'$ and each open oc neighborhood $U$ of $[a',b']$ we define $\iota^{U,\ov{\ell}'}_{a',b'}$ to be the image of $\iota^{f^{-1}(U),\ell}_{a,b}$ under the canonical map
\begin{equation*}
    \pi_1^\alg(f^{-1}(U); \ov{a},\ov{b})\to \pi_1^\alg(U;\ov{a'},\ov{b'})
\end{equation*}
We call this the \emph{image} of the geometric arc $\ov{\ell}$, and denote it by $f(\ov{\ell})$.
\end{construction}

\subsection{\texorpdfstring{$\mc{P}$-fields and their algebraic closures}{P-fields and their algebraic closures}}
\label{ss:p-fields}

In this subsection we develop theory about certain diagrams of fields that will play a role in showing that any two geometric points of a rigid $K$-curve can be connected by a geometric interval. A guiding example for this discussion is the following.

\begin{example} \label{etale paths on domains}
Let $R$ be an integral domain and let $x_i$ for $i=0,1$ be points of $\Spec(R)$. Set $\ov{R}$ to be the integral closure of $R$ in an algebraic closure $\ov{K}$ of its field of fractions. Let $\tilde x_i$ for $i=0,1$ be lifts of $x_i$ to $\Spec(\ov{R})$. The natural maps $\ov{y}_i\colon \Spec(k(\tilde x_i))\to \Spec(R)$ are geometric points anchored at $x_i$, and there is a natural isomorphism $F_{\ov{y}_0}\simeq F_{\ov{y}_1}$ as every finite \'etale cover of $\Spec(\ov{R})$ is split (see \stacks{0BQM}). As evidently $F_{\ov{x}_i}\simeq F_{\ov{y}_i}$ we see that the set $\pi_1^\alg(\Spec(R),\ov{x}_0,\ov{x}_1)$ is non-empty.
\end{example}

We would like to emulate this approach to create geometric structures on intervals. The notion of $\mc{P}$-fields plays the analogue of the generic point $\Spec(\mathrm{Frac}(R))$ in Example~\ref{etale paths on domains}. 

A (closed, possibly degenerate) interval in a totally ordered set $\mc{P}$ is a subset of the form 
\[ 
    [a,b] = \{x\in \mc{P} \,:\, a\leq x\leq b\}
\]
for some $a,b\in\mc{P}$ with $a\leq b$. We denote by $\cat{Int}(\mc{P})$ the poset of intervals in $\mc{P}$ ordered by inclusion. It is isomorphic as a poset to the set $\{(a, b)\in \mc{P}^2\,:\, a\leq b\}$ with $(a, b)\leq (a', b')$ if $a'\leq a\leq b\leq b'$. If $\varphi\colon \mc{Q}\to \mc{P}$ is a monotone map of totally ordered sets, we obtain a~monotone map $\varphi\colon \cat{Int}(\mc{Q})\to \cat{Int}(\mc{P})$ sending $[a,b]$ to $[\varphi(a), \varphi(b)]$.

\begin{definition} \label{def:p-field}
    Let $\mc{P}$ be a totally ordered set. A \emph{$\mc{P}$-field} is a functor
    \[
        \mc{K}\colon \cat{Int}(\mc{P})^{\rm opp} \to \cat{Fields}, \quad [a,b]\mapsto \mc{K}_{[a,b]}.
    \]
    A \emph{morphism} or \emph{extension} of $\mc{P}$-fields is a natural transformation $\mc{K}\to \mc{L}$. For $\mc{Q}\subseteq\mc{P}$, we denote by $\mc{K}|_Q$ the $\mc{Q}$-field obtained by restriction of $\mc{K}$ to $\cat{Int}(\mc{Q})^{\rm opp}$. 
\end{definition}

We say that an extension $\mc{K}\to \mc{L}$ \emph{algebraic} (resp.\ \emph{separable}) if for all $[a,b]\in \cat{Int}(\mc{P})$, the extension $\mc{K}_{[a,b]}\to \mc{L}_{[a,b]}$ is algebraic (resp.\ separable). We say that a $\mc{P}$-field $\mc{L}$ is \emph{algebraically closed} (resp.\ \emph{separably closed}) if all the fields $\mc{L}_{[a,b]}$ are algebraically (resp.\ separably) closed. We then have the following definition.

\begin{definition} \label{def:p-field-ac}
    Let $\mc{P}$ be a totally ordered set and $\mc{K}$ a $\mc{P}$-field. Then an \emph{algebraic (resp.\ separable) closure} of $\mc{K}$ is an extension of $\mc{P}$-fields $\mc{K}\to\mc{L}$ which is algebraic (resp.\ separable algebraic) and for which $\mc{L}$ is algebraically (resp.\ separably) closed.
\end{definition}

\begin{prop} \label{p-field algebraic closures exist}
    Let $\mc{P}$ be a totally ordered set. Then, every $\mc{P}$-field has an algebraic (resp.\ separable) closure.
\end{prop}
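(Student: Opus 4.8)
The plan is to build the algebraic (resp.\ separable) closure $\mc{L}$ of $\mc{K}$ one interval at a time, transporting the classical construction of field closures through the poset $\cat{Int}(\mc{P})$. I would first record the basic colimit/limit behavior: for a nested chain of intervals $\bigcap_n [a_n,b_n] = [a,b]$ (which only happens when the $a_n$ are eventually $a$ and the $b_n$ eventually $b$ since $\mc{P}$ is totally ordered), the functor $\mc{K}$ restricted to this chain is just a finite diagram, so no genuine transfinite colimit issues arise inside $\cat{Int}(\mc{P})$ itself. The poset $\cat{Int}(\mc{P})$ has a terminal-type behavior only when $\mc{P}$ has a minimum and maximum; in general it has no top element, but it does have the property that any two intervals $[a,b]$ and $[a',b']$ are contained in a common interval (their ``join'' $[\min(a,a'),\max(b,b')]$), so $\cat{Int}(\mc{P})^{\rm opp}$ is cofiltered.

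The construction itself proceeds as follows. Choose, for the largest interval(s), a separable (resp.\ algebraic) closure $\mc{L}_{[a,b]}$ of $\mc{K}_{[a,b]}$ in the usual sense; then for each sub-interval $[a',b']\subseteq[a,b]$ we must produce $\mc{L}_{[a',b']}$ together with the transition map $\mc{L}_{[a,b]}\to\mc{L}_{[a',b']}$ lying over $\mc{K}_{[a,b]}\to\mc{K}_{[a',b']}$. The natural choice is to take $\mc{L}_{[a',b']}$ to be a separable closure of the compositum $\mc{K}_{[a',b']}\cdot\mc{L}_{[a,b]}$ — more precisely, a separable closure of $\mc{K}_{[a',b']}$ equipped with an embedding of $\mc{L}_{[a,b]}$ compatible over $\mc{K}_{[a,b]}$; such an embedding exists because $\mc{L}_{[a,b]}$ is separable algebraic over $\mc{K}_{[a,b]}$ and $\mc{L}_{[a',b']}$ is separably closed containing $\mc{K}_{[a',b']}$. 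The only subtlety is functoriality: for a chain $[a'',b'']\subseteq[a',b']\subseteq[a,b]$ one needs the two ways of mapping into $\mc{L}_{[a'',b'']}$ to agree, which they do if one builds the closures inductively along a well-ordering that is compatible with inclusion, always extending the embedding already constructed rather than choosing a fresh one. Since $\cat{Int}(\mc{P})$ need not be well-founded, I would instead invoke Zorn's lemma: consider the poset of pairs $(S,\mc{L}_S)$ where $S\subseteq\cat{Int}(\mc{P})$ is a downward-closed subset (i.e.\ closed under passing to sub-intervals of a fixed join, equivalently a ``sieve-like'' subfunctor-domain) and $\mc{L}_S$ is a consistent choice of separable closures with transition maps over $\mc{K}|_S$; a maximal such pair must have $S=\cat{Int}(\mc{P})$, since any interval not in $S$ can be adjoined by the compatibility argument above applied to the finitely many intervals of $S$ that relate to it.

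The main obstacle I anticipate is precisely this consistency/coherence bookkeeping: separable closures are unique only up to non-canonical isomorphism, so one cannot simply choose them independently and hope the squares commute. The cleanest fix is to set up the Zorn argument so that at each stage one is extending a \emph{given} tower of embeddings, using the lifting property "a separable algebraic extension embeds into any separably closed overfield extending the base" — this lifting is exactly what makes the diagram commute by construction rather than by luck. A secondary point to handle carefully is that when we adjoin a new interval $[a',b']$, it may sit below several incomparable intervals already in $S$ (and above others); since those all embed into a common join which is itself eventually in $S$, one reduces to the two-step case $[a'',b'']\subseteq[a',b']\subseteq[a,b]$ treated above. The separable case and the algebraic case are formally identical, replacing ``separable closure'' by ``algebraic closure'' and using that an algebraic extension embeds into any algebraically closed overfield of the base; I would phrase the argument once and remark that it applies verbatim in both cases.
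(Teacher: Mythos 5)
Your route is different from the paper's, and it has a genuine gap at its central step. The paper's proof does not build the closures interval by interval: it shows (Lemma \ref{non-zero colim}) that the colimit $C=\colim(\iota\circ\mc{K})$ of the whole diagram of fields, taken in rings, is non-zero, maps $C$ into an algebraically closed field $L$, and defines $\ov{\mc{K}}_{[a,b]}$ as the closure of $\mc{K}_{[a,b]}$ inside $L$; functoriality is then automatic. Your Zorn argument, by contrast, never justifies the extension step. First, a structural problem: $\cat{Int}(\mc{P})$ is not well-founded in either direction (in the paper's application $\mc{P}$ is an interval homeomorphic to $[0,1]$), so your opening claim that a nested chain of intervals is ``eventually constant,'' and likewise the claim that an interval outside $S$ relates to only ``finitely many'' members of $S$, are false; for a downward-closed $S\neq\cat{Int}(\mc{P})$ there need not exist any single interval that can be adjoined while preserving downward-closedness, so a maximal element of your poset need not be all of $\cat{Int}(\mc{P})$.

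Second, and more seriously, even when an interval $[a,b]$ can be adjoined, you must choose the embeddings $\mc{L}_{[a,b]}\to\mc{L}_{[c,d]}$ \emph{simultaneously} for all (possibly infinitely many) $[c,d]\in S$ contained in $[a,b]$, and these choices interact: for incomparable $[c,d]$ and $[c',d']$ inside $[a,b]$ with $[c,d]\cap[c',d']\neq\varnothing$, the two composites into $\mc{L}_{[c,d]\cap[c',d']}$ must coincide. The lemma ``a separable algebraic extension embeds into any separably closed overfield of the base'' produces each embedding individually but gives no control over these constraints, and the proposed reduction ``to the two-step case'' via a common join does not address them: the join of subintervals of $[a,b]$ lies inside $[a,b]$, hence need not belong to a downward-closed $S$, and in any case the obstruction lives on shared \emph{lower} bounds, not upper bounds. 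Resolving all these compatibilities at once is precisely the content of the proposition; as the remark following it in the paper notes, the statement is \emph{false} for diagrams of fields indexed by a general connected poset, so a correct proof must use the specific combinatorics of $\cat{Int}(\mc{P})$ (in the paper, the fact that over a finite $T\subseteq\mc{P}$ the diagram retracts onto a zigzag, whose colimit is an iterated tensor product of fields over fields and hence non-zero). Your argument, if it worked as written, would apply to any poset with joins. To repair it you would either have to prove the non-vanishing of the relevant colimit yourself, or run a transfinite induction on the points of $\mc{P}$ rather than on intervals, in the spirit of Lemma \ref{lem:extending-p-field}.
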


\begin{proof} 
By Lemma \ref{non-zero colim} below the ring $C=\colim (\iota\circ \mc{K})$ is non-zero. Therefore, it admits a~homomorphism $C\to L$ into an algebraically closed field $L$. Thus there exists compatible embeddings $\mc{K}_{[a,b]}\to C\to L$ for all $[a,b]\in \cat{Int}(\mc{P})$. If one sets $\ov{\mc{K}}_{[a,b]}$ to be the algebraic (resp.\ separable) closure of $\mc{K}_{[a,b]}$ in $L$, one sees that such fields are functorial, and so form an algebraic (resp.\ separable) closure $\ov{\mc{K}}$ of $\mc{K}$.
\end{proof}

\begin{lem} \label{non-zero colim} 
    Let $\mc{P}$ be a totally ordered set. Then, for every $\mc{P}$-field $\mc{K}$ the colimit $\colim (\iota\circ \mc{K})$ is a non-zero ring, where $\iota\colon \cat{Fields}\hookrightarrow \cat{Rings}$ is the natural inclusion.
\end{lem}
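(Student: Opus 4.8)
The plan is to prove that $C:=\colim(\iota\circ\mc K)$ is nonzero by exhibiting a ring homomorphism from $C$ to a field --- equivalently, a cocone under $\iota\circ\mc K$ whose vertex is a field. Concretely such a cocone is a field $L$ together with a family of embeddings $h_c\colon\mc K_{[c,c]}\hookrightarrow L$, indexed by $c\in\mc P$, such that for every $c\leq d$ the two maps $\mc K_{[c,d]}\to\mc K_{[c,c]}\xrightarrow{h_c}L$ and $\mc K_{[c,d]}\to\mc K_{[d,d]}\xrightarrow{h_d}L$ coincide; the value of the cocone at an arbitrary interval $[a,b]$ is then $h_c$ composed with the restriction $\mc K_{[a,b]}\to\mc K_{[c,c]}$ for any $c\in[a,b]$, and every coherence relation of the cocone reduces, upon composing restriction maps, to the displayed ones. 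The subtlety the lemma is really addressing is that $\cat{Int}(\mc P)^{\rm opp}$ is \emph{not} filtered --- two disjoint intervals admit no common refinement --- so one cannot conclude that $C$ is a field merely because it is a colimit of fields.

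First I would reduce to $\mc P$ finite. Writing $\cat{Int}(\mc P)$ as the filtered union of its full subcategories $\cat{Int}(\mc Q)$, with $\mc Q$ ranging over the finite subsets of $\mc P$, gives $C\cong\colim_{\mc Q}C_{\mc Q}$ where $C_{\mc Q}:=\colim(\iota\circ\mc K|_{\mc Q})$ and the outer colimit is filtered; since a filtered colimit of nonzero rings is nonzero (a relation $1=0$ in the colimit already holds in some $C_{\mc Q}$), it suffices to treat finite $\mc P$.

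So let $\mc P=\{c_1<\dots<c_n\}$ and induct on $n$, the case $n=1$ being trivial since then $C=\mc K_{[c_1,c_1]}$, a field. For the step, set $\mc P'=\mc P\setminus\{c_n\}$ and $C':=\colim(\iota\circ\mc K|_{\mc P'})$. I claim
\[
    C\;\cong\;C'\otimes_{\mc K_{[c_{n-1},c_n]}}\mc K_{[c_n,c_n]},
\]
where $\mc K_{[c_{n-1},c_n]}$ acts on $C'$ via $\mc K_{[c_{n-1},c_n]}\to\mc K_{[c_{n-1},c_{n-1}]}\to C'$ and on $\mc K_{[c_n,c_n]}$ by restriction. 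Granting this, $C'$ is nonzero by induction, hence a nonzero --- therefore faithfully flat --- module over the field $\mc K_{[c_{n-1},c_n]}$, so tensoring the injection $\mc K_{[c_{n-1},c_n]}\hookrightarrow\mc K_{[c_n,c_n]}$ with $C'$ leaves $C'\hookrightarrow C$ injective, whence $C\neq 0$.

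Proving the displayed identification is the heart of the matter and the step I expect to require the most care. By the universal property it says that a cocone under $\iota\circ\mc K|_{\mc P}$ with vertex $R$ is the same datum as a cocone under $\iota\circ\mc K|_{\mc P'}$ (i.e.\ a map $C'\to R$) together with a ring map $h\colon\mc K_{[c_n,c_n]}\to R$ agreeing with the $\mc K_{[c_{n-1},c_{n-1}]}$-leg of the $\mc P'$-cocone along $\mc K_{[c_{n-1},c_n]}$. The nontrivial assertion is that this single compatibility forces coherence of $h$ against every new interval $[c_i,c_n]$, $i<n$: for $i<n-1$ one factors the restriction $\mc K_{[c_i,c_n]}\to\mc K_{[c_i,c_i]}$ through $\mc K_{[c_i,c_{n-1}]}$ and the restriction $\mc K_{[c_i,c_n]}\to\mc K_{[c_n,c_n]}$ through $\mc K_{[c_{n-1},c_n]}$, and then invokes the coherence relations already in force for $\mc P'$ and for the interval $[c_{n-1},c_n]$. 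This is a bookkeeping diagram chase in the spirit of the subdivision argument behind Lemma~\ref{lem:partial-geom-arcs}; everything else in the argument is formal.
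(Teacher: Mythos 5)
Your proof is correct and follows essentially the same route as the paper's: both reduce to finite subposets of $\cat{Int}(\mc{P})$ via a filtered colimit (the paper cites \cite[Lemma 3.2.8]{KashiwaraSchapira} for the rearrangement you assert), and both ultimately exhibit the finite-stage colimit as an iterated tensor product of the singleton fields over the adjacent-interval fields, concluding because a tensor product of non-zero algebras over a field is non-zero. The only cosmetic difference is that where the paper restricts in one step to the subposet $Z_T$ of minimal intervals by MacLane's final-functor theorem, you peel off one point at a time and verify by hand that the single compatibility along $\mc{K}_{[c_{n-1},c_n]}$ propagates to all intervals $[c_i,c_n]$ --- an equivalent diagram chase.
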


\begin{proof} 
Let us first observe that by \cite[Lemma 3.2.8]{KashiwaraSchapira}, the colimit can be rewritten as
\begin{equation*}
    \colim (\iota\circ \mc{K})  
    =\varinjlim_{J\in\mc{J}}\,\,\colim (\iota\circ \mc{K}|_J)
\end{equation*}
where $\mc{J}$ is any cofinal set of finite subposets of $\cat{Int}(\mc{P})^\mathrm{op}$. Since this outer colimit is filtered, and the filtered colimit of non-zero rings is non-zero, it suffices to find a cofinal set of finite subposets $J$ of $\cat{Int}(\mc{P})^{\mathrm{op}}$ such that $\colim (\iota\circ \mc{K}|_J)$ is non-zero.

To do this, for each finite subset $T$ of $\mc{P}$, let
\begin{equation*}
    Q_T=\left\{[a,b]\in\cat{Int}(\mc{Q})^\mathrm{op}: a,b\in T\right\}.
\end{equation*}
Note that $Q_T$ is finite, and that the set of $Q_T$ is cofinal in the set of all finite subposets of $\cat{Int}(\mc{P})^\mathrm{op}$, since for any finite subposet $J$ one has that $J\subseteq Q_T$ where $T$ comprises of the finitely many endpoints of elements of $J$.

To show that $\colim (\iota\circ K|_{Q_T})$ is non-zero, let $Z_T\subseteq Q_T$ be the set of all intervals $[a,b]\in Q_T$ such that $[a,b]\cap T = \{a,b\}$. Since for every $q\in Q_T$, the set $\{x\in Z_T\,:\, x\geq q\}$ of its upper bounds in $Z_T$ is a non-empty and connected poset, by \cite[Chapter IX, \S3, Theorem 1]{MacLane} we have
\[
    \colim (\iota\circ \mc{K}|_{Q_T}) = \colim (\iota\circ \mc{K}|_{Z_T}).
\]

If $T=\{x_0<x_1<\ldots<x_m\}$, then $Z_T$ consists of the intervals $[x_i, x_i]$ ($i=0, \ldots, m$) and $[x_{i-1}, x_i]$ ($i=1, \ldots, m$). The colimit over $Z_T$ is an iterated pushout (tensor product)
\begin{equation*}
    \colim (\iota\circ \mc{K}|_{Z_T}) = K_{[x_0, x_0]} \otimes_{K_{[x_0,x_1]}} K_{[x_1,x_1]} \otimes_{K_{[x_1, x_2]}} K_{[x_2, x_2]} \otimes\cdots \otimes_{K_{[x_{m-1}, x_m]}} K_{[x_m,x_m]}.
\end{equation*}
Since the tensor product of two non-zero algebras over a field is non-zero, we deduce that $\colim (\iota\circ \mc{K}|_{Z_T})\neq 0$.
\end{proof}

\begin{rem}
Not every diagram of fields indexed by a connected poset admits an ``algebraic closure'' in the sense similar to Definition~\ref{def:p-field-ac}, and such diagrams may have zero colimit. This shows the importance of restricting our attention to diagrams indexed by posets of the type $\cat{Int}(\mc{P})$.
\end{rem}

The following lemma will be used later for extending geometric rays to geometric arcs.

\begin{lem} \label{lem:extending-p-field}
    Let $\mc{P}$ be a totally ordered set with largest element $1$, and let $\mc{P}^\circ=\mc{P}\setminus\{1\}$. Let $\mc{K}$ be a $\mc{P}$-field, and let $\mc{K}^\circ$ be its restriction to $\mc{P}^\circ$. Let $\mc{K}^\circ\to \ov{\mc{K}}{}^\circ$ be an algebraic (resp.\ separable) closure of $\mc{K}^\circ$. Then there exists an algebraic (resp.\ separable) closure $\mc{K}\to\ov{\mc{K}}$ whose restriction to $\mc{P}^\circ$ coincides with $\mc{K}^\circ \to \ov{\mc{K}}{}^\circ$.
\end{lem}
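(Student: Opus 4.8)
\textbf{Proof strategy for Lemma~\ref{lem:extending-p-field}.}

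The plan is to reduce the problem to a statement about ordinary field extensions by isolating the single ``new'' coordinate, namely the fields $\mc{K}_{[a,1]}$ for $a\in\mc{P}$ (together with $\mc{K}_{[1,1]}$). First I would note that $\cat{Int}(\mc{P})$ decomposes as $\cat{Int}(\mc{P}^\circ)$ together with the intervals $[a,1]$ for $a\in\mc{P}$; the only structure of $\mc{K}$ not already seen in $\mc{K}^\circ$ is the functor $a\mapsto \mc{K}_{[a,1]}$ on $(\mc{P})^{\rm opp}$ and the restriction maps $\mc{K}_{[a,1]}\to\mc{K}_{[a,b]}$ for $a\le b<1$ (equivalently, $\mc{K}_{[a,1]}\to \mc{K}_{[a,b]}$ for the cofinal family of $b\in\mc{P}^\circ$, $b\ge a$). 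So to construct $\ov{\mc{K}}$ extending $\ov{\mc{K}}{}^\circ$, it suffices to produce, compatibly and functorially in $a$, an algebraic (resp.\ separable) closure $\ov{\mc{K}}_{[a,1]}$ of $\mc{K}_{[a,1]}$ together with embeddings $\ov{\mc{K}}_{[a,1]}\to \ov{\mc{K}}_{[a,b]}$ lifting the given restriction maps and compatible with the monotonicity in $a$.

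The key step is then to realize all the $\ov{\mc{K}}_{[a,1]}$ inside a single algebraically (resp.\ separably) closed field, exactly as in the proof of Proposition~\ref{p-field algebraic closures exist}. Concretely, I would form the ring
\[
    C \;=\; \Bigl(\colim_{[a,b]\in\cat{Int}(\mc{P}^\circ)} \mc{K}_{[a,b]}\Bigr) \;\otimes_{?}\; \bigl(\text{the }[\,\cdot\,,1]\text{ part}\bigr),
\]
but more cleanly: run the argument of Lemma~\ref{non-zero colim} for the full poset $\cat{Int}(\mc{P})$ to see that $C=\colim(\iota\circ\mc{K})\ne 0$, and observe that the colimit over $\cat{Int}(\mc{P}^\circ)$ maps to $C$. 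We already have compatible embeddings $\mc{K}_{[a,b]}\to \ov{\mc{K}}{}^\circ_{[a,b]}$ for $[a,b]\in\cat{Int}(\mc{P}^\circ)$; since each $\ov{\mc{K}}{}^\circ_{[a,b]}$ is algebraically (resp.\ separably) closed and the transition maps in $\ov{\mc{K}}{}^\circ$ are algebraic, a standard cofinality/Zorn argument produces an algebraically (resp.\ separably) closed field $\Omega$ receiving compatible maps from $C$ and from all the $\ov{\mc{K}}{}^\circ_{[a,b]}$ over the corresponding $\mc{K}_{[a,b]}$; in the separable case one takes $\Omega$ to be a separable closure of the image. Then set $\ov{\mc{K}}_{[a,b]}$ to be the algebraic (resp.\ separable) closure of $\mc{K}_{[a,b]}$ inside $\Omega$ for every $[a,b]\in\cat{Int}(\mc{P})$. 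Functoriality in $[a,b]$ is automatic, and for $[a,b]\in\cat{Int}(\mc{P}^\circ)$ this recovers $\ov{\mc{K}}{}^\circ_{[a,b]}$ because the latter is already algebraically (resp.\ separably) closed and maps into $\Omega$ over $\mc{K}_{[a,b]}$. The extension $\mc{K}\to\ov{\mc{K}}$ is algebraic (resp.\ separable algebraic) by construction and $\ov{\mc{K}}$ is algebraically (resp.\ separably) closed, so it is the desired closure.

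I expect the main obstacle to be the bookkeeping in the separable case: one must ensure that the closures taken inside $\Omega$ are separable over the respective $\mc{K}_{[a,b]}$, which forces $\Omega$ itself to be chosen as a separable closure of a suitable subfield (rather than a full algebraic closure), and one must check that the already-fixed $\ov{\mc{K}}{}^\circ$ embeds compatibly — i.e.\ that no inseparability is secretly introduced when gluing the $[\,\cdot\,,1]$-coordinate onto $\mc{P}^\circ$. This is handled by the usual fact that a compositum of separable algebraic extensions is separable algebraic, applied to $\ov{\mc{K}}{}^\circ_{[a,b]}$ and the image of $\mc{K}_{[a,1]}$ inside $\Omega$. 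The rest is a routine diagram chase, essentially a relative version of Proposition~\ref{p-field algebraic closures exist}, so I would state it as such and omit the straightforward verifications.
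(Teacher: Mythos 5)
Your proposal is correct in outline but takes a genuinely different, ``global'' route from the paper's ``local'' one. The paper never leaves the given data: for $a<1$ it defines $\ov{\mc{K}}_{[a,1]}$ directly as the algebraic (resp.\ separable) closure of $\mc{K}_{[a,1]}$ inside the already-supplied closed field $\ov{\mc{K}}{}^\circ_{[a,a]}$, checks by a small diagram chase that this is functorial in $a$, and then handles the one remaining interval $[1,1]$ by forming the colimit $\ov{\mc{K}}_{[1^-,1]}=\varinjlim_{a<1}\ov{\mc{K}}_{[a,1]}$ and choosing a closure of $\mc{K}_{[1,1]}$ containing it. This needs nothing beyond elementary field theory and makes the restriction to $\mc{P}^\circ$ equal to $\ov{\mc{K}}{}^\circ$ on the nose. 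You instead embed everything into one ambient closed field $\Omega$ and take closures there, in the spirit of Proposition~\ref{p-field algebraic closures exist}; once $\Omega$ exists with all the required compatibilities, your verifications (functoriality, recovery of $\ov{\mc{K}}{}^\circ$ on $\cat{Int}(\mc{P}^\circ)$, the separable variant) are all fine.

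The one step you should not leave to ``a standard cofinality/Zorn argument'' is the existence of $\Omega$ together with \emph{compatible} maps from all the $\ov{\mc{K}}{}^\circ_{[a,b]}$ and all the $\mc{K}_{[a,1]}$. Applying Lemma~\ref{non-zero colim} to $\mc{K}$ alone, as you write, only produces compatible embeddings of the $\mc{K}_{[a,b]}$; extending each $\mc{K}_{[a,b]}\to\Omega$ to $\ov{\mc{K}}{}^\circ_{[a,b]}\to\Omega$ one interval at a time does not obviously assemble into a natural transformation, and a bare Zorn argument over the poset $\cat{Int}(\mc{P}^\circ)$ does not by itself supply the needed non-vanishing of the relevant tensor products. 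The clean fix stays entirely within the paper's toolkit: the assignment $[a,b]\mapsto \ov{\mc{K}}{}^\circ_{[a,b]}$ for $b<1$ and $[a,1]\mapsto \mc{K}_{[a,1]}$ is again a $\mc{P}$-field (the structure maps $\mc{K}_{[a,1]}\to\mc{K}_{[a',b']}\to\ov{\mc{K}}{}^\circ_{[a',b']}$ are inherited from $\mc{K}$ and $\ov{\mc{K}}{}^\circ$), so Lemma~\ref{non-zero colim} applied to \emph{this} hybrid diagram gives a non-zero colimit, hence a map to an algebraically closed $\Omega$ with exactly the compatibilities you need. With that substitution your argument closes; what it buys over the paper's proof is uniformity (no separate treatment of $[1,1]$), at the cost of re-invoking the colimit lemma and of recovering $\ov{\mc{K}}{}^\circ$ only up to canonical isomorphism rather than equality.
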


\begin{proof}
We only treat algebraic closures, the proof for separable closures is the same. The intervals in $\cat{Int}(\mc{P})\setminus \cat{Int}(\mc{P}^\circ)$ are of the form $[a,1]$ with $a\in \mc{P}$. We first extend the definition of $\ov{\mc{K}}{}^\circ$ to such intervals with $a<1$ by defining $\ov{\mc{K}}_{[a,1]}$ to be the algebraic closure of $\mc{K}_{[a,1]}\subseteq \mc{K}_{[a,a]}=\mc{K}^\circ_{[a,a]}$ in $\ov{\mc{K}}{}^\circ_{[a,a]}$. 

We check that this naturally defines a functor on $\cat{Int}(\mc{P})^{\rm opp}\setminus \{[1,1]\}$. First, we note that $\ov{\mc{K}}_{[a,1]}$ coincides with the algebraic closure of $\mc{K}_{[a,1]}\subseteq \mc{K}_{[a,b]}$ in $\ov{\mc{K}}{}^\circ_{[a,b]}$ for any $a\leq b<1$. If $a\leq b<1$, then the commutative diagram
\[ 
    \xymatrix{
        \mc{K}_{[a,1]} \ar[r] \ar[d] & \mc{K}^{\circ}_{[a,b]} \ar[r] \ar[d] & \ov{\mc{K}}{}^{\circ}_{[a,b]} \ar[d] \\
        \mc{K}_{[b,1]} \ar[r] & \mc{K}^\circ_{[b,b]} \ar[r] & \ov{\mc{K}}{}^\circ_{[b,b]} 
    }
\]
produces a map $\ov{\mc{K}}_{[a,1]}\to \ov{\mc{K}}_{[b,1]}$. These maps are clearly functorial for $a\leq b\leq c<1$, and allow us to extend the functor $\ov{\mc{K}}{}^\circ$ to $\cat{Int}(\mc{P})^{\rm opp}\setminus \{[1,1]\}$.

To extend the functor to $[1,1]$, we let $\mc{K}_{[1^-, 1]}$ (resp.\ $\ov{\mc{K}}_{[1^-, 1]}$) be the colimit of $\mc{K}_{[a,1]}$ (resp.\ $\ov{\mc{K}}_{[a,1]}$) over all $a<1$. Then $\ov{\mc{K}}_{[1^-,1]}$ is an algebraic closure of $\mc{K}_{[1^-,1]}$. We pick an algebraic closure $\ov{\mc{K}}_{[1,1]}$ of $\mc{K}_{[1,1]}$ containing $\ov{\mc{K}}_{[1^-,1]}$. We then define the maps $\ov{\mc{K}}_{[a,1]} \to \ov{\mc{K}}_{[1,1]}$ by the composition $\ov{\mc{K}}_{[a,1]}\to \ov{\mc{K}}_{[1^-,1]}\to \ov{\mc{K}}_{[1,1]}$. This extends the functor $\ov{\mc{K}}$ and the natural transformation $\mc{K}\to \ov{\mc{K}}$ to all of $\cat{Int}(\mc{P})^{\rm opp}$ as desired.
\end{proof}

\subsection{Abundance of geometric intervals on curves}\label{ss:abundance-geometric-intervals}

In this section we verify that any interval is the underlying interval of a geometric interval. We establish this by two constructions. One will notice a similarity with the construction from Example~\ref{etale paths on domains}. 

Let $X$ be a smooth and separated rigid $K$-curve and let $\ell$ be an interval in $X$. Our first construction associates to $\ell$ an $\ell$-field, a sort of `meromorphic structure sheaf' for $\ell$.

\begin{construction} \label{cons:ell-field}
For any subarc $[a,b]$ of $\ell$, let us define the ring
\begin{equation*}
    \mc{R}_{[a,b]}=\varinjlim_{\substack{\text{open oc}\\ \text{neighborhood of } [a,b]}}\h_X(U)=\varinjlim_{\substack{\text{affinoid open}\\ \text{oc neighborhood of }[a,b]}}\h_X(U)
\end{equation*}
where the latter equality follows from Proposition \ref{arc basis}. 

\begin{lem} 
    The rings $\mc{R}_{[a,b]}$ are integral domains, and for $[a,b]\subseteq [a',b']$ the natural restriction maps $\mc{R}_{[a',b']}\to \mc{R}_{[a,b]}$ are injections.
\end{lem}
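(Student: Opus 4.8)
The plan is to reduce both assertions to statements about the affinoid algebras $\mathcal{O}_X(U)$ and about a single well-chosen nice arc neighborhood. First I would address the integral domain claim. Since $X$ is smooth and separated over $K$ and $[a,b]$ is an arc (hence a connected, quasi-compact subset of $[X]$), Proposition \ref{arc basis} gives an oc neighborhood basis of $[a,b]$ consisting of affinoid open oc neighborhoods; moreover by Theorem \ref{thm:curves are arc connected} we may further arrange that each such $U$ in a cofinal subsystem is affinoid with $[U] = \sep_X(U)$ uniquely arc connected, and in particular $[U]$ (hence $U$, since $\pi_0(U) \to \pi_0([U])$ is a bijection) is connected. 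A connected affinoid rigid space over $K$ which is smooth is normal (smoothness implies regular, hence reduced and normal), and a connected normal affinoid is irreducible, so $\mathcal{O}_X(U)$ is an integral domain for all $U$ in this cofinal system. The ring $\mathcal{R}_{[a,b]}$ is the filtered colimit of these $\mathcal{O}_X(U)$; a filtered colimit of integral domains along injective transition maps is again an integral domain, so it remains to check the transition maps are injective, which is exactly the second assertion (applied to nested neighborhoods of the same arc), so the two claims are really one.

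For the injectivity of $\mathcal{R}_{[a',b']} \to \mathcal{R}_{[a,b]}$ when $[a,b]\subseteq[a',b']$: it suffices to show that for each affinoid open oc neighborhood $U'$ of $[a',b']$ and each smaller affinoid open oc neighborhood $U \subseteq U'$ of $[a,b]$ in the cofinal system above, the restriction map $\mathcal{O}_X(U') \to \mathcal{O}_X(U)$ is injective, since colimits and the property "injective" interact well and the maps $\mathcal{O}_X(U') \to \mathcal{R}_{[a,b]}$ factor through such $\mathcal{O}_X(U)$. Because $U'$ is connected and smooth, it is irreducible; the restriction map $\mathcal{O}_X(U') \to \mathcal{O}_X(U)$ for an affinoid open $U \subseteq U'$ is flat (affinoid localizations of smooth affinoids, or more simply: $U \hookrightarrow U'$ is an open immersion, hence flat, and flatness of a morphism of affinoids means flatness on global sections by \cite[\S 1.7]{Huberbook} together with the equivalence $\FEt$—or rather the coherent-sheaf dictionary—between affinoid adic spaces and their rings), and a flat map out of a domain with nonzero target is injective. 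The target is nonzero since $U \neq \emptyset$ (it is an oc neighborhood of the nonempty arc $[a,b]$). Hence each such restriction map is injective, and passing to the colimit gives injectivity of $\mathcal{R}_{[a',b']} \to \mathcal{R}_{[a,b]}$.

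The main obstacle I anticipate is purely bookkeeping: matching up the two cofinal systems of affinoid oc neighborhoods (the one from Proposition \ref{arc basis} giving affinoids, and the one from Theorem \ref{thm:curves are arc connected} giving connected affinoids with the nice unique-arc-connectedness property), and confirming that $\mathcal{O}_X(U)$-as-global-sections-of-$U$ really is what appears in the colimit defining $\mathcal{R}_{[a,b]}$ in Construction \ref{cons:ell-field} — i.e.\ that the colimit is genuinely filtered, which holds because the poset of affinoid open oc neighborhoods of a fixed quasi-compact connected $\Sigma$ is cofiltered (intersections of oc opens are oc open, and Proposition \ref{curves are good}/\ref{arc basis} lets one fit an affinoid inside). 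Once the filteredness and the normality-implies-domain input are in place, the rest is formal. No delicate non-archimedean input beyond "smooth connected affinoid over $K$ is an integral domain" and "open immersions of affinoids are flat on rings" is needed.
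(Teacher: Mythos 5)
Your proof is correct, and it is worth comparing the two halves separately. The first half is essentially the paper's argument: both pass to the cofinal system of connected (smooth) affinoid oc neighborhoods, whose coordinate rings are domains by normality plus connectedness. The one inefficiency is that you make the domain claim depend on injectivity of the transition maps; this is not needed, since a filtered colimit of domains is a domain regardless of injectivity (a relation $ab=0$ in the colimit already holds at some finite stage, where one factor must vanish) --- this is what the paper's citation of Fujiwara--Kato, Ch.~0, Prop.~3.1.1(2) supplies. For the injectivity of $\mc{R}_{[a',b']}\to\mc{R}_{[a,b]}$ you take a genuinely different route: flatness of restriction to affinoid subdomains (BGR 7.3.2/6) plus the formal fact that a flat ring map out of a domain with nonzero target is injective (multiplication by a nonzero $a$ is injective on the source, hence on the flat target, so its image cannot be zero unless the target is zero). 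The paper instead uses the identity principle: a section vanishing on a nonempty oc open of the irreducible reduced affinoid $U'$ has $V(f)$ a Zariski-closed set containing a nonempty open, hence equal to $U'$, so $f=0$. Both rest on the same irreducibility input; yours trades the topological identity-theorem step for the standard flatness of affinoid localization, which is a perfectly sound and somewhat more algebraic alternative. One small point of hygiene: in the colimit step the outer neighborhood $U'$ must also be taken from the cofinal connected system (not just the inner $U$), since your flatness argument needs $\h_X(U')$ to be a domain; this costs nothing because cofinal subsystems compute the same colimit, but as written the quantifiers are slightly ambiguous.
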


\begin{proof} 
Let us start by showing that $\mc{R}_{[a,b]}$ is a domain. By \cite[Chapter 0, Proposition~3.1.1(2)]{FujiwaraKato} it suffices to show that cofinal in the set of open affinoid oc neighborhoods of $\gamma$ are such open affinoid oc neighborhoods $U$ with $\h_X(U)$ an integral domain. But, cofinal in such affinoid domains are those $U$ which are (smooth and) connected, and for such $U$ the ring  $\h_X(U)$ is an integral domain (cf.\@ \cite[Corollary 3.3.21]{BerkovichSpectral}).
 
Let us now show that the map $\mc{R}_{[a',b']}\to \mc{R}_{[a,b]}$ for $[a,b]\subseteq [a',b']$ is injective. To see this let $f\in \h_X(U)$ be an element of $\mc{R}_{[a',b']}$ with zero image in $\mc{R}_{[a,b]}$, where $U$ is a~connected oc open neighborhood of $[a,b]$. Then, by definition there exists a connected oc open neighborhood $U'$ of $[a,b]$ contained in $U$ such that $f|_{U'}=0$. But, this implies that $V(f)\subseteq U$ contains an oc open subset. Therefore (cf.\@ loc.\@ cit.\@) that $V(f)=U$ and so $f$ equals zero in $\h_X(U)$ and so $f$ is zero in $\mc{R}_{[a,b]}$ as desired.
\end{proof}

Let us denote by $\mc{K}_{[a,b]}$ the fraction field of $\mc{R}_{[a,b]}$. By the above injectivity statement, we see that if $[a,b]\subseteq [a',b']$ then one gets an induced map $\mc{K}_{[a',b']}\to \mc{K}_{[a,b]}$. Thus, we see that one has a functor 
\[
    \mc{K}_\ell\colon \cat{Int}(\ell)^{\rm opp}\to \cat{Fields}, \quad [a,b]\mapsto \mc{K}_{[a,b]}
\]
and thus we obtain an $\ell$-field $\mc{K}_\ell$ which we call the $\ell$-field \emph{associated} to $\ell$ in $X$. This finishes our first construction. \hfill $\square$ {\it (Construction~\ref{cons:ell-field})}
\end{construction}

We now show that an algebraic closure of $\mc{K}_\ell$ gives rise to a~geometric structure on $\ell$.

\begin{construction} \label{geometric interval construction}
Let $\ov{\mc{K}}_\ell$ be an algebraic closure of $\mc{K}_\ell$. For $a$ in $\ell$, let us pick a~geometric point $\ov{a}$ of $X$ anchored at $a^\mx$ and let us abbreviate $\mc{R}_{[a,a]}$ (resp.\@ $\mc{K}_{[a,a]}$) to $\mc{R}_a$ (resp.\@ $\mc{K}_a$). The point $\ov a$ defines a geometric point on $\Spec(\mc{R}_a)$, denoted $\ov a$ as well. As in Example~\ref{etale paths on domains}, we pick an \'etale path $\iota_a \in \pi_1^\alg(\Spec(\mc{R}_a); \ov{a}, \tilde a)$ where $\tilde a$ is the geometric point $\Spec(\ov{\mc{K}}_a)\to \Spec(\mc{R}_a)$.

Suppose now that $[a,b]\subseteq \ell$ and that $U=\Spa(A)$ is an affinoid open oc neighborhood of $[a,b]$ in $X$. We would like to give an isomorphism of fiber functors:
\begin{equation*}
    \iota^U_{a,b}\colon F_{\ov{a}}\to F_{\ov{b}}
\end{equation*}
where $F_{\ov{a}}$ and $F_{\ov{b}}$ are the natural fiber functors on $\UFEt_U$. Note that we have a natural bijection (see the end of \S\ref{ss:geometric-points-and-fundamental-groups})
\begin{equation}\label{map of paths eq}
    \pi_1^\alg(\Spa(A);\Spa(k(\ov{a})),\Spa(k(\ov{b})))\isomto \pi_1^\alg(\Spec(A);\Spec(k(\ov{a})),\Spec(k(\ov{b})))
\end{equation}
But $\Spec(\ov{\mc{K}}_a)$ and $\Spec(\ov{\mc{K}}_a)$ are both geometric points of $\Spec(\ov{\mc{K}}_{[a,b]})$ as well as of $\Spec(A)$, which gives isomorphisms of fiber functors $j^U_{a,b}:F_{\ov{\mc{K}}_a}\simeq F_{\ov{\mc{K}}_b}$. 

We define $\iota_{a,b}^U$ to be the preimage under the map described in Equation \eqref{map of paths eq} of the following composition 
\begin{equation*}
    F_{\Spec(k(\ov{a}))}\xrightarrow{\iota_a}F_{\Spec(\ov{\mc{K}}_a)}\xrightarrow{j_{a,b}^U}F_{\Spec(\ov{\mc{K}}_b)}\xrightarrow{\iota_b^{-1}}F_{\Spec(k(\ov{b}))}
\end{equation*}
It is clear that $\iota_{a,b}^U$ doesn't depend on the affinoid open oc neighborhood $U$ of $[a,b]$. So then, for an open oc neighborhood $U$ of $[a,b]$ we can, as at the beginning of the proof, find an affinoid neighborhood $U'$ of $[a,b]$ contained in $U$. We then set $\iota^{U}_{a,b}$ to be the image of $\iota^{U'}_{a,b}$. As above indicated above, this definition is independent of the choice of $U'$.

The set of isomorphisms $\{\iota_{a,b}^U\}$, where now $[a,b]\subseteq \ell$ is arbitrary and $U$ is allowed to either be an affinoid or open neighborhood of $[a,b]$, is easily seen to satisfy the desired conditions. Thus, the $\iota^U_{a,b}$ together define a geometric interval $\ov\ell$ with underlying interval~$\ell$.
\end{construction}

Combining these two constructions we obtain the following.

\begin{prop} \label{geometric intervals exist} 
    Let $X$ be a smooth and separated rigid $K$-curve and let $\ell$ be an interval in $X$. Then, there exists a geometric interval $\ov{\ell}$ with underlying interval $\ell$.
\end{prop}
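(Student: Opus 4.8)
The plan is to chain together the two constructions just developed in this subsection, with Proposition~\ref{p-field algebraic closures exist} as the glue. First I would run Construction~\ref{cons:ell-field} on the pair $(X,\ell)$: because $X$ is smooth and separated, Proposition~\ref{arc basis} supplies, for every subarc $[a,b]\subseteq\ell$, a cofinal family of affinoid open oc neighborhoods, so the rings $\mc{R}_{[a,b]}=\varinjlim_U \h_X(U)$ are defined; the lemma inside that construction identifies them as integral domains with injective restriction maps, and passing to fraction fields assembles them into the $\ell$-field $\mc{K}_\ell$ associated to $\ell$ in $X$.

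Next I would invoke Proposition~\ref{p-field algebraic closures exist}, applied with $\mc{P}=\ell$, to obtain an algebraic closure $\mc{K}_\ell\to\ov{\mc{K}}_\ell$. With such a closure fixed, Construction~\ref{geometric interval construction} produces the desired geometric data: for each $a\in\ell$ one chooses a geometric point $\ov a$ anchored at $a^\mx$ together with an \'etale path $\iota_a\in\pi_1^\alg(\Spec(\mc{R}_a);\ov a,\tilde a)$ to the geometric point coming from $\ov{\mc{K}}_a$ (exactly as in Example~\ref{etale paths on domains}); then, for a subarc $[a,b]$ with affinoid oc neighborhood $U=\Spa(A)$, one transports along the generic fiber using the isomorphisms $j^U_{a,b}\colon F_{\ov{\mc{K}}_a}\simeq F_{\ov{\mc{K}}_b}$ of fiber functors on $\UFEt_U$ coming from $\Spec(\ov{\mc{K}}_{[a,b]})$, conjugates by $\iota_a,\iota_b$, and pulls back through the comparison $\pi_1^\alg(\Spa(A);-)\simeq\pi_1^\alg(\Spec(A);-)$ to define $\iota^U_{a,b}$; finally one extends to an arbitrary open oc neighborhood by restricting from an affinoid oc subneighborhood, the result being independent of the choices as recorded in that construction.

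The remaining point is to check that this data satisfies Axioms~(1) and~(2) of Definition~\ref{def:geom-int}. Compatibility with inclusions $U\subseteq U'$ of oc neighborhoods and with concatenation of subarcs $[a,b]\cup[b,c]=[a,c]$ both reduce to functoriality of the fiber-functor comparison maps and to the observation that $j^U_{a,b}$ and the $\iota_a$ are induced by honest morphisms of affine schemes over the common base $\Spec(\ov{\mc{K}}_{[a,c]})$, so the relevant triangles of isomorphisms of fiber functors commute strictly; this is the routine verification the excerpt leaves to the reader. I expect the genuine difficulty to have already been absorbed upstream --- namely, the existence of an algebraic closure of the indexed diagram of fields, which rests on the non-vanishing of $\colim(\iota\circ\mc{K})$ (Lemma~\ref{non-zero colim}) and is precisely what forces the use of the interval poset $\cat{Int}(\ell)$ rather than a naive field diagram. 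Granting that, Proposition~\ref{geometric intervals exist} is a formal consequence of the two constructions.
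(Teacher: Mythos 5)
Your proposal is correct and is essentially the paper's own proof: the paper states this proposition as the direct combination of Construction~\ref{cons:ell-field} and Construction~\ref{geometric interval construction}, with Proposition~\ref{p-field algebraic closures exist} supplying the algebraic closure of the $\ell$-field, exactly as you describe. Your verification of Axioms~(1) and~(2) via functoriality of the fiber-functor comparisons matches the routine check the paper leaves implicit.
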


Moreover, often all geometric structures on an interval $\ell$ come from this construction.

\begin{prop} \label{exhaustion of geometric intervals} 
    Let $X$ be a smooth and separated rigid $K$-curve and let $\ell$ be an interval in $X$ whose endpoints are not classical points. Then, up to equivalence every geometric structure comes from Construction \ref{geometric interval construction}.
\end{prop}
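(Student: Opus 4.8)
The plan is to show that an arbitrary geometric structure $\ov\ell$ on $\ell$ is equivalent to one obtained from Construction \ref{geometric interval construction}, by producing an algebraic closure $\mc{K}_\ell\to\ov{\mc{K}}_\ell$ and a family of equivalences of geometric points compatible with the given $\iota^U_{a,b}$'s. The key point is that the non-classical assumption on the endpoints (together with smoothness and separatedness) forces every relevant $\mc{R}_{[a,b]}$-localization to "see" the \'etale paths coherently, so that the data of $\ov\ell$ assembles into an $\mc{P}$-field extension with $\mc{P}=\ell$.

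First I would, for each $z\in\ell$, reinterpret the chosen geometric point $\ov z$ (anchored at $z^\mx$) as a geometric point of $\Spec(\mc{R}_z)$ — here one uses that $\mc{R}_z$ is a domain with fraction field $\mc{K}_z$ and that $z^\mx$ corresponds, via the identification $\FEt_{\Spa(A)}\simeq\FEt_{\Spec(A)}$ used throughout \S\ref{ss:geometric-points-and-fundamental-groups}, to a point of $\Spec(\mc{R}_z)$. From this one extracts a separable closure of $\mc{K}_z$, call it $L_z$, into which $k(\ov z)$ embeds. The \'etale paths $\iota^U_{a,b}$ in $\pi_1^\alg(U;\ov a,\ov b)$, compatible over shrinking oc neighborhoods (Axiom 1), pass to the limit to give isomorphisms of fiber functors over $\Spec(\mc{R}_{[a,b]})$, hence over $\Spec(\mc{K}_{[a,b]})$ by the reduction to the generic point (exactly as in Construction \ref{geometric interval construction}). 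Using that a separable closure of $\mc{K}_{[a,b]}$ can be built compatibly inside the $L_a$'s — this is where the structure of $\cat{Int}(\ell)$ and Proposition \ref{p-field algebraic closures exist} (or rather a version with pre-chosen closures at the vertices) enters — I would manufacture an $\ell$-field closure $\ov{\mc{K}}_\ell$ of $\mc{K}_\ell$ together with embeddings $k(\ov z)\hookrightarrow\ov{\mc{K}}_{[z,z]}$.

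Next, feeding this $\ov{\mc{K}}_\ell$ into Construction \ref{geometric interval construction} yields a geometric structure $\ov\ell{}'$; I then check that the equivalences $\ov z\to\ov z'$ (coming from the compatible embeddings into $\ov{\mc{K}}_{[z,z]}$) carry $\iota^{U,\ov\ell}_{a,b}$ to $\iota^{U,\ov\ell{}'}_{a,b}$. By Lemma \ref{lem:partial-geom-arcs} it suffices to verify this on an $\ell$-basis of affinoid oc neighborhoods, where both paths are, by construction, the composite $\iota_a^{-1}\circ j^U_{a,b}\circ\iota_a$ transported to $k(\ov a),k(\ov b)$; the equality reduces to compatibility of the two chosen \'etale paths $\iota_a$ into $\ov{\mc{K}}_a$, which holds because both identify the fiber functor at $\ov a$ with the split one over the chosen separable closure, and any two such identifications differ by an element of $\pi_1^\alg(\Spec\ov{\mc{K}}_a)=1$.

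The main obstacle is the use of the hypothesis that the endpoints of $\ell$ are not classical points. Classical points are exactly where $\mc{R}_z$ could fail to capture enough of the \'etale fundamental group — a classical point $z$ has $k(z)$ finite over $K$, and the "meromorphic" ring $\mc{R}_z$ at such a point need not have $\pi_1^\alg(\Spec\mc{R}_z)$ surjecting appropriately onto $\pi_1^\alg$ of small neighborhoods, so the passage "geometric point of $X$ at $z^\mx$" $\rightsquigarrow$ "geometric point of $\Spec\mc{R}_z$ compatible with the $\iota$'s" can break down. I expect the bulk of the work to be a careful limiting argument showing that, away from classical points, the inverse system of $\pi_1^\alg(U;\ov a,\ov b)$ over oc neighborhoods of $[a,b]$ is computed by $\pi_1^\alg(\Spec\mc{K}_{[a,b]};\ov a,\ov b)$ — essentially a Noetherian approximation plus the fact that $\FEt$ of an affinoid agrees with $\FEt$ of its $\Spec$ — after which the $\mc{P}$-field formalism of \S\ref{ss:p-fields} does the rest.
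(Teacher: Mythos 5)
Your proposal follows essentially the same route as the paper: the $\iota^{U_i}_{a,b}$ over a cofinal system of affinoid oc neighborhoods assemble, via $\pi_1^\alg(\Spec(\mc{R}_{[a,b]});\ov a,\ov b)=\varprojlim_i\pi_1^\alg(U_i;\ov a,\ov b)$, into an identification of the algebraic closures of $\mc{K}_{[a,b]}$ inside $k(\ov a)$ and $k(\ov b)$, and these identifications define an algebraic closure of the $\ell$-field which reproduces $\ov\ell$ up to equivalence via Construction~\ref{geometric interval construction}. The only place where your anticipated difficulty dissolves is the role of the non-classical hypothesis: it is used simply to guarantee that $\mc{R}_{[a,b]}$ is already a field (equal to $\mc{K}_{[a,b]}$), because the local rings of a smooth curve at non-classical points are fields, so no surjectivity or limiting comparison between $\pi_1^\alg(\Spec\mc{K}_{[a,b]})$ and $\varprojlim_i\pi_1^\alg(U_i)$ is required.
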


\begin{proof}
Fix $[a,b]\subseteq\ell$ and let $\{U_i=\Spa(R_i)\}_{i\in I}$ be a cofinal family of affinoid oc neighborhoods of $[a,b]$. Then $\mc{R}_{[a,b]}= \varinjlim_{i\in I} R_i$, and hence
\[ 
    \pi_1^\alg(\Spec(\mc{R}_{[a,b]}); \ov a, \ov b) 
    = \varprojlim_{i\in I} \pi_1^\alg(\Spec(R_i); \ov a, \ov b)
    = \varprojlim_{i\in I} \pi_1^\alg(U_i; \ov a, \ov b),
\]
(see e.g.\ \cite[Remark 1.2.9]{KedlayaLiu}). The elements $\iota^{U_i}_{a,b}$ form an element of the right hand side by Axiom~1, and hence we obtain an element $\iota_{a,b}\in \pi_1^\alg(\Spec(\mc{R}_{[a,b]}); \ov a, \ov b)$. 

Since neither $a$ nor $b$ is classical and $X$ is smooth, the ring $\mc{R}_{[a,b]}$ is a field, i.e.\ $\mc{R}_{[a,b]}=\mc{K}_{[a,b]}$. Therefore $\iota_{a,b} \in \pi_1^\alg(\Spec(\mc{K}_{[a,b]}); \ov a, \ov b)$ corresponds to an identification of the algebraic closures of $\mc{K}_{[a,b]}$ in $k(\ov a)$ and $k(\ov b)$. We denote by $\ov{\mc{K}}_{[a,b]}$ this common value. 

If $a\leq b\leq c$ in $\ell$, then $\mc{K}_{[a,c]}\to \mc{K}_{[a,b]}\to k(\ov a)$ shows that $\ov{\mc{K}}_{[a,c]}$ is contained in $\ov{\mc{K}}_{[a,b]}$  as subfields of $k(\ov a)$. Similarly, $\ov{\mc{K}}_{[a,c]}\subseteq \ov{\mc{K}}_{[b,c]}$ as subfields of $k(\ov c)$. Moreover, Axiom~2 implies that the following diagram commutes
\[ 
    \xymatrix@R=.5em@C=.7em{
        k(\ov a) & & k(\ov b) & & k(\ov c)\\
        & \ov{\mc{K}}_{[a,b]}\ar[ul]\ar[ur] & & \ov{\mc{K}}_{[b,c]} \ar[ul]\ar[ur] \\
        & & \ov{\mc{K}}_{[a,c]} \ar[ul]\ar[ur]\ar@/^2em/[uull] \ar@/_2em/[uurr]
    }
\]
For $[a,b]\supseteq [a',b']$, we define $\ov{\mc{K}}_{[a,b]}\to \ov{\mc{K}}_{[a',b']}$ as the composition $\ov{\mc{K}}_{[a,b]}\to \ov{\mc{K}}_{[a,b']}\to \ov{\mc{K}}_{[a',b']}$, which is the same as the composition $\ov{\mc{K}}_{[a,b]}\to \ov{\mc{K}}_{[a',b]}\to \ov{\mc{K}}_{[a',b']}$. This way we obtain an algebraic closure $\ov{\mc{K}} = \{\ov{\mc{K}}_{[a,b]}\}$ of the $\ell$-field $\{\mc{K}_{[a,b]}\}$. It is straightforward to check that the geometric interval defined by this algebraic closure is equivalent to $\ov\ell$.
\end{proof}

\begin{rem}
In the above proof, we used the following observation: if $\ell$ is an interval in $X$, then only its endpoints can be classical points. To show this, since $X$ is good we may assume that $X$ is affinoid and by Theorem \ref{thm:curves are arc connected} we may assume that $[X]$ is uniquely arc connected. But, if $T$ is a uniquely arc connected space, $\ell\subseteq T$ is an arc, and $t\in \ell$ is an interior point, then $T\setminus \{t\}$ is disconnected. However, removing a classical point from a connected smooth curve does not make it disconnected (see \cite[Corollary~2.7]{HansenVanishing}). 
\end{rem}

Lastly, we combine Lemma~\ref{lem:extending-p-field} with Proposition~\ref{exhaustion of geometric intervals} to extend geometric structures from rays to arcs. 

\begin{cor} \label{extend} 
    Let $X$ be a smooth and separated rigid $K$-curve and let $i\colon [0,1]\to [X]$ be a~parameterized arc. Set $\gamma=i([0,1])$ and $\rho=i([0,1))$. Then, for any structure of a~geometric ray $\ov{\rho}$ there exists a structure of a  geometric arc $\ov{\gamma}$ on $\gamma$ whose induced structure on $\rho$ coincides with $\ov{\rho}$ up to equivalence. 
\end{cor}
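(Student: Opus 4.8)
The strategy is to reduce the statement about geometric structures on intervals in $[X]$ to the purely algebraic statement about $\mc{P}$-fields proved in Lemma~\ref{lem:extending-p-field}, via the exhaustion result of Proposition~\ref{exhaustion of geometric intervals}. First I would note that we may assume the endpoints of $\gamma$ are not classical points: indeed, since $X$ is good we may shrink $X$ to an affinoid with $[X]$ uniquely arc connected (Theorem~\ref{thm:curves are arc connected}), and as remarked after Proposition~\ref{exhaustion of geometric intervals}, only endpoints of an arc can be classical; but we are free to extend $i$ slightly past its endpoints (enlarging $\gamma$ to a slightly larger arc inside $[X]$) so that the original endpoints become interior, hence non-classical, points. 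This reduction lets us apply Proposition~\ref{exhaustion of geometric intervals} to both $\rho$ and $\gamma$.

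\medskip

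The main steps are then as follows. Set $\mc{P}=[0,1]$ with its usual order, so that an interval $\ell\subseteq[X]$ parameterized by $i$ is identified with $\mc{P}$, and let $\mc{P}^\circ = [0,1)$ correspond to $\rho$. Construction~\ref{cons:ell-field} produces the $\gamma$-field $\mc{K}_\gamma$ and, by restriction, the $\rho$-field $\mc{K}_\rho = \mc{K}_\gamma|_{\mc{P}^\circ}$ (one checks the restriction of the $\ell$-field construction to a subinterval is the $\ell$-field of that subinterval, which is immediate from the definition of $\mc{R}_{[a,b]}$ as a colimit over oc neighborhoods). By Proposition~\ref{exhaustion of geometric intervals}, applied to $\rho$ (whose endpoint $i(0)$ is non-classical after the reduction above), the given geometric ray $\ov{\rho}$ is, up to equivalence, the one coming from an algebraic closure $\mc{K}_\rho \to \ov{\mc{K}}_\rho$ via Construction~\ref{geometric interval construction}. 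Now apply Lemma~\ref{lem:extending-p-field} with $\mc{P}=[0,1]$, $\mc{P}^\circ = [0,1)$, $\mc{K}=\mc{K}_\gamma$, and the chosen algebraic closure $\ov{\mc{K}}_\rho$ of $\mc{K}_\rho=\mc{K}_\gamma|_{\mc{P}^\circ}$: this yields an algebraic closure $\mc{K}_\gamma \to \ov{\mc{K}}_\gamma$ restricting to $\mc{K}_\rho\to\ov{\mc{K}}_\rho$. Feeding $\ov{\mc{K}}_\gamma$ into Construction~\ref{geometric interval construction} produces a geometric arc $\ov{\gamma}$ on $\gamma$, and because the algebraic closure restricts to the one defining $\ov{\rho}$, the induced geometric structure on $\rho$ is the one coming from $\ov{\mc{K}}_\rho$, hence equivalent to $\ov{\rho}$. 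Finally, restricting $\ov{\gamma}$ back from the enlarged arc to the original $\gamma=i([0,1])$ (Subinterval construction) gives the desired geometric arc.

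\medskip

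The point requiring the most care is the compatibility of the various constructions under restriction to a subinterval: one must check that the geometric structure that Construction~\ref{geometric interval construction} assigns to a subinterval $\ell'\subseteq\ell$, starting from $\ov{\mc{K}}_\ell|_{\ell'}$, agrees (up to equivalence) with the restriction of the geometric interval built from $\ov{\mc{K}}_\ell$ — this is essentially bookkeeping with the \'etale paths $\iota^U_{a,b}$ and the colimit description of $\mc{R}_{[a,b]}$, but it is what makes the diagram chase go through. A secondary subtlety is that Proposition~\ref{exhaustion of geometric intervals} is stated for arcs with non-classical endpoints, which is precisely why the initial enlargement of the parameterized arc is needed; one should make sure such an enlargement inside $[X]$ always exists, which follows since $i(0)$ and $i(1)$ are not isolated in the arc-connected space $[X]$ (after shrinking $X$ as above, $[X]$ is uniquely arc connected but still has no isolated points among non-endpoints, and the classical points are dense). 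Modulo these checks, everything else is a formal consequence of the two cited results.
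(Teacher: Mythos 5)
Your overall strategy is the paper's: apply Proposition~\ref{exhaustion of geometric intervals} to realize $\ov\rho$ as coming from an algebraic closure of the associated $\mc{P}$-field, extend that closure with Lemma~\ref{lem:extending-p-field}, and feed the result into Construction~\ref{geometric interval construction}. However, your reduction to the case of non-classical endpoints has a genuine gap. You propose to enlarge $\gamma$ to a slightly larger arc of $[X]$ so that $i(0)$ becomes an interior point, and you justify the existence of such an enlargement by saying that $i(0)$ is not isolated in $[X]$. Non-isolatedness does not give this: the remark following Proposition~\ref{exhaustion of geometric intervals} shows that removing a classical point from a connected smooth curve does not disconnect it, and hence (since after shrinking, $[X]$ is uniquely arc connected) a classical point can never be an \emph{interior} point of an arc in $[X]$ --- it is a leaf of the underlying $\mathbf{R}$-tree. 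So precisely in the case your reduction is meant to handle, namely $i(0)$ classical, the proposed enlargement does not exist, and the argument breaks down before Proposition~\ref{exhaustion of geometric intervals} can be invoked.

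The fix is to move the problematic endpoint out of the picture rather than trying to absorb it into the interior: it suffices to extend the geometric ray induced by $\ov\rho$ on $i([\tfrac12,1))$ to a geometric arc on $i([\tfrac12,1])$ and then concatenate with the given structure on $i([0,\tfrac12])$ (using the Concatenation construction, since the two pieces share the geometric point over $i(\tfrac12)$). The sub-ray $i([\tfrac12,1))$ consists entirely of interior points of $\gamma$, hence contains no classical points, so Proposition~\ref{exhaustion of geometric intervals} applies to it directly; the remainder of your argument (Lemma~\ref{lem:extending-p-field} followed by Construction~\ref{geometric interval construction}, plus the routine compatibility checks you flag) then goes through unchanged.
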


\begin{proof}
It is enough to treat the geometric ray induced by $\ov\rho$ on $i([\frac 1 2, 1))$. This way, we may assume that no points of $\rho$ are classical points. By Proposition~\ref{exhaustion of geometric intervals}, we may assume that the geometric structure $\ov\rho$ comes from an algebraic closure $\ov{K}^\rho$ of the $[0,1)$-field $K^\rho=\{K_{[a,b]}\}$ on $\rho$. By Lemma~\ref{lem:extending-p-field}, the algebraic closure $\ov K{}^\rho$ extends to an algebraic closure $\ov K{}^\gamma$ of $K^\gamma$. The induced structure of a geometric arc $\ov\gamma$ on $\gamma$ has the required property.
\end{proof}

\section{Geometric coverings}
\label{s:geom-cov}

In this section, we define geometric coverings, show that they satisfy reasonable geometric properties, and then show that the category of such coverings is a tame infinite Galois category in the sense of \cite{BhattScholze}.

\subsection{Lifting of geometric intervals}
\label{ss:geom-arc-lift}

Before we define our notion of geometric covering we will need some background results about liftings of geometric intervals. We show that under very mild conditions any finite \'etale map has unique lifting of geometric arcs. Finally, we show that the property unique lifting of geometric arcs and AVC  (Definition~\ref{def:AVC}) are essentially equivalent for \'etale and partially proper morphisms of reasonable curves.

\begin{definition}
    Let $f\colon X'\to X$ be a map of rigid $K$-spaces and let $\ov\ell$ be a geometric interval in $X$. By a \emph{lifting} of $\ov\ell$ to $X'$ we shall mean a geometric interval $\ov{\ell}{}'$ in $X'$ such that $[f]$ maps $\ell'$ homeomorphically onto $\ell$ and $\ov\ell$ is equal to the geometric interval $f(\ov{\ell}{}')$.
\end{definition}

It is useful to note that with this definition, if $\ov{\ell}{}'$ is a lift of the geometric arc $\ov{\ell}$ then for each point $x'$ of $\ell'$ if we set $x=f(x')$ then the geometric point $\ov{x}'$ (afforded to us by the geometric structure on $\ell'$) is a lift of the geometric point $\ov{x}$ (with a similar comment). So, in particular, we see $\ov{x}'$ is a point of $Y_{\ov{x}}$ is a canonical way.

\begin{definition}\label{def:unique-lifting}
    A map $Y\to X$ of rigid $K$-spaces satisfies the property of \emph{unique lifting of geometric arcs} if for every geometric arc  $\ov\ell$ in $X$ with left geometric endpoint $\ov{x}$ and every lifting $\ov{x}'$ of $\ov{x}$ to $Y$, there exists a unique lifting of $\ov\ell$ to $Y$ with left geometric endpoint $\ov{x}'$.
\end{definition}

We now observe that unique lifting of geometric arcs implies unique lifting of geometric rays (defined in the analogous way).

\begin{prop} \label{automatic geometric ray lifting} 
    Let $Y\to X$ be a map of rigid $K$-spaces which has unique lifting of geometric arcs. Then, $Y\to X$ has unique lifting of geometric rays.
\end{prop}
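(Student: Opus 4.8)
The plan is to reduce the lifting of a geometric ray $\ov\rho$ in $X$ to the lifting of geometric arcs, by exhausting $\rho$ by an increasing sequence of subarcs, lifting each one using the hypothesis, and then gluing the resulting lifts into a geometric ray in $Y$. Concretely, fix a parameterization $i\colon [0,1)\isomto \rho$, set $t_n = 1 - 2^{-n}$, and let $\gamma_n = i([t_0, t_n]) = i([0, t_n])$, a subarc of $\rho$ with the induced geometric structure $\ov\gamma_n$ (via the Subinterval construction). Let $\ov x$ be the left geometric endpoint of $\ov\rho$, which is also the left geometric endpoint of each $\ov\gamma_n$, and fix a lift $\ov x'$ of $\ov x$ to $Y$.

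First I would apply the hypothesis of unique lifting of geometric arcs to each $\ov\gamma_n$: there is a unique lifting $\ov\gamma_n{}'$ of $\ov\gamma_n$ to $Y$ with left geometric endpoint $\ov x'$. The key point is compatibility: since $\gamma_n \subseteq \gamma_{n+1}$, the restriction of $\ov\gamma_{n+1}{}'$ to (the preimage homeomorphic copy of) $\gamma_n$ is a lifting of $\ov\gamma_n$ with the same left geometric endpoint $\ov x'$, so by uniqueness it equals $\ov\gamma_n{}'$. In particular the underlying subarcs $\gamma_n{}' \subseteq [Y]$ are nested, $[f]$ maps each $\gamma_n{}'$ homeomorphically onto $\gamma_n$, and the chosen geometric points $\ov z'$ over $z \in \gamma_n$ and the \'etale paths $\iota^{V,\ov\gamma_n{}'}_{a,b}$ agree as $n$ grows. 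Set $\rho' = \bigcup_n \gamma_n{}' \subseteq [Y]$, equipped with the order making each inclusion $\gamma_n{}' \hookrightarrow \rho'$ monotone.

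Next I would check that $\rho'$ is genuinely a ray in $Y$ and carries a well-defined geometric structure. That $[f]\colon \rho' \to \rho$ is a bijection is immediate from the compatible homeomorphisms on each $\gamma_n{}'$; that it is a homeomorphism follows because $\rho = \bigcup_n \gamma_n$ is an increasing union of arcs with $\gamma_n$ contained in the interior of $\gamma_{n+1}$ (for $n\geq 1$), so the $\gamma_n$ form an open cover of $\rho$ after removing the endpoint, and a continuous bijection which is a local homeomorphism is a homeomorphism; composing with $i^{-1}$ shows $\rho'$ is a ray with a parameterization. The geometric structure on $\rho'$ is assembled from the $\ov\gamma_n{}'$: for $z\in\rho'$ pick any $n$ with $z\in\gamma_n{}'$ and take the geometric point afforded by $\ov\gamma_n{}'$ (well-defined by the compatibility above), and for a subarc $[a,b]\subseteq\rho'$ and an open oc neighborhood $U$ of $[a,b]$, note $[a,b]$ lies in some $\gamma_n{}'$ and set $\iota^{U,\ov\rho{}'}_{a,b} = \iota^{U,\ov\gamma_n{}'}_{a,b}$ (independent of $n$). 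Axioms (1) and (2) of Definition~\ref{def:geom-int} hold because any finite configuration of subarcs and neighborhoods involved lies inside a single $\gamma_n{}'$, where the axioms are already known; so $\ov\rho{}'$ is a geometric ray. By construction $f(\ov\rho{}') = \ov\rho$ (the image operation is checked subarc-by-subarc, and on each $\gamma_n{}'$ it recovers $\ov\gamma_n$), and the left geometric endpoint of $\ov\rho{}'$ is $\ov x'$, giving existence.

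For uniqueness: suppose $\ov\rho{}''$ is another lifting of $\ov\rho$ with left geometric endpoint $\ov x'$. Its restriction to the subarc over $\gamma_n$ is a lifting of $\ov\gamma_n$ with left geometric endpoint $\ov x'$, hence equals $\ov\gamma_n{}'$ by the uniqueness hypothesis; since this holds for all $n$ and the $\gamma_n$ exhaust $\rho$, we get $\ov\rho{}'' = \ov\rho{}'$ (as underlying subsets of $[Y]$, as chosen geometric points, and as \'etale path data). I expect the main obstacle to be the purely point-set bookkeeping in the previous paragraph — verifying that the nested union $\rho'$ really is homeomorphic to $\rho$ via $[f]$ and that the geometric-interval axioms pass to the union — rather than anything deep; the hypothesis does all the real work, and everything else is a colimit/gluing argument of the kind already used implicitly in Lemma~\ref{lem:partial-geom-arcs} and Corollary~\ref{extend}.
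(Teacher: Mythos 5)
Your proposal is correct and follows essentially the same route as the paper: exhaust the ray by closed subarcs sharing the left endpoint, lift each uniquely, use the uniqueness hypothesis to see the lifts are nested/compatible, and glue; the paper's proof is just a terser version of this (using all subarcs $[x,a]$ rather than a countable exhaustion, an immaterial difference). Your extra point-set and axiom-checking details are exactly the verifications the paper leaves implicit.
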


\begin{proof} 
Let $\ov{\rho}$ be a geometric ray in $X$ with left geometric endpoint $\ov{x}$. Let us note then that for all $a<x$ in $\rho$ one has the induced geometric structure $\ov{\rho}_a$ on $\rho_a:=[a,x]$, and thus by the uniqueness of lifting of geometric arcs one has a unique lift of $\ov{\rho}_a$ to a geometric arc $\ov{\rho}_a'$ in $Y$. By uniqueness it is clear that if $a_1<a_2$ then the induced geometric arc structure from $\ov{\rho}_{a_1}'$ on the sublifting of $\rho_{a_2}$ must equal the lifting $\ov{\rho}_{a_2}'$. Therefore we can uniquely concatenate these to a lift of the geometric ray $\ov{\rho}$, which is clearly unique.
\end{proof}

We now show that one can lift geometric arcs uniquely along any finite \'etale morphism.

\begin{prop} \label{unique lifting finite etale}
    Let $Y\to X$ be a finite \'etale map of rigid $K$-spaces where $X$ is taut. Then $Y\to X$ satisfies unique lifting of geometric arcs.
\end{prop}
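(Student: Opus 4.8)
The plan is to reduce the lifting of a geometric arc $\overline{\ell}$ along a finite \'etale map $f\colon Y\to X$ to the algebro-geometric fact that \'etale paths (elements of $\pi_1^\alg$) can always be lifted along finite \'etale covers, using the $\ell$-basis formalism of Lemma~\ref{lem:partial-geom-arcs}. First I would fix a geometric arc $\overline{\ell}$ in $X$ with underlying arc $\ell$, left geometric endpoint $\overline{x}$, and a chosen lift $\overline{x}'$ of $\overline{x}$ to $Y$. Since $X$ is taut and $f$ is finite \'etale (hence partially proper), $Y$ is taut as well, and $[f]\colon[Y]\to[X]$ is a proper map with discrete fibers (cf.\ Remark~\ref{rem:proper-AVC} and Proposition~\ref{[f] for etale and pp}); in particular the underlying topological lifting problem --- lifting the arc $\ell\subseteq[X]$ through $[f]$ starting at the image of $\overline{x}'$ --- has a unique solution $\ell'\subseteq[Y]$ mapped homeomorphically onto $\ell$ by $[f]$. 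This pins down the underlying interval of any candidate lift.

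Next I would construct the geometric structure on $\ell'$. For each $z\in\ell'$ with image $w=[f](z)\in\ell$, I need a geometric point $\overline{z}'$ of $Y$ anchored at $z^\mx$ lifting $\overline{w}$; at the left endpoint this is the given $\overline{x}'$, and at a general point I would use that the fiber $F_{\overline{w}}(Y)$ of the finite \'etale cover over $\overline{w}$ is finite and nonempty, choosing the point compatibly using the \'etale paths $\iota^U_{a,b}$ of $\overline{\ell}$ --- concretely, parallel-transport $\overline{x}'$ along $\overline{\ell}$ to define $\overline{z}'$ for every $z$. To build the \'etale paths of the lift, I would work on a convenient $\ell'$-basis: by Proposition~\ref{pp etale local structure} (or just because $f$ is finite) it suffices to treat subarcs $[a',b']\subseteq\ell'$ together with open oc neighborhoods of the form $U'=f^{-1}(U)$ for $U$ ranging over an oc neighborhood basis of $[a,b]=[f]([a',b'])$ --- this is an $\ell'$-basis. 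For such $U'$, the map $Y_U\to U$ is finite \'etale, so the canonical restriction map $\pi_1^\alg(U;\overline{a},\overline{b})\to\operatorname{Hom}(F_{\overline{a}'}|_{\UFEt_{Y_U}},\,F_{\overline{b}'}|_{\UFEt_{Y_U}})$... more simply: a finite \'etale cover $V\to Y_U$ is also one over $U$ (via composition), and an \'etale path upstairs is the same datum as an \'etale path downstairs compatible with the chosen lifts $\overline{a}',\overline{b}'$. Thus $\iota^U_{a,b}$ determines a unique $\iota^{U'}_{a',b'}\in\pi_1^\alg(Y_U;\overline{a}',\overline{b}')$ mapping to it, and Axioms~1 and 2 for $\overline{\ell}$ immediately give Axioms~1 and 2 for this $\ell'$-basis datum. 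By Lemma~\ref{lem:partial-geom-arcs} this extends uniquely to a geometric interval $\overline{\ell}{}'$ on $\ell'$, and by construction $f(\overline{\ell}{}')=\overline{\ell}$ with left geometric endpoint $\overline{x}'$.

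Finally, for uniqueness, suppose $\overline{\ell}{}'_1$ and $\overline{\ell}{}'_2$ are two lifts with left geometric endpoint $\overline{x}'$. Their underlying intervals agree by the topological uniqueness above. The geometric points over each $z\in\ell'$ must agree (up to equivalence) because they are determined by parallel transport of $\overline{x}'$ using the common \'etale paths $\iota^{f^{-1}(U)}_{a',b'}$, which are themselves forced: the condition $f(\overline{\ell}{}'_k)=\overline{\ell}$ says precisely that $\iota^{f^{-1}(U)}_{a',b'}$ maps to $\iota^U_{a,b}$ under $\pi_1^\alg(Y_U;\overline{a}',\overline{b}')\to\pi_1^\alg(U;\overline{a},\overline{b})$, and the remark before Proposition~\ref{unique lifting finite etale} (or a direct check on fiber functors of finite \'etale covers of $Y_U$) shows this map is injective once the lifted geometric points are fixed. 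Hence $\iota^{f^{-1}(U)}_{a',b'}$ is unique, and by the $\ell'$-basis uniqueness of Lemma~\ref{lem:partial-geom-arcs} the two geometric structures coincide up to equivalence.

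\textbf{Main obstacle.} I expect the delicate point to be bookkeeping the geometric points $\overline{z}'$ and the equivalences between them carefully enough that "the canonical map $\pi_1^\alg(Y_U;\overline{a}',\overline{b}')\to\pi_1^\alg(U;\overline{a},\overline{b})$ is a bijection onto the paths compatible with $f$" is literally true and not merely morally true --- i.e., matching the formalism of equivalences of geometric points (the $(F,F^+)$-data of \S\ref{ss:geometric-points-and-fundamental-groups}) with the base-change behaviour of fiber functors along the finite \'etale map $Y_U\to U$. Everything else (the topological arc lifting, the $\ell'$-basis reduction, the verification of Axioms~1--2) is routine given the results already in the excerpt.
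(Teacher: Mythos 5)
Your overall strategy --- define the lifted geometric points by parallel transport of $\ov{x}'$ along the \'etale paths of $\ov{\ell}$, construct the lifted paths $\iota^{U'}_{a',b'}$ on a basis of opens over which $f$ is finite \'etale, and get uniqueness from the injectivity of $Z'_{\ov{a}'}\to Z'_{\ov{a}}$ --- is the paper's strategy, and the algebraic part of your argument (the bijection between \'etale paths upstairs and compatible \'etale paths downstairs, via viewing a finite \'etale cover of $Y_U$ as one of $U$) matches the paper's diagram \eqref{eqn:lift-geom-arc}. However, there is a genuine gap at the topological step. You assert that because $[f]$ is proper with discrete fibers, ``the underlying topological lifting problem \dots{} has a unique solution $\ell'$,'' and you lean on this both to produce the underlying arc of the lift and to identify the underlying arcs of two candidate lifts. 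Properness plus discrete fibers does not give existence or uniqueness of arc lifting from a prescribed initial point: Remark~\ref{rem:proper-AVC} only says such maps satisfy AVC, i.e.\ that a lift already defined on $[0,1)$ extends over $[0,1]$; it says nothing about constructing a lift from scratch, and proper finite-to-one maps can admit several lifts of an arc with the same initial point. As a result you have no arc $\ell'$ on which to hang your geometric structure, and no argument that the anchor points of the parallel-transported points $\ov{z}'$ actually trace out a single arc of $[Y]$ lying over $\ell$.

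The paper closes exactly this gap by proving that the parallel transport is continuous: setting $y(t)$ to be the image in $[Y]$ of the anchor point of $\iota^X_{x,t}(\ov{y})$, it shows $t\mapsto y(t)$ is continuous using condition $(\ast)$ of Proposition~\ref{[f] for etale and pp} to produce, near each $t_0$, an overconvergent clopen connected component $C$ of $Y_W$, and then invoking the naturality of $\iota^W$ with respect to the morphism $C\to Y_W$ in $\FEt_W$ to conclude that the transported points stay inside $\sep_Y(C)$. The image of this continuous section \emph{is} the underlying arc of the lift, and the uniqueness of the underlying arc and of the lifted geometric points is then deduced algebraically (applying the injectivity argument to $U'=Z'=Y$, so that $t'=y(t)$ is forced), not from a topological unique-lifting statement. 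If you replace your topological claim with this continuity argument, the rest of your proposal goes through essentially as the paper's proof.
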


\begin{proof}
Let $\ov\gamma$ be a geometric arc in $X$, with geometric left endpoint $\ov x$, and let $\ov y\in Y_{\ov x}$ be a~lifting of $\ov x$. We denote by $x$ and $y$ the respective anchor points. For every $t\in\gamma$, consider the geometric point
\[
    \ov y(t) = \iota^X_{x,t}(\ov y) \in Y_{\ov t},
\]
and let $y(t)\in [Y]$ be the image of its anchor point. In particular, $\ov y(x)=\ov y$. 

We claim that the map $\gamma\to [Y]$ given by $t\mapsto y(t)$ is continuous. Since this map is clearly a~set theoretic lift of the map $\gamma\to [X]$ it is injective and hence its continuity implies its image is an arc $\gamma'_0$ in $[Y]$ mapping homeomorphically onto $\gamma$.

To verify that this map is continuous let $t_0\in \gamma$, let $y_0 = y(t_0)$, and let $U$ be an open neighborhood of $y_0$ in $[Y]$. Let us note that by Proposition \ref{[f] for etale and pp} applied to $[f]$ there exists an open neighborhood $V$ of $t_0$ such that the connected component $Y_0$ of $[f]^{-1}(V)$ containing $y_0$ is open and contained in $U$. Let us set $W=\sep_X^{-1}(V)$ and $C=\sep_Y^{-1}(Y_0)$. Thus $C$ is an overconvergent open subset of $Y$ which is a connected component of $Y_W$.

Let $[a,b]\subseteq \gamma$ be subarc containing $t_0$ in its interior and contained in $V$. By naturality of $\iota^{W}$ with respect to the morphism $C\to Y_W$ in $\FEt_W$, for every $t\in [a,b]$, the map $\iota^W_{t_0,t}\colon Y_{\ov t_0}\to Y_{\ov t}$ maps $C_{\ov t_0}$ into $C_{\ov t}$. By the axioms of a geometric structure, $\ov y(t) = \iota^W_{t_0, t}(\ov y(t_0))\in C_{\ov t}$, so we have $y(t)\in \sep_Y(C)=Y_0\subseteq U$ for $t\in [a,b]$, and so $t\mapsto y(t)$ is continuous.

Suppose that $\ov\gamma'$ is a lifting of $\ov\gamma$ with geometric left endpoint $\ov y$. Let $\mc{U}$ be the collection of open subsets $U'\subseteq Y$ such that $U'\to U=f(U')$ is finite \'etale. Then by Proposition \ref{pp etale local structure} the set $\mc{U}$ is a $\gamma$-basis. Let $[a',b']$ be a subarc of $\gamma'$ such that $[a',b']$ has some $U' \in \mc{U}$ as an open oc neighborhood and let $[a,b]$ be the corresponding subarc of $\gamma$. It follows  that $U=f(U')$ is an open oc neighborhood of $[a,b]$ and $f\colon U'\to U$ is finite \'etale. Let $Z'\to U'$ be a finite \'etale cover, let $Z=Z'\times_U U'$, and let $\Delta=(\mathrm{id},f) \colon Z'\to Z$ be the natural map. Then $Z$ is a finite \'etale cover of $U'$ and $\Delta$ is a morphism in the category $\FEt_{U'}$. We therefore have the commutative diagram
\begin{equation} \label{eqn:lift-geom-arc}
    \xymatrix{
        & Z'_{\ov a'} \ar[rr]^-{\iota_{a',b'}^{U'}(Z')} \ar[d]_\Delta & & Z'_{\ov b'} \ar[d]^\Delta \\
        Z'_{\ov a} \ar@{=}[r] \ar@/_2em/[rrrr]_-{\iota_{a,b}^U(Z'\to U)} & Z_{\ov a'} \ar[rr]^-{\iota_{a',b'}^{U'}(Z)}  & & Z_{\ov b'}  \ar@{=}[r] & Z'_{\ov b} \\
    }
\end{equation}
(the top square commutes by naturality of $\iota_{a',b'}^{U'}$, and the bottom one by definition of a~lifting of an arc). Since the vertical arrows are injective, this shows that the top map $\iota_{a',b'}^{U'}(Z')$ is uniquely determined by the diagram. By Lemma~\ref{lem:partial-geom-arcs}, this shows uniqueness of the system of \'etale paths $\iota_{a',b'}^{U'}$  given the underlying arc $\gamma'$ and lifting of geometric points $\ov a'\to \ov a$ for $a'\in \gamma'$ mapping to $a\in \gamma$. 

Moreover, applying the above to $U'=Z'=Y$, so that $Z=Y\times_X Y$, and the arc $[y, t']$ for some $t'\in \gamma'$ lying over $t\in\gamma$, we conclude that $\ov t' = \iota_{x,t}^X(Y)(\ov y) = \ov y(t)$ and $t' = y(t)$. Taking images in $[Y]$, we see that the underlying arc $\gamma'$ equals $\gamma'_0$ and that the liftings of geometric points of $\ov\gamma$ are also uniquely determined. 

It remains to construct the structure of a geometric arc on $\gamma'=\gamma'_0$ with $\ov t'= \ov y(t)$ as a chosen lift of the geometric point $\ov t$. To this end, again by Lemma~\ref{lem:partial-geom-arcs} it suffices to construct the maps $\iota_{a,b}^{U'}$ for $U'\in \mc{U}$ so that Axioms 1 and 2 of Definition~\ref{def:geom-int} are satisfied. For $[a',b']$, $U'\in \mc{U}$ and $U=f(U')\subseteq X$ as before we need to check that for every finite \'etale $Z'\to U'$ the map
\[ 
    \iota_{a,b}^U(Z'\to U) \colon Z'_{\ov a}\to Z'_{\ov b}
\]
maps $Z'_{\ov a'}$ to $Z'_{\ov b'}$. Since $Z'_{\ov a'}$ is the preimage of $\ov a'\in U_{\ov a}$ under $Z'_{\ov a}\to U_{\ov a}$, and similarly for $b$, this assertion follows from the functoriality of $\iota_{a,b}^U$ with respect to the morphism $Z'\to U'$ in $\FEt_U$. This defines the maps $\iota_{a',b'}^{U'}$. Axioms~1 and 2 of the definition of a geometric arc are easy to verify.
\end{proof}

Using this we can show that uniqueness of geometric arc liftings holds for arbitrary \'etale and partially proper maps.

\begin{cor} \label{unique lifting corollary} 
    Let $f\colon Y\to X$ be an \'etale and partially proper morphism of rigid $K$-spaces where $X$ is taut. Let $\ov{\gamma}$ be a geometric arc in $X$, and let $\ov x'$ be a lifting of the left geometric endpoint $\ov x$ of $\ov\gamma$ to $Y$. Then, a lifting of $\ov{\gamma}$ to $Y$ with left geometric endpoint $\ov x'$ is unique if it exists.
\end{cor}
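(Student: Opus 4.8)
The plan is to reduce the global uniqueness statement to the local, finite étale situation handled in Proposition~\ref{unique lifting finite etale}, using the local structure theory of étale partially proper maps. Suppose $\ov{\gamma}{}_1'$ and $\ov{\gamma}{}_2'$ are two liftings of $\ov{\gamma}$ to $Y$, both with left geometric endpoint $\ov{x}'$. Let $\gamma_i'\subseteq [Y]$ be their underlying arcs and $\ov{t}{}_i'$ the chosen geometric points over $t\in\gamma$. First I would show that the two liftings agree on an initial segment: consider the set of $t\in\gamma$ such that on the subarc $[x,t]$ the two liftings coincide (meaning the underlying subarcs agree, the chosen geometric points agree up to equivalence, and the étale path data agree). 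The endpoint $x$ is in this set by hypothesis. I claim this set is closed and open in $\gamma$, hence all of $\gamma$ by connectedness.

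For openness, suppose the two liftings agree on $[x,t_0]$, and let $y_0\in[Y]$ be the common value of $t_0'$. Apply Proposition~\ref{pp etale local structure} to the point $y_0^\mx$: there is an open oc neighborhood $W\subseteq Y$ of $y_0^\mx$ with $f|_W\colon W\to f(W)=:U$ finite étale, and $U$ is an open oc neighborhood of $t_0^\mx$ in $X$. Pick a subarc $[a,b]\subseteq\gamma$ containing $t_0$ in its interior with $[a,b]\subseteq \sep_X(U)$ (possible since $\sep_X(U)$ is open in $[X]$ by Proposition~\ref{[f] for etale and pp}(a), or directly). Restricting $\ov{\gamma}$ to $[a,b]$ gives a geometric arc in $U$; the restrictions of $\ov{\gamma}{}_1'$ and $\ov{\gamma}{}_2'$ to the relevant subarcs are liftings to $W$ (once we check their underlying subarcs stay in $W$ near $t_0$ — which follows from condition $(\ast)$ of Proposition~\ref{[f] for etale and pp}, exactly as in the continuity argument in the proof of Proposition~\ref{unique lifting finite etale}). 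Since both liftings have left geometric endpoint equal to the common point over $a$ (using that they already agree on $[x,t_0]\supseteq\{a\}$ after possibly shrinking so $a$ lies in the agreement region — here one should first shrink $[a,b]$ so that $a\in[x,t_0]$), Proposition~\ref{unique lifting finite etale} applied to $W\to U$ forces them to agree on $[a,b]$, hence on $[x,b]$, giving an open neighborhood of $t_0$ in the agreement set.

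For closedness, suppose the two liftings agree on $[x,t)$ for all $t<t_1$ in $\gamma$ but one wants agreement at $t_1$; run the same finite-étale-neighborhood argument around $t_1$: choose $W\ni t_1'{}^\mx$ over $U\ni t_1^\mx$ finite étale, a subarc $[a,b]$ with $t_1\in(a,b)$, $a$ in the agreement region; then again both restricted liftings to $W$ over $U$ share a left endpoint over $a$, so by Proposition~\ref{unique lifting finite etale} they coincide on $[a,b]\ni t_1$. (A subtlety: one must know the underlying arcs $\gamma_1'$ and $\gamma_2'$ both pass through a point over $t_1$ lying in $W$; this again uses $(\ast)$ together with the fact that $\gamma_i'\to\gamma$ is a homeomorphism, so $t_i'$ varies continuously.) This shows the agreement set is clopen, hence equals $\gamma$, so $\ov{\gamma}{}_1'=\ov{\gamma}{}_2'$.

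The main obstacle I anticipate is the bookkeeping around the local step: matching up ``agreement of geometric intervals'' (underlying arc, chosen geometric points up to equivalence, and the full system of étale paths $\iota^{U'}_{a',b'}$) with the input/output format of Proposition~\ref{unique lifting finite etale}, and in particular ensuring that near each $t_0$ the underlying arcs of the two liftings actually enter the chosen finite étale neighborhood $W$ so that the proposition applies — this is where condition $(\ast)$ and the homeomorphism property of $\gamma_i'\to\gamma$ do the real work, much as in the continuity argument already carried out in the proof of Proposition~\ref{unique lifting finite etale}. Once that is set up cleanly, each of the openness and closedness steps is a direct citation of the finite étale case.
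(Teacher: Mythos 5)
Your proposal is correct and rests on the same two pillars as the paper's proof---Proposition~\ref{pp etale local structure} to produce finite \'etale oc neighborhoods and Proposition~\ref{unique lifting finite etale} for the local uniqueness---but it globalizes differently. You run a single connectedness argument on the set of parameters $t$ for which the two liftings agree on $[x,t]$, showing this set is clopen in $\gamma$. The paper instead argues in two stages: first it treats the case where the underlying arcs coincide, by cutting the arc into finitely many pieces each admitting a finite \'etale neighborhood and inducting along the pieces; then, for arbitrary liftings, it takes the connected component $\delta'$ of $\gamma_0'\cap\gamma_1'$ containing the anchor point $x'$ (a closed subarc because $[Y]$ is Hausdorff), applies the first stage to it, and derives a contradiction at its right endpoint via Proposition~\ref{unique lifting finite etale} if $\delta'$ is proper. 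The two schemes are interchangeable: yours avoids the case split but concentrates the delicate point in the closedness step, where one must first use continuity of the parametrizations and Hausdorffness of $[Y]$ to see that the two underlying arcs share the point over $t_1$ before a common finite \'etale neighborhood $W$ can be chosen---you flag this correctly---whereas the paper's intersection-component device packages exactly that verification. Either way the statement follows.
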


\begin{proof} 
Let $\ov{\gamma}'_i$ ($i=0,1$) be two liftings of $\ov{\gamma}$ with left geometric endpoint $\ov x'$. Assume first that the underlying arcs $\gamma_i'$ agree, call this common arc $\gamma'$. By Proposition~\ref{pp etale local structure}, there exists a finite partition $\gamma' = \bigcup_{j=1}^m \gamma'_j$ and open oc neighborhoods $U_i$ of $\gamma'_j$ with the property that $f|_{U_j}\colon U_j\to f(U_j)$ is finite \'etale. Let $\gamma_j=[f](\gamma_j')$ be the induced subarcs of $\gamma$. By Proposition \ref{unique lifting finite etale} and induction on $j$ one has that the geometric structure on each $\gamma_j'$ induced by $\ov{\gamma}_i'$ must agree. This implies that $\ov{\gamma}'_1=\ov{\gamma}'_2$ as desired.
 
Suppose now that $\gamma'_i$ for $i=0,1$ are arbitrary lifts of $\ov{\gamma}$. Let $\delta'$ be the connected component $\gamma'_0\cap \gamma'_1$ containing the anchor point $x'$ of $\ov x'$. Since $X$ is taut and $f$ is partially proper we know that $Y$ is also taut by Proposition \ref{pp implies taut}, and thus $[Y]$ is Hausdorff by Proposition \ref{taut properties}~\ref{taut properties 1}. Thus, $\delta'$ is a (closed, possibly degenerate) subarc of both $\gamma'_0$ and $\gamma'_1$, mapping homeomorphically via $[f]$ onto a subarc $\delta$ of $\gamma$. Let $\ov{\delta}'_i$ ($i=0,1$) be the induced geometric structure on $\delta'$ inherited from $\ov{\gamma}_i'$. Then $\ov{\delta}'_i$ are two geometric lifts of $\ov{\delta}$ with the same underlying arc. Therefore, by the previous paragraph, we have $\ov{\delta}'_0=\ov{\delta}'_1$. 

Denote by $\ov{z}'$ common right geometric endpoint of $\ov{\delta}'_0=\ov{\delta}'_1$, lying over the right geometric endpoint $\ov{z}$ of $\delta$. If $z$ is the right endpoint of $\gamma$, we are done. In any case, by Proposition \ref{pp etale local structure} there exists an open oc neighborhood $U'$ of $z'$ such that $f\colon U'\to U$ (where $U=f(U')$) is finite \'etale.  If $[x,z]\neq \gamma$, then there exists an $a$ in the interior of $\gamma\cap \sep_X(U)$ with $a\geq z$ such that the preimages of $[z,a]$ in $\sep_Y(U)\cap \gamma'_i$ are subarcs $[z', a'_i]$ ($i=0,1$) with $a'_0\neq a'_1$. However, this contradicts Proposition~\ref{unique lifting finite etale} applied to the map $U'\to U$, the geometric subarc of $\gamma$ supported on $[z,a]$, and the lifting $\ov z'$ of $\ov z$ to $U'$.
\end{proof}

While the above pertains to geometric arcs, one can use the above in conjunction with Proposition \ref{geometric intervals exist} and Proposition \ref{automatic geometric ray lifting} to obtain the following useful topological corollary.

\begin{cor} \label{arc and ray lift} 
     Let $f\colon Y\to X$ be a surjective finite \'etale morphism where $X$ is a~smooth and separated rigid $K$-curve. Then, for any interval $\ell$ in $X$, there exist finitely many intervals $\ell_i$ ($1\leq i\leq m$) in $Y$ mapping homeomorphically onto $\ell$ such that
    \begin{equation*}
        [f]^{-1}(\ell)=\bigcup_{i=1}^m \ell_i.
    \end{equation*}
\end{cor}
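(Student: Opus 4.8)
The plan is to reduce to the case of a geometric arc or ray by exhibiting $\ell$ as an interval with a geometric structure, then use unique lifting to produce the lifted intervals, and finally use properness/tautness to check that these are all the points of $[f]^{-1}(\ell)$.

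First I would dispose of the case where $\ell$ is a point: then $[f]^{-1}(\ell)$ is finite since $[f]$ has discrete fibers (Proposition \ref{[f] for etale and pp}), and each point is trivially an interval. So assume $\ell$ is an arc or ray. By Proposition \ref{geometric intervals exist} (applied to $X$ smooth and separated), choose a geometric interval $\ov\ell$ with underlying interval $\ell$; let $\ov x$ be its left geometric endpoint, anchored at $x^{\mx}$. Since $f$ is finite \'etale and surjective, the fiber $F_{\ov x}(Y)$ is finite and nonempty; enumerate its elements $\ov y_1, \ldots, \ov y_m$. By Proposition \ref{unique lifting finite etale}, for each $i$ there is a unique lifting $\ov\ell_i$ of $\ov\ell$ to $Y$ with left geometric endpoint $\ov y_i$, and its underlying interval $\ell_i \subseteq [Y]$ maps homeomorphically onto $\ell$ via $[f]$. (If $\ell$ is a ray, one either applies Proposition \ref{automatic geometric ray lifting} directly, or argues via arcs as in its proof; either way the underlying rays $\ell_i$ exist and map homeomorphically onto $\ell$.) This gives the inclusion $\bigcup_{i=1}^m \ell_i \subseteq [f]^{-1}(\ell)$, with the $\ell_i$ having the stated homeomorphism property.

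It remains to prove $[f]^{-1}(\ell) \subseteq \bigcup_{i=1}^m \ell_i$. Pick $p \in [f]^{-1}(\ell)$ and set $t = [f](p) \in \ell$. Using the geometric structure $\ov\ell$, transport along $\iota^X_{t,x}$ gives a geometric point $\ov p' := \iota^X_{t,x}(\text{anything over }p)$... more carefully: the anchor point $p^{\mx}$ determines, via a choice of separable closure, a geometric point over $t$, and the construction in the proof of Proposition \ref{unique lifting finite etale} shows that every geometric point of $Y$ over $\ov t$ equals $\ov y_i(t) = \iota^X_{x,t}(\ov y_i)$ for exactly one $i$, whose anchor point lies on $\ell_i$. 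Hence $p \in \ell_i$ for some $i$. The key point enabling this is that $Y$ is taut (by Proposition \ref{pp implies taut}, since finite \'etale maps are partially proper) so $[Y]$ is Hausdorff and the $\ell_i$ are genuine closed subarcs, and that the fiber $[f]^{-1}(t)$ is in bijection with $F_{\ov{t}^{\mx}}(Y) \cong F_{\ov x}(Y) = \{\ov y_1, \ldots, \ov y_m\}$ via the \'etale path $\iota^X_{x,t}$.

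Finally, finiteness of $m$ is automatic: $m = \#F_{\ov x}(Y)$ is the degree of the finite \'etale cover. The main obstacle I anticipate is the bookkeeping in the last step — matching an arbitrary point $p$ of the topological fiber $[f]^{-1}(\ell)$ with one of the $m$ liftings requires care about the distinction between geometric points and their anchor points, and about the fact that the bijection $[f]^{-1}(t) \leftrightarrow F_{\ov x}(Y)$ is mediated by the \'etale paths $\iota^X_{x,t}$ rather than being canonical; but this is exactly the content already extracted in the proof of Proposition \ref{unique lifting finite etale}, so it should go through cleanly. (One could alternatively avoid geometric structures entirely and argue purely topologically, lifting $\ell$ as a topological arc using that $[f]$ is a covering map of Hausdorff spaces in the relevant sense — but invoking Proposition \ref{unique lifting finite etale} is cleaner given what is available.)
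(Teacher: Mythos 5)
Your proposal is correct and follows essentially the same route as the paper: fix a geometric structure on $\ell$ via Proposition~\ref{geometric intervals exist}, lift it from each of the $\deg f$ geometric points of $Y_{\ov{x}}$ via Proposition~\ref{unique lifting finite etale}, and handle the ray case by exhausting by arcs with compatible structures. The only divergence is in the reverse inclusion: the paper lifts the two halves of $\ov{\ell}$ through a geometric point chosen over an arbitrary $w\in [f]^{-1}(\ell)$ and concatenates, then invokes uniqueness of lifts, whereas you identify $w$ directly as some $y_i(t)$ using the map $F_{\ov{t}}(Y)\to [f]^{-1}(t)$ together with the bijection $\iota^X_{x,t}\colon Y_{\ov{x}}\to Y_{\ov{t}}$ and the explicit description of the lifted arcs from the proof of Proposition~\ref{unique lifting finite etale}; this is valid, with the small caveat that $F_{\ov{t}}(Y)\to [f]^{-1}(t)$ need only be a surjection (residue field extensions can identify geometric points over distinct anchor data), not the bijection you assert --- but surjectivity is all your argument uses.
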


\begin{proof}
First, assume that $\ell$ is an arc. Fix a geometric arc structure $\ov{\ell}$ on $\ell$ which exists by Proposition \ref{geometric intervals exist}. Let us also fix a parameterization $i\colon [0,1]\to [X]$ for $\ell$. Let $\ov{x}$ be the left geometric endpoint of $\ov{\ell}$ and consider the geometric fiber $Y_{\ov{x}}$. This contains $n$ geometric points of $Y$ where $n$ is the degree of $f$. Now, consider the  unique geometric arcs $\ov{\ell}_i$ starting from these geometric points and lifting $\ov{\ell}$. Obviously, we have $[f]^{-1}(\ell) \supset \bigcup_j \ell_j$. Conversely, let $w$ be a~point of $[f]^{-1}(\ell)$. Pick a geometric point $\ov{w}$ above $w$ that lifts the geometric point of $\ov{\ell}$ that has the maximal point in $X$ above $v=[f](w) \in \ell$ as its anchor point. We can lift $\ov{\ell}|_{[0,v]}$ and $\ov{\ell}|_{[v,1]}$ uniquely to geometric arcs in $Y$ having $\ov{w}$ as a point. The concatenation of the two, however, produces a geometric arc in $Y$ lifting $\ov{\ell}$. This shows $[f]^{-1}(\ell) \subset \bigcup_j \ell_j$.

To prove the geometric ray case, we write the ray as an increasing union of arcs, put geometric arc structures on those arcs in a compatible way and apply the above.
\end{proof}

We next show that AVC is strong enough to imply unique lifting of geometric arcs for \'etale and partially proper morphisms. 

\begin{prop} \label{pre-geometric covering lifts geometric arcs}
    Let $f\colon Y\to X$ be a map of rigid $K$-spaces where $X$ is assumed taut, and let $\ov{\gamma}$ be a geometric arc in $X$ with underlying parameterized arc $i\colon [0,1]\to [X]$. Suppose that
    $f$ is \'etale, partially proper, and $[f]$ satisfies AVC with respect to every parameterized subarc of $i\colon [0,1]\to [X]$. Then $f$ satisfies unique lifting with respect to $\ov{\gamma}$.
\end{prop}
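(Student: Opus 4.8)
The plan is to combine the uniqueness statement from Corollary~\ref{unique lifting corollary} with an existence argument that proceeds by the standard ``maximal subarc'' technique, using AVC to cross the troublesome limit points and the local finite-\'etale structure (Proposition~\ref{pp etale local structure}) together with Proposition~\ref{unique lifting finite etale} to extend past them. First I would dispose of uniqueness: since $f$ is \'etale and partially proper and $X$ is taut, Corollary~\ref{unique lifting corollary} shows that a lifting of $\ov\gamma$ with prescribed left geometric endpoint $\ov{x}'$ is unique if it exists, so the entire content is existence.

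For existence, fix $\ov x'\in Y_{\ov x}$ over the left endpoint $\ov x=\ov{i(0)}$, and let $S\subseteq[0,1]$ be the set of $s$ for which the restricted geometric subarc $\ov\gamma|_{[0,s]}$ (supported on $i([0,s])$, with its induced geometric structure as a subinterval) admits a lifting to $Y$ with left geometric endpoint $\ov x'$. By uniqueness these liftings are compatible under restriction, so the data glues; thus it suffices to show $1\in S$. Clearly $0\in S$. The key observations are: (i) $S$ is an interval, i.e. $[0,s]\subseteq S$ whenever $s\in S$ (immediate from the subinterval construction); (ii) $S$ is \emph{open to the right}, meaning if $s\in S$ with $s<1$ then $[0,s+\varepsilon]\in S$ for small $\varepsilon>0$ --- this follows because the right geometric endpoint $\ov z'$ of the lift over $i(s)$ has, by Proposition~\ref{pp etale local structure}, an open oc neighborhood $U'$ with $f\colon U'\to U:=f(U')$ finite \'etale, and $i([s,s+\varepsilon])\subseteq \sep_X(U)$ for small $\varepsilon$, so Proposition~\ref{unique lifting finite etale} lifts the geometric subarc on $[s,s+\varepsilon]$ starting at $\ov z'$, and concatenation (using the Concatenation construction) extends the lift; (iii) $S$ is \emph{closed}, i.e. if $s=\sup S$ is a limit from the left (so $[0,s)\subseteq S$), then $s\in S$. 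Granting (iii), $S$ is nonempty, closed, and right-open inside $[0,1]$, hence $S=[0,1]$ and we are done.

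The main obstacle is step (iii), and this is exactly where AVC enters. Suppose $[0,s)\subseteq S$ with $s=\sup S$. The compatible lifts over $i([0,s'))$ for $s'<s$ glue to a lift of the geometric \emph{ray} $\ov\rho:=\ov\gamma|_{[0,s)}$; in particular (passing to underlying spaces) we obtain a continuous map $\rho':=i([0,s))\to[Y]$ lifting $i([0,s))\hookrightarrow[X]$, fitting into a solid square
\[
    \xymatrix{
        [0,1) \ar[r] \ar[d] & [Y]\ar[d]^{[f]} \\
        [0,1] \ar[r]_-{i|_{[0,s]}} \ar@{.>}[ur] & [X]
    }
\]
after reparametrizing $[0,s]\cong[0,1]$. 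By hypothesis $[f]$ satisfies AVC with respect to this (sub)parameterized arc, so there is a (unique) dotted arrow; its value at the endpoint gives a point $z'\in[Y]$ over $i(s)$ that is a limit of the $\rho'$-points, i.e. the underlying arc of the putative lift extends continuously over $i(s)$. It then remains to promote this \emph{topological} extension to an extension of the \emph{geometric} structure: I would apply Proposition~\ref{pp etale local structure} at $z'$ to get an open oc neighborhood $U'$ with $f|_{U'}\colon U'\to U=f(U')$ finite \'etale and $z'\in\sep_Y(U')$; for $s'<s$ close enough to $s$, the subarc $i([s',s])$ lies in $\sep_X(U)$ and the already-constructed point over $i(s')$ lies in $\sep_Y(U')$ (continuity of the lift at $z'$). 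Then Proposition~\ref{unique lifting finite etale} applied to $U'\to U$ produces the unique geometric lift of $\ov\gamma|_{[s',s]}$ starting at the chosen geometric point over $i(s')$, whose underlying arc must, by uniqueness of arc-lifts for finite \'etale maps (and Hausdorffness of $[Y]$, Proposition~\ref{taut properties}), agree with the AVC-provided extension and in particular pass through $z'$; concatenating with the lift over $i([0,s'])$ yields a geometric lift over $i([0,s])$, so $s\in S$. One subtlety to check carefully is that the geometric point chosen over $i(s)$ by this finite-\'etale lifting process is the ``right'' one, i.e. the one determined by $\ov x'$; but this is forced by uniqueness (Corollary~\ref{unique lifting corollary}) applied on $[0,s']$ and the compatibility of the two constructions over the overlap, exactly as in the proof of Proposition~\ref{unique lifting finite etale}. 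The only place genuinely new input beyond the finite-\'etale case is used is AVC, which supplies precisely the limit point that path-lifting in a non-locally-simply-connected setting would otherwise fail to provide.
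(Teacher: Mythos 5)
Your proposal is correct and follows essentially the same argument as the paper: the same set $S$, the same split into right-openness (handled by Proposition~\ref{pp etale local structure} plus Proposition~\ref{unique lifting finite etale}) and closedness under limits from the left (handled by AVC to produce the limit point, then the local finite \'etale structure to upgrade the topological extension to a geometric one), with uniqueness supplied by Corollary~\ref{unique lifting corollary}.
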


\begin{proof}
In this proof, we tacitly use the uniqueness result of Corollary \ref{unique lifting corollary} several times. Let $\ov{x}$ be an endpoint anchored at $i(0)$, and let $\ov{x}'$ be a lift of $\ov{x}$ to $Y$. For each $t$ in $[0,1]$ let $\ov{\gamma}_t$ be the induced geometric arc structure to $i([0,t])$. Let $S$ be the the set of all $t \in [0,1]$ for which there exists a lifting  $\ov{\widetilde{\gamma}}_t$ of $\ov\gamma_t$ to a geometric arc in $Y$ with left endpoint $\ov{x}'$. Note that $0\in S$ and if $t\in S$ then $[0,t]\subseteq S$. Thus, to show that $S=[0,1]$ it suffices to show that 
\begin{itemize}
    \item if $t\in S$ for all $t<t_0$ then $t_0\in S$,
    \item if $1\ne t_0\in S$ then $t_0+\varepsilon\in S$ for some $\varepsilon>0$.
\end{itemize} 
To see the first claim let us then note that the union of the underlying arcs $\widetilde{\gamma}_t$ for $t<t_0$ lifting $\gamma_t$ is precisely a lift of the parameterized ray $i|_{[0,t_0)}$. By our assumption on AVC relative to parameterized subarcs of $i$ we can extend lift of the parameterized arc $i|_{[0,t_0]}$. Let $y'$ be the point in our lift living over $y= i(t_0)$. Note that by Proposition \ref{pp etale local structure} there exists an open oc neighborhood $U$ of $y'$ such that $f\colon U\to f(U)$ is finite \'etale. Since $f$ is \'etale and partially proper we know that $y$ lies in $\interior_X(f(U))$, and thus $i([0,t_0])\cap \sep_X(V)$ contains $i([t_1,t_0])$ for some $t_1<t_0$. But, since $f\colon U\to f(U)$ is finite \'etale, we know by Proposition \ref{unique lifting finite etale} that there exists a unique lift of the geometric arc structure on $i([t_1,t_0])$ to a geometric arc of $U$. This clearly glues to give a geometric lift of $\ov{\gamma}_{t_0}$ as desired.

To show the second claim let $y=i(t_0)$ and let $\ov{\widetilde{\gamma}}_{t_0}$ be our chosen lift of $\ov{\gamma}_{t_0}$. Let $y'$ be the point of $\widetilde{\gamma}_{t_0}$ lying over $y$. Again by Proposition \ref{pp etale local structure} there exists open oc neighborhoods $U$ of $y'$ such that $f\colon U\to f(U)$ is finite \'etale. Since $f$ is \'etale and partially proper we have that $y\in \interior_X(f(U))$ and thus we have that $i([0,1])\cap \sep_X(f(U))$ contains $i([t_1, t_0+\varepsilon])$ for some $\varepsilon>0$. One then proceeds as in the first case.
\end{proof}

Finally, we show that the converse to this result holds in the case of curves.

\begin{prop}\label{geometric covering equiv} 
    Let $f\colon Y\to X$ be an \'etale and partially proper morphism where $X$ is a~smooth and separated rigid $K$-curve. Then, $[f]$ satisfies AVC if and only if $f$ has unique lifting of geometric arcs.
\end{prop}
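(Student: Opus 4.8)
The strategy is to prove the two implications separately, exploiting the uniqueness already established in Corollary~\ref{unique lifting corollary}. The implication ``AVC $\Rightarrow$ unique lifting of geometric arcs'' is almost immediate from Proposition~\ref{pre-geometric covering lifts geometric arcs}: given a geometric arc $\ov\gamma$ in $X$ with underlying parameterized arc $i\colon[0,1]\to[X]$, every parameterized subarc of $i$ is again a parameterized arc in $[X]$, so AVC (which holds with respect to \emph{all} parameterized arcs) applies in particular to all of them. Hence Proposition~\ref{pre-geometric covering lifts geometric arcs} gives existence of a lift with the prescribed left geometric endpoint, and uniqueness is Corollary~\ref{unique lifting corollary}. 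Here we use that $X$ is taut, which holds by Proposition~\ref{prop:curves-are-taut} since $X$ is a quasi-separated (indeed separated) rigid $K$-curve, and that $Y$ is then taut by Proposition~\ref{pp implies taut}.

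For the converse, ``unique lifting of geometric arcs $\Rightarrow$ AVC'', I would argue as follows. Fix a parameterized arc $i\colon[0,1]\to[X]$ with image $\gamma$ and set $\rho = i([0,1))$. Suppose we are given a commutative solid square as in \eqref{eqn:PVC-square}, i.e.\ a continuous lift $g\colon[0,1)\to[Y]$ of $i|_{[0,1)}$; we must produce the dotted arrow extending $g$ continuously over $[0,1]$. The idea is to transport this topological data into the world of geometric intervals. First, by Proposition~\ref{geometric intervals exist} we may choose a structure of a geometric arc $\ov\gamma$ on $\gamma$. The restriction of $\ov\gamma$ to $\rho$ is a geometric ray $\ov\rho$; I want to lift $\ov\rho$ to a geometric ray in $Y$ whose underlying ray is $g([0,1))$. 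To do this, note that $g([0,1))$ is the underlying ray of a geometric ray lift: for each subarc $[0,t]\subseteq\rho$, the image $g([0,t])$ is an arc in $[Y]$ mapping homeomorphically onto $i([0,t])$, and since $g$ is a set-theoretic lift of $i$, applying unique lifting of geometric arcs to $\ov\gamma_t := \ov\gamma|_{i([0,t])}$ and to \emph{some} lift of its left geometric endpoint produces a geometric arc lift whose underlying arc is a lift of $i([0,t])$; one must check this underlying arc is exactly $g([0,t])$. The natural way to see this is: there are finitely many lifts of the left geometric endpoint (the fiber $Y_{\ov x}$ is finite since $f$ is \'etale and partially proper, by Proposition~\ref{pp etale local structure} / Proposition~\ref{[f] for etale and pp}(b)), their geometric-arc lifts have underlying arcs whose union is $[f]^{-1}(i([0,t]))$ (this is the curve-case analogue of Corollary~\ref{arc and ray lift}, or can be extracted from Corollary~\ref{unique lifting corollary} plus local finite-\'etale structure), and exactly one of them starts at $g(0)$; by connectedness of $[0,t]$ and uniqueness of the lift, that one must have underlying arc $g([0,t])$. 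Concatenating these over all $t<1$ (using Proposition~\ref{automatic geometric ray lifting}) we obtain a geometric ray lift $\ov\rho'$ of $\ov\rho$ with underlying ray $g([0,1))$.

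Now invoke Corollary~\ref{extend}: since $X$ is a smooth separated rigid $K$-curve, the geometric ray $\ov\rho'$ in $Y$ — after possibly restricting to $i([\tfrac12,1))$ so that no interior points are classical, and after replacing $Y$ by a suitable open oc neighborhood which is again a smooth separated rigid $K$-curve, or simply by working with $Y$ directly since it inherits these properties — wait, more carefully: apply Corollary~\ref{extend} in $Y$ to the parameterized ray $g|_{[0,1)}$ and the geometric ray structure $\ov\rho'$; this requires $Y$ to be a smooth separated rigid $K$-curve, which holds because $f$ is \'etale and $X$ is such. We thereby obtain a geometric arc $\ov\gamma'$ in $Y$ with underlying arc $\gamma' = g([0,1])$ (for a suitable continuous extension $g$ of the ray to $[0,1]$) whose restriction to the ray is $\ov\rho'$ up to equivalence. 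The right endpoint of $\gamma'$ then gives a point $g(1)\in[Y]$ with $[f](g(1)) = i(1)$, and $\gamma'\to\gamma$ being a homeomorphism forces $g\colon[0,1]\to[Y]$ to be continuous and to extend the original map on $[0,1)$. This is the required dotted arrow; uniqueness is Remark~\ref{rem:separated maps and AVC} (both $[X]$ and $[Y]$ are Hausdorff as $X,Y$ are taut). Since the parameterized arc $i$ was arbitrary, $[f]$ satisfies AVC.

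\textbf{Main obstacle.} The delicate point is the bookkeeping in the converse direction: showing that the geometric-ray lift of $\ov\rho$ produced by unique arc lifting has underlying ray \emph{exactly} equal to the given topological lift $g([0,1))$, rather than merely \emph{some} lift of $\rho$. This is where the finiteness of the fiber $Y_{\ov x}$, the decomposition $[f]^{-1}(\text{arc}) = \bigcup \ell_i$ into finitely many homeomorphic copies, and the uniqueness from Corollary~\ref{unique lifting corollary} must be combined carefully; one also has to handle the mismatch between geometric points (anchored at maximal points) and the possibly non-maximal points in the image of $g$, using that for lifting purposes only the separated quotient matters. A secondary technical nuisance is checking the hypotheses of Corollary~\ref{extend} are met after the reductions (smoothness and separatedness of the ambient curve, non-classicality of interior points of the relevant ray), but these are routine given the results already in the excerpt.
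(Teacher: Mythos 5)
Your forward implication (AVC implies unique lifting of geometric arcs) is correct and is exactly the paper's argument: Proposition~\ref{pre-geometric covering lifts geometric arcs} for existence, Corollary~\ref{unique lifting corollary} for uniqueness, with tautness of $X$ supplied by Proposition~\ref{prop:curves-are-taut}. The converse, however, has a genuine gap, and the difficulty you flag as ``delicate bookkeeping'' is in fact a fatal defect of the chosen order of construction. You fix a geometric structure $\ov\gamma$ on $\gamma$ in $X$ \emph{first} and then try to argue that the geometric lift of $\ov\gamma|_{i([0,t])}$ starting over $g(0)$ has underlying arc exactly $g([0,t])$. Nothing forces this: the arcs in $[Y]$ arising as underlying arcs of geometric lifts of $\ov\gamma_t$ may intersect one another at interior points, and a continuous lift $g$ may switch branches at such an intersection, so $g([0,t])$ need not be the underlying arc of \emph{any} geometric lift of the particular structure you chose --- topological path lifting along $[f]$ is not unique, which is precisely why the extra data of \'etale paths is needed in the first place. (Two smaller errors in the same step: for a general \'etale partially proper map the fiber $Y_{\ov x}$ need not be finite, and distinct lifts of $\ov x$ can share an anchor point, so ``exactly one of them starts at $g(0)$'' is unjustified.) Separately, your appeal to Corollary~\ref{extend} ``in $Y$'' is circular: that corollary takes as input a parameterized \emph{arc} $[0,1]\to[Y]$ and merely extends a geometric structure from the ray to the arc; producing the missing endpoint $g(1)\in[Y]$ is exactly the content of AVC and cannot be extracted from it.

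The repair is to reverse the order of construction, which is what the paper does. Given the topological ray lift $s\colon[0,1)\to[Y]$ with image $\rho'$, first endow $\rho'$ with a geometric structure $\ov\rho{}'$ \emph{in $Y$} (Proposition~\ref{geometric intervals exist}, using that $Y$ is again a smooth separated curve), push it forward to the geometric ray $f(\ov\rho{}')$ on $\rho$ in $X$, and only then extend to a geometric arc $\ov\gamma$ on $\gamma$ by Corollary~\ref{extend} applied in $X$, where the full arc genuinely exists. Now lift $\ov\gamma$ using the hypothesis, starting from the left geometric endpoint of $\ov\rho{}'$. For each $x<1$, the restriction of this lift over $i([0,x])$ and the restriction $\ov\rho{}'|_{s([0,x])}$ are both lifts of $\ov\gamma|_{i([0,x])}$ with the same left geometric endpoint, hence coincide by Corollary~\ref{unique lifting corollary}; taking the union over $x<1$ shows the underlying arc of the lift extends $s$, which is the required dotted arrow. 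In this order the geometric structure downstairs is adapted to the given topological lift by construction, and the matching problem you ran into never arises.
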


\begin{proof} 
If $f$ satisfies AVC then we have already shown in Proposition \ref{pre-geometric covering lifts geometric arcs} that $f$ satisfies unique lifting of geometric arcs. Conversely, let $i\colon [0,1]\to [X]$ be an arc and let $s\colon [0,1)\to [Y]$ be a~lifting of the ray. Set $\rho'=s([0,1))$ and $\rho=i([0,1))$. Let us note that since $Y$ is a smooth and separated rigid $K$-curve we can by Proposition \ref{geometric intervals exist} find a geometric structure $\ov{\rho}'$ on $\rho'$. Since $[f]$ maps $\rho'$ homeomorphically onto $\rho$ we note then that $f(\ov{\rho}')$ is a~geometric structure on $\rho$. By Corollary \ref{extend} we can extend this to a geometric structure $\ov{\gamma}$ on $\gamma$. We then note that by unique lifting of geometric arcs that there exists a lift $\ov{\gamma}'$ of $\ov{\gamma}$. 

We claim that $\gamma'$ is an arc lift of $\gamma$ extending the ray lift $\rho'$ of $\rho$. To see this, for each $x<1$ set
\begin{itemize}[leftmargin=*]
    \item $\gamma_x$ to be the arc $i([0,x])$, and $\ov{\gamma}_x$ to be the geometric structure on $\gamma_x$ induced by $\ov{\gamma}$,
    \item $\gamma'_x$ to be the subarc of $\gamma'$ lifting $\gamma_x$, and $\ov{\gamma}'_x$ the geometric structure on $\gamma'_x$ induced by $\ov{\gamma}'$,
    \item $\rho_x$ to be the subarc $s([0,x])$ of $\rho$, and $\ov{\rho}_x$ the geometric structure on $\rho_x$ induced by $\ov{\rho}$.
\end{itemize}
Note then that $\ov{\rho}_x$ and $\ov{\gamma}'_x$ are both lifts of the geometric arc $\ov{\gamma}_x$. By previous comment we know that $f$ satisfies uniqueness of liftings of geometric arcs, and thus $\ov{\rho}_x=\ov{\gamma}'_x$ and so, in particular, we see that 
\begin{equation*}
    \rho=\bigcup_{x>0}\rho_x=\bigcup_{x>0}\gamma'_x
\end{equation*}
which implies that $\gamma'$ is, indeed, an arc lift of $\gamma$ extending $\rho$ as desired.
\end{proof}

\subsection{The definition of geometric coverings and basic properties}
\label{ss:geom-cov-def}

We now define geometric coverings, which in essence are \'etale and partially proper maps which satisfy unique lifting of geometric arcs.  As the theory of geometric arcs and their lifts only works well on smooth and separated rigid $K$-curves (see Remark~\ref{rem:tame-paths}) we shall have to use such spaces as the basis of our test objects. This is made possible by the result in  Appendix~\ref{curve-connectedness appendix}.

By Proposition \ref{geometric covering equiv} we may also think of geometric coverings as \'etale maps which which satisfy both an algebro-geometric valuative criterion (in the form of being partially proper) but also a topological valuative criterion (in the form of satisfying AVC). Again, this requirement will be required to hold over smooth and separated rigid $K$-curves.

\begin{definition} \label{def:test-curve}
    Let $X$ be a rigid $K$-space. A \emph{test curve} in $X$ is the data of a a non-archimedean field extension $L$ of $K$, a smooth and separated $L$-curve $C$, and a map $C\to X$ of adic spaces over $K$.
\end{definition}

We note that if $Y\to X$ is \'etale and partially proper, and $C\to X$ is a test curve in $X$, then $Y_C\to Y$ is a test curve in  $Y$.

We now define our notion of geometric coverings. The equivalence of the two conditions follows from Proposition~\ref{geometric covering equiv}.

\begin{definition} \label{def:geom-cov}
    A morphism $Y\to X$ of rigid $K$-spaces is a \emph{geometric covering} if it is \'etale, partially proper, and satisfies one of the following two equivalent properties:
    \begin{enumerate}[(a)]
        \item for all test curves $C\to X$ the map $Y_C\to C$ has unique lifting of geometric arcs (Definition~\ref{def:unique-lifting}),
        \item for all test curves $C\to X$ the map $Y_C\to C$ satisfies AVC (Definition~\ref{def:AVC}).
    \end{enumerate}
    We denote the full subcategory of $\Et_X$ consisting of geometric coverings of $X$ by $\Cov_X$. 
\end{definition}

\begin{rem} 
Note that by Remark~\ref{rem:separated maps and AVC} and Corollary \ref{unique lifting corollary} the uniqueness part of (a) and (b) above are automatic.
\end{rem}

We now show that geometric coverings are closed under many natural operations, and are therefore quite abundant. The first substantive class is singled out by Proposition~\ref{unique lifting finite etale}.

\begin{prop} \label{finite etale are geometric coverings} 
    Let $Y\to X$ be a finite \'etale morphism of rigid $K$-spaces. Then, $Y\to X$ is a geometric covering.
\end{prop}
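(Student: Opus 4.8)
The plan is to verify the three defining conditions of a geometric covering from Definition~\ref{def:geom-cov}: that $Y\to X$ is \'etale, that it is partially proper, and that condition (a) holds. A finite \'etale morphism is of course \'etale, and it is proper, hence partially proper; alternatively one can invoke the valuative criterion of Proposition~\ref{valuative criterion prop} since finite morphisms satisfy $\Hom_X(\Spa(L,L^+),Y)\isomto\Hom_X(\Spa(L),Y)$ by the valuative criterion for finiteness/properness. So the only substantive point is the unique lifting of geometric arcs over test curves.

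First I would reduce to the statement about lifting. Fix a test curve $C\to X$, so $C$ is a smooth separated $L$-curve for some non-archimedean extension $L/K$, and form the base change $Y_C\to C$. Since finite \'etale morphisms are stable under base change, $Y_C\to C$ is finite \'etale, and $C$, being a quasi-separated (indeed separated) rigid $L$-curve, is taut by Proposition~\ref{prop:curves-are-taut}. At this point the claim that $Y_C\to C$ has unique lifting of geometric arcs is precisely the content of Proposition~\ref{unique lifting finite etale}, applied with $X$ replaced by the taut space $C$ and $Y$ replaced by $Y_C$.

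Therefore the whole argument is: (1) observe \'etale and partially proper are clear for finite \'etale maps; (2) observe that for any test curve $C\to X$, the base change $Y_C\to C$ is again finite \'etale, and $C$ is taut by Proposition~\ref{prop:curves-are-taut}; (3) apply Proposition~\ref{unique lifting finite etale} to conclude that $Y_C\to C$ satisfies unique lifting of geometric arcs; (4) conclude via Definition~\ref{def:geom-cov}(a) that $Y\to X$ is a geometric covering.

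There is essentially no obstacle here — the real work was already done in establishing Proposition~\ref{unique lifting finite etale} (the continuity of $t\mapsto y(t)$ and the subdivision argument) and Proposition~\ref{prop:curves-are-taut}. The only thing to be slightly careful about is that the notion of test curve allows base change to an arbitrary non-archimedean extension $L/K$, but this causes no trouble: finite \'etaleness and tautness of smooth separated curves are insensitive to the base field, so Proposition~\ref{unique lifting finite etale} and Proposition~\ref{prop:curves-are-taut} apply verbatim over $L$.
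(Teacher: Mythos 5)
Your proposal is correct and matches the paper's (essentially implicit) argument: the paper introduces this proposition with the remark that it is ``singled out by Proposition~\ref{unique lifting finite etale}'', i.e.\ it follows by base-changing to a test curve (which is taut by Proposition~\ref{prop:curves-are-taut}), noting the base change is still finite \'etale, and applying Proposition~\ref{unique lifting finite etale}, exactly as you do. Your additional remarks on partial properness and insensitivity to the base field extension are correct and fill in the routine details the paper leaves unstated.
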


\begin{rem} 
One can also prove that finite \'etale maps satisfy AVC without the need for geometric arcs, using the corresponding fact about proper continuous maps with discrete fibers (see Remark~\ref{rem:proper-AVC}).
\end{rem}

Therefore the category $\FEt_X$ is a full subcategory of $\Cov_X$. By the result below, it also contains the category $\UFEt_X$ of disjoint unions of rigid $K$-spaces finite \'etale over $X$.  

\begin{prop} \label{disjoint union geometric coverings} 
    Let $X$ be a rigid $K$-space and let $\{Y_i\to X\}$ be a collection of morphisms of rigid $K$-spaces. Let us set $Y=\coprod_i Y_i$ with its natural map $Y\to X$. Then, $Y\to X$ is a~geometric covering if and only if $Y_i\to X$ is a geometric covering.
\end{prop}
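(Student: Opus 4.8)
The plan is to verify separately, in each case by reducing to the pieces $Y_i$, the three conditions in Definition~\ref{def:geom-cov}: \'etale, partially proper, and AVC over all test curves (condition~(b)). The common mechanism is that each $Y_i$ is clopen in $Y$ and that the underlying space of a disjoint union of adic spaces is a topological coproduct.

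\'Etaleness is immediate, as \'etaleness is local on the source and each $Y_i$ is open in $Y$. For partial properness I would argue as follows. If $Y\to X$ is partially proper, then, noting that a clopen subset $U$ of a valuative space is overconvergent (for $x\in U$ the irreducible set $\ov{\{x^\mx\}}=\sep^{-1}(\sep(x))$ meets the clopen $U$ and hence lies in it, so one of the equivalent conditions of Proposition~\ref{oc equiv} holds), the inclusion $Y_i\hookrightarrow Y$ is partially proper by Proposition~\ref{pp open embedding}; composing with $Y\to X$ shows each $Y_i\to X$ is partially proper. Conversely, if every $Y_i\to X$ is partially proper, then each is separated and locally of ${}^+$weakly finite type, both of which are clopen-local on the source, so $Y\to X$ has these properties as well; then for any affinoid field $(L,L^+)$ and morphism $\Spa(L,L^+)\to X$ the map $\Hom_X(\Spa(L,L^+),Y)\to\Hom_X(\Spa(L),Y)$ is the disjoint union over $i$ of the corresponding maps for the $Y_i$ (because $\Spa(L,L^+)$ and $\Spa(L)$ are connected), each a bijection by Proposition~\ref{valuative criterion prop}, so $Y\to X$ is partially proper by that same proposition.

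For the last condition, assume now that $Y\to X$ and all $Y_i\to X$ are \'etale and partially proper, and fix a test curve $C\to X$, so that $Y_C=\coprod_i (Y_i)_C$ with each $(Y_i)_C\to C$ again \'etale and partially proper. The key point is that $[Y_C]=\coprod_i [(Y_i)_C]$, with each $[(Y_i)_C]$ clopen: indeed $|Y_C|$ is the topological coproduct of the $|(Y_i)_C|$, a coproduct of $T_1$ spaces is $T_1$, and the universal property of the separation map then identifies $[Y_C]$ with $\coprod_i [(Y_i)_C]$. Now in any instance of the square~\eqref{eqn:PVC-square} for $[Y_C]\to[C]$, the solid map $[0,1)\to[Y_C]$ has connected source and hence lands in a single clopen piece $[(Y_i)_C]$, and any dotted filler $[0,1]\to[Y_C]$ restricting to it lands in the same piece (its preimage is clopen and nonempty in the connected space $[0,1]$); conversely every such square for an individual $[(Y_i)_C]\to[C]$ is a special case of one for $[Y_C]\to[C]$, and uniqueness of fillers passes between the two. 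Hence $[Y_C]\to[C]$ satisfies AVC iff each $[(Y_i)_C]\to[C]$ does, and as $C$ was an arbitrary test curve, $Y\to X$ satisfies condition~(b) of Definition~\ref{def:geom-cov} iff every $Y_i\to X$ does. Combining the three parts proves the proposition.

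I do not expect a genuine obstacle; the only care needed is the routine bookkeeping that ``separated'', ``locally of ${}^+$weakly finite type'', and the Hom-sets in the valuative criterion all decompose correctly over a disjoint union, together with the elementary facts that clopen subsets are overconvergent, that $[-]$ commutes with coproducts, and that the sources $\Spa(L,L^+)$, $\Spa(L)$, $[0,1)$ and $[0,1]$ appearing in the various criteria are connected.
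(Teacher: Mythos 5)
Your proof is correct. The paper states this proposition without proof, evidently regarding it as a routine check, and your argument supplies exactly the verification one would expect: the three observations doing the work --- that a clopen subset of a valuative space is overconvergent (so the summand inclusions are partially proper), that the valuative criterion and the universal separated quotient both decompose over coproducts, and that the test objects $\Spa(L,L^+)$, $\Spa(L)$, $[0,1)$, $[0,1]$ are connected and hence land in a single summand --- are all sound, and the bookkeeping (separatedness and locally of $^+$weakly finite type being clopen-local on the source, $Y_C=\coprod_i (Y_i)_C$) is handled correctly.
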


In particular, one sees that for a morphism $Y\to X$ of rigid $K$-spaces to be a geometric covering it is necessary and sufficient to check that for every connected component $Y_i$ of $Y$ that $Y_i\to X$ is a geometric covering. 

The most difficult property of $\Cov_X$ to prove is the following.

\begin{prop} \label{etale local}
    The property of being a geometric covering is \'etale local on the target.
\end{prop}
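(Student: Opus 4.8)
The plan is to prove that being a geometric covering is étale local on the target, i.e.\ if $f\colon Y\to X$ is an étale morphism of rigid $K$-spaces and $X'\to X$ is an étale surjection such that $Y_{X'}\to X'$ is a geometric covering, then $f$ is a geometric covering. First I would reduce to the statement about the three separate conditions defining a geometric covering: (1) $f$ is étale --- this is Proposition~\ref{prop:etale-etale-local}; (2) $f$ is partially proper --- this is Proposition~\ref{prop:pp-etale-local}; and (3) for every test curve $C\to X$, the map $[Y_C]\to[C]$ satisfies AVC, using the AVC characterization (b) in Definition~\ref{def:geom-cov}, which is justified by Proposition~\ref{geometric covering equiv}. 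Thus everything comes down to checking that the AVC condition descends along étale covers. I would invoke the étale bootstrap principle, Corollary~\ref{boostrap corollary}, to reduce checking AVC to the cases of (i) an affinoid open cover and (ii) a connected finite étale Galois cover of connected affinoids; the stability of the AVC condition under base change is essentially formal, since test curves of $X$ pull back to test curves of any $X'\to X$ and one uses that $(Y_{X'})_{C'} = Y_{C'}$ for $C'=C\times_X X'$.

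**The key steps.** After the reductions above, fix a test curve $C\to X$ and a parametrized arc $i\colon[0,1]\to[C]$, together with a lift $s\colon[0,1)\to[Y_C]$ of the associated ray; I must produce a (necessarily unique, by Remark~\ref{rem:separated maps and AVC} since $Y_C$ is taut) extension to $[0,1]\to[Y_C]$. The first subcase is an \emph{affinoid} (more precisely Zariski/open) cover $X=\bigcup U_\alpha$: here $C\to X$ restricts to a Zariski cover of $C$, and since AVC is a condition about lifting an arc and its endpoint, one covers the compact arc $i([0,1])$ by finitely many of the preimages, subdivides $[0,1]$ accordingly, and glues the finitely many local extensions --- the gluing is legitimate because uniqueness forces the local lifts to agree on overlaps. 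The second, genuinely substantive subcase is a connected finite étale Galois cover $X'\to X$ of connected affinoids: now $C' := C\times_X X'\to C$ is finite étale and $C'$ is again a (disjoint union of) smooth separated rigid $K'$-curve(s) for appropriate field extensions, so $Y_{C'} = (Y_{X'})_{C'}\to C'$ satisfies AVC by hypothesis, equivalently (Proposition~\ref{geometric covering equiv}) has unique lifting of geometric arcs. I would then pull the situation over to $C'$ using Corollary~\ref{arc and ray lift}: the arc $i$ in $[C]$ has preimage in $[C']$ a finite disjoint union of arcs each mapping homeomorphically onto $i$, and similarly the ray lift $s$ in $[Y_C]$ pulls back; choosing a geometric structure on one arc $i'$ over $i$ (Proposition~\ref{geometric intervals exist}) and using that $Y_{C'}\to C'$ lifts it uniquely, I extend the ray lift to an arc lift in $[Y_{C'}]$, and then push forward along the finite étale map $[Y_{C'}]\to[Y_C]$ to obtain the desired extension in $[Y_C]$. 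One must check that this pushed-forward arc genuinely extends the original $s$ and lies over $i$; this follows from compatibility of the constructions with the projection and the uniqueness statements, together with Proposition~\ref{[f] for etale and pp}.

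**The main obstacle.** The hard part will be the finite étale Galois descent step, specifically organizing the bookkeeping so that the extended lift downstairs is well-defined independently of which arc $i'\subseteq[C']$ over $i$ and which geometric point over the relevant basepoint one uses. The cleanest route is probably to argue purely at the level of the AVC diagram: the extension upstairs is unique, the Galois group $G$ acts on $[C']$ over $[C]$ and on $[Y_{C'}]$ over $[Y_C]$ compatibly, so the unique upstairs extension is $G$-equivariant and hence descends set-theoretically; continuity of the descended map then follows because $[Y_{C'}]\to[Y_C]$ is a quotient map (finite surjective, and $[C']\to[C]$, $[Y_{C'}]\to[Y_C]$ are finite étale hence proper quotient maps by Proposition~\ref{[f] for etale and pp} and Proposition~\ref{taut properties}). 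I would also need the elementary observation that a finite extension $K'/K$ and base change $C\times_K K'$ preserves smoothness and separatedness of curves, so that the hypotheses of the curve results in Section~\ref{s:geom-arcs} genuinely apply to $C'$; this is routine but should be stated. A secondary subtlety is verifying the base-change stability hypothesis of Corollary~\ref{boostrap corollary} for the property ``$[Y_C]\to[C]$ satisfies AVC for all test curves $C$'' --- one should note that a test curve of $X'$ composed with $X'\to X$ is a test curve of $X$, so a geometric covering over $X'$ automatically has the AVC property tested against all test curves of $X$ that factor through $X'$, and conversely any test curve of $X$ can be pulled back; with this the descent input is exactly what Corollary~\ref{boostrap corollary} requires.
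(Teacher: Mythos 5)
Your overall skeleton matches the paper's: reduce via Corollary~\ref{boostrap corollary} to affinoid open covers and connected finite \'etale Galois covers, handle \'etaleness and partial properness by Propositions~\ref{prop:etale-etale-local} and~\ref{prop:pp-etale-local}, and then descend the AVC condition. Your treatment of the Galois case is essentially a hands-on unwinding of what the paper packages into two lemmas: the paper simply observes that $Y_{X'}\to X'\to X$ is a composition of geometric coverings (Propositions~\ref{finite etale are geometric coverings} and~\ref{composition and base change geometric covering}~\ref{composition and base change geometric covering 1}) and that $Y$ is the image of $Y_{X'}$ under a surjection over $X$, so Proposition~\ref{image of geometric covering is geometric covering} applies. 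Your ray-lifting-and-pushing-down argument over $C'$ is in fact the proof of that image proposition, so it works; the Galois-equivariance and descent bookkeeping you worry about in your last paragraph is unnecessary, since you only need to push down \emph{one} extension from $[Y_{C'}]$ and invoke uniqueness downstairs (Remark~\ref{rem:separated maps and AVC}).

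The genuine gap is in the affinoid open cover case, and it sits exactly at the step you dispatch with ``one covers the compact arc by finitely many of the preimages, subdivides $[0,1]$ accordingly, and glues.'' For an affinoid open $U_j\subseteq C$, the subset $[U_j]=\sep_C(U_j)$ is compact, hence \emph{closed} in the Hausdorff space $[C]$, and generally not open. So the sets $i^{-1}([U_j])$ form a finite \emph{closed} cover of $[0,1]$, and no Lebesgue-number argument produces a subdivision of $[0,1]$ into subintervals each contained in a single $i^{-1}([U_j])$: a priori these closed sets could interleave in a Cantor-like fashion near $t=1$. What saves the day is Lemma~\ref{lem:finite-pi0}: for a smooth separated curve, $\gamma\cap[U_j]$ has finitely many connected components (this rests on the finiteness of the Shilov boundary of an affinoid in a curve), whence some $i^{-1}([U_j])$ contains a terminal interval $[1-\varepsilon,1]$; since AVC only concerns extension at the right endpoint, one may then restrict to $[1-\varepsilon,1]$ and conclude using $[Y_{U_j}]=[Y]\times_{[X]}[U_j]$ (Proposition~\ref{commuting sep and inverse image}). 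This lemma is not a technicality one can elide --- it is the reason the whole theory is restricted to test curves (the analogous statement fails in higher dimension, cf.\ Remark~\ref{rem:tame-paths}) --- so your proof is incomplete without it.
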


Since $\Cov_X$ contains $\UFEt_X$ the \'etale local nature of geometric coverings implies that $\Cov_X$ in fact contains the \'etale stackification of $\UFEt_X$. This is significant since it implies that $\Cov_X$ contains not only the covering spaces considered in \cite{deJongFundamental} but also many natural generalizations of those covering spaces (see \cite[\S 3]{ALY1P2}).

To prove Proposition \ref{etale local} we will require a few intermediary results which are of independent interest. 

\begin{prop} \label{composition and base change geometric covering}
    Let $X$ be a rigid $K$-space. Then, the following statements are true.
    \begin{enumerate}[(a)] 
        \item \label{composition and base change geometric covering 1} {\rm (Composition)} If $f\colon Y\to X$ and $g\colon Z\to Y$ are geometric coverings, then $g\circ f$ is a~geometric covering. 
        \item \label{composition and base change geometric covering 2} {\rm (Pullback)} If $Y\to X$ is a geometric covering and $X'\to X$ is any morphism of rigid $K$-spaces, then $Y_{X'}\to X'$ is a geometric covering.
        \item {\rm (Change of base field)} If $Y\to X$ is a geometric covering and $L$ is a non-archimedean extension of $K$, then $Y_L\to X_L$ is a geometric covering.
        \item \label{geom covering cancellation} {\rm (Cancellation)} Let $X$ be a rigid $K$-space, $Y\to X$ be a geometric covering, and $Y'\to X$ be separated and \'etale morphism. Then, any $X$-morphism $Y\to Y'$ is a~geometric covering.
    \end{enumerate}
\end{prop}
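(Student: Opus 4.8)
\textbf{Proof proposal for Proposition~\ref{composition and base change geometric covering}.}
The plan is to treat the four parts separately, reducing each to the test-curve criterion of Definition~\ref{def:geom-cov}, together with the unique-lifting characterization from Proposition~\ref{geometric covering equiv} and the uniqueness result Corollary~\ref{unique lifting corollary}. In all four cases the \'etale and partially-proper parts are standard stability properties (\'etale morphisms are stable under composition, base change, and cancellation by Lemma~\ref{etale image lemma}/\stacks{02GW}; partially proper morphisms likewise, using the valuative criterion Proposition~\ref{valuative criterion prop} for composition and base change and Lemma~\ref{image of partially proper} where needed for cancellation), so the real content is always the lifting property along test curves.

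For \textbf{(b) Pullback}, if $C\to X'$ is a test curve, then the composition $C\to X'\to X$ is a test curve in $X$, and $Y_{X'}\times_{X'}C = Y\times_X C$ as adic curves over $C$; since $Y\to X$ is a geometric covering the map $Y\times_X C\to C$ has unique lifting of geometric arcs, hence so does $(Y_{X'})_C\to C$, proving $Y_{X'}\to X'$ is a geometric covering. For \textbf{(c) Change of base field}, this is a special case of (b): $X_L\to X$ is a morphism of rigid $K$-spaces (viewing $X_L$ as a rigid $K$-space) and $Y_L = Y\times_X X_L$, so (b) applies directly. For \textbf{(d) Cancellation}, let $g\colon Y\to Y'$ be the given $X$-morphism; since $Y\to X$ and $Y'\to X$ are both \'etale and $Y'\to X$ is moreover separated, $g$ is \'etale by \stacks{02GW} and separated; for partial properness one uses that $Y\to X$ is partially proper, $Y'\to X$ is separated, and $Y\to Y'\to X$, so $g$ is partially proper by Lemma~\ref{image of partially proper}-type cancellation (more precisely, a morphism into a separated morphism whose composite is partially proper is partially proper, which follows from the valuative criterion Proposition~\ref{valuative criterion prop}). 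For the lifting property: given a test curve $C\to Y'$, post-compose to get a test curve $C\to X$; then $Y_C\to C$ has unique lifting of geometric arcs since $Y\to X$ is a geometric covering, and $Y\times_{Y'}C$ is a subspace — in fact a clopen-in-fibers \'etale subspace — of $Y\times_X C$ compatible with the projections to $C$, so unique arc-lifting for $Y\times_X C\to C$ restricts to unique arc-lifting for $Y\times_{Y'}C\to C$, using that a lifted geometric arc in the bigger space whose left endpoint lands in the subspace stays in the subspace (the subspace is overconvergent and a union of connected components of each fiber, cf.\ the argument in the proof of Proposition~\ref{unique lifting finite etale}).

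The main obstacle is \textbf{(a) Composition}. Here, given a test curve $C\to X$, we need unique lifting of geometric arcs for $Z_C\to C$, but the natural factorization is $Z_C\to Y_C\to C$ where $Y_C\to C$ is a geometric covering (by part (b), since $Y\to X$ is) but $Y_C$ need not be a curve, so we cannot directly invoke that $Z\to Y$ is a geometric covering along $Y_C$. The strategy is to lift a geometric arc $\ov\gamma$ in $C$ in two stages: first lift it along $Y_C\to C$ to a geometric arc $\ov\gamma'$ in $Y_C$ (with prescribed left endpoint), which exists and is unique; the underlying arc $\gamma'$ together with a chosen geometric structure makes $\gamma'\hookrightarrow Y_C$ into something we would like to promote to a test curve mapping to $Y$. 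The cleanest route is to observe that an arc $\gamma'$ in $Y_C$ with its geometric structure, together with the fact that $Y_C\to C$ is \'etale and partially proper, lets us find (by Proposition~\ref{pp etale local structure} and Proposition~\ref{arc basis}, subdividing $\gamma'$ into finitely many pieces each sitting inside an affinoid $W$ with $W\to f(W)$ finite \'etale) smooth separated curve test objects over $Y$ through which the relevant pieces of $\gamma'$ factor; then apply the hypothesis that $Z\to Y$ is a geometric covering to each piece to lift $\ov\gamma'$ to $Z$, and glue using uniqueness (Corollary~\ref{unique lifting corollary}) to get a lift of $\ov\gamma'$, hence of $\ov\gamma$, to $Z_C$. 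Uniqueness throughout is automatic by Corollary~\ref{unique lifting corollary} applied to $Z_C\to C$ (which is \'etale and partially proper since both $Z\to Y$ and $Y\to X$ are, by the already-established stability of those properties). The delicate point to get right is that the two-stage lift actually produces a \emph{geometric arc in $Z_C$} lying over $\gamma$ in $C$ (not just over $\gamma'$ in $Y_C$), i.e.\ that the geometric-point data and \'etale-path data assemble correctly under the composite $Z_C\to Y_C\to C$; this follows formally from the functoriality in the Image construction (the construction just before \S\ref{ss:p-fields}) applied to the composite, since the geometric structure on the lift in $Z_C$ pushes forward to that on $\gamma'$ in $Y_C$ which in turn pushes forward to $\ov\gamma$ in $C$.
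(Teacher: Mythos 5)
Your parts (b) and (d) are essentially fine, and (d) is a legitimate alternative to the paper's route: the paper instead invokes Vakil's cancellation principle, reducing (d) to the observation that $\Delta_{Y'/X}$ is a clopen immersion (since $Y'\to X$ is separated and \'etale), hence finite \'etale, hence a geometric covering by Proposition~\ref{finite etale are geometric coverings}; this presupposes (a) and (b) but avoids the fiddly verification that a geometric arc in $Y\times_X C$ whose left endpoint lies in the clopen subspace $Y\times_{Y'}C$ restricts to a geometric arc of that subspace, which you assert but do not really check. Part (c) is \emph{not} literally a special case of (b): $X_L$ is not a rigid $K$-space and $X_L\to X$ is not a morphism of rigid $K$-spaces unless $L/K$ is topologically of finite type, so (b) does not apply as stated. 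The same argument does work verbatim, though, because a test curve $C\to X_L$ (with $C$ over some $L'/L$) composes to a test curve $C\to X$ and $(Y_L)_C\simeq Y_C$; this is what the paper means by ``the proof of (c) is identical.''

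The real problem is (a). Your premise that ``$Y_C$ need not be a curve'' is false, and the workaround you build on it is both unnecessary and vague at its key step. Since $Y\to X$ is \'etale and partially proper, so is $Y_C\to C$; as $C$ is a smooth separated rigid $L$-curve, $Y_C$ is again of pure dimension $1$, smooth over $L$ ($\text{\'etale}\circ\text{smooth}$), and separated over $L$ (partially proper implies separated). Hence $Y_C\to Y$ is itself a test curve in $Y$ --- the paper records exactly this sentence immediately after Definition~\ref{def:test-curve}. With that in hand, (a) is a two-line argument: given a test curve $C\to X$ and a geometric arc in $C$ with a lift of its left geometric endpoint to $Z_C=Z_{Y_C}$, first lift the arc along $Y_C\to C$ (because $Y\to X$ is a geometric covering and $C\to X$ is a test curve), then lift the result along $Z_{Y_C}\to Y_C$ (because $Z\to Y$ is a geometric covering and $Y_C\to Y$ is a test curve), and compose; uniqueness is Corollary~\ref{unique lifting corollary}. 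Your subdivision argument could probably be repaired --- the affinoids $W$ you produce with $W\to f(W)$ finite \'etale are in fact themselves test curves in $Y$, so you would be rediscovering the point --- but as written it leaves unexplained what it means for ``pieces of $\ov\gamma'$ to factor through curve test objects'' and why the geometric structure (the system of \'etale paths $\iota^U_{a,b}$, indexed by oc neighborhoods in $Y_C$ rather than in $W$) restricts to a geometric structure on each piece. You should replace the whole paragraph by the observation that $Y_C$ is a test curve.
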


\begin{proof} 
To see (a) we note that since the properties of \'etale and partially proper are preserved under composition, it suffices to show that for any test curve $C\to X$ in $X$ that the map $Z_C\to C$ satisfies unique lifting of geometric arcs. But, note that since $Z_C=Z_{Y_C}$, and $Y_C$ is a~test curve, one quickly shows that since geometric arcs may be uniquely lifted along $Z_{Y_C}\to Y_C$ and $Y_C\to C$, that they may be uniquely lifted along $Z_C\to C$.

To see (b), we note that since $Y_{X'}\to X'$ is \'etale and partially proper, it suffices to show that for all test curves $C\to X'$ in $X'$ one has that $(Y_{X'})_C\to C$ satisfies lifting of geometric arcs. But, since $(Y_{X'})_C$ is canonically isomorphic to $Y_C$ as $C$-spaces, the claim easily follows. The proof of (c) is identical.

For (d), note that by the cancellation principle\footnote{Vakil's `cancellation principle' \cite[10.1.19]{Vakilfoag} is the following: given a category $\mc{C}$ with pullbacks and a property $P$ of maps in $\mc{C}$ closed under pullbacks and composition, for every two maps $f\colon Y\to X$, $g\colon Z\to Y$ such that $f\circ g\colon Z\to X$ and the diagonal $\Delta_{Y/X}\colon Y\to Y\times_X Y$ have $P$, also $g$ has $P$} it suffices to check that $\Delta_{Y'/X}\colon Y'\to Y'\times_X Y'$ is a geometric covering. But, since $Y'\to X$ is separated and \'etale, we know that $\Delta_{Y'/X}$ is a clopen embedding (see \cite[Proposition 1.6.8]{Huberbook}) and so is a~geometric covering by Proposition \ref{finite etale are geometric coverings}.
\end{proof}

The other result we shall need to prove Proposition \ref{etale local} is the following result which, colloquially, says that the category $\Cov_X$ is closed under taking images.

\begin{prop} \label{image of geometric covering is geometric covering}
    Let $X$ be a rigid $K$-space and let $Y\to Y'$ be surjective map of rigid $X$-spaces. Assume that $Y\to X$ is a geometric covering and that  $Y'\to X$ is separated and \'etale. Then, $Y'\to X$ is a geometric covering.
\end{prop}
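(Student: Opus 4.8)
The strategy is to reduce to the already-established facts about images of partially proper maps and then leverage the curve-by-curve criterion in Definition~\ref{def:geom-cov}. First I would observe that $Y'\to X$ is automatically partially proper: this is exactly Lemma~\ref{image of partially proper} applied with $S=X$, using that $Y\to X$ is partially proper, $Y\to Y'$ is surjective, and $Y'\to X$ is separated (and locally of $^+$weakly finite type, being \'etale). Combined with the hypothesis that $Y'\to X$ is \'etale, it remains only to verify the unique lifting of geometric arcs over every test curve $C\to X$.

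So fix a test curve $C\to X$ and pass to the base change: we get a surjection $Y_C\to Y'_C$ of rigid $C$-spaces, with $Y_C\to C$ a geometric covering (by Proposition~\ref{composition and base change geometric covering}\ref{composition and base change geometric covering 2}) and $Y'_C\to C$ separated and \'etale. Thus we are reduced to the case where $X=C$ is itself a smooth and separated rigid $K$-curve, and we must show $Y'\to C$ has unique lifting of geometric arcs. Uniqueness is free by Corollary~\ref{unique lifting corollary} (since $Y'\to C$ is \'etale and partially proper and $C$ is taut by Proposition~\ref{prop:curves-are-taut}). For existence, let $\ov\gamma$ be a geometric arc in $C$ with left geometric endpoint $\ov x$, and let $\ov x''$ be a lift of $\ov x$ to $Y'$. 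Using surjectivity of $Y_C\to Y'_C$ (now $Y\to Y'$ over $C$) together with the fact that $F_{\ov x}$ of an \'etale map computes liftings, I would lift $\ov x''$ further to a geometric point $\ov x'$ of $Y$ over $\ov x$; concretely, $\ov x''$ corresponds to a point of $Y'_{\ov x}$, and surjectivity of $Y_{\ov x}\to Y'_{\ov x}$ (the fiber of a surjection of adic spaces is surjective on points) plus separable-closedness of the residue field of $\ov x$ gives a point of $Y_{\ov x}$ mapping to it, i.e.\ a lift $\ov x'$. Since $Y\to C$ is a geometric covering, there is a unique lift $\ov\gamma'$ of $\ov\gamma$ to $Y$ with left geometric endpoint $\ov x'$. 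Its image $g(\ov\gamma')$ under $g\colon Y\to Y'$ (using the Image construction from \S\ref{ss:geom-arcs-def}, which requires $[g]$ to map the underlying arc homeomorphically onto its image) is then a lift of $\ov\gamma$ to $Y'$ with left geometric endpoint $\ov x''$, completing the existence part.

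The one genuine subtlety — and the step I expect to be the main obstacle — is verifying that $[g]$ maps the underlying arc $\gamma'$ of $\ov\gamma'$ homeomorphically onto its image in $[Y']$, so that the Image construction applies and actually produces a lift of $\gamma$ (not a degenerate or folded-up arc). Since $g\colon Y\to Y'$ is a map of rigid $K$-spaces over $C$ and the composite $[Y]\to[C]$ already sends $\gamma'$ homeomorphically onto $\gamma$, the map $[g]$ restricted to $\gamma'$ is injective and continuous with $\gamma'$ compact Hausdorff (as $Y$ is taut, hence $[Y]$ is Hausdorff), so $[g]|_{\gamma'}$ is automatically a homeomorphism onto its image; the content is just that this image, which maps homeomorphically onto $\gamma$ under $[Y']\to[C]$, is indeed an arc lifting $\gamma$, which is then formal. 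I would then check that $g(\ov\gamma')$ has left geometric endpoint $\ov x''$: the left geometric endpoint of $\ov\gamma'$ is $\ov x'$, and its image under $g$ is by construction $\ov x''$ (this is how we chose $\ov x'$). Finally, one should note uniqueness of the constructed lift follows from Corollary~\ref{unique lifting corollary} as mentioned, so no separate argument is needed. This completes the proof that $Y'\to X$ is a geometric covering.
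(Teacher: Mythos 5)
Your proposal is correct, but it runs the key step through characterization (a) of Definition~\ref{def:geom-cov} (unique lifting of geometric arcs) where the paper uses characterization (b) (AVC). Both proofs get partial properness from Lemma~\ref{image of partially proper} and reduce to a test curve, but then diverge: the paper takes a topological ray lift $s'\colon [0,1)\to [Y']$, invokes Proposition~\ref{composition and base change geometric covering}~\ref{geom covering cancellation} to see that $Y\to Y'$ is itself a geometric covering, uses Propositions~\ref{geometric intervals exist} and~\ref{automatic geometric ray lifting} to lift the whole ray up to $[Y]$, extends it there by AVC for $Y\to X$, and pushes back down. You instead lift only the single initial geometric point from $Y'$ to $Y$, lift the entire geometric arc in $Y$ using the hypothesis on $Y\to X$, and push down via the Image construction. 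Your route is arguably more economical — it avoids cancellation and the ray-lifting machinery entirely, needing only that a geometric point of $Y'$ lifts to $Y$, and your verification that $[g]$ restricts to a homeomorphism on the lifted arc (injectivity from the factorization through $[C]$, plus compactness and Hausdorffness of $[Y]$, $[Y']$) is exactly the right point to isolate. The paper's route, on the other hand, stays purely topological after the initial setup and makes the ``lift up, extend, push down'' pattern visible at the level of $[0,1)\to[0,1]$. One small caveat: your parenthetical justification for lifting $\ov x''$ to $Y$ (``the fiber of a surjection of adic spaces is surjective on points'') is glib — the honest argument picks a point $y\in Y$ over the anchor point $x''$, uses that $k(y)/k(x'')$ is finite separable so embeds into the separably closed $L$, and then uses partial properness of $Y\to Y'$ to fix the valuation ring; this is standard and used implicitly elsewhere in the paper, so it is not a gap, but it deserves that one line.
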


\begin{proof}
We note that $Y' \to X$ is partially proper, by Lemma \ref{image of partially proper}. It remains to check that for any test curve $C\to X$ in $X$ that the map $Y'_C\to C$ satisfies AVC. We may assume that $C=X$. Let $i\colon [0,1] \to [X]$ be an arc and $s'\colon [0,1) \to [Y']$ be a lift of $[0,1)$. By Proposition~\ref{composition and base change geometric covering}~\ref{geom covering cancellation} we know that $Y \to Y'$ is a geometric covering of smooth and separated rigid $K$-curves. By combining Proposition \ref{geometric intervals exist} and Proposition \ref{automatic geometric ray lifting} we see that one can, in particular, lift topological rays along $[Y'] \to [Y]$. Thus, one can lift $s'$ to a lift $s\colon [0,1) \to [Y]$. But, since $Y\to X$ is a~geometric covering we know that AVC holds for $[Y] \to [X]$, and thus we can lift $i$ to an arc on $[Y]$ extending $s$. The composition of this lift with the map $[Y]\to [Y']$ is a~lifting of $i$ to $[Y']$ extending $s'$ as desired.
\end{proof}

\begin{proof}[Proof of Proposition \ref{etale local}] 
By Corollary \ref{boostrap corollary} we reduce to showing that if $Y\to X$ is a~morphism of rigid $K$-spaces, then whether this map is a geometric covering can be checked on a~finite \'etale Galois cover and can be checked on an affinoid open cover. By Proposition \ref{prop:etale-etale-local} and Proposition \ref{prop:pp-etale-local} we know that $Y\to X$ is an \'etale and partially proper map, and it remains to check that AVC holds for the base change of $Y$ to any test curve $C\to X$. 
 
The claim concerning Galois covers follows easily from previously proven facts. Indeed, denote by $X' \to X$ the Galois cover such that $Y_{X'}\to X'$ is a geometric covering. Since $X'\to X$ is a~geometric covering by Proposition \ref{finite etale are geometric coverings} we see from Proposition \ref{composition and base change geometric covering} \ref{composition and base change geometric covering 1} that the composition $Y_{X'}\to X$ is a geometric covering. The map $Y_{X'} \to Y$ is surjective, and since $Y\to X$ is \'etale and partially proper we see by Proposition \ref{image of geometric covering is geometric covering} that $Y\to X$ is a geometric covering as desired.

To see the claim concerning affinoid open covers, let $\{U_j\}_{j\in J}$ be an affinoid open cover of $X$ such that $Y_{U_j}\to U_j$ is a geometric covering for all $j$. Taking an affinoid refinement of $\{U_j\times_X C\}$ and replacing $X$ with $C$, we are then reduced to showing that if $X$ is a smooth and separated curve and $Y\to X$ is an \'etale and partially proper map, whether this map satisfies AVC can be checked on an affinoid open cover. Since for any arc $\gamma\subseteq [X]$, its preimage $\sep_X^{-1}(\gamma)$ is quasi-compact, we may also assume that $X$ is quasi-compact and the index set $J$ is finite.

Let us note that for every arc $\gamma$ in $X$ and every affinoid open $U \subseteq X$, the intersection $[U] \cap \gamma$ is closed (because $[U]$ is closed in $[X]$) and has finitely many connected components (by Lemma~\ref{lem:finite-pi0} below). Let $i\colon [0,1]\to [X]$ be a parameterized arc, and let $s\colon [0,1)\to [Y]$ be a lifting of $i|_{[0,1)}$. Suppose first that $i([0,1])\subseteq [U_j]$ for some $j\in J$. Since $[Y_{U_j}]\to [U_j]$ satisfies AVC, we can extend $s$ to a map $[0,1]\to [Y_{U_j}]$. Since $[Y_{U_j}] = [Y]\times_{[X]} [U_j]$ by Proposition \ref{commuting sep and inverse image}, we see that $[Y]\to [X]$ satisfies AVC with respect to $i$. 

In any case, $[0,1]$ is covered by the finitely many closed subsets $i^{-1}([U_j])$, and each of them has finitely many connected components, so one of them contains $[1-\varepsilon,1]$ for some $\varepsilon>0$. In order to check AVC for a parameterized arc $i$, we may replace it with its restriction to $[1-\varepsilon,1]$ for any $\varepsilon>0$, and then we conclude by our first observation.
\end{proof}

The following lemma, used in the above proof, is the main reason we only use arcs in test curves in our theory of geometric coverings. Indeed, the analogous assertion is false in higher dimensions. See Remark~\ref{rem:tame-paths} for a discussion.

\begin{lem} \label{lem:finite-pi0}
    Let $X$ be smooth and separated rigid $K$-curve. Then, for every arc $\gamma$ in $X$ and every affinoid open $U \subseteq X$, the intersection $[U] \cap \gamma$ is closed and has finitely many connected components.
\end{lem}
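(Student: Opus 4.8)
The plan is to handle the two assertions separately: closedness of $[U]\cap\gamma$ in $\gamma$ is immediate, and the finiteness of $\pi_0$ I would reduce to the single statement that the topological boundary of $[U]$ in $[X]$ is finite.

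For closedness: $X$, being separated, is quasi-separated, hence taut by Proposition~\ref{prop:curves-are-taut}, so $[X]$ is Hausdorff by Proposition~\ref{taut properties}~\ref{taut properties 1}. Since $U$ is affinoid it is quasi-compact, so $[U]=\sep_X(U)$ is quasi-compact, hence closed in $[X]$; and $\gamma$ is compact, being homeomorphic to $[0,1]$, hence closed. Thus $[U]\cap\gamma$ is closed in $[X]$, a fortiori in $\gamma$. Fixing a parameterization $\gamma\cong[0,1]$, it then suffices to show that a closed subset $C$ of $[0,1]$ with finite topological boundary $\partial C$ has finitely many connected components; this is elementary, since $\interior(C)=C\setminus\partial C$ is an open subset of $[0,1]$ with boundary contained in $\partial C\cup\{0,1\}$, hence a union of finitely many intervals (infinitely many disjoint open intervals would accumulate at a boundary point outside the finite set), and $C=\interior(C)\sqcup\partial C$. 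Since $\gamma\setminus([U]\cap\gamma)=\gamma\cap([X]\setminus[U])$, one has $\partial_\gamma([U]\cap\gamma)\subseteq[U]\cap\overline{[X]\setminus[U]}=\partial_{[X]}[U]$, so the whole lemma reduces to: $\partial_{[X]}[U]$ is finite.

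To prove this, the first step is the identity $\sep_X^{-1}([U])=\overline U$, where $\overline U$ is the closure of $U$ in $X$. Indeed $\sep_X^{-1}([U])$ is closed and contains $U$, hence contains $\overline U$; conversely $\sep_X^{-1}([U])=\sep_X^{-1}(\sep_X(U))=\bigcup_{u\in U}\overline{\{u^\mx\}}$, and each $\overline{\{u^\mx\}}$ lies in $\overline U$, because $X$ is good (Proposition~\ref{curves are good}), so $u^\mx$ has a neighborhood basis of overconvergent opens, and any overconvergent open $V\ni u^\mx$ satisfies $\overline{\{u^\mx\}}\subseteq V$ by Proposition~\ref{oc equiv}, so in particular $u\in V$; hence every open neighborhood of $u^\mx$ meets $U$, i.e. $u^\mx\in\overline U$, whence $\overline{\{u^\mx\}}\subseteq\overline U$. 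Next, $\overline U\setminus U$ is finite: under the identification $\overline U\cong U^{\mathrm{univ}}_{/X}$ of Proposition~\ref{prop:-univcomp}~\ref{prop:-univcomp 1} (valid since $U\hookrightarrow X$ is separated and taut) together with the injection $U^{\mathrm{univ}}_{/X}\hookrightarrow U^{\mathrm{univ}}$ of Proposition~\ref{prop:-univcomp}~\ref{prop:-univcomp 2}, this follows from Corollary~\ref{cor:curvecomp}. Finally I claim $\partial_{[X]}[U]\subseteq\sep_X(\overline U\setminus U)$. Writing $\widetilde U:=\sep_X^{-1}([U])=\overline U$, I first note that $\overline{\sep_X^{-1}(A)}=\sep_X^{-1}(\overline A)$ for every $A\subseteq[X]$: the inclusion $\subseteq$ is continuity, and for $\supseteq$, if $\sep_X(z)\in\overline A$ then any open neighborhood of $z$ contains an overconvergent open $V=\sep_X^{-1}(V')$, and $V'$ meets $A$ since $\sep_X(z)\in V'\cap\overline A$, so by surjectivity of $\sep_X$ the set $V$ meets $\sep_X^{-1}(A)$. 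Taking $A=[X]\setminus[U]$ gives $\overline{X\setminus\widetilde U}=\sep_X^{-1}(\overline{[X]\setminus[U]})$. Now if $x\in\partial_{[X]}[U]$, then $x\in[U]$ forces $\sep_X^{-1}(x)\subseteq\widetilde U$, while $x\in\overline{[X]\setminus[U]}$ forces $\sep_X^{-1}(x)\subseteq\overline{X\setminus\widetilde U}$; hence $\sep_X^{-1}(x)\subseteq\widetilde U\cap\overline{X\setminus\widetilde U}=\partial_X\widetilde U\subseteq\widetilde U\setminus U=\overline U\setminus U$, the last inclusion because $U$ is open. So $x\in\sep_X(\overline U\setminus U)$, which is finite.

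The heart of the argument — and essentially the only place where "curve" and "smooth" are genuinely used — is the finiteness of $\overline U\setminus U$, i.e. Corollary~\ref{cor:curvecomp}; everything else is formal topology of the separation map of a good taut space. I therefore expect the main obstacle to be purely one of careful bookkeeping, the one point that is not automatic being the identity $\sep_X^{-1}([U])=\overline U$ (note that an affinoid open need not be overconvergent, so $[U]$ genuinely "grows" under $\sep_X^{-1}$), which is handled above via goodness of $X$.
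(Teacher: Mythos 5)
Your strategy is sound and, at the decisive step, genuinely different from the paper's: both arguments reduce the lemma to the finiteness of $\partial_{[X]}[U]$ and finish with the same elementary fact about closed subsets of $[0,1]$ with finite boundary, but where the paper quotes de~Jong's finiteness of the Shilov boundary of an affinoid curve together with Berkovich's identification of the Shilov boundary with the topological boundary, you derive finiteness from $\ov{U}\setminus U\cong U^{\rm univ}_{/X}\setminus U\subseteq U^{\rm univ}\setminus U$ and Corollary~\ref{cor:curvecomp} (the mechanism of Proposition~\ref{prop:curves-are-taut}), plus the inclusion $\partial_{[X]}[U]\subseteq \sep_X(\ov{U}\setminus U)$. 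That route is more self-contained. However, your proof of this last inclusion rests on the identity $\ov{\sep_X^{-1}(A)}=\sep_X^{-1}(\ov{A})$, and this identity is \emph{false}. The justification ``any open neighborhood of $z$ contains an overconvergent open containing $z$'' fails whenever $z$ is not maximal: by Proposition~\ref{oc equiv}, an overconvergent open containing $z$ must contain the entire fiber $\ov{\{z^\mx\}}$, which a small open neighborhood of $z$ need not (cf.\ the Remark after Proposition~\ref{interior equivalent}). Concretely, in $X=\mathbf{D}^1_K$ let $z$ be a rank-two specialization of the Gauss point $\eta$ with $|T(z)|<1$, and let $A=\sep_X(V)$ with $V=\{|T-1|<1\}$ the open residue disc of $1$, so that $A$ is open and $\sep_X^{-1}(A)=V$. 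Then $\sep_X(\eta)\in\ov{A}$, hence $z\in\sep_X^{-1}(\ov{A})$; but the rational open $\{\,|1|\leq|T-1|\,\}$ contains $z$ (since $|(T-1)(z)|=1$) and is disjoint from $V$, so $z\notin\ov{V}$. The same failure occurs for $A=[X]\setminus[U]$ with $U$ an affinoid disc, which is exactly the instance you invoke.

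Fortunately the inclusion you actually need, $\partial_{[X]}[U]\subseteq\sep_X(\ov{U}\setminus U)$, is true, and the paper already supplies the correct tool: Proposition~\ref{interior equivalent}, implication (1)$\Rightarrow$(4). If $x\in[U]$ then $x^\mx\in U$ (open sets are stable under generization); if moreover $\ov{\{x^\mx\}}\subseteq U$, then by (1)$\Rightarrow$(4) the point $x$ lies in the topological interior of $\sep_X(U)=[U]$. Hence for $x\in\partial_{[X]}[U]$ there exists $z\in\ov{\{x^\mx\}}\setminus U$; since $x^\mx\in U$ gives $\ov{\{x^\mx\}}\subseteq\ov{U}$, this $z$ lies in $\ov{U}\setminus U$ and satisfies $\sep_X(z)=x$. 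Substituting this for the false topological identity (which also lets you drop the detour through $\sep_X^{-1}([U])=\ov{U}$ and the appeal to goodness) completes the proof; the remaining steps---closedness, the bound $\partial_\gamma([U]\cap\gamma)\subseteq\partial_{[X]}[U]$, and the interval combinatorics---are correct.
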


\begin{proof}
By Proposition~\ref{curves are good}, the adic space $X$ is good, and so since $\gamma$ is quasi-compact and $X$ is separated we may assume that $X$ is affinoid.

By \cite[Lemma 3.1]{deJongFundamental} the Shilov boundary of $[U]$ is finite, and by combining this with \cite[Corollary 2.5.13.(ii) and Proposition 3.1.3]{BerkovichSpectral} it follows that the (topological) boundary $\partial_{[X]}[U]$ of $[U]$ in $[X]$ is finite. We have tacitly used here that the the Shilov boundary of $[U]$ matches the relative boundary $\partial([U]/\mathcal{M}(k))$ (see \cite[Example 3.4.2.5]{TemkinBerkovich}). As $\gamma$ and $[U]$ are closed in $[X]$, we have that $\partial_\gamma(\gamma \cap [U])$ is contained in $\partial_{[X]}([U])\cap \gamma$. It follows that $\gamma \cap [U]$ can be identified with a closed subset of $[0,1]$ with a finite boundary. But such a subset must be a finite union of closed subintervals, thus the claim follows.
\end{proof}

We finally mention some miscellaneous geometric properties of geometric coverings which one expects from a good analogue of the topological theory of covering spaces.

\begin{prop} \label{image of geometric covering clopen}
    Let $X$ be a rigid $K$-space and let $f\colon Y\to X$ be a geometric covering. Then, $f(Y)$ is a clopen subset of $X$. In particular, if $X$ is connected and $Y$ is non-empty then $f$ is surjective.
\end{prop}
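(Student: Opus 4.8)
The plan is to reduce to a connected base and then bootstrap from the one-dimensional case. Since $f$ is étale, $f(Y)$ is open, so the only issue is closedness. A rigid $K$-space is locally connected (it has a basis of affinoids, each with finitely many connected components), so its connected components $X_i$ are clopen; restricting $f$ over each $X_i$ gives again a geometric covering by Proposition~\ref{composition and base change geometric covering}~\ref{composition and base change geometric covering 2}, and $f(Y)\cap X_i = f(f^{-1}(X_i))$. Hence it suffices to prove the statement for connected base, i.e.\ that \emph{a geometric covering of a connected rigid $K$-space with nonempty source is surjective}: granting this, $f(Y)$ is a union of connected components of $X$, and its complement is likewise a union of components, hence open, so $f(Y)$ is clopen; the ``in particular'' is then immediate.

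\emph{The curve case.} The crucial special case is that a geometric covering $g\colon Z\to C$ of a connected smooth separated rigid $L$-curve $C$ (over some non-archimedean field $L$) with $Z\ne\emptyset$ is surjective. Here I would argue as follows. As $g$ is étale (hence generizing) and partially proper (hence specializing), $g(Z)$ is an overconvergent open of $C$ by Proposition~\ref{oc equiv}, so it is enough to show $g(Z)$ contains every maximal point of $C$. Fix $z_0\in Z$, set $c_0=g(z_0)$, choose a geometric point $\ov z_0$ of $Z$ anchored at $z_0^\mx$, and let $\ov a = g\circ\ov z_0$, a geometric point of $C$ anchored at $c_0^\mx$ admitting $\ov z_0$ as a lift. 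Given a maximal point $w\in C$ with $\sep_C(w)\ne\sep_C(c_0)$, arc connectedness of $[C]$ (Theorem~\ref{thm:curves are arc connected}; $C$ is connected and good) produces a parameterized arc $i\colon[0,1]\to[C]$ from $\sep_C(c_0)$ to $\sep_C(w)$, and Construction~\ref{geometric interval construction} (Proposition~\ref{geometric intervals exist}) puts a geometric arc structure $\ov\gamma$ on $i([0,1])$ with left geometric endpoint $\ov a$ (one is free to choose the geometric points along $\ov\gamma$). Since $\mathrm{id}_C$ is a test curve in $C$, the definition of a geometric covering yields a lift of $\ov\gamma$ to a geometric arc in $Z$ with left geometric endpoint $\ov z_0$, whose right geometric endpoint is anchored at a point of $Z$ lying over $(i(1))^\mx = w$; hence $w\in g(Z)$, and so $g(Z)=C$.

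\emph{From curves to $X$.} For connected $X$ with $Y\ne\emptyset$, the set $f(Y)$ is a nonempty overconvergent open, and I want $f(Y)=X$. The key observation is: if $\phi\colon C\to X$ is a connected test curve with $\im(\phi)\cap f(Y)\ne\emptyset$, then $\im(\phi)\subseteq f(Y)$. Indeed, picking $p\in C$ with $\phi(p)\in f(Y)$ and $y\in Y$ over $\phi(p)$, the fibre product $Y\times_X C$ has a point over $(y,p)$ (the residue-field tensor product over $k(\phi(p))$ is nonzero), so $Y_C\ne\emptyset$; since $Y_C\to C$ is a geometric covering (Proposition~\ref{composition and base change geometric covering}~\ref{composition and base change geometric covering 2}) it is surjective by the curve case, whence $\phi(p')\in f(Y)$ for all $p'\in C$. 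Finally I would invoke the curve-connectedness result of Appendix~\ref{curve-connectedness appendix}: since $X$ is connected, any two of its points are joined by a finite chain of connected test curves with successively overlapping images. Feeding a point $x_0\in f(Y)$ into the first curve and propagating the above observation along the chain shows every point of $X$ lies in $f(Y)$.

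\emph{Main obstacle.} The curve case is short given the machinery of Section~\ref{s:geom-arcs} (arc connectedness, Construction~\ref{geometric interval construction}, and the very definition of a geometric covering); the delicate point is the passage from curves to general $X$, where one must apply the appendix in a form that delivers a chain of \emph{connected} test curves with overlapping images, and must consistently track maximal points versus their overconvergent saturations so that ``$w\in g(Z)$ for all maximal $w$'' really upgrades to surjectivity.
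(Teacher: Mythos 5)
Your proposal is correct and follows essentially the same route as the paper's proof: reduce to connected components, note that $f(Y)$ is overconvergent open, use Corollary~\ref{cor: general path connected} to reduce to a connected test curve, and there use arc-connectedness of $[C]$, Proposition~\ref{geometric intervals exist}, and the lifting property defining geometric coverings. The paper phrases the last step as a contradiction rather than a direct propagation along the chain of curves, but the content is identical.
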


\begin{proof} 
By considering the connected components of $X$, it is enough to show that for $X$ connected and $Y$ non-empty, we have $f(Y)=X$. Suppose now that $f$ is not surjective; we claim that in this case there exists a maximal point $x\notin f(Y)$. Indeed, since $f(Y)$ is an overconvergent open by Proposition \ref{[f] for etale and pp}, we have $f(Y)=\sep_X^{-1}(\sep(f(Y))$. By Corollary \ref{cor: general path connected} one can connect any maximal point $y$ in $f(Y)$ to $x$ by a sequence of connected test curves. By possibly changing $x$, we may thus assume that there exists a connected test curve $C\to X$ which contains points of $f(Y)$ and its complement. We may therefore assume that $X$ is a connected smooth and separated curve.

By Theorem~\ref{thm:curves are arc connected} and Proposition~\ref{geometric intervals exist} there exists a geometric arc $\ov{\gamma}$ with left endpoint $y$ and right endpoint $x$. Let $\ov{y}'$ be a lifting of the geometric point $\ov{y}$ above $y$. Then, by definition of a geometric covering there exists a lift $\ov{\gamma}'$ of $\gamma$. In particular, one of the endpoints of $\ov{\gamma}'$ lifts the point $x$, which is a contradiction.
\end{proof}

\begin{prop} \label{fibered product of geometric coverings}
    Let $X$ be a rigid $K$-space and suppose that $Y_i\to X$ for $i=1,2$ and $Z\to X$ are geometric coverings. Then, for any $X$-morphisms $Y_i\to Z$ for $i=1,2$ the map $Y_1\times_Z Y_2\to X$ is a geometric covering.
\end{prop}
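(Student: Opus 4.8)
The plan is to realize $Y_1 \times_Z Y_2 \to X$ as a composition of maps already known to be geometric coverings, using the cancellation and composition results of Proposition~\ref{composition and base change geometric covering} together with the fact that $\Cov_X$ is closed under pullback. First I would factor the structure map as
\[
    Y_1 \times_Z Y_2 \to Y_1 \to X,
\]
so by Proposition~\ref{composition and base change geometric covering}~\ref{composition and base change geometric covering 1} it suffices to show that the first projection $\mathrm{pr}_1\colon Y_1 \times_Z Y_2 \to Y_1$ is a geometric covering (of $Y_1$, which is itself a rigid $K$-space). Now $Y_1 \times_Z Y_2 \to Y_1$ is the base change along $Y_1 \to Z$ of the map $Y_2 \to Z$, so the question reduces to showing that $Y_2 \to Z$ is a geometric covering, after which Proposition~\ref{composition and base change geometric covering}~\ref{composition and base change geometric covering 2} (pullback) finishes the argument.

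It remains to verify that the given $X$-morphism $Y_2 \to Z$ is a geometric covering. This is exactly the content of the cancellation statement Proposition~\ref{composition and base change geometric covering}~\ref{geom covering cancellation}: we have a geometric covering $Y_2 \to X$ and a separated \'etale morphism $Z \to X$ (every geometric covering is \'etale and partially proper, hence in particular separated and \'etale), and any $X$-morphism between them is automatically a geometric covering. So I would simply invoke that result.

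Assembling the chain: $Y_2 \to Z$ is a geometric covering by cancellation; hence $\mathrm{pr}_1\colon Y_1 \times_Z Y_2 \to Y_1$ is a geometric covering by pullback along $Y_1 \to Z$; hence $Y_1 \times_Z Y_2 \to Y_1 \to X$ is a geometric covering by closure under composition. Since $Y_1\times_Z Y_2$ is visibly a rigid $K$-space (the spaces in sight are all rigid $K$-spaces and fiber products exist in this category), this completes the proof.

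I do not anticipate a genuine obstacle here: the statement is a formal consequence of the structural properties of $\Cov_X$ already established, and the only mild care needed is to note that $Z\to X$ being a geometric covering makes it in particular separated and \'etale, so that the hypothesis of the cancellation principle is met. One could alternatively quote Vakil's cancellation principle directly (as recalled in the footnote to Proposition~\ref{composition and base change geometric covering}), applied to the property ``is a geometric covering'' — which is closed under composition and base change — together with the fact that the diagonal of the separated \'etale map $Z \to X$ is a clopen immersion and hence a geometric covering; but routing through part~\ref{geom covering cancellation} of Proposition~\ref{composition and base change geometric covering} is cleanest.
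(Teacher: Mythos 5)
Your argument is correct and is essentially identical to the paper's proof, which applies cancellation to get that $Y_1\to Z$ is a geometric covering, pulls back along $Y_2\to Z$ to get that $Y_1\times_Z Y_2\to Y_2$ is one, and then composes with $Y_2\to X$ — the same three steps as yours with the indices $1$ and $2$ interchanged. The one point requiring care, namely that $Z\to X$ being a geometric covering makes it separated and \'etale so that the cancellation hypothesis applies, is handled correctly.
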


\begin{proof} 
First, $Y_1\to Z$ is a geometric covering by Proposition~\ref{composition and base change geometric covering}~\ref{geom covering cancellation}. By Proposition~\ref{composition and base change geometric covering}~\ref{composition and base change geometric covering 2}, so is $Y_1\times_Z Y_2\to Y_2$. But, since $Y_2\to X$ is a geometric covering we know that the composition $Y_1\times_Z Y_2\to X$ is a geometric covering by Proposition \ref{composition and base change geometric covering} ~\ref{composition and base change geometric covering 1}.
\end{proof}

\subsection{A brief recollection of tame infinite Galois categories}\label{igc sec}

In the next section we show that $\Cov_X$, with any of its natural fiber functors $F_{\ov{x}}$, is a tame infinite Galois category in the sense of \cite[\S7]{BhattScholze}.\footnote{A similar formalism, under different names, appears also in \cite{LepageThesis}.} Thus, we shall briefly recall the setup of this theory to set notation and prime ourselves for the proof ahead.

In essence, the theory of tame infinite Galois categories seeks to axiomatize the study of the category $G\text{-}\cat{Set}$ of discrete sets with a continuous action of a topological group.  It is in analogy with the classical theory of Galois categories, where one studies finite sets with a continuous action of a profinite group. However, unlike the classical case, one needs an extra `tameness' condition to get a good theory (see e.g.\ \cite[Example 7.2.3]{BhattScholze}).

\begin{definition}[{\cite[Definition 7.2.1]{BhattScholze}}] 
    Let $\mc{C}$ be a category and $F\colon \mc{C}\to\cat{Set}$ be a functor (called the \emph{fiber functor}). We then call the pair $(\mc{C},F)$ an \emph{infinite Galois category} if the following properties hold:
    \begin{enumerate}[leftmargin=2.5cm]
        \item[\textbf{(IGC1)}] The category $\mc{C}$ is cocomplete and finitely complete.
        \item[\textbf{(IGC2)}] Each object $X$ of $\mc{C}$ is a coproduct of categorically connected objects of $\mc{C}$.
        \item[\textbf{(IGC3)}] There exists a set $S$ of connected objects of $\mc{C}$ which generates $\mc{C}$ under colimits.
        \item[\textbf{(IGC4)}] The functor $F$ is faithful, conservative, cocontinuous, and finitely continuous.
    \end{enumerate}
    We say that $(\mc{C},F)$ is \emph{tame} if for every categorically connected object $X$ of $\mc{C}$ the action of $\pi_1(\mc{C},F)$ on $F(X)$ is transitive. The \emph{fundamental group of $(\mc{C},F)$}, denoted $\pi_1(\mc{C},F)$ is the group $\Aut(F)$ endowed with the compact-open topology.\footnote{More precisely, for each $s$ in $S$, where $S$ is as in \textbf{(IGC3)}, we endow $\Aut(s)$ with the compact-open topology. We then endow $\Aut(F)$ with the subspace topology inherited from the natural map $\Aut(F)\to\prod_{s\in S}\Aut(s)$.} 
\end{definition}

In the above we used the terminology that an object $Y$ of a category $\mc{C}$ is \emph{categorically connected}, which means that every monomorphism $Y'\to Y$ is either an isomorphism or $Y'$ is an initial object. 

To state the Galois correspondence for (tame) infinite Galois categories, we need the notion of a Noohi group (see \cite[Definition 7.1.1 and Proposition 7.1.5]{BhattScholze}): a Hausdorff topological group which has a neighborhood basis of $1$ given by open (not necessarily normal) subgroups, and which is  Ra\^{i}kov complete. For a Noohi group $G$, we denote the category of discrete sets with continuous $G$ action by $G\text{-}\cat{Set}$. 

\begin{prop}[{\cite[Example 7.2.2 and Theorem 7.2.5]{BhattScholze}}] \label{igc prop} 
    Let $(\mc{C},F)$ be an infinite Galois category and $G$ a Noohi group. Then, the following statements are true.
    \begin{enumerate}[(a)]
        \item \label{icg prop 1}The group $\pi_1(\mc{C},F)$ with its compact-open topology is a Noohi group.
        \item The pair $(G\text{-}\cat{Set},F_G)$, where $F_G\colon G\text{-}\cat{Set}\to\cat{Set}$ is the forgetful functor, is a tame infinite Galois category with a canonical isomorphism $G\simeq \pi_1(G\text{-}\cat{Set},F_G)$.
        \item The natural map $\Hom((\mc{C},F),(G\text{-}\cat{Set},F_G))\to\Hom_{\cnts}(G,\pi_1(\mc{C},F))$ is a bijection.
        \item \label{icg prop 4} If $(\mc{C},F)$ is tame then $F$ induces an equivalence $F\colon \mc{C}\isomto \pi_1(\mc{C},F)\text{-}\cat{Set}$.
    \end{enumerate}
\end{prop}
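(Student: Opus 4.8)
The plan is to follow the blueprint of classical Galois category theory in its infinitary form; indeed the four assertions together are precisely \cite[Example 7.2.2 and Theorem 7.2.5]{BhattScholze}, so one could in principle dispatch the proposition by that reference alone. For completeness I would sketch the argument as follows. For part (a), I would realize $\pi_1(\mc{C},F)=\Aut(F)$ as a subgroup of the product $\prod_{s\in S}\Aut(F(s))=\prod_{s\in S}\mathrm{Sym}(F(s))$, where $S$ is as in \textbf{(IGC3)}. Each symmetric group $\mathrm{Sym}(F(s))$ with the compact-open topology is a Noohi group (Hausdorff, Ra\^{i}kov complete, with the point-stabilizers as a basis of open subgroups of $1$), and an arbitrary product of Noohi groups is Noohi; so it suffices to check that $\Aut(F)$ is a closed subgroup, with the point-stabilizers inducing the required open-subgroup basis. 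Closedness holds because an element $(\sigma_s)$ of the product lies in $\Aut(F)$ exactly when it is compatible with all structure maps $F(u)\colon F(s)\to F(s')$ and with the colimit presentations of the remaining objects of $\mc{C}$, and each such compatibility is a closed condition.

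For part (b), the category $\GSet{G}$ is cocomplete and finitely complete, every $G$-set is the coproduct of its orbits, each orbit $G/U$ ($U\leq G$ open) is categorically connected, the $G/U$'s form a set generating $\GSet{G}$ under colimits, and the forgetful functor visibly has the properties in \textbf{(IGC4)}; tameness and the canonical isomorphism $G\simeq\pi_1(\GSet{G},F_G)$ reduce to the fact that for a Noohi group the natural map $G\to\Aut(F_G)$ is an isomorphism of topological groups, which is where Ra\^{i}kov completeness and the open-subgroup basis of a Noohi group are used. For part (c), a functor $(\mc{C},F)\to(\GSet{G},F_G)$ over the fiber functors is the same datum as a $G$-action on each $F(X)$ natural in $X$, i.e.\ a homomorphism $G\to\Aut(F)=\pi_1(\mc{C},F)$; one checks this correspondence is bijective and that continuity of the homomorphism matches the discreteness/continuity of the actions, both directions using the open-subgroup bases on the two sides.

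Part (d) is the substantive point. Writing $G=\pi_1(\mc{C},F)$, I would show that $F\colon\mc{C}\to\GSet{G}$ is fully faithful and essentially surjective. For essential surjectivity, tameness gives that $G$ acts transitively on $F(s)$ for every connected $s\in S$, so $F(s)\cong G/U_s$ with $U_s=\mathrm{Stab}_G(x_s)$ a point-stabilizer; these $U_s$ form a neighborhood basis of $1$, every $G$-set is a coproduct of orbits $G/V$, each such $G/V$ is a quotient of some $G/U_s$, and since $\mc{C}$ is cocomplete and $F$ cocontinuous, $F$ hits every $G$-set up to isomorphism. For full faithfulness, injectivity of $\Hom_{\mc{C}}(X,Y)\to\Hom_G(F(X),F(Y))$ is faithfulness of $F$; for surjectivity one reduces to $X$ connected, takes a $G$-equivariant $\phi\colon F(X)\to F(Y)$, picks $x\in F(X)$ with $y=\phi(x)$, forms the connected component $Z\subseteq X\times_{\mathrm{pt}}Y$ through the point $(x,y)\in F(X)\times F(Y)=F(X\times Y)$, notes that $F(Z)\to F(X)$ is a $G$-equivariant injection of transitive $G$-sets hitting $x$ hence a bijection, so $Z\to X$ is an isomorphism by conservativity of $F$, and the composite $X\xrightarrow{\ \sim\ }Z\hookrightarrow X\times Y\to Y$ is the desired morphism.

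The step I expect to be the main obstacle is the bookkeeping in (d): ensuring the connected component $Z$ of $X\times Y$ is well-defined via \textbf{(IGC2)} even though $X\times Y$ may be "large", that $F$ carries it to the correct transitive $G$-set, and that the passage from the point-stabilizer basis $\{U_s\}$ to arbitrary open subgroups in the essential-surjectivity step goes through --- this is exactly where \textbf{(IGC2)}, \textbf{(IGC3)} and the tameness hypothesis have to interlock, and it is handled in \cite[\S7.2]{BhattScholze}.
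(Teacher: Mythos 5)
Your proposal is correct and matches the paper's treatment: the paper gives no proof of this proposition, simply citing \cite[Example 7.2.2 and Theorem 7.2.5]{BhattScholze}, which is exactly your opening move. The additional sketch you provide is a faithful reconstruction of the arguments in \cite[\S 7]{BhattScholze} and raises no issues.
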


\subsection{Geometric coverings form a tame infinite Galois category}
\label{ss:geom-cov-infty-gal}

In this section we arrive at our main result: geometric coverings form a tame infinite Galois category.

\begin{thm}\label{main tameness result}
    Let $X$ be a connected rigid $K$-space and let $\ov{x}$ be a geometric point of $X$. Then, the pair $(\Cov_X,F_{\ov{x}})$ is a tame infinite Galois category.
\end{thm}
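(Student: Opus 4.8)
The plan is to verify the four axioms \textbf{(IGC1)}--\textbf{(IGC4)} for the pair $(\Cov_X, F_{\ov x})$ and then establish tameness, i.e.\ that $\pi_1(\Cov_X, F_{\ov x})$ acts transitively on $F_{\ov x}(Y)$ for every categorically connected $Y$. Much of the categorical bookkeeping is already in hand: the closure properties from Proposition~\ref{composition and base change geometric covering}, the disjoint-union statement Proposition~\ref{disjoint union geometric coverings}, the fibered-product statement Proposition~\ref{fibered product of geometric coverings}, \'etale-localness (Proposition~\ref{etale local}), and the cardinality bound Lemma~\ref{lem:card-bound} are exactly the inputs one needs. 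The genuinely geometric content --- the part that uses geometric arcs in an essential way --- is the transitivity statement, so I expect that to be the main obstacle; everything else is a matter of assembling known facts in the style of \cite[\S7]{BhattScholze} (and parallels de~Jong's argument for $\Cov^{\oc}_X$, which however does not transfer directly).

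\textbf{Axioms (IGC1)--(IGC3).} For finite completeness: $X$ itself is a geometric covering (it is an isomorphism), fibered products of geometric coverings over a geometric covering are geometric coverings by Proposition~\ref{fibered product of geometric coverings}, and equalizers are obtained as clopen subspaces of such fibered products via the cancellation property Proposition~\ref{composition and base change geometric covering}~\ref{geom covering cancellation} (the diagonal of a separated \'etale map is a clopen immersion). For cocompleteness: coproducts exist by Proposition~\ref{disjoint union geometric coverings}, and coequalizers (hence all colimits) are built by the same mechanism Bhatt--Scholze use --- one forms the quotient \'etale-locally and glues using \'etale-localness Proposition~\ref{etale local}; here I would check that the quotient of a geometric covering by an \'etale equivalence relation is again \'etale and partially proper over $X$ (partial properness descends by Proposition~\ref{prop:pp-etale-local}) and remains a geometric covering by \'etale descent of the AVC. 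For \textbf{(IGC2)}, a geometric covering $Y\to X$ decomposes as the coproduct of its connected components, and each connected component $Y_i\to X$ is again a geometric covering (Proposition~\ref{disjoint union geometric coverings}); a connected geometric covering is categorically connected because any monomorphism $Y'\to Y_i$ in $\Cov_X$ is, by the cancellation principle, a geometric covering $Y'\to Y_i$, hence an \'etale and partially proper monomorphism, hence an open immersion whose image is overconvergent (Proposition~\ref{pp open embedding}) and --- being also the image of a geometric covering --- clopen by Proposition~\ref{image of geometric covering clopen}, so it is all of $Y_i$ or empty. For \textbf{(IGC3)}: by Lemma~\ref{lem:card-bound} there is a cardinal $\kappa$ bounding $|Y|$ for every connected geometric covering $Y\to X$; taking $S$ to be a set of representatives of isomorphism classes of connected geometric coverings of cardinality $<\kappa$ gives a generating set, since every object is the coproduct of its connected components.

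\textbf{Axiom (IGC4) and tameness.} Faithfulness and conservativity of $F_{\ov x}$: a morphism $Y\to Y'$ in $\Cov_X$ is itself a geometric covering by cancellation, so it is an open immersion onto a clopen subset followed by the obvious inclusions (using Proposition~\ref{image of geometric covering clopen} on connected components), and one checks on geometric fibers that it is an iso iff $F_{\ov x}$ sends it to a bijection --- here one must pass from the fiber over $\ov x$ to the fiber over an arbitrary geometric point, which is where the path-lifting machinery enters. That $F_{\ov x}$ is cocontinuous and finitely continuous follows because $F_{\ov x}(Y)=\pi_0(Y_{\ov x})$ and pullback along $\ov x$ commutes with the (\'etale-local) formation of colimits and with finite limits. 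The crux is \textbf{tameness}: given a connected, hence categorically connected, geometric covering $Y\to X$ and two points $\ov y_0, \ov y_1 \in F_{\ov x}(Y)$, I must produce an automorphism of the fiber functor carrying $\ov y_0$ to $\ov y_1$. The idea, following the outline in the introduction, is: by Corollary~\ref{cor: general path connected} (Appendix~\ref{curve-connectedness appendix}) connect the anchor point of $\ov x$ to itself through a chain of test curves realizing the passage $\ov y_0 \rightsquigarrow \ov y_1$ inside $Y$ (possible since $Y$ is connected); on each test curve $C$ choose a geometric arc (Proposition~\ref{geometric intervals exist}) and use unique lifting of geometric arcs for $Y_C\to C$ to transport fiber data; this produces a compatible system of bijections $F_{\ov x}(Z) \to F_{\ov x}(Z)$ for all $Z\in\Cov_X$, i.e.\ an element of $\pi_1(\Cov_X, F_{\ov x})$, sending $\ov y_0\mapsto \ov y_1$. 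The hard part will be checking that the transport maps obtained from different test curves and different choices of geometric arcs are \emph{coherent} across all $Z$ simultaneously --- i.e.\ that one really gets a natural automorphism of $F_{\ov x}$ and not just a family of unrelated bijections --- and that the construction is independent of the auxiliary choices; this coherence is exactly what the functoriality built into the definition of a geometric interval (the image construction, Axioms~1--2 of Definition~\ref{def:geom-int}) and the uniqueness in Corollary~\ref{unique lifting corollary} are designed to deliver, but spelling it out carefully is the substance of the proof. Finally, $\pi_1(\Cov_X,F_{\ov x})$ is a Noohi group by Proposition~\ref{igc prop}~\ref{icg prop 1}, and Proposition~\ref{igc prop}~\ref{icg prop 4} yields the promised equivalence $\Cov_X \simeq \GSet{\pi_1^{\rm ga}(X,\ov x)}$.
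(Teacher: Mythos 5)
Your proposal is correct and follows essentially the same route as the paper: verify \textbf{(IGC1)}--\textbf{(IGC4)} using the closure properties of $\Cov_X$ (with coequalizers obtained as quotients by the clopen \'etale equivalence relation generated by the image of $W\to U\times_X U$, and with Proposition~\ref{image of geometric covering is geometric covering} guaranteeing the quotient stays in $\Cov_X$), and reduce tameness to the existence of isomorphisms of fiber functors between arbitrary geometric points. The coherence issue you flag in the tameness step is resolved in the paper exactly as you anticipate, packaged as Theorem~\ref{pathsexist} applied to the connected space $Y$ itself, composed with the base-change functor $\Cov_X\to\Cov_Y$ and evaluated via the diagonal $\Delta_{Y/X}$.
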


From Proposition \ref{igc prop} we then obtain the following definition and corollary.

\begin{definition}
    Let $X$ be a connected rigid $K$-space and $\ov{x}$ a geometric point of $X$. Then, the Noohi group $\pi_1(\Cov_X,F_{\ov{x}})$ is called the \emph{geometric arc fundamental group} of the pair $(X,\ov{x})$ and is denoted $\pi_1^{\rm ga}(X,\ov{x})$.
\end{definition}

\begin{cor} 
    Let $X$ be a connected rigid $K$-space and let $\ov{x}$ be a geometric point of $X$. Then, the functor 
    \begin{equation*}
        F_{\ov{x}}\colon \Cov_X\to \pi_1^{\rm ga}(X,\ov{x})\text{-}\cat{Set}
    \end{equation*}
    is an equivalence of categories.
\end{cor}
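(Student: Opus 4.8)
The plan is to deduce the corollary directly from Theorem~\ref{main tameness result} together with the general formalism of tame infinite Galois categories recalled in Proposition~\ref{igc prop}; there is no new geometric input required. By Theorem~\ref{main tameness result}, the pair $(\Cov_X, F_{\ov{x}})$ is a tame infinite Galois category. Applying Proposition~\ref{igc prop}~\ref{icg prop 1}, the automorphism group $\pi_1(\Cov_X, F_{\ov{x}}) = \Aut(F_{\ov{x}})$, equipped with its compact-open topology, is a Noohi group; by the Definition preceding this corollary, this is exactly the group denoted $\pi_1^{\rm ga}(X,\ov{x})$, so in particular $\pi_1^{\rm ga}(X,\ov{x})\text{-}\cat{Set}$ is a well-defined category.

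It then remains only to invoke Proposition~\ref{igc prop}~\ref{icg prop 4}: since $(\Cov_X, F_{\ov{x}})$ is tame, the fiber functor induces an equivalence of categories $F_{\ov{x}}\colon \Cov_X \isomto \pi_1(\Cov_X, F_{\ov{x}})\text{-}\cat{Set}$, and the target is by definition $\pi_1^{\rm ga}(X,\ov{x})\text{-}\cat{Set}$. One should check the (trivial) compatibility that the functor $F_{\ov{x}}$ named in the statement is literally the same functor used to define $\pi_1^{\rm ga}(X,\ov{x})$; this holds by construction, so no identification of fiber functors is needed here.

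The main point is that there is essentially no obstacle at this stage: the entire content of the corollary is packaged into Theorem~\ref{main tameness result}, whose proof verifies axioms \textbf{(IGC1)}--\textbf{(IGC4)} and tameness for $(\Cov_X, F_{\ov{x}})$. The only ancillary remark worth recording is that the resulting equivalence depends on the choice of the geometric point $\ov{x}$ only up to the (non-canonical) group isomorphism coming from an isomorphism of fiber functors $F_{\ov{x}}\simeq F_{\ov{x}'}$ on the connected space $X$; this independence is, however, not part of the present statement, so we need not address it in the proof.
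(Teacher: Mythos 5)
Your proposal is correct and matches the paper exactly: the corollary is stated there as an immediate consequence of Theorem~\ref{main tameness result} combined with Proposition~\ref{igc prop}~\ref{icg prop 4}, with no further argument given. Nothing is missing.
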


We now start moving towards the proof of Theorem \ref{main tameness result}. The keystone intermediary result is the following statement about the existence of `paths' between geometric points.

\begin{thm} \label{pathsexist} 
    Let $X$ be a connected rigid $K$-space and let $\ov{x}$ and $\ov{y}$ be two geometric points of $X$. Then, there exists an isomorphism $F_{\ov{x}}\simeq F_{\ov{y}}$ of functors $\Cov_X\to \Set$.
\end{thm}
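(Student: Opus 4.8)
The plan is to transport the fiber functor from $\ov x$ to $\ov y$ along a chain of geometric arcs living inside one-dimensional test curves, using the ``geometric path connectedness'' of $X$ recorded in Appendix~\ref{curve-connectedness appendix}.

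First I would reduce to the case of maximal geometric points. Since every geometric covering $Y\to X$ is partially proper, Proposition~\ref{valuative criterion prop}~\ref{valuative criterion prop:2} shows that $F_{\ov x}(Y)=\Hom_X(\Spa(L,L^+),Y)\to\Hom_X(\Spa(L),Y)=F_{\ov x^{\max}}(Y)$ is a bijection, naturally in $Y\in\Cov_X$; hence $F_{\ov x}\simeq F_{\ov x^{\max}}$ and we may assume $\ov x$ and $\ov y$ maximal. Next I would check that any two maximal geometric points $\ov z_1,\ov z_2$ of $X$ with the same anchor point yield isomorphic fiber functors on $\Cov_X$: choosing a complete algebraically closed valued field with compatible embeddings of $k(\ov z_1)$ and $k(\ov z_2)$ gives a third maximal geometric point $\ov z_3$ together with morphisms $\ov z_3\to\ov z_i$, hence natural transformations $F_{\ov z_i}\to F_{\ov z_3}$ $(i=1,2)$. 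For a geometric covering $Y\to X$, the base change $Y_{\ov z_i}\to\Spa(k(\ov z_i))$ is étale and partially proper, with $\Spa(k(\ov z_i))$ a single point, so by Proposition~\ref{pp etale local structure} (using that $k(\ov z_i)$ is separably closed and that $Y_{\ov z_i}$ is separated) it is a disjoint union of copies of $\Spa(k(\ov z_i))$ indexed by $\pi_0(Y_{\ov z_i})=F_{\ov z_i}(Y)$; base change to $\ov z_3$ does not affect this index set, so $F_{\ov z_i}\to F_{\ov z_3}$ is an isomorphism, and composing gives $F_{\ov z_1}\simeq F_{\ov z_2}$.

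The heart of the argument is the following transport lemma: if $g\colon C\to X$ is a test curve and $\ov a,\ov b$ are geometric points of $C$, then $F_{g(\ov a)}\simeq F_{g(\ov b)}$ as functors $\Cov_X\to\Set$. Pulling back along $g$ defines a functor $\Cov_X\to\Cov_C$, $Y\mapsto Y_C$ (Proposition~\ref{composition and base change geometric covering}~\ref{composition and base change geometric covering 2}), and for any geometric point $\ov c$ of $C$ one has a canonical identification $F_{g(\ov c)}(Y)=F_{\ov c}(Y_C)$, functorial in $Y$; so it is enough to produce an isomorphism $F_{\ov a}\simeq F_{\ov b}$ of functors $\Cov_C\to\Set$. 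Since $C$ is good, $[C]$ is arc connected (Theorem~\ref{thm:curves are arc connected}); I choose an arc $\gamma$ in $[C]$ joining the anchor points of $\ov a$ and $\ov b$ and, by Proposition~\ref{geometric intervals exist}, a geometric arc structure $\ov\gamma$ on $\gamma$ whose chosen geometric points at the two endpoints are $\ov a$ and $\ov b$. For $Y\in\Cov_C$ the map $Y\to C$ is a geometric covering, so (taking the identity $C\to C$ as a test curve in Definition~\ref{def:geom-cov}) it has unique lifting of geometric arcs: each $\ov a'\in F_{\ov a}(Y)$ --- i.e.\ each lift of $\ov a$ to $Y$ --- extends to a unique geometric arc $\ov\gamma'$ in $Y$ lifting $\ov\gamma$, whose right endpoint geometric point is a lift $\ov b'\in F_{\ov b}(Y)$. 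Set $\Phi_Y(\ov a')=\ov b'$. Using the geometric arc with reversed order together with the uniqueness statement of Corollary~\ref{unique lifting corollary} ($C$ is taut by Proposition~\ref{prop:curves-are-taut}), $\Phi_Y$ is a bijection; and $\Phi$ is natural in $Y$ because a morphism $Y\to Z$ in $\Cov_C$ carries $\ov\gamma'$ to a geometric arc in $Z$ lifting $\ov\gamma$ and starting at the image of $\ov a'$, which by uniqueness must be the canonical lift, so its right endpoint is the image of $\ov b'$.

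Finally, by Corollary~\ref{cor: general path connected} the (maximal) anchor points of $\ov x$ and $\ov y$ are joined by a finite sequence of connected test curves $C_1,\dots,C_n\to X$ with consecutive members passing through a common maximal point of $X$; picking maximal geometric points of the $C_i$ over $\ov x$, over the successive junction points, and over $\ov y$ (their images in $X$ are again maximal geometric points, since morphisms of adic spaces are valuative), the transport lemma applied to each $C_i$, bridged at each junction by the independence statement of the second paragraph, composes to the desired isomorphism $F_{\ov x}\simeq F_{\ov y}$. The delicate step is the transport lemma: one must carefully match the geometric points at the ends of a lifted geometric arc with genuine elements of the fiber sets $F_{\ov a}(Y)$ and $F_{\ov b}(Y)$ and verify naturality in $Y$, which is exactly where the definition of ``lifting of geometric arcs'' and the uniqueness of Corollary~\ref{unique lifting corollary} are used; the remaining reductions are routine given the local structure of étale partially proper maps and the test-curve connectedness from the appendix.
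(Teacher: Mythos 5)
Your proposal is correct and follows essentially the same route as the paper: reduce to maximal geometric points, transport along a geometric arc in a test curve using unique lifting (with bijectivity from the reversed arc and uniqueness), and chain test curves via Corollary~\ref{cor: general path connected}. The only difference is cosmetic: where you insert an explicit argument that the fiber functor is independent of the choice of maximal geometric point over a fixed anchor (via a common dominating point), the paper absorbs this into the notion of equivalence of geometric arcs and the identification $F_{\ov{x}}(Y)=F_{\ov{x}'}(Y_C)$ at the junctions.
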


\begin{proof} 
Let us first assume that $X$ is a smooth and separated rigid $K$-curve. Let $\ov{x}$ and $\ov{y}$ be anchored at $x$ and $y$ respectively. If $\ov{x}^\mx$ is the associated maximal geometric point associated to $x$, then $F_{\ov{x}}\simeq F_{\ov{x}^\mx}$ via the definition of these fiber functors and the valuative criterion, and similarly for $\ov{y}$ and $\ov{y}^\mx$. Thus, we may assume that $\ov{x}$ and $\ov{y}$ are maximal geometric points. Let $\gamma$ be an arc in $X$ with left endpoint $\sep_X(x)$ and right endpoint $\sep_Y(y)$. Such an arc exists by Theorem \ref{thm:curves are arc connected}. By Proposition \ref{geometric intervals exist} we may upgrade this to a geometric arc $\ov{\gamma}$. Up to replacing $\ov{\gamma}$ by an equivalent arc we may assume that the left geometric endpoint of $\ov{\gamma}$ is $\ov{x}$ and the right geometric endpoint is $\ov{y}$. 

Define an isomorphism $\eta_{\ov{\gamma}}\colon F_{\ov{x}}\to F_{\ov{y}}$ as follows. For each object $Y$ of $\Cov_X$ we define $\eta_{\ov{\gamma}}(Y)\colon F_{\ov{x}}(Y)\to F_{\ov{y}}(Y)$ to be the map which associates to a geometric point $\ov{x}'$ in $F_{\ov{x}}(Y)$ the right geometric endpoint of the unique geometric arc $\ov{\gamma}'$ lifting $\ov{\gamma}$ with left geometric endpoint $\ov{x}'$. It is clear from the definition of uniqueness of geometric arc lifts, applied to both $\ov{\gamma}$ and $\ov{\gamma}^{\rm op}$, where $\ov\gamma^{\rm op}$ is $\ov\gamma$ with the opposite orientation, that this map is bijective.
It remains to show that this bijection is functorial in $Y$, which is straighforward to check.

Suppose now that $X$ is an arbitrary connected rigid $K$-space. Again, we may assume that the anchor points $x$ and $y$ are maximal. By Corollary~\ref{cor: general path connected} we can find a sequence of connected test curves in $X$ connecting $x$ and $y$. By iterating the procedure it suffices to assume that $x$ and $y$ are both contained in the image of a connected test curve $C\to X$. Let $\ov{x}'$ and $\ov{y}'$ be any lifts of $\ov{x}$ and $\ov{y}$ to $C$. Note then that there is an obvious map
\begin{equation*}
    \Isom_{\Cov_C}(F_{\ov{x'}},F_{\ov{y}'})\to \Isom_{\Cov_X}(F_{\ov{x}},F_{\ov{y}})
\end{equation*}
obtained via the natural identifications $F_{\ov{x}}(Y)=F_{\ov{x'}}(Y_C)$ and similarly for $\ov{y}$ and $\ov{y}'$. By the previous case we know that $\Isom_{\Cov_C}(F_{\ov{x'}},F_{\ov{y}'})\neq\emptyset$, and the conclusion follows.
\end{proof}

We now begin to show that $(\Cov_X,F_{\ov{x}})$ is a tame infinite Galois category. We begin by verifying Axioms \textbf{(IGC2)} and \textbf{(IGC3)}. Useful in this is the following result which shows that categorical connectedness in $\Cov_X$ coincides with topological connectedness.

\begin{prop} \label{connected equivalence} 
    Let $X$ be a connected rigid $K$-space and let $Y$ be an object of $\Cov_X$. Then, $Y$ is categorically connected if and only if $Y$ is topologically connected. Therefore, $Y$ admits a~unique decomposition $Y=\coprod_{i\in I}Y_i$ with each $Y_i$ a categorically connected object.
\end{prop}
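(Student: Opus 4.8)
The plan is to prove the two implications separately, with the forward direction (categorically connected $\Rightarrow$ topologically connected) being the easy one and the converse being the substance. For the forward direction, suppose $Y$ is not topologically connected, so $Y = Y_1 \sqcup Y_2$ with both $Y_i$ nonempty and open (hence closed). Each $Y_i \to X$ is \'etale and partially proper (these properties are clopen-local on the source), and by Proposition \ref{disjoint union geometric coverings} each $Y_i \to X$ is a geometric covering, so $Y_i$ is an object of $\Cov_X$. The inclusion $Y_1 \hookrightarrow Y$ is then a monomorphism in $\Cov_X$ which is neither an isomorphism (as $Y_2 \neq \emptyset$) nor has $Y_1$ initial (as $Y_1 \neq \emptyset$, and the initial object of $\Cov_X$ is the empty covering). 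Hence $Y$ is not categorically connected.

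For the converse, suppose $Y$ is topologically connected and let $m\colon Y' \to Y$ be a monomorphism in $\Cov_X$ with $Y'$ non-initial, i.e.\ $Y' \neq \emptyset$; we must show $m$ is an isomorphism. First I would observe that $Y' \to X$, being a geometric covering, is in particular \'etale and partially proper, and the same holds for $Y \to X$; by the cancellation property Proposition \ref{composition and base change geometric covering}\ref{geom covering cancellation} (applied with $Y \to X$ a geometric covering and $Y \to X$ itself separated \'etale — more precisely, apply it to see $Y' \to Y$ is a geometric covering, using that $Y\to X$ is separated \'etale) the map $m\colon Y' \to Y$ is itself a geometric covering. Being a monomorphism and \'etale, $m$ is an open immersion onto a clopen subset of $Y$ (a monic \'etale map of adic spaces is an open immersion, by \cite[Proposition 1.6.8]{Huberbook}, and since $Y' \to Y$ is also partially proper its image is overconvergent, hence — being open and closed under generization along a partially proper map — clopen; alternatively invoke Proposition \ref{image of geometric covering clopen} applied to the connected components of $Y$). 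Thus $m(Y')$ is a nonempty clopen subset of the connected space $Y$, so $m(Y') = Y$, and since $m$ is an open immersion it is an isomorphism. Therefore every such monomorphism is an isomorphism, i.e.\ $Y$ is categorically connected.

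The last sentence follows immediately: write $Y = \coprod_{i\in I} Y_i$ for the (disjoint, open, closed) decomposition of the underlying space into connected components. Since $Y \to X$ is locally of finite type and partially proper, and $X$ is a rigid $K$-space, each component $Y_i$ is open in $Y$, so $Y_i \to X$ is again \'etale and partially proper, and is a geometric covering by Proposition \ref{disjoint union geometric coverings}; by the equivalence just proved each $Y_i$ is categorically connected. Uniqueness of the decomposition is the uniqueness of the decomposition of a topological space into connected components.

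The main obstacle I anticipate is the bookkeeping in the converse: one must be careful that a monomorphism in the \emph{category} $\Cov_X$ is genuinely represented by an open immersion of adic spaces. This uses that monomorphisms of \'etale adic spaces are open immersions together with the partial properness to upgrade "open" to "clopen"; the cancellation principle of Proposition \ref{composition and base change geometric covering}\ref{geom covering cancellation} is what makes $Y'\to Y$ a geometric covering in the first place so that these structural results apply. Once that is in hand, connectedness of $Y$ does all the remaining work.
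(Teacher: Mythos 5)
Your proposal follows essentially the same route as the paper: the easy direction via Proposition~\ref{disjoint union geometric coverings}, and the converse by showing a non-initial monomorphism $Y'\to Y$ is a geometric covering via cancellation (Proposition~\ref{composition and base change geometric covering}~\ref{geom covering cancellation}), hence an open immersion with clopen image, hence an isomorphism by connectedness (the paper invokes Proposition~\ref{image of geometric covering clopen} here, as you suggest in your parenthetical alternative). The one step you flag but do not actually resolve is the passage from ``monomorphism in the category $\Cov_X$'' to ``monomorphism of adic spaces'': the paper's mechanism is that $\Cov_X$ has fibered products agreeing with those of rigid $K$-spaces (Proposition~\ref{fibered product of geometric coverings}), so the categorical characterization of a monomorphism --- that $Y'\to Y'\times_Y Y'$ is an isomorphism (\stacks{01L3}) --- transfers to the category of rigid $K$-spaces, after which one applies \cite[Theorem~6.21]{FujiwaraKatoSurvey} to conclude that the \'etale monomorphism $Y'\to Y$ is an open embedding. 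With that bridge supplied, your argument is complete and matches the paper's.
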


\begin{proof}
The second statement clearly follows from the first, so it suffices to show this first claim. Suppose first that $Y$ is categorically connected. Write the connected component decomposition of $Y$ as $Y=\coprod_{i\in I} Y_i$. By Proposition \ref{disjoint union geometric coverings} we know that each $Y_i\to X$ is a geometric covering. One then clearly sees from the assumption that $Y$ is categorically connected that $I$ must be a singleton, and so $Y$ is connected.

Conversely, suppose that $Y$ is connected. Let $Y'\to Y$ be a monomorphism in $\Cov_X$ where $Y'$ is non-empty (and thus non-initial). Since $\Cov_X$ has fibered products by Proposition \ref{fibered product of geometric coverings}, $Y'\to Y$ being a monomorphism implies that the canonical map $Y'\to Y'\times_Y Y'$ is an isomorphism (see \stacks{01L3}). Again by Proposition \ref{fibered product of geometric coverings} the fibered product in $\Cov_X$ agrees with that in the category of rigid $K$-spaces, so $Y'\to Y$ is a monomorphism in rigid $K$-spaces. By \cite[Theorem~6.21]{FujiwaraKatoSurvey}, $Y'\to Y$ is an open embedding. Since $Y'\to Y$ is a geometric covering by Proposition~\ref{composition and base change geometric covering}~\ref{geom covering cancellation} we deduce from Proposition \ref{image of geometric covering clopen} that $Y'\to Y$ is surjective, and hence an isomorphism.
\end{proof}

\begin{cor}
    Let $X$ be a connected rigid $K$-space and $\ov{x}$ a geometric point of $X$. Then, the pair $(\mc{C},F_{\ov{x}})$ satisfies Axiom \emph{\textbf{(IGC2)}} and Axiom \emph{\textbf{(IGC3)}}.
\end{cor}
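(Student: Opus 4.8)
The plan is to deduce both axioms almost formally from results already in hand, chiefly Proposition~\ref{connected equivalence}, Proposition~\ref{disjoint union geometric coverings}, and Lemma~\ref{lem:card-bound}.

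Axiom \textbf{(IGC2)} I would get immediately: Proposition~\ref{connected equivalence} gives, for any object $Y$ of $\Cov_X$, a decomposition $Y=\coprod_{i\in I}Y_i$ into its connected components, each of which is a geometric covering (by Proposition~\ref{disjoint union geometric coverings}) and is categorically connected (again by Proposition~\ref{connected equivalence}). The only thing left to observe is that this topological disjoint union is the categorical coproduct \emph{in $\Cov_X$}: this holds because $\coprod_i Y_i\to X$ is the coproduct of the $Y_i\to X$ in the larger category $\Et_X$, it lies in the full subcategory $\Cov_X$ by Proposition~\ref{disjoint union geometric coverings}, and coproducts computed in an ambient category that happen to land in a full subcategory are coproducts there as well. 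This is routine and I would not belabor it.

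For Axiom \textbf{(IGC3)} the plan is to take $S$ to be a set of representatives for the isomorphism classes of \emph{connected} objects of $\Cov_X$. Once we know $S$ is a genuine set, we are done: given $Y$ in $\Cov_X$, Axiom \textbf{(IGC2)} exhibits $Y=\coprod_i Y_i$ with each $Y_i$ connected, hence isomorphic to some object of $S$, so $Y$ is realized as a colimit (a coproduct) of objects of $S$; thus $S$ generates $\Cov_X$ under colimits, and it consists of connected objects as required. The one substantive point — and the step I expect to be the main obstacle — is verifying that the connected geometric coverings of $X$ form a set up to isomorphism rather than a proper class. Here I would invoke Lemma~\ref{lem:card-bound}, which supplies a cardinal $\kappa$ bounding $|Y|$ for every connected \'etale and partially proper $Y\to X$, in particular for every connected object of $\Cov_X$. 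A rigid $K$-space $Y$ with $|Y|<\kappa$ equipped with an \'etale structure map to $X$ can be pinned down by a bounded amount of data: $Y$ is covered by affinoid opens $\Spa(A_j,A_j^+)$, each $A_j$ being a quotient of a Tate algebra $K\langle T_1,\dots,T_{n_j}\rangle$, indexed by a topological space of size $<\kappa$ with bounded gluing and structure-map data. Hence there are only set-many such $Y$ up to isomorphism, so $S$ exists, and the corollary follows with no further verification.
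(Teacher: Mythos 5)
Your proposal is correct and follows exactly the route the paper intends: the paper's own proof simply states that Proposition~\ref{connected equivalence} gives \textbf{(IGC2)} and that \textbf{(IGC3)} is elementary from Lemma~\ref{lem:card-bound}, leaving the details to the reader. You have supplied precisely those details (the coproduct in $\Cov_X$ agreeing with the one in $\Et_X$ via Proposition~\ref{disjoint union geometric coverings}, and the cardinality bound pinning down a set of representatives of connected objects) in the natural way.
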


\begin{proof} 
Proposition \ref{connected equivalence} immediately implies Axiom \textbf{(IGC2)}. Axiom \textbf{(IGC3)} is elementary and follows from Lemma \ref{lem:card-bound}. We leave the details to the reader.
\end{proof}

We now move on to show that $(\Cov_X,F_{\ov{x}})$ satisfies Axiom \textbf{(IGC1)}.

\begin{prop} \label{cocomplete and finitely complete} 
    Let $X$ be a connected rigid $K$-space and let $\ov{x}$ be a geometric point. Then, the category $\Cov_X$ is cocomplete and finitely complete. Consequently, the pair $(\Cov_X,F_{\ov{x}})$ satisfies Axiom \emph{\textbf{(IGC1)}}.
\end{prop}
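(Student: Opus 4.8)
The plan is to verify that $\Cov_X$ has a terminal object and all fibre products — hence all finite limits — and all small coproducts and coequalizers — hence all small colimits; Axiom \textbf{(IGC1)} then follows.

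\emph{Finite limits.} The identity $\mathrm{id}_X\colon X\to X$ lies in $\Cov_X$: it is \'etale and partially proper, and for every test curve $C\to X$ its base change is $\mathrm{id}_C$, which satisfies AVC trivially. It is clearly a terminal object of $\Cov_X$. Given $Y_1\to Z\leftarrow Y_2$ in $\Cov_X$, the cancellation statement of Proposition~\ref{composition and base change geometric covering}~\ref{geom covering cancellation} shows that each $Y_i\to Z$ is a geometric covering, so Proposition~\ref{fibered product of geometric coverings} gives that the fibre product $Y_1\times_Z Y_2$ — formed in the category of rigid $K$-spaces, where it exists since $\Et_X$ is stable under fibre products — is again a geometric covering, hence is the fibre product in the full subcategory $\Cov_X$. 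A category with a terminal object and all fibre products is finitely complete, so the limit half of \textbf{(IGC1)} holds.

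\emph{Colimits.} It suffices to produce all small coproducts and all coequalizers. Small coproducts are given by disjoint unions of rigid $K$-spaces, which are geometric coverings by Proposition~\ref{disjoint union geometric coverings}; Lemma~\ref{lem:card-bound} together with the connected-component decomposition of Proposition~\ref{connected equivalence} shows moreover that $\Cov_X$ is essentially small, so there is no set-theoretic difficulty. For the coequalizer of a pair $f,g\colon Y\rightrightarrows Z$ in $\Cov_X$, the natural candidate is the quotient $Q = Z/R$, where $R\subseteq Z\times_X Z$ is the equivalence relation generated by the image $R_0$ of $(f,g)\colon Y\to Z\times_X Z$. Here $(f,g)$ is separated and \'etale by cancellation, and $Z\times_X Z\to X$ is a geometric covering by Proposition~\ref{fibered product of geometric coverings}, so $R_0$ is an open subspace of $Z\times_X Z$ which is a geometric covering over $X$ by Proposition~\ref{image of geometric covering is geometric covering}. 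Adjoining the (clopen, hence finite \'etale) diagonal, the transpose, and the iterated relational composites — each obtained as the image of a fibre product of geometric coverings over $Z$, so again a geometric covering — exhibits $R$ as an increasing union of overconvergent open geometric coverings inside $Z\times_X Z$; one then checks directly that $R\to X$ is \'etale, partially proper, and satisfies AVC over all test curves, reducing the AVC statement for $R$ to that of $Z\times_X Z$ along the lines of the proof of Proposition~\ref{etale local}. Since $Y\to R_0$ is an epimorphism, the coequalizer of $f,g$ coincides with the coequalizer of the two projections $R_0\rightrightarrows Z$, and the kernel pair of $Z\to Q$ is then forced to be $R$.

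The crux is the representability of the quotient $Q = Z/R$ by a rigid $K$-space, together with the assertions that $Z\to Q$ is \'etale with $R\cong Z\times_Q Z$ and that $Q\to X$ is again a geometric covering. This is the analogue of the (subtle) fact that a quotient of a scheme by an \'etale equivalence relation is only an algebraic space: the extra input that $R\to Z$ is not merely \'etale but partially proper — so that the equivalence relation is ``properly discontinuous'' — is what is meant to keep $Q$ inside the category of rigid $K$-spaces, while the \'etale-local nature of geometric coverings (Proposition~\ref{etale local}) lets one descend the geometric-covering property of $Q\to X$ from the \'etale cover $Z\to Q$, and handle the infinite transitive closure only implicitly. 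Once $Q$ is constructed, the universal property of the coequalizer and compatibility with the fibre functors are formal, completing the verification of \textbf{(IGC1)}. I expect this quotient construction — ensuring $Z/R$ is an honest rigid $K$-space and that AVC survives — to be the main obstacle.
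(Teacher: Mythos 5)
Your overall architecture matches the paper's: terminal object plus fibre products for finite limits, disjoint unions for coproducts, and the reduction of coequalizers to the quotient of $Z$ by the clopen equivalence relation $R$ generated by the image $R_0$ of $(f,g)$. But you stop exactly where the proof has to be carried out: you never establish that $Q=Z/R$ is representable by a rigid $K$-space, and you explicitly flag this as something you ``expect'' to be the main obstacle rather than something you prove. That is a genuine gap, since without representability the coequalizer has not been constructed and \textbf{(IGC1)} is not verified. The paper closes it by observing that $R$, being a union of clopen subsets $R_1^n$ of $Z\times_X Z$ which is clopen by local connectedness of $Z\times_X Z$, is in particular a \emph{closed immersion} into $Z\times_X Z$; this is precisely the hypothesis of the representability theorem for quotients of adic spaces by \'etale equivalence relations \cite[Theorem 3.1.5]{Warner}, which then yields a rigid $K$-space $Q$ with $Q\to X$ separated and $Z\to Q$ surjective \'etale. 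Your heuristic that partial properness of $R\to Z$ makes the relation ``properly discontinuous'' is not the condition actually used; what matters is the clopenness of $R$ in $Z\times_X Z$.

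A second, smaller misdirection: once $Q$ exists, the geometric-covering property of $Q\to X$ is not obtained by ``descending along the \'etale cover $Z\to Q$'' via Proposition~\ref{etale local} (which concerns \'etale covers of the \emph{target} $X$, not of $Q$). The correct tool is Proposition~\ref{image of geometric covering is geometric covering}: $Z\to Q$ is a surjection over $X$, $Z\to X$ is a geometric covering, and $Q\to X$ is separated and \'etale (\'etaleness via Lemma~\ref{etale image lemma}), so $Q\to X$ is a geometric covering. Likewise your proposed direct verification that $R\to X$ satisfies AVC is unnecessary: since $R$ is clopen in $Z\times_X Z$, it is a union of connected components of a geometric covering and hence a geometric covering by Proposition~\ref{disjoint union geometric coverings}.
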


\begin{proof} 
We have already verified in Proposition \ref{disjoint union geometric coverings} and Proposition \ref{fibered product of geometric coverings} that $\Cov_X$ has fibered products and arbitrary coproducts. Thus, since $\Cov_X$ has a final object, it suffices to show that $\Cov_X$ has all coequalizers (e.g.\@ see \cite[\S V.2]{MacLane}).

To do this, let $W\rightrightarrows U$ be a pair of morphisms in $\Cov_X$. Let us consider the induced map $W\to U\times_X U$. By Proposition \ref{fibered product of geometric coverings} the natural morphism $U\times_X U\to X$ is a geometric covering, and since $W\to U\times_X U$ is a morphism over $X$ we deduce from Proposition~\ref{composition and base change geometric covering}~\ref{geom covering cancellation} that $W\to U\times_X U$ is a geometric covering. In particular, the image $R_0$ of $W\to U\times_X U$ is clopen by Proposition \ref{image of geometric covering clopen}.

Let $R$ be smallest equivalence relation containing $R$ in $U \times_X U$. It can be obtained in the following way. Define $R_1$ to be the symmetrization of $R_0$, i.e.\ $R_1 = R_0 \cup R_0^{\mathrm{inv}}$, where $R_0^{\mathrm{inv}}$ is the image of $R_0$ via the automorphism of $U \times_X U$ that switches the factors. Next, consider $R_1^n$ defined as the image of the map
\begin{displaymath}
    R_1 \times_{p,U,q} R_1 \times_{p,U,q} R_1 \times_{p,U,q} \ldots \times_{p,U,q} R_1 \to (U \times_X U)\times_{p,U,q}\ldots \times_{p,U,q} (U \times_X U)  \stackrel{\sim}{\to} U\times_X U
\end{displaymath}
Here the left hand side is the $n$-fold fibered product and $p$ (resp.\@ $q$) is the left (resp.\@ right) projection $U\times_X U\to U$. Similarly to the argument given in the second paragraph, each $R_1^n$ is a clopen subset of $U\times_X U$. Define $R$ to be the union $\bigcup_n R_1^n$. Note that since $U$ is a locally connected topological space, this union will be clopen. Now, the coequalizer of $W \rightrightarrows U$, if it exists, will be isomorphic to the coequalizer of $R \rightrightarrows U\times_X U$, by construction. 

Let $\mc{F}$ be the sheaf on the \'etale site of $X$ given by the sheaf quotient $(U\times_X U)/R$. As $R \to U\times_X U$ is a closed immersion, we can apply \cite[Theorem 3.1.5]{Warner} to get that that $\mc{F}$ is representable. Let us write the representing object by $Q$. By loc.\@ cit.\@ we also have that $Q\to X$ is separated and $U\to Q$ is surjective and \'etale. We claim that $Q\to X$ is an object of $\Cov_X$. The fact that $Q\to X$ is \'etale is simple since $U\to X$ is \'etale and $U\to Q$ is \'etale and surjective (see Lemma \ref{etale image lemma})
. So then, one sees that $Q \to X$ is \'etale and surjective. Since one has a surjection $U\to Q$ over $X$, we deduce that $Q\to X$ is a geometric covering by Proposition~\ref{image of geometric covering is geometric covering}.
\end{proof}

Finally, to show that $(\Cov_X,F_{\ov{x}})$ is an infinite Galois category we need only to show that Axiom \textbf{(IGC4)} holds. This is done in the following. 

\begin{prop}
    Let $X$ be a connected rigid $K$-space and let $\ov{x}$ be a geometric point of $X$. Then, the pair $(\Cov_X,F_{\ov{x}})$ satisfies Axiom \emph{\textbf{(IGC4)}}.
\end{prop}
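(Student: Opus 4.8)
The plan is to verify the four requirements of \textbf{(IGC4)} one at a time, using throughout the description $F_{\ov x}(Z)=\pi_0(Z_{\ov x})$, where $Z_{\ov x}=Z\times_X\Spa(L,L^+)$, and two elementary observations. First: since surjectivity of morphisms of adic spaces and the functor $\pi_0$ are both preserved under base change, a surjection of geometric coverings over $X$ induces a surjection on $F_{\ov x}$. Second (the key reduction): if $X$ is connected and $Z\to X$ is a geometric covering with $F_{\ov x}(Z)=\emptyset$, then $Z=\emptyset$; indeed, were $Z$ nonempty, $f(Z)$ would be clopen and nonempty by Proposition~\ref{image of geometric covering clopen}, hence equal to $X$, so $Z\to X$ would be surjective and thus $Z_{\ov x}\to\Spa(L,L^+)$ surjective, contradicting $\pi_0(Z_{\ov x})=\emptyset$. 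Finite continuity is then essentially corepresentability: $F_{\ov x}(X)=\pi_0(\Spa(L,L^+))=\{\ast\}$, and since fibered products in $\Cov_X$ agree with those of rigid $K$-spaces (Proposition~\ref{fibered product of geometric coverings}), $F_{\ov x}(Y_1\times_{Y'}Y_2)=\Hom_X(\Spa(L,L^+),Y_1\times_{Y'}Y_2)=F_{\ov x}(Y_1)\times_{F_{\ov x}(Y')}F_{\ov x}(Y_2)$; as equalizers are built from these, $F_{\ov x}$ is finitely continuous.

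For faithfulness, given $f,g\colon Y\to Y'$ over $X$ with $F_{\ov x}(f)=F_{\ov x}(g)$, I would take the equalizer $E\hookrightarrow Y$; since $Y'\to X$ is separated and \'etale, $\Delta_{Y'/X}$ is a clopen immersion, so $E$ is a clopen subspace of $Y$ and therefore both $E$ and $Y\setminus E$ are geometric coverings of $X$. Every element of $F_{\ov x}(Y)$ factors through $E$, so $F_{\ov x}(Y\setminus E)=\emptyset$, whence $Y\setminus E=\emptyset$ and $f=g$. For conservativity, let $f\colon Y\to Y'$ in $\Cov_X$ with $F_{\ov x}(f)$ bijective; by Proposition~\ref{composition and base change geometric covering}~\ref{geom covering cancellation}, $f$ is itself a geometric covering, in particular separated and \'etale, so $\Delta(Y)$ is clopen in $Y\times_{Y'}Y$, which is a geometric covering of $X$ by Proposition~\ref{fibered product of geometric coverings}. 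Since $F_{\ov x}$ is finitely continuous and $F_{\ov x}(f)$ is injective, $F_{\ov x}\big((Y\times_{Y'}Y)\setminus\Delta(Y)\big)=\emptyset$, so this complement is empty and $f$ is a monomorphism; by \cite[Theorem~6.21]{FujiwaraKatoSurvey} $f$ is an open immersion, and by Proposition~\ref{image of geometric covering clopen} its image is clopen, so $f$ identifies $Y$ with a clopen subspace of $Y'$. Then $Y'\setminus Y$ is a geometric covering of $X$ with $F_{\ov x}(Y'\setminus Y)=F_{\ov x}(Y')\setminus F_{\ov x}(Y)=\emptyset$, hence empty, and $f$ is an isomorphism.

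Cocontinuity is the substantive part, and I would reduce it to preservation of coproducts and coequalizers. Coproducts are immediate: $F_{\ov x}(\coprod_i Y_i)=\pi_0(\coprod_i(Y_i)_{\ov x})=\coprod_iF_{\ov x}(Y_i)$. For coequalizers the crucial input — and the step I expect to be the main obstacle — is the structural fact $(\star)$: for every geometric covering $V\to X$, the base change $V_{\ov x}\to\Spa(L,L^+)$ is a disjoint union of copies of $\Spa(L,L^+)$ indexed by $F_{\ov x}(V)$. I would prove this from the \'etale-local structure of $V\to X$ (Proposition~\ref{pp etale local structure}) and the valuative criterion for partial properness (Proposition~\ref{valuative criterion prop}): over the unique maximal point $\Spa(L)$ of $\Spa(L,L^+)$ the pullback is $\coprod_{F_{\ov x}(V)}\Spa(L)$, being \'etale over a separably closed field; each $\Spa(L)$-section extends uniquely to an $\Spa(L,L^+)$-section by partial properness; and these sections' images exhaust $V_{\ov x}$, since a nonempty clopen complement would have to miss $\Spa(L)$, which is impossible because the maximal points of a nonempty adic space over $\Spa(L,L^+)$ all lie over $\Spa(L)$.

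Granting $(\star)$, I would finish as follows. Let $W\rightrightarrows U$ be a pair in $\Cov_X$ and $Q$ its coequalizer constructed as in the proof of Proposition~\ref{cocomplete and finitely complete}, so that $Q\to X$ is a geometric covering, $U\to Q$ is surjective \'etale, and $U\times_QU=R$ is the equivalence relation on $U$ generated by the image of $W$ in $U\times_XU$. Applying $(-)_{\ov x}$ and $(\star)$ to $U$, $R$, $Q$ turns the surjection $U_{\ov x}\to Q_{\ov x}$ with kernel pair $R_{\ov x}$ into a surjection of sets $F_{\ov x}(U)\to F_{\ov x}(Q)$ with kernel pair $F_{\ov x}(R)$ (using $(\star)$ once more over each component of $Q_{\ov x}$), so $F_{\ov x}(Q)=F_{\ov x}(U)/F_{\ov x}(R)$. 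Finally, $R$ arises from $W$ by finite fibered products, images of morphisms, and a countable increasing union of clopen subspaces of $U\times_XU$, and $F_{\ov x}$ carries each of these to the corresponding operation on subsets of $F_{\ov x}(U)\times F_{\ov x}(U)$ (finite limits by the above; images by the first elementary observation; increasing unions because $\pi_0$ commutes with filtered colimits), so $F_{\ov x}(R)$ is the equivalence relation generated by the image of $F_{\ov x}(W)$. Hence $F_{\ov x}(Q)=\operatorname{coeq}\big(F_{\ov x}(W)\rightrightarrows F_{\ov x}(U)\big)$, and since a functor preserving coproducts and coequalizers preserves all colimits, $F_{\ov x}$ is cocontinuous. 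Apart from $(\star)$ — where one genuinely has to understand \'etale partially proper maps over an affinoid field — every step is a formal consequence of corepresentability together with Propositions~\ref{image of geometric covering clopen}, \ref{fibered product of geometric coverings}, and \ref{composition and base change geometric covering}.
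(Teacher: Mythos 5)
Your proof is correct but runs on a different engine than the paper's. For faithfulness and conservativity the paper propagates the hypothesis at $\ov{x}$ to \emph{all} geometric points via Theorem~\ref{pathsexist} and then compares the images of the graph maps $Y_1\to Y_1\times_X Y_2$, whereas you package the connectivity input once and for all into the reduction ``$F_{\ov{x}}(Z)=\emptyset\Rightarrow Z=\emptyset$'' (a direct consequence of Proposition~\ref{image of geometric covering clopen}) and apply it to the clopen complements of the equalizer, of the diagonal, and of the image. This is arguably cleaner and avoids invoking Theorem~\ref{pathsexist} explicitly, though it rests on the same geometric-arc machinery, since Proposition~\ref{image of geometric covering clopen} is itself proved by lifting geometric arcs. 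For cocontinuity the paper simply quotes that quotients of \'etale equivalence relations commute with arbitrary base change and specializes to $V=\ov{x}$, while you unwind the construction of the coequalizer $Q$ by hand through your structure result $(\star)$; your route is longer but makes explicit why passing to $\pi_0$ of the fiber is harmless, a point the paper leaves implicit. Both approaches crucially reuse the explicit construction of $Q$ from Proposition~\ref{cocomplete and finitely complete}.

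The one step I would tighten is the exhaustion claim in $(\star)$: the complement of the union of the section images is closed but not obviously open (the union may be infinite), so you cannot immediately speak of ``a nonempty clopen complement.'' Two repairs are available. Either use the bijection $F_{\ov{x}}(V)\simeq\pi_0(V_{\ov{x}})$ recorded in \S\ref{ss:geometric-points-and-fundamental-groups} together with the fact that each section image is clopen (a nonempty clopen subset contained in a connected component must equal that component); or argue locally via Proposition~\ref{pp etale local structure}: every point of $V_{\ov{x}}$ lies in an oc neighborhood that is finite \'etale over an open subset of $\Spa(L,L^+)$, hence (as $L$ is separably closed) a finite disjoint union of sections over that open, each of which extends uniquely to a global section by partial properness. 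With that adjustment the argument is complete.
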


\begin{proof} 
Let us first show that $F_{\ov{x}}$ is faithful. Let $Y_1\to Y_2$ be a morphism in $\Cov_X$. Note then that the graph morphism $Y_1\to Y_1\times_X Y_2$ is a geometric covering by combining Proposition~\ref{composition and base change geometric covering}~\ref{geom covering cancellation} and Proposition \ref{fibered product of geometric coverings}. In particular, it is an \'etale monomorphism, and so an open embedding by \cite[Theorem~6.21]{FujiwaraKatoSurvey}. By Proposition \ref{image of geometric covering clopen} it has clopen image, and so $Y_1\to Y_1\times_X Y_2$ is an isomorphism onto some connected component of $Y_1\times_X Y_2$. So, suppose that $f,g\colon Y_1\to Y_2$ are morphisms in $\Cov_X$ such that $F_{\ov{x}}(f)=F_{\ov{x}}(g)$ for some geometric point $\ov{x}$ of $X$. To show that $f=g$ it suffices to show that the connected components of $Y_1\times_X Y_2$ corresponding to $f$ and $g$ agree. By Theorem \ref{pathsexist} we have that $F_{\ov{x}}(f)=F_{\ov{x}}(g)$ for all geometric points $\ov{x}$ of $X$. If $\ov{y}$ is a geometric point of $Y_1$ lying over the geometric point $\ov{x}$ of $X$, then the image of this geometric point under the graph map $Y_1\to Y_1\times_X Y_2$ for $f$ is $(\ov{y},F_{\ov{x}}(f)(\ov{y}))$, and similarly for $g$. In particular, since $F_{\ov{x}}(f)=F_{\ov{x}}(g)$ for all geometric points $\ov{x}$ of $X$, we see that the maps $Y_1\to Y_1\times_X Y_2$ induced by $f$ and $g$ have the same images as desired.

To see that $F_{\ov{x}}$ is conservative, suppose that $f\colon Y_1\to Y_2$ is a morphism in $\Cov_X$ such that $F_{\ov{x}}(f)$ is a bijection for some geometric point $\ov{x}$. By Proposition~\ref{composition and base change geometric covering}~\ref{geom covering cancellation},  $f$ is \'etale. Moreover, by Theorem \ref{pathsexist} see that $F_{\ov{x}}(f)$ is a bijection for all geometric points $\ov{x}$ of $X$. This clearly implies that one has the unique lifting property of geometric points as in \cite[Theorem~6.21]{FujiwaraKatoSurvey}. By \cite[Theorem~6.21]{FujiwaraKatoSurvey}, $f$ is an isomorphism as desired.

To see that $F_{\ov{x}}$ is cocontinuous and finitely continuous, we note that since $\Cov_X$ has a~final object it suffices to show that $F_{\ov{x}}$ commutes with arbitrary coproducts, coequalizers, and fibered products (see \cite[\S V.2]{MacLane}). It clearly commutes with arbitrary coproducts and fibered products, and thus it suffices to show it commutes with coequalizers. Let $W\rightrightarrows U$ be a pair of arrows in $\Cov_X$. Recall that in the proof of Proposition \ref{cocomplete and finitely complete} one identified 
\begin{equation*}
    \mathrm{Coeq}\left(W\rightrightarrows U\right)\simeq (U\times_X U)/R
\end{equation*}
with notation as in that proposition. Since quotients of \'etale equivalence relations commute with pullback (cf.\@ \stacks{03I4}), for each morphism $V\to X$ of adic spaces, one has that 
\begin{equation*}
    \mathrm{Coeq}\left(W\rightrightarrows U\right)_V\simeq ((U\times_X U)/R)_V\simeq (U_V\times_V U_V)/R_V\simeq \mathrm{Coeq}\left(W_V\rightrightarrows U_V\right)
\end{equation*}
Applying this to the case when $V=\ov{x}$ shows that $F_{\ov{x}}$ commutes with coequalizers.
\end{proof}

Thus, the only part of Theorem \ref{main tameness result} left to be proven is the claim that the infinite Galois category $(\Cov_X,F_{\ov{x}})$ is tame. 

\begin{prop} \label{tameness follows from pathsexist} 
    Let $X$ be a connected rigid $K$-space, $\ov{x}$ a geometric point of $X$, and $Y$ a~categorically connected object of $\Cov_X$. Then, $\Aut(F_{\ov{x}})$ acts transitively on $F_{\ov{x}}(Y)$. 
\end{prop}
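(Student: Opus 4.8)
The plan is to reduce transitivity of the $\Aut(F_{\ov x})$-action on $F_{\ov x}(Y)$ to the path-existence theorem (Theorem~\ref{pathsexist}), exactly in the spirit of the proof that $(\Cov_X,F_{\ov x})$ is a Galois category: one produces enough automorphisms of $F_{\ov x}$ by ``transporting $\ov x$ around $Y$''. Concretely, fix $Y$ a categorically connected (equivalently, by Proposition~\ref{connected equivalence}, topologically connected) geometric covering, and let $\ov y_0,\ov y_1\in F_{\ov x}(Y)$ be two liftings of $\ov x$ to $Y$; their anchor points $y_0,y_1$ lie in $Y$. Since $Y\to X$ is a geometric covering and $Y$ is connected, and since $F_{\ov x}(Y)=\pi_0(Y_{\ov x})$, we may replace $\ov y_i$ by the maximal geometric points over $y_i^{\mx}$ without loss of generality (using the valuative criterion identification $F_{\ov y}\simeq F_{\ov y^{\mx}}$, compatibly with the inclusion $F_{\ov x}(Z)\subseteq F_{\ov y}(Z_Y)$ type identifications).

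The key step is the following: view $\ov y_0$ and $\ov y_1$ as two geometric points of the connected rigid $K$-space $Y$. By Theorem~\ref{pathsexist} applied to $Y$ (not $X$!), there is an isomorphism of fiber functors $\sigma\colon F_{\ov y_0}\isomto F_{\ov y_1}$ of functors $\Cov_Y\to\Set$. Now I would use the pullback functor $\Cov_X\to\Cov_Y$, $Z\mapsto Z_Y$, which is well-defined by Proposition~\ref{composition and base change geometric covering}\ref{composition and base change geometric covering 2}, together with the canonical natural identifications $F_{\ov x}(Z)\simeq F_{\ov y_i}(Z_Y)$ (both sides are the set of liftings of $\ov x$ to $Z$; a lifting of $\ov x$ to $Z$ is the same as a lifting of $\ov y_i$ to $Z_Y$, because $\ov y_i$ lies over $\ov x$ and $Z_Y=Z\times_X Y$). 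Transporting $\sigma$ through these identifications produces an automorphism $\tau\in\Aut(F_{\ov x})$. It remains to check that $\tau$ sends the distinguished point of $F_{\ov x}(Y)=F_{\ov y_0}(Y_Y)$ corresponding to $\ov y_0$ to the one corresponding to $\ov y_1$; this is essentially the statement that the ``diagonal'' section $\Delta\colon Y\to Y\times_X Y$ picks out, in the fiber over $\ov y_i$, the point $\ov y_i$ itself, so that $\sigma$ (being an isomorphism of fiber functors on all of $\Cov_Y$, in particular on $Y_Y = Y\times_X Y$, which decomposes into connected components one of which is the diagonal copy of $Y$) necessarily carries the component of $\Delta(Y)$ through $\ov y_0$ to the component through $\ov y_1$. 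Actually one needs slightly more care: $\sigma$ is $\Aut$-of-a-fiber-functor data, and the claim is that under the identification $F_{\ov x}(Y)\simeq F_{\ov y_0}(Y\times_X Y)$, the element $\ov y_0\in F_{\ov x}(Y)$ corresponds to the canonical diagonal lift, whose image under $\sigma$ is the canonical lift $\ov y_1\to \ov y_1$, which corresponds to $\ov y_1\in F_{\ov x}(Y)$.

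I expect the main obstacle to be bookkeeping the chain of natural identifications $F_{\ov x}(Z)\simeq F_{\ov y_i}(Z_Y)$ and verifying that the element of $F_{\ov x}(Y)$ determined by $\ov y_i$ corresponds, under the base-change identification, to the ``tautological'' diagonal lifting of $\ov y_i$ along $Y\times_X Y\to Y$, and hence that $\sigma$ genuinely moves $\ov y_0$ to $\ov y_1$. The cleanest way to organize this is: (i) note the functor $(-)_Y\colon \Cov_X\to\Cov_Y$ and the natural iso $F_{\ov x}\Rightarrow F_{\ov y_i}\circ(-)_Y$; (ii) observe that these two natural isos (for $i=0,1$) are related by $\sigma$ composed with a \emph{third} natural iso $F_{\ov y_0}\circ(-)_Y\Rightarrow F_{\ov y_1}\circ(-)_Y$ induced by $\sigma$; (iii) conclude $\tau := (\text{iso}_1)^{-1}\circ(\sigma * (-)_Y)\circ(\text{iso}_0)\in\Aut(F_{\ov x})$; (iv) evaluate on $Y$ and use that $Y_Y=Y\times_X Y$ has a canonical diagonal section corresponding to $\ov y_0\mapsto \ov y_0$ and to $\ov y_1 \mapsto \ov y_1$, so $\tau(\ov y_0)=\ov y_1$. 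Since $\ov y_0, \ov y_1$ were arbitrary elements of $F_{\ov x}(Y)$, this gives transitivity, completing the proof of tameness and hence of Theorem~\ref{main tameness result}.
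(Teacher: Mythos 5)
Your proposal is correct and is essentially identical to the paper's own proof: the paper likewise applies Theorem~\ref{pathsexist} to the connected space $Y$ itself to obtain an \'etale path $\eta\colon F_{\ov y_0}\isomto F_{\ov y_1}$ of functors on $\Cov_Y$, transports it through the base-change functor $b\colon\Cov_X\to\Cov_Y$ and the identifications $F_{\ov x}\simeq F_{\ov y_i}\circ b$ to get an automorphism of $F_{\ov x}$, and uses the diagonal $\Delta_{Y/X}\colon Y\to Y\times_X Y$ to verify that this automorphism carries $\ov y_0$ to $\ov y_1$. Your step (i)--(iv) organization matches the paper's commutative diagram argument exactly, so no further comparison is needed.
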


\begin{proof} 
As in the beginning of the proof of Proposition \ref{pathsexist}, we may assume that $\ov{x}$ is a~maximal geometric point. Let $\ov{y}_1$ and $\ov{y}_2$ be elements of $F_{\ov{x}}(Y)$. Since $Y$ is connected we obtain from Theorem \ref{pathsexist} an \'etale path $\eta\colon F_{\ov{y}_1}\isomto F_{\ov{y}_2}$ of functors on $\Cov_Y$. Consider the base change functor $b\colon \Cov_X\to \Cov_Y$. One easily sees that $F_{\ov{y}_i}\circ b\simeq F_{\ov{x}}$. Thus, from the isomorphism $\eta$ we obtain an isomorphism $\eta'$ given as the composition
\begin{equation*}
    F_{\ov{x}}\simeq F_{\ov{y}_1}\circ b\xrightarrow{\eta}F_{\ov{y}_2}\circ b\simeq F_{\ov{x}}
 \end{equation*}
By functoriality of $\eta$ and the definition of $\eta'$ one has a commutative diagram
 \begin{equation*}
     \xymatrixcolsep{5pc}\xymatrixrowsep{2pc}
     \xymatrix{
        \{\ov{y}_1\}=F_{\ov{y}_1}(Y)\ar[r]^{\eta(Y)}\ar[d]_{\Delta_{Y/X}} & F_{\ov{y}_2}(Y)=\{\ov{y}_2\}\ar[d]^{\Delta_{Y/X}}\\
        F_{\ov{y}_1}(Y\times_X Y)\ar[r]^-{\eta(Y\times_X Y)}\ar@{=}[d] & F_{\ov{y}_2}(Y\times_X Y)\ar@{=}[d]\\
        F_{\ov{x}}(Y)\ar[r]^{\eta'(Y)} & F_{\ov{x}}(Y)
    }
 \end{equation*}
 where $\Delta_{Y/X}\colon  Y\to Y\times_X Y$ is the diagonal map. This clearly shows that $\eta(Y)$ takes $\ov{y}_1$ to $\ov{y}_2$. Since $\ov{y}_1$ and $\ov{y}_2$ were arbitrary, the conclusion follows. 
\end{proof}

\begin{rem} \label{rem:tame-paths}
Why do we need to pullback to curves to define geometric coverings? And why do we use arcs and not paths? Roughly, the reason is that arcs in higher-dimensional Berkovich spaces can be very wiggly, which may cause trouble at various steps of the proofs. For example, Lemma~\ref{lem:finite-pi0} is false in higher dimension. In fact, we expect there to be a more general notion of a tame path in a Berkovich analytic space, such that every two points in the same connected component are the endpoints of a tame path. This would likely allow one to develop a cleaner definition of a geometric covering, without the crutch of base changing to curves.
\end{rem}

\appendix 

\section{Curve-connectedness of rigid varieties}
\label{curve-connectedness appendix}

\begin{definitionA} \label{def:curve-conn}
	A rigid $K$-space is \emph{curve-connected} if for every two classical points $x_0, x_1$ of $X$ there exists a sequence of morphisms $C_i\to X$ ($i=0, \ldots, n$) over $K$ where each $C_i$ is a~connected affinoid rigid $K$-curve and we have $x_0\in \im(C_0\to X)$, $x_1\in \im(C_n\to X)$, and $\im(C_i\to X)\cap \im(C_{i-1}\to X)\neq\emptyset$ for $i=1, \ldots, n$.
\end{definitionA}

By \cite[Theorem~3.3.6]{ConradComponents}, one can assume that the curves $C_i$ are smooth over $K$.

\begin{propA} \label{curve-connectedness prop}
	Let $X$ be a connected rigid $K$-space. Then, $X$ is curve-connected.
\end{propA}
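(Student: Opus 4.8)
The plan is to first observe that the relation ``$x_0$ and $x_1$ are joined by a curve chain in $X$'' is an equivalence relation on classical points (reflexivity by a constant curve, symmetry by reversing, transitivity by concatenating chains, since $\im(C_i\to X)\cap\im(C_{i-1}\to X)\ne\emptyset$ is preserved), so it suffices to prove this relation has a single class. The key reduction is to the affinoid case: assuming the proposition for affinoids, one shows each class is ``open''. Concretely, a classical point $x$ in a class $E$ has a connected affinoid open neighborhood $U\subseteq X$ (take any affinoid neighborhood and pass to the connected component through $x$, which is open since affinoid adic spaces have finitely many components); by the affinoid case all classical points of $U$ are curve-connected within $U$, and those chains push forward along $U\hookrightarrow X$, so all classical points of $U$ lie in $E$. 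Since classical points are dense in $X$ (they are Zariski-dense in every affinoid open), the union $U_E$ of all such neighborhoods over a fixed class $E$ is open, two distinct classes give disjoint $U_E$'s, and $\bigcup_E U_E$ contains all classical points hence is all of $X$; connectedness of $X$ then forces a single class.

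For $X=\Spa(A)$ affinoid, the second reduction is to the integral case. Replacing $A$ by $A_{\mathrm{red}}$ does not change classical points and curves mapping to $X_{\mathrm{red}}$ give curves mapping to $X$, so assume $A$ reduced. Write $X=\bigcup_i X_i$ for the finitely many irreducible components $X_i=\Spa(A/\mathfrak{p}_i)$ ($\mathfrak{p}_i$ the minimal primes of the Noetherian ring $A$). Connectedness of $X$ makes the incidence graph of the $X_i$ connected, and each nonempty intersection $X_i\cap X_j=\Spa(A/(\mathfrak{p}_i+\mathfrak{p}_j))$ contains a classical point $p_{ij}$, which is classical in both $X_i$ and $X_j$. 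Walking along a path in the incidence graph and using that each $X_i$ is curve-connected (applied to the pair consisting of the relevant point of $X_i$ and a suitable $p_{ij}$), one concatenates into a single curve chain in $X$. Thus it remains to prove: \emph{an integral affinoid rigid $K$-space is curve-connected}.

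This last statement is essentially de~Jong's Proposition~6.1.1 \cite{deJongCrystal}, and one proves it by induction on $d=\dim A$. For $d\le 1$ there is nothing to do, since for $d=1$ the space $\Spa(A)$ is itself a connected affinoid rigid $K$-curve. For $d\ge 2$, choose a Noether normalization, i.e. a finite surjection $f\colon\Spa(A)\to\mathbf{D}^d_K=\mathbf{D}^{d-1}_K\times\mathbf{D}^1_K$. One first checks that $\mathbf{D}^d_K$ itself is curve-connected by the same induction: given classical points $y_0,y_1$, their images $b_i$ in $\mathbf{D}^{d-1}_K$ have connected affinoid fibers $\mathbf{D}^1_{k(b_i)}$ (rigid $k(b_i)$-curves, hence rigid $K$-curves as $k(b_i)/K$ is finite) joining $y_i$ to a point in the hyperplane $\mathbf{D}^{d-1}_K\times\{0\}\cong\mathbf{D}^{d-1}_K$, where one concludes by the inductive hypothesis. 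Then, for general integral $A$, given classical points $x_0,x_1$ one joins $f(x_0),f(x_1)$ by a curve chain in $\mathbf{D}^d_K$ and lifts it through the finite surjection $f$, using that the preimage of a one-dimensional closed subset is again one-dimensional.

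The main obstacle is precisely this last lifting step: the preimage under $f$ of a curve in the chain need not be connected, and may acquire spurious isolated (zero-dimensional) components over the ramification/non-flat locus of $f$, so one must argue that the components one lands in along consecutive curves of the chain can still be joined --- e.g. via the finitely many classical points lying over each intersection point $E_k\cap E_{k+1}$ --- and one must control the vertical fibers of $\mathrm{pr}_1\circ f$ at the starting points, which forces choosing the chain (or the normalization) in sufficiently general position relative to these proper Zariski-closed loci, using that $K$ is infinite. The reductions to the affinoid and then integral cases are routine; the bulk of the technical work, and the point at which one genuinely invokes (a variant of) de~Jong's argument, is in carrying out this curve-chain lifting through a finite cover.
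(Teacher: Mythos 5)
The reductions you perform (to the affinoid case, then to the integral case) are fine, and you have correctly located where the difficulty lies --- but the proof is missing its central idea: you never overcome the obstacle you name in your final paragraph, and the ``general position'' remedy you gesture at cannot work. The obstruction to lifting a curve chain through a finite surjection $f\colon X\to\mathbf{D}^d_K$ is not a proper Zariski-closed locus that a generic chain avoids; it is the monodromy of $f$. Even when $f$ is finite \emph{\'etale} (so there is no ramification or non-flat locus at all), the component through $x_0$ of the preimage of a chain joining $f(x_0)$ to $f(x_1)$ surjects onto each curve of the chain and therefore ends over $f(x_1)$ --- but at an a priori arbitrary point of the finite fiber $f^{-1}(f(x_1))$, not necessarily at $x_1$. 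No choice of chain, and no choice of Noether normalization, repairs this.

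The paper's proof (following de~Jong) resolves the issue in a genuinely different way: rather than lifting curves from the base, it produces a hyperplane $H\subseteq\mathbf{D}^d_K$ such that the \emph{full preimage} $f_\eta^{-1}(H)\subseteq X$ is connected. This $(d-1)$-dimensional ``hub'' is curve-connected by induction, and an arbitrary classical point $x$ of $X$ is joined to it by the component through $x$ of the preimage of a fiber of a linear projection $\mathbf{D}^d_K\to H$; here finiteness (hence surjectivity onto that fiber) suffices, because one only needs to reach the hub somewhere, not at a prescribed point. Establishing connectedness of $f_\eta^{-1}(H)$ is where essentially all the work lies: one needs a formal model $A^\circ$ with geometrically reduced special fiber (the Reduced Fiber Theorem), a generically \'etale Noether normalization of the reduction lifted to the model, a Bertini argument on the special fiber, the fact that the pairwise intersections of unions of irreducible components of $\widetilde{X}$ have dimension exactly $d-1$ (via L\"utkebohmert's extension theorem), and a local-algebra connectedness statement (Lemma~\ref{local ring connected}). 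None of this apparatus, nor any substitute for it, appears in your proposal, so the argument as written does not close.
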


\noindent
The analogous result was proven by de~Jong in \cite[Theorem~6.1.1]{deJongCrystal} for quasi-compact rigid $K$-spaces in the case when $K$ is discretely valued. Our proof follows his, but requires some non-trivial alterations due to the fact that $\cO_K$ is non-Noetherian. It is also worth pointing out that Berkovich obtained similar results for Berkovich spaces (see \cite[Theorem 4.1.1]{BerkovichIntegration}) which, in the language of adic spaces, are partially proper over $\Spa(K)$.

\begin{proof}
Idea of the proof: If $X$ is a connected affine variety, one can use the Bertini theorem to find a connected hypersurface passing through two given points. In the situation at hand, if $X=\Spa(A)$ is affinoid, we can construct a suitable connected hyperplane section of its reduction $\widetilde X = \Spec(\widetilde A)$, and lift it to the formal model $\Spf(A^\circ)$. The main difficulty, resolved carefully below, is to ensure that the resulting hypersurface remains connected on the generic fiber. 
\beginsteps

\step[\textit{We may assume that $X$ is affinoid}]
Let $U$ be the union of all connected quasi-compact opens containing $x_0$. Then $U$ is (open and) closed: if $y\notin U$ and $V$ is a connected affinoid neighborhood of $y$, then $V\cap U=\emptyset$, otherwise $V\cup U$ is quasi-compact and connected. Since $X$ is connected, $x_1\in U$, i.e.\ $x_0$ and $x_1$ admit a connected quasi-compact neighborhood $W$. Write $W=\bigcup_{i=1}^n X_i$ with $X_i$ affinoid, which we may assume are connected and $X_i\cap X_{i-1}\neq\emptyset$ (in particular, the intersection has classical points), and it is enough to show the result for the $X_i$. 

\step[\textit{We  may assume that $X$ is geometrically connected and geometrically normal}]
In the proof, we can freely pass to a finite extension $L$ of $K$, because every connected component of $X_L$ surjects onto $X$. By \cite[\S3.2]{ConradComponents}, we may assume that $X$ is geometrically connected. By \cite{ConradComponents}, affinoid $K$-algebras are excellent and we have the normalization map $X^{\rm n
}\to X$, a finite surjective morphism. By \cite[Theorem 3.3.6]{ConradComponents}, after passing to a finite extension of $K$ we may therefore assume that $X$ is geometrically normal.

\medskip

\step[\textit{Reduced Fiber Theorem}]
Write $X=\Spa(A)$. By the Reduced Fiber Theorem \cite{GrauertRemmert,BoschLutkebohmertRaynaud}, passing to a~finite extension of $K$ we may assume that $A^\circ$ is a topologically finitely presented $\cO_K$-algebra such that $A^\circ\otimes k$ is geometrically reduced and the formation of $A^\circ$ is compatible with further finite extensions of~$K$.

\medskip

Write $\widetilde{A} = A^\circ\otimes k$ and $\widetilde{X} = \Spec(\widetilde{A})$, and set $d=\dim X$. We denote the irreducible components of $\widetilde{X}$ by $Z_1, \ldots, Z_r$. Note that we have $d=\dim Z_i$ for all $i$. To see this, let $\widetilde{U}_i\subseteq \widetilde{X}$ be an affine open subset contained in $Z_i$, so that $\dim \widetilde{U}_i=\dim Z_i$, and let $U_i\subseteq \Spf(A^\circ)$ be the corresponding open formal subscheme. Then the generic fiber $U_{i,\eta}$ is an affinoid subdomain of $X$ and hence has dimension $d$. We conclude by \cite[Theorem A.2.1]{ConradModular}.

\step[\textit{Noether normalization}]
Since $\widetilde{X}$ is geometrically reduced, Lemma~\ref{lem:sep-nn} below yields a finite and generically \'etale morphism $\tilde{f}\colon \widetilde{X}\to \mathbf{A}^d_k$ (if $k$ is finite, we might need to pass to a~finite extension of $K$). Choosing lifts of $\tilde{f}^*(x_i)$ to $A^\circ$, we lift $\tilde{f}$ to a map 
\[
    f\colon \Spec(A^\circ) \to \Spec(\cO_K\langle x_1, \ldots, x_d\rangle), 
\]
which we claim is finite, and \'etale at the points of $\widetilde{X}$ where $\tilde f$ is \'etale. Then the morphism $f_\eta \colon X\to \mb{D}^d_K$ induced by $K\langle x_1, \ldots, x_d\rangle \to A$ is finite as well.

To prove this claim, set $B^\circ = \cO\langle x_1, \ldots, x_d\rangle$ and let $f_0\colon B^\circ/\varpi B^\circ \to A^\circ/\varpi A^\circ$ be the induced map. Since $f_0$ is a thickening of $\widetilde{f}$ in the sense of \stacks{04EX} and $A^\circ/\varpi A^\circ$ is of finite presentation over $\cO_K/\varpi\cO_K$,  by \stacks{0BPG}, the finiteness of $\widetilde{f}$ implies the finiteness of $f_0$. By \cite[Chapter I, Proposition 4.2.3]{FujiwaraKato}, this implies that $f$ is finite, and therefore also finitely presented \cite[7.3/4]{BoschLectures}.

Let now $x\in \widetilde{X}\subseteq \Spec(A^\circ)$ be a point where $\tilde f$ is \'etale; we will show that $f$ is \'etale at $x$. Set $y=f(x)$. As $f$ is finitely presented and $\Spec(A^\circ)_y\to\Spec(k(y))$ is \'etale at $x$ (as it agrees with $\widetilde{f}$) it is enough by \stacks{01V9} to show that $\cO_{\Spec(B^\circ),y}\to \cO_{\Spec(A^\circ),x}$ is flat. Note that 
\begin{equation*}
    \left(\cO_{\Spec(A^\circ), x}\to \cO_{\Spf(A^\circ),x}\right)=\varinjlim_{x\in D(f)} \left(A^\circ[f^{-1}]\to A^\circ\langle f^{-1}\rangle\right)
\end{equation*} 
and the right-hand side is flat by Gabber's lemma \cite[8.2/2]{BoschLectures}, but the map is also local by \cite[Remark 7.2/1]{BoschLectures}. Thus, $\cO_{\Spec(A^\circ), x}\to \cO_{\Spf(A^\circ),x}$ is faithfully flat, and similarly $\cO_{\Spec(B^\circ), y} \to \cO_{\Spf(B^\circ), y}$ is faithfully flat. Finally, the map $\cO_{\Spf(B^\circ), y}\to \cO_{\Spf(A^\circ),x}$ is flat by \cite[Proposition~I,  5.3.11]{FujiwaraKato} and \stacks{0CF4}. Note that the compositions
\begin{equation*}
    \mc{O}_{\Spec(B^\circ),y}\to \mc{O}_{\Spec(A^\circ),x}\to \mc{O}_{\Spf(A^\circ),x},\qquad \mc{O}_{\Spec(B^\circ),y}\to\mc{O}_{\Spf(B^\circ),y}\to\mc{O}_{\Spf(A^\circ),x}
\end{equation*}
are equal. By the discussion above the second composition is flat, and thus the first. As $\mc{O}_{\Spec(A^\circ),x}\to \mc{O}_{\Spf(A^\circ),x}$ is faithfully flat we see $  \mc{O}_{\Spec(B^\circ),y}\to \mc{O}_{\Spec(A^\circ),x}$ is flat as desired.

\begin{lemA}[{Generically \'etale Noether normalization}] \label{lem:sep-nn}
    Let $Y$ be an affine scheme of finite type over a field $k$ which is geometrically reduced and whose irreducible components have the same dimension $d$. Then there exists a finite and generically \'etale map $Y\to \mathbf{A}^d_k$.
\end{lemA}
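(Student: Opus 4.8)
The idea is to realize $Y\to\mathbf{A}^d_k$ as a sufficiently general linear projection: generality gives finiteness by the usual argument, and geometric reducedness forces such a projection to be étale at the generic points of $Y$, hence generically étale. Assume first that $k$ is infinite. Fix a closed immersion $Y\hookrightarrow\mathbf{A}^n_k$, write $Y=\Spec A$, and for $c=(c_{jk})\in\mathrm{Mat}_{d\times n}(k)=\mathbf{A}^{dn}_k$ let $f_c\colon Y\to\mathbf{A}^d_k$ be the restriction of the linear map $\mathbf{A}^n_k\to\mathbf{A}^d_k$ with $\pi_c^*(x_j)=\sum_k c_{jk}y_k$. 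By the classical proof of Noether normalization over an infinite field, the set $U_0$ of $c$ for which $f_c$ is finite (and surjective) is a non-empty Zariski-open subset of $\mathbf{A}^{dn}_k$.

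Now let $\eta_1,\dots,\eta_r$ be the generic points of $Y$; since $Y$ is reduced, $L_i:=\mathcal{O}_{Y,\eta_i}=\kappa(\eta_i)$ is a field, finitely generated of transcendence degree $d$ over $k$ (as $\dim Z_i=d$). For $c\in U_0$ the fibre of $f_c$ over the generic point of $\mathbf{A}^d_k$ is $\Spec(S^{-1}A)$ with $S=k[x_1,\dots,x_d]\setminus\{0\}$, a reduced finite algebra over $k(x_1,\dots,x_d)$; hence $f_c$ is generically étale (i.e.\ étale over a dense open of $\mathbf{A}^d_k$, equivalently étale at each $\eta_i$) if and only if it is unramified at each $\eta_i$, flatness there being automatic since $\mathcal{O}_{\mathbf{A}^d_k,f_c(\eta_i)}=k(x_1,\dots,x_d)$ is a field. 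Unramifiedness at $\eta_i$ means $\Omega_{L_i/k(x)}=0$, i.e.\ the images of $d(\pi_c^*x_1),\dots,d(\pi_c^*x_d)$ span the $L_i$-vector space $\Omega_{L_i/k}$. This is where geometric reducedness enters: it forces $L_i/k$ to be separably generated, so $\dim_{L_i}\Omega_{L_i/k}=d$, and spanning by $d$ vectors becomes the non-vanishing of one determinant. Concretely, $\Omega_{L_i/k}=\Omega_{A/k}\otimes_A L_i$ is a quotient of $\bigoplus_k L_i\, dy_k$ via a surjection $q_i\colon L_i^n\twoheadrightarrow\Omega_{L_i/k}$, and $d(\pi_c^*x_j)=q_i(c_j)$ with $c_j$ the $j$-th row of $c$; fixing a basis of $\Omega_{L_i/k}$ gives a matrix $M_i\in\mathrm{Mat}_{d\times n}(L_i)$ of rank $d$ so that the relevant determinant is $P_i(c):=\det(M_i\,c^{\,t})$. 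Since $M_i$ admits a right inverse over $L_i$, $P_i$ is a non-zero polynomial in the $c_{jk}$ with coefficients in $L_i$; expanding those coefficients in a $k$-basis of their $k$-span shows $P_i$ does not vanish identically on $\mathrm{Mat}_{d\times n}(k)$, so $U_i:=\{c:P_i(c)\neq 0\}$ is a non-empty open subset of $\mathbf{A}^{dn}_k$.

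Then $U_0\cap U_1\cap\cdots\cap U_r$ is a non-empty open subset of the irreducible scheme $\mathbf{A}^{dn}_k$, hence — $k$ being infinite — contains a $k$-point $c$, and $f_c\colon Y\to\mathbf{A}^d_k$ is finite and generically étale. When $k$ is finite one first replaces $k$ by a finite extension large enough that the above open set acquires a rational point (harmless for the application in Step~4, cf.\ the parenthetical remark there), or, to stay over $k$ itself, substitutes for the generic linear change of coordinates a Nagata-type substitution $y_i\mapsto y_i-y_n^{e_i}$ with the exponents $e_i$ chosen coprime to the characteristic so that the determinants $P_i$ remain non-zero. The main subtlety I anticipate is exactly this last point, together with having to satisfy the étaleness condition at all the (possibly several) generic points simultaneously; once geometric reducedness is invoked to turn each of these into an open non-degeneracy condition on the projection, what remains is the routine intersection-of-dense-opens argument above.
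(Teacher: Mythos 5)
Your argument is correct in substance but takes a genuinely different route from the paper. The paper disposes of this lemma in one line by citing Kedlaya's Lemma~6 from \emph{More \'etale covers of affine spaces in positive characteristic}, applied to the closure of $Y\subseteq\mathbf{A}^n_k$ in $\mathbf{P}^n_k$ together with a zero-dimensional subscheme of the smooth locus meeting every irreducible component; since the \'etale locus is open and meets each component, the resulting finite map is \'etale at every generic point. Kedlaya's lemma is engineered precisely to work over finite fields in positive characteristic (via Nagata-type non-linear substitutions), so the paper's route proves the statement over $k$ itself in all cases. Your self-contained generic linear projection is the classical alternative: finiteness of a general projection is the usual non-empty open condition on the center at infinity, and you correctly isolate where geometric reducedness enters --- it makes each $\kappa(\eta_i)/k$ separably generated, so $\dim_{L_i}\Omega_{L_i/k}=d$ and \'etaleness at $\eta_i$ becomes the non-vanishing of a single determinant $P_i$, which (after spreading the $L_i$-coefficients of $P_i$ over a $k$-basis, as you indicate) cuts out a non-empty open locus of projections. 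This is complete for $k$ infinite. For $k$ finite, your primary fix (pass to a finite extension) does not literally prove the lemma as stated, though it is harmless for the application, since Step~4 of the proof of Proposition~\ref{curve-connectedness prop} explicitly allows a finite extension of $K$ in that case; your secondary suggestion of a Nagata substitution is exactly the content of Kedlaya's argument and would need to be carried out in detail (arranging finiteness and the simultaneous non-vanishing of all the $P_i$ with non-linear substitutions is the real work there). In short: your proof is more elementary and self-contained, at the cost of the finite-field case; the paper's one-line citation gets that case for free from a black box.
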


\begin{proof}
Apply \cite[Lemma~6]{KedlayaMoreEtale}\footnote{As pointed out by Ofer Gabber, \cite[Lemmas~5 and 6]{KedlayaMoreEtale} need an extra assumption that $\dim(D)<n$.} to the closure of $Y\subseteq \mb{A}^n_k$ in $\mb{P}^n_k$ and a zero-dimensional subscheme of the smooth locus of $Y$ meeting every irreducible component.
\end{proof}

\step
We claim that it is enough to show:
\begin{quote}
    \emph{If $d\geq 2$, there is a hyperplane $H\subseteq \mb{D}^d_K$ such that $f_\eta^{-1}(H)\subseteq X$ is connected.} 
\end{quote}
Here, by a hyperplane we mean the zero set $H=V(h)$ of a linear form $h = \sum_{i=1}^d a_i x_i - a$ with $a_i,a\in\cO_K$ with $a_i$ not all zero in $k$.

The argument for this is exactly as in \cite{deJongCrystal}: we prove the entire theorem by induction on $d=\dim X\geq 0$. Since the assertion is clear for $d\leq 1$, we assume $d\geq 2$. For the induction step, we find $H$ as in the claim, and since $f_\eta^{-1}(H)$ is curve-connected by the induction assumption, it suffices to connect a given classical point $x\in X$ to a point $y\in f_\eta^{-1}(H)$ by a curve in $X$. Let $p\colon \mb{D}^d_K\to H$ be a linear projection (so $p|_H = \op{id}_H$); then $C=p^{-1}(p(f(x)))$ is a connected curve (in fact, isomorphic to $\mb{D}^1_{L}$ for a finite extension $L/K$) connecting $f(x)$ with $H$. Let $C'$ be the connected component of $f^{-1}(C)$ containing $x$. We claim that $\dim C'=1$; otherwise, $x$ is an isolated point of $f^{-1}(C)$, which is impossible because the map $\Spec(A)\to \Spec(K\langle x_1, \ldots, x_d\rangle)$ is open by \stacks{0F32}\footnote{Indeed, if $I\subseteq B=K\langle x_1, \ldots, x_d\rangle$ is the ideal of $C$, then $x$ is an isolated point of $f^{-1}(C)=\Spa(A/IA)$ if and only if it is an isolated point of $\Spec(A/IA)$, as both mean that the local ring $(A/IA)_x$ is Artinian. If $U\subseteq \Spec(A)$ is an open subset with $U\cap \Spec(A/IA) = \{x\}$, then its image in $\Spec(B)$ is an open neighborhood of $f(x)$, and hence it contains the generic point of $\Spec(B/I)$, contradiction.}.  The map $C'\to C$ is finite, and hence surjective, and therefore $C'$ connects $x$ with a point in $f^{-1}(H)$.

\medskip

In the following steps, we shall construct a hyperplane $\widetilde H\subseteq \mb{A}^d_k$ and show that every hyperplane $H\subseteq \mb{D}^d_K$ lifting $\widetilde H$ has connected preimage in $X$. 

\step[\textit{Construction of $H$}]
For a~decomposition $I\cup J=\{1, \ldots, r\}$ with $I$, $J$ non-empty and disjoint, we set 
\[
    Z_{I,J} = (\bigcup_{i\in I}Z_i)\cap (\bigcup_{j\in J}Z_j),
\]
treated as a reduced subscheme of $\widetilde{X}$. We claim as in \cite[\S 6.4]{deJongCrystal} that $\dim Z_{I,J}=d-1$ for every such $I$, $J$. To this end, note that $\widetilde{X}\setminus Z_{I,J}$ is disconnected, and hence so is $\mf{U}_\eta \subseteq X$ where $\mf{U}\subseteq \Spf(A^\circ)$ is the open formal subscheme supported on $\widetilde{X}\setminus Z_{I,J}$. But, if $\dim Z_{I,J}<d-1$, then by \cite[Satz 2]{Lut2} we have $\cO(\mf{U}_\eta)= A$, which does not have non-trivial idempotents. 

We denote by $\widetilde{T}\subseteq \widetilde{X}$ the closed subscheme where $\tilde f$ is not \'etale, and by $\widetilde{S}\subseteq \widetilde{X}$ the closed subscheme where $\tilde f$ is not flat. By construction, $\dim\widetilde{T}<d$. By Miracle Flatness \stacks{00R4}, $\widetilde{S}$ is the non-Cohen--Macaulay locus of $\widetilde{X}$, see \stacks{00RE}. Since $\widetilde{X}$ is reduced, it is $S_1$ \stacks{031R}, and therefore we have $\dim \widetilde{S}<d-1$.

By the Bertini theorem \cite[Th\'eor\`eme 6.3]{Jouanolou}, a generic hyperplane (which exists after replacing $k$ by a finite extension if $k$ is finite) $\widetilde H = V(\tilde h) \subseteq \mb{A}^d_k$, $\tilde h=\sum_{i=1}^d \tilde a_i x_i - \tilde a$ ($\tilde a_i, \tilde a\in k$) satisfies the following properties:
\begin{enumerate}[(1)]
    \item The intersections $\tilde f^{-1}(\widetilde H)\cap Z_i$ ($i=1, \ldots, r$) are irreducible of dimension $d-1$ and generically \'etale over $\widetilde H$.
    \item For every decomposition $I\cup J=\{1, \ldots, r\}$ with $I$ and $J$ non-empty and disjoint, $\tilde f^{-1}(\widetilde{H}) \cap Z_{I,J}$ has a component of dimension $d-2$, and all such components are generically flat over $\widetilde H$.  
\end{enumerate} 

Indeed, \cite[Th\'eor\`eme 6.3(1b,4)]{Jouanolou} shows that an open set of hyperplanes will have the property that $\tilde f^{-1}(\widetilde H)\cap Z_i$ are irreducible of dimension $d-1$. To ensure \'etaleness in (1) it suffices to choose $\widetilde H$ not contained in $\tilde f(\widetilde{T})$. If $V\subseteq Z_{I,J}$ is an irreducible component of dimension $d-1$, then by \cite[Th\'eor\`eme 6.3(1b)]{Jouanolou} for a generic $\widetilde H$, the preimage $\tilde f^{-1}(\widetilde H)\cap V$ is of dimension $d-2$. To ensure flatness in (2), we choose a hyperplane whose intersection with $\tilde f(V)$ is not contained in $\tilde f(\tilde S)$. 

We let $h=\sum_{i=1}^d a_i x_i - a$ ($a_i, a\in \cO_K$) be any lifting of $h$, and set $H=V(h)\subseteq \mb{D}^d_K$. In the remaining two steps, we shall prove that $f^{-1}_\eta(H)\subseteq X$ is connected. By the claim in Step~5, this will finish the proof. To this end, if $f_\eta^{-1}(H)$ is disconnected, then we have $\Spec(A^\circ/h A^\circ) = T_1\cup T_2$ for two non-empty closed subsets $T_1$, $T_2$ such that $T_1\cap T_2$ is non-empty and set-theoretically contained in $\widetilde X$. In the final two steps, we shall derive a contradiction. 

\step[\textit{$T_1\cap T_2$ does not contain any $\tilde f^{-1}(\widetilde H) \cap Z_i$}] Let $\xi_i$ be the generic point of $\tilde f^{-1}(\widetilde{H})\cap Z_i$; we will show that $\xi_i\notin T_1\cap T_2$. It is enough to show that $A^\circ_{\xi_i}/hA^\circ_{\xi_i}$ is a domain. Indeed, since $\dim(T_1)=\dim(T_2)=d$ if $\xi_i\in T_1\cap T_2$ then $\Spec(A^\circ_{\xi_i}/hA^\circ_{\xi_i})$ would contain multiple components of $\Spec(A^\circ/hA^\circ)$, but this would contradict that $A^\circ_{\xi_i}/hA^\circ_{\xi_i}$ is a domain. 
To see that $A^\circ_{\xi_i}/hA^\circ_{\xi_i}$ is a domain, consider the localization $R$ of $\cO_K\langle x_1, \ldots, x_d\rangle$ at the generic point of $\widetilde H$ and the map $R\to A^\circ_{\xi_i}$ induced by $f$. Since $f$ is \'etale in a neighborhood of $\xi_i$, the induced map $R/hR \to A^\circ_{\xi_i}/hA^\circ_{\xi_i}$ is the composition of an \'etale morphism and a~localization; in particular, it is weakly \'etale.  Since $\cO_K\langle x_1, \ldots, x_d\rangle/h \simeq \cO_K\langle y_1, \ldots, y_{d-1}\rangle$ is normal, so is $R/hR$. Thus, by \stacks{0950}, we conclude that $A^\circ_{\xi_i}/h A^\circ_{\xi_i}$ is normal as well, in particular it is a domain.

\step[\textit{The contradiction}] It follows from the previous step that if we set
\begin{equation*}I=\{i\,:\, \xi_i\in T_1\},\qquad J=\{j\,:\, \xi_j\in T_2\}
\end{equation*}
then these form a non-trivial partition of $\{1, \ldots, r\}$. Therefore $T_1\cap T_2\cap \widetilde X$ is contained in $Z_{I,J}$. Note though that since $T_1$ and $T_2$ both have codimension one in $\Spec(A^\circ)$ the intersection $T_1\cap T_2\cap \widetilde X$ necessarily contains a component $V$ of $\widetilde f^{-1}(\widetilde H)\cap Z_{I,J}$. This component can be taken to have codimension two in $\widetilde X$, and by our choice of hyperplane, we know that this component is generically Cohen--Macaulay and so not contained in $\widetilde{S}$.

Let $\zeta$ be the generic point of $V$. By the previous paragraph we know that $\zeta$ is not contained in $\widetilde{S}$ and therefore $\widetilde A_\zeta$ is a two-dimensional, Cohen--Macaulay, local ring. Moreover, $\widetilde{h}$ is a~nonzerodivisor of $\widetilde A_\zeta$ contained in its maximal ideal. Note that since $\widetilde{A}_\zeta/\widetilde{h} \widetilde{A}_\zeta$ is not zero-dimensional its maximal ideal contains some nonzerodivisor. Let $\widetilde{g}$ in $\widetilde A_\zeta$ be the lift of such an element. Then, $(\widetilde{g}, \widetilde{h})$ is a~regular sequence in $\widetilde{A}_\zeta$. Let $\varpi$ be the pseudouniformizer from above and choose a~lift $g$ of $\widetilde{g}$ in $A^\circ_\zeta$. Note then that $(\varpi,g,h)$ is a~regular sequence in $A^\circ_\zeta$. Indeed, this is clear since $A^\circ_\zeta/\varpi A^\circ_\zeta$ is a local ring and $(g,h)$ have images in this ring that, in the further quotient ring, $\widetilde{A}_\zeta$ have images forming a regular sequence. Therefore $(\varpi, g)$ is a regular sequence in the two-dimensional $A^\circ_\zeta/h A^\circ_\zeta$. The fact that we can permute a regular sequence follows from the argument given in \stacks{00LJ}. Indeed, while $A^\circ_\zeta/hA^\circ_\zeta$ is not Noetherian it is coherent (e.\@g.\@ by \cite[Chapter~0, Corollary~9.2.8]{FujiwaraKato}) and so the annihilator of any element of $A^\circ_\zeta/hA^\circ_\zeta$ is finitely generated, which is all the referenced argument requires.

By Lemma~\ref{local ring connected} below we know that $W = \Spec(A^\circ_\zeta/hA^\circ_\zeta)\setminus V(\varpi,g)$ is connected. Note though that since $T_1\cup T_2=\Spec(A^\circ/hA^\circ)$ by construction, that $T_1'\cup T_2'=W$ where $T_i'=T_i\cap W$. Each $T_i'$ is closed in $W$, and we have that $T_1'\cap T_2'\subseteq V(\varpi)\cap W$. However, we have $V(\varpi)\cap W)=\varnothing$: note that $V(\varpi)\cap W = \Spec(\widetilde{A}_\zeta/\widetilde{h}\widetilde{A}_\zeta) \setminus V(\widetilde{g})$ and this set is contained in the  union of $\xi_i$'s that are in the intersection $T_1 \cap T_2$. As shown above, the set of those $\xi_i$'s is empty. Thus, $T_1'$ and $T_2'$ form a disconnection of $W$. Contradiction.
\end{proof}

\begin{lemA}\label{local ring connected} 
    Let $R$ be a local ring, let $x, y\in R$ be such that $(x, y)$ and $(y, x)$ are regular sequences, and set $W=\Spec(R) \setminus V(x, y)$. Then $\Gamma(W, \cO_W) = R$; in particular, $W$ is connected.
\end{lemA}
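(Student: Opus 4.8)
The plan is to prove the stronger statement $\Gamma(W,\cO_W)=R$ directly, and then read off connectedness. First observe that because $(x,y)$ and $(y,x)$ are both regular sequences, $x$ and $y$ are nonzerodivisors on $R$; hence so is $xy$, and $R$ embeds into $R_x$, $R_y$ and $R_{xy}$. Next identify $W=D(x)\cup D(y)$, with $D(x)\cap D(y)=D(xy)$, so that the sheaf axiom for $\cO_{\Spec R}$ on the two-element cover $\{D(x),D(y)\}$ gives $\Gamma(W,\cO_W)=R_x\times_{R_{xy}}R_y$ (no Noetherian, quasi-compactness or separatedness hypotheses are needed for this). Thus it suffices to show this fiber product coincides with the image of the diagonal map $R\to R_x\times R_y$.

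The one piece of genuine algebra is a divisibility lemma: if $(x,y)$ is a regular sequence and $a\in R$ satisfies $ay^m\in x^nR$, then $a\in x^nR$. For $n=1$ this is precisely the statement that $y$ (hence $y^m$) is a nonzerodivisor modulo $x$; the inductive step peels off one factor of $x$ using that $x$ is a nonzerodivisor on $R$. By symmetry, the same holds with the roles of $x$ and $y$ exchanged, using that $(y,x)$ is also a regular sequence. Given a section $s\in\Gamma(W,\cO_W)$, written as $a/x^n\in R_x$ and $b/y^m\in R_y$ agreeing in $R_{xy}$, clearing denominators and using the injectivity of $R\to R_{xy}$ yields $ay^m=bx^n$ in $R$. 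The divisibility lemma and its symmetric version then give $a=x^na'$ and $b=y^mb'$ for unique $a',b'\in R$, and $ay^m=bx^n$ forces $a'=b'$ after cancelling the nonzerodivisor $x^ny^m$. The common value $c:=a'=b'\in R$ restricts to $s$ on both $D(x)$ and $D(y)$, hence represents $s$; injectivity of $R\to\Gamma(W,\cO_W)$ is immediate since it stays injective after composing with restriction to $D(x)$. This proves $\Gamma(W,\cO_W)=R$.

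For connectedness: $R$ is a nonzero local ring, so it has no idempotents other than $0$ and $1$. Since $W$ is nonempty (as $x$ is a nonzerodivisor in a nonzero ring, it is not nilpotent, so $D(x)\subseteq W$ is nonempty), and since a decomposition of $W$ into two nonempty disjoint open subschemes would produce a nontrivial idempotent in $\Gamma(W,\cO_W)=R$, the scheme $W$ must be connected.

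I do not expect a serious obstacle here: the argument is essentially the hand-computation of $H^0_{(x,y)}(R)=H^1_{(x,y)}(R)=0$ coming from $\operatorname{depth}_{(x,y)}R\ge 2$, carried out explicitly precisely because $R$ is not assumed Noetherian (so one cannot simply invoke local cohomology). The only step requiring any care is the divisibility lemma and the bookkeeping of exponents when gluing the two local descriptions of a section; everything else is formal.
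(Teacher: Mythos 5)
Your proposal is correct and follows essentially the same route as the paper: identify $\Gamma(W,\cO_W)$ with $\ker(R_x\times R_y\to R_{xy})$ via the cover $W=D(x)\cup D(y)$, use that $xy$ is a nonzerodivisor to get $ay^m=bx^n$, and then use that $y$ is a nonzerodivisor mod $xR$ (and symmetrically $x$ mod $yR$) to conclude. Your inductive ``divisibility lemma'' is just a repackaging of the paper's reduction to lowest terms ($n=0$ or $a\notin xR$), so the two arguments are the same in substance.
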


\begin{proof}
Since $W=D(x)\cup D(y)$, we have $\Gamma(W, \cO_W)=\ker(R_x\times R_y\to R_{xy})$. As $x$ is a nonzerodivisor, $R\to R_x$ is injective, and hence so is $R\to \Gamma(W, \cO_W)$. To show it is surjective, let us take an element $(a/x^n, b/y^m)\in R_x\times R_y$ in the kernel, i.e.\ $(xy)^N(ay^m - bx^n)=0$. Since $xy$ is a~nonzerodivisor, we have $ay^m = bx^n$. We may assume that $n=0$ or that $a \notin xR$. If $n>0$, then we have $ay^m = 0$ in $R/xR$, and hence $a\in xR$ since $y$ is a~nonzerodivisor in $R/xR$. Therefore $n=0$, analogously $m=0$, and hence $a=b$.
\end{proof}

We can bootstrap this up to connect points on connected rigid $K$-spaces which are not necessarily classical, at the expense of base field extension.

\begin{corA} \label{cor: general path connected}
    Let $X$ be a connected rigid $K$-space. Fix maximal points $x$ and $y$ in $X$. Then, there exists a complete extension $L$ of $K$ and smooth connected affinoid $L$-curves $C_i$ with maps $C_i \to X$  such that for all $i$ we have that $\mathrm{im}(C_i\to X)\cap \mathrm{im}(C_{i+1}\to X)$ is non-empty, $x\in \mathrm{im}(C_1\to X)$, and $y\in \mathrm{im}(C_m\to X)$.
\end{corA}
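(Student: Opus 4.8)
The plan is to reduce the statement to the already-proven curve-connectedness of \emph{classical} points (Proposition~\ref{curve-connectedness prop}) after a single base change to a larger field $L$. The point of the base change is that a maximal point of a rigid $K$-space need not lie on any $K$-curve (for instance the Gauss point of $\mb{D}^2_K$, whose residue field has transcendence degree $2$ over $K$), but once we enlarge $K$ so that $x$ and $y$ become classical they will. The delicate point I expect to fight with is that $X_L$ is typically \emph{disconnected}, so Proposition~\ref{curve-connectedness prop} cannot be applied to it verbatim; I will get around this by choosing $L$ large enough that every connected component of $X_L$ still surjects onto $X$, and by threading the chain of curves through a fixed classical point of $X$.

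First I would fix the field. Let $\mc{H}(x)=\widehat{k(x)}$ and $\mc{H}(y)=\widehat{k(y)}$ be the completed residue fields; since $x$ and $y$ are maximal, $k(x)^+=k(x)^\circ$ and $k(y)^+=k(y)^\circ$, so there are canonical morphisms $\Spa(\mc{H}(x))\to X$ and $\Spa(\mc{H}(y))\to X$ with images $x$ and $y$. Writing $\widehat{\ov{K}}$ for the completion of an algebraic closure of $K$, I take $L$ to be the completed residue field of any point of the Berkovich spectrum $\mc{M}(\widehat{\ov{K}}\widehat\otimes_K\mc{H}(x)\widehat\otimes_K\mc{H}(y))$ of this non-zero Banach $K$-algebra; thus $L$ is a complete extension of $K$ equipped with $K$-embeddings of $\widehat{\ov{K}}$, of $\mc{H}(x)$, and of $\mc{H}(y)$. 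Composing $\Spa(L)\to\Spa(\mc{H}(x))\to X$ with $\mathrm{id}_{\Spa(L)}$ gives a morphism $\Spa(L)\to X_L:=X\times_{\Spa(K)}\Spa(L)$ landing on a point $x'$ with $k(x')=L$, hence a classical point of the rigid $L$-space $X_L$ lying over $x$; likewise one gets a classical point $y'$ of $X_L$ over $y$, and $X_L\to X$ is surjective (faithfully flat).

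Next I would record that every connected component of $X_L$ maps \emph{onto} $X$. Since $L\supseteq\widehat{\ov{K}}$, each component of $X_L$ lies over a component $D$ of $X_{\widehat{\ov{K}}}$, which is geometrically connected (being connected over an algebraically closed field), so $D_L$ is connected and the component in question is all of $D_L$; and each such $D$ surjects onto $X$ because $\mathrm{Gal}(K^{\sep}/K)$ acts transitively on $\pi_0(X_{\widehat{\ov{K}}})$ and, equivariantly, on the non-empty fibre of $X_{\widehat{\ov{K}}}\to X$ over any classical point, so every component meets that fibre, and the image of $D$ in $X$, being open (flatness) and containing every classical point, is all of $X$ — alternatively one may cite \cite{ConradComponents}. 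Now fix any classical point $a$ of $X$ and let $X_1$ (resp.\ $X_2$) be the connected component of $X_L$ containing $x'$ (resp.\ $y'$); by the above $X_1$ and $X_2$ each contain a point over $a$, classical since $k(a)/K$ is finite, say $a_1\in X_1$ and $a_2\in X_2$. I then apply Proposition~\ref{curve-connectedness prop}, together with the refinement to smooth curves via \cite[Theorem~3.3.6]{ConradComponents}, to the connected rigid $L$-space $X_1$ with its classical points $x', a_1$ and to $X_2$ with $y', a_2$, obtaining a chain of smooth connected affinoid $L$-curves connecting $x'$ to $a_1$ inside $X_1$ and one connecting $y'$ to $a_2$ inside $X_2$. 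Composing every curve with $X_i\hookrightarrow X_L\to X$ and concatenating the first chain with the reverse of the second produces smooth connected affinoid $L$-curves $C_1,\dots,C_m\to X$ for which: the images of consecutive curves meet (because they already meet in $X_1$ or in $X_2$), the images of the two curves glued in the middle both contain $a$ (the common image of $a_1$ and $a_2$), and $x\in\im(C_1\to X)$, $y\in\im(C_m\to X)$ because $x'\mapsto x$ and $y'\mapsto y$. This is the required sequence.

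The remaining verifications — non-emptiness of the Berkovich spectrum used to build $L$, surjectivity of $X_L\to X$, and classicality of $x', y', a_1, a_2$ — are routine. The one genuinely non-formal input is the behaviour of connected components under the (possibly transcendental) extension $L/K$; once $L$ is arranged to contain $\widehat{\ov{K}}$ and one invokes \cite{ConradComponents} (or the short Galois-transitivity argument above), the rest of the proof is bookkeeping.
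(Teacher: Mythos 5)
Your proof is correct and follows essentially the same route as the paper: both obtain $L$ as the completed residue field of a point of the Berkovich spectrum of a completed tensor product involving $k(x)$ and $k(y)$ (the non-emptiness you call routine is exactly Gruson's theorem \cite{Gruson}, which the paper cites, combined with \cite[Theorem 1.2.1]{BerkovichSpectral}), and then invoke Proposition~\ref{curve-connectedness prop} over $L$. The only organizational difference is in handling the possible disconnectedness of $X_L$: the paper first reduces to $X$ geometrically connected by passing to a finite extension of $K$, as in Step~2 of the proof of Proposition~\ref{curve-connectedness prop}, so that $X_L$ is connected and the proposition applies directly, whereas you enlarge $L$ to contain $\widehat{\ov{K}}$ and thread two chains through a common classical point $a$; both work. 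One small caveat: your parenthetical claim that an open subset of $X$ containing every classical point must equal $X$ is false in general (remove a rank-two point from the closed disk), but you never actually need surjectivity of the components of $X_{\widehat{\ov{K}}}$ onto $X$ --- only that the components containing $x'$ and $y'$ each meet the fibre over $a$, which your Galois-transitivity argument already supplies.
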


\begin{proof} As in the second step of the proof of Proposition \ref{curve-connectedness prop} we may assume that $X$ is geometrically connected. Let us then note that by Gruson's theorem (\cite[Th\'{e}or\`{e}me~1]{Gruson}) one has that the completed tensor product $k(x)\widehat{\otimes}_K k(y)$ is non-zero. Thus, by taking a point of $\mc{M}(k(x)\widehat{\otimes}_K k(y))$ (which exists by \cite[Theorem 1.2.1]{BerkovichSpectral}) and looking at its residue field we get a valued extension $L$ of $K$ containing both $k(x)$ and $k(y)$. This gives two maps $\mc{M}(L) \rightrightarrows X$. They give rise to two $L$-points $x'$ (resp.\ $y'$) of $X_L$ mapping to $x$ (resp.\ $y$). We then note that by Proposition \ref{curve-connectedness prop} there exists smooth, connected, affinoid curves $C_1,\ldots,C_n$ over $L$ and morphisms $C_i\to X_L$ satisfying $x'\in\mathrm{im}(C_1\to X_L)$, $y'\in\im(C_n\to X_L)$, and $\im(C_i\to X_L)\cap \im(C_{i+1}\to X_L)$ is non-empty. Clearly then the compositions $C_i\to X_L\to X$ satisfy the desired properties.
\end{proof}

\bibliographystyle{amsalpha}
\renewcommand{\MR}[1]{MR \href{http://www.ams.org/mathscinet-getitem?mr=#1}{#1}}
\providecommand{\bysame}{\leavevmode\hbox to3em{\hrulefill}\thinspace}
\providecommand{\MR}{\relax\ifhmode\unskip\space\fi MR }
\providecommand{\MRhref}[2]{%
  \href{http://www.ams.org/mathscinet-getitem?mr=#1}{#2}
}
\providecommand{\href}[2]{#2}

\end{document}